\documentclass[11pt]{article}

\usepackage[english]{babel}

\usepackage[T1]{fontenc}
\usepackage{lmodern}

\usepackage[utf8]{inputenc}
\usepackage[a4paper,margin=2.5cm]{geometry}
\usepackage[dvipsnames]{xcolor}
\usepackage{eufrak}
\usepackage{chemist}
\usepackage{amsmath,amsfonts,amsthm,amssymb}

\DeclareMathOperator{\supp}{supp}
\DeclareMathOperator*{\esssup}{ess\,sup}

\usepackage{blindtext}
\usepackage{mathtools}
\usepackage{mathrsfs}
\newcommand{\defeq}{\mathrel{\mathop:}=}
\newcommand*\Laplace{\mathop{}\!\mathbin\bigtriangleup}

\usepackage{booktabs,tabularx}
\usepackage{apxproof}
\usepackage{environ}
\usepackage{dsfont}
\usepackage{etoolbox}
\usepackage{fancyvrb}
\usepackage{ifthen}
\usepackage{kvoptions}
\usepackage{bibunits}
\usepackage{comment}
\usepackage{chemformula}
\usepackage{mathrsfs}
\usepackage{float}
\usepackage{xcolor}
\usepackage{enumerate}
\usepackage[toc,page]{appendix}
\usepackage{graphicx,color}
\usepackage{hyperref}
\usepackage{mathtools}
\usepackage{setspace}
\usepackage[toc,page]{appendix}
\usepackage{cite}
\usepackage[normalem]{ulem} 

\makeatletter
\renewcommand*\env@matrix[1][\arraystretch]{%
  \edef\arraystretch{#1}%
  \hskip -\arraycolsep
  \let\@ifnextchar\new@ifnextchar
  \array{*\c@MaxMatrixCols c}}
\makeatother

\let\OLDthebibliography\thebibliography
\renewcommand\thebibliography[1]{
  \OLDthebibliography{#1}
  \setlength{\parskip}{0pt}
  \setlength{\itemsep}{0pt plus 0.3ex}
}

\newtheorem{theorem}{Theorem}[section]
\newtheorem{lemma}[theorem]{Lemma}
\newtheorem{corollary}[theorem]{Corollary}
\newtheorem{assumption}{Assumption}
\newtheorem{proposition}[theorem]{Proposition}

\numberwithin{equation}{section}

\newtheorem*{theorem*}{Theorem}
\newtheorem*{proposition*}{Proposition}
\newtheorem*{corollary*}{Corollary}
\newtheorem*{lemma*}{Lemma}

\theoremstyle{definition}
\newtheorem{definition}[theorem]{Definition}
\newtheorem{remark}[theorem]{Remark}

\newcommand{\ignore}[1]{}

\newcommand{\llbracket}{[\![}
\newcommand{\rrbracket}{]\!]}

\usepackage{amsmath}
\usepackage{graphicx}

\begin{document}

\title{Can deleterious mutations surf deterministic population waves?}

\author{João Luiz de Oliveira Madeira\thanks{Department of Statistics, University of Oxford, UK} \and Marcel Ortgiese\footnote{Department of Mathematical Sciences, University of Bath, UK} \and Sarah Penington\footnotemark[2]}


\maketitle

\begin {abstract}
{In spatially structured populations, rare neutral mutations can spread through large regions during a range expansion, a phenomenon known as gene surfing. Whether deleterious mutations can also surf remains poorly understood. To address this question, we study a deterministic version of the spatial Muller's ratchet, given by an infinite system of reaction--diffusion equations describing an asexual population subject to mutation, migration, and density-dependent reproduction and death.
After establishing that the system of PDEs is well-posed, we analyse the distribution of deleterious mutations within the population. In the monostable regime, we derive quantitative bounds on the ratio between the density of individuals carrying a given number of mutations and the density of mutation-free individuals. Under a Fisher--KPP condition, we further determine the spreading speed of the population into an empty habitat, confirming non-rigorous computations of Foutel-Rodier and Etheridge. Finally, using a tracer dynamics approach, we show that deleterious mutations cannot surf deterministic waves: although they are present at the expansion front, they only arise as recent descendants of the wild type.}
\end{abstract}

{\small \noindent\textbf{Keywords:} Muller's ratchet; travelling waves; non-local partial differential equations; gene surfing.}

\section{Introduction}
\label{Paper02_introduction}

Mutations are a major driver of evolution.
Depending on their effect on an individual's ability to survive and reproduce, mutations may be classified as deleterious, advantageous, or neutral. Deleterious mutations reduce fitness, advantageous mutations increase fitness, and neutral mutations have no direct effect on fitness.

Under natural selection, individuals with higher fitness are expected to leave more descendants, and deleterious mutations are therefore generally less likely to reach fixation. In spatially structured populations, however, reproductive success depends not only on an individual's fitness but also on the local population density~\cite{foutel2020spatial}. High population density may reduce the per-capita growth rate through competition for resources, but it may also enhance growth through cooperative interactions between individuals.

Edmonds et al.~\cite{edmonds2004mutations} observed that, during a range expansion, rare neutral mutations can propagate across large regions, a phenomenon called \emph{gene surfing}. Roughly speaking, when competition dominates over cooperation, the per-capita growth rate is highest at low population densities. In this case, the population dynamics is said to be of \emph{Fisher-KPP} type. In this scenario, a neutral mutation appearing at the leading edge of the expanding population can spread locally with high probability, since the population density at the front is usually very low. Since individuals near the front colonise the new habitat, such mutations can become prevalent over large spatial regions~\cite{foutel2020spatial}. Importantly, this expansion is not driven by the mutation’s effect on fitness, but purely by its appearance at the front of an expanding population~\cite{hallatschek2008gene, birzu2018fluctuations, edmonds2004mutations}. When cooperation prevails over competition, the maximum per-capita growth rate is achieved at an intermediate rather than very low population density. The population dynamics is then said to exhibit an \emph{Allee effect}. In this case, the maximum growth rate is achieved behind the expansion front, which reduces the opportunity for gene surfing~\cite{hallatschek2008gene}.

Gene surfing of neutral mutations has been studied through simulations of discrete particle systems, partial differential equations (PDEs), and stochastic PDEs (see~\cite{edmonds2004mutations,hallatschek2008gene, roques2012allee}), and rigorously in PDE-based models~\cite{garnier2012inside, roques2012allee}. We also refer the reader to~\cite{roques2012allee} for a discussion on the possible different definitions of gene surfing of neutral mutations.

A natural question is then whether or not gene surfing of deleterious mutations could also occur~\cite{foutel2020spatial}. If an allele is sufficiently deleterious relative to the population average, then one would expect surfing of this allele not to be possible. However, for weakly deleterious mutations, experimental studies suggest several possible outcomes. Deleterious mutations may fail to surf altogether~\cite{do2015no}, may surf only transiently before being eliminated by selection~\cite{simons2014deleterious}, or may spread across large spatial regions~\cite{peischl2013accumulation,travis2007deleterious,peischl2015expansion,peischl2016genetic,rougemont2023allele}.

Assuming that deleterious mutations can surf population waves, and since deleterious mutations occur at much higher frequency than beneficial mutations, repeated surfing events of deleterious mutations would be expected during a range expansion, leading to a progressive decline in fitness at the expansion front. This phenomenon is called \emph{expansion load}~\cite{peischl2015expansion}. Although simulation studies strongly support the existence of expansion load~\cite{peischl2013accumulation,peischl2015expansion,peischl2016genetic,gilbert2017local, zeitler2023purging}, there is biological evidence for both existence~\cite{zeitler2023purging,rougemont2023allele,peischl2016genetic} and non-existence~\cite{do2015no,simons2014deleterious,takou2021maintenance} of expansion load. In analogy to gene surfing of neutral mutations, the presence of an Allee effect is also expected to reduce the likelihood that deleterious mutations surf.

To address the question of whether or not deleterious mutations can surf, Foutel-Rodier and Etheridge introduced the \emph{spatial Muller's ratchet}~\cite{foutel2020spatial}, a spatial extension of a mechanism first proposed by Muller~\cite{muller1964relation} to explain the evolution of recombination and sexual reproduction. In asexual reproduction, chromosomes are inherited as indivisible units, so the number of mutations along an ancestral lineage can only increase. Since the majority of mutations in biological systems are deleterious~\cite{bao2022mutations}, this leads to a gradual decline in overall population fitness~\cite{etheridge2009often}. In Muller's ratchet models, when all individuals in the most-adapted class in the population acquire at least one additional deleterious mutation, the minimum mutational load in the population increases — this is known as a ‘click' of the ratchet. Muller's ratchet has been used to explain the accumulation of deleterious mutations in microorganisms, in mitochondrial DNA, and in the Y chromosome in mammals (see e.g.~\cite{howe2008muller, andersson1996muller, kaiser2010muller} and references therein). From a mathematical perspective, Muller's ratchet has been extensively studied in populations without spatial structure~\cite{casanova2022quasi,pfaffelhuber2012muller,haigh1978accumulation,etheridge2009often,mariani2020metastability}.

The model introduced in~\cite{foutel2020spatial} is an interacting particle system where individuals migrate, and reproduce and die with rates that depend on the local population density. To study expansion load and replicate a similar mechanism to the classical Muller's ratchet, in the model in~\cite{foutel2020spatial}, after each reproduction event, the offspring can accumulate an extra deleterious mutation with respect to their parent. Foutel-Rodier and Etheridge conjectured that, under an appropriate scaling regime, the particle system should converge to the solution of a system of PDEs that we now introduce. For each $k \in \mathbb{N}_0$, for $t > 0$, let $u_k(t, \cdot)$ denote the population density corresponding to individuals carrying exactly~$k$ mutations. Since individuals may carry arbitrarily many mutations, the limiting system involves infinitely many coupled equations. For all $t > 0$ and $x \in \mathbb R$, the total population density of individuals living at time $t$ and position $x$ is $\|u(t,x) \|_{\ell_1} = \sum_{k = 0}^{\infty} u_k(t,x)$. Foutel-Rodier and Etheridge proposed that the behaviour of the population is captured in the deterministic regime by a sequence of population densities $u = (u_k)_{k\in \mathbb N_0}$, which is the solution of the following system of PDEs: for $k\in \mathbb N_0$ and $t>0$,
\begin{equation} \label{Paper02_conjectured_PDE_heuristics}
\begin{aligned}
    \partial_t u_k = \frac{m}{2} \Laplace u_k + r(B\| u \|_{\ell_1} + 1)(u_k(1-s)^{k}(1- \mu) + \mathds{1}_{\{k \geq 1\}}u_{k-1}(1-s)^{k-1}\mu - u_{k} \| u \|_{\ell_1}), 
\end{aligned}
\end{equation}
where~$m\in (0,\infty)$ is a migration rate, $r \in (0,\infty)$ is a Malthusian growth parameter, $s \in (0,1)$ represents the impact on fitness of each deleterious mutation, and the parameter $B \in [0, \infty)$ captures the relative effect of cooperation and competition in the local population dynamics. Here larger values of $B$ indicate a greater role of cooperation in the dynamics. The parameter $\mu \in (0,1)$ reflects the probability that, after a birth event, an offspring individual accumulates an additional mutation with respect to their parent. The mutations are assumed to affect only the per-capita reproduction rate, and the migration rate is the same across all individuals. Foutel-Rodier and Etheridge also predict~\cite[Equation~(9)]{foutel2020spatial} that the (critical) spreading speed into an empty habitat of a population governed by the dynamics of~\eqref{Paper02_conjectured_PDE_heuristics} depends on the parameter $B \in [0, \infty)$, and is given by $c^*>0$, where:
\begin{equation} \label{Paper02_conjecture_spreading_speed_foutel_rodier_etheridge}
    c^* \defeq \left\{\begin{array}{lcl}
        \sqrt{2mr(1-\mu)} & \textrm{if} & \displaystyle B \leq \frac{2}{1-\mu},  \\
          \displaystyle \frac{B(1-\mu) + 2}{2}\sqrt{\frac{mr}{B}} & \textrm{if} & \displaystyle B > \frac{2}{1-\mu}.
    \end{array}\right.
\end{equation}
The change in the expression for the spreading speed at $B = \frac{2}{1-\mu}$ reflects a transition from \emph{pulled} to \emph{pushed} expansion waves~\cite{roques2012allee,garnier2012inside}. Since pulled waves occur under Fisher--KPP dynamics, and pushed waves can occur with the presence of an Allee effect, the formula~\eqref{Paper02_conjecture_spreading_speed_foutel_rodier_etheridge} predicts a qualitative change in the mechanism driving the range expansion as the parameter~$B$ increases.

For an expanding population, in the PDE limit scaling, Foutel-Rodier and Etheridge also state that, under a weak selection--low mutation regime, i.e.~assuming that $s,\mu \ll 1$, the stationary solution $\hat{u} = (\hat{u}_k)_{k \in \mathbb{N}_{0}}$ of the system of PDEs~\eqref{Paper02_conjectured_PDE_heuristics} with $\hat{u}_{0} \neq 0$  satisfies
\begin{equation} \label{Paper02_approximating_equilibrium_weak_selection_low_mutation_regime}
    \frac{\hat{u}_{k}}{\| \hat{u} \|_{\ell_1}} \approx \exp\left(-\frac{\mu}{s}\right) \frac{(\mu/s)^{k}}{k!} \quad \forall \, k \in \mathbb{N}_0.
\end{equation}
Foutel-Rodier and Etheridge then conclude that although deleterious mutations are observed at stationarity, this results from mutation-selection equilibrium, rather than proper gene surfing.

The main goal of the current article is to rigorously establish whether or not deleterious mutations can surf deterministic range expansions. From a mathematical perspective, there are several challenges in rigorously answering this question and in proving the conjectures of Foutel-Rodier and Etheridge in~\cite{foutel2020spatial}. The existence and well-posedness of the interacting particle system proposed in~\cite{foutel2020spatial} were established in~\cite{madeira2025existence}. The rigorous derivation of a generalised version of the PDE system~\eqref{Paper02_conjectured_PDE_heuristics} as the scaling limit of this interacting particle system is carried out in our companion work~\cite{madeira2025LLN}. In this article, we study a general version of the system of PDEs~\eqref{Paper02_conjectured_PDE_heuristics} and first address well-posedness of solutions. Existence of mild solutions follows directly from the law of large numbers in our companion article~\cite{madeira2025LLN}. In the present article, we then prove uniqueness of solutions under very mild assumptions.

The main part of this article then deals with the long-term analysis of the system of PDEs.
A major challenge here is
the fact that the interaction is non-local in type space, i.e.~fractions of the population carrying different numbers of mutations affect the reproduction and death rates of each other. As a consequence, standard comparison-principle arguments are not directly applicable. In this article, 
we instead use a Feynman--Kac representation of~\eqref{Paper02_conjectured_PDE_heuristics}. Classically, a Feynman--Kac formula is a probabilistic representation of the solution of a linear parabolic PDE in terms of the expectation of a suitable functional of a stochastic process. Here, we follow~\cite{penington2018spreading} (which in turn is based on ideas in~\cite{bramson1983convergence}) and apply the Feynman--Kac formula also in the non-linear setting.
Using this method, we derive the spreading speed in the Fisher-KPP regime (that is, for $B \leq 1/(1-\mu)$), and establish an equilibrium
mutational profile that agrees with~\eqref{Paper02_approximating_equilibrium_weak_selection_low_mutation_regime} in the weak selection--low mutation regime. Finally, by constructing suitable 
tracer dynamics, we show that gene surfing of deleterious mutations does not happen in the deterministic regime, even in the Fisher-KPP case. In particular, our results provide a rigorous justification of several conjectures proposed in~\cite{foutel2020spatial}. The results of this article also play a key role in our companion work~\cite{madeira2025LLN}, where the PDE analysis is combined with a hydrodynamic limit to derive asymptotic properties of the underlying interacting particle system.

\medskip

\noindent \textbf{Structure of the article.} In Section~\ref{Paper02_model_description}, we define the system of PDEs which corresponds to the deterministic version of the spatial Muller's ratchet, and state the main results of this article. In Section~\ref{Paper02_example_pop_dynamics}, we discuss how our results can be applied to different population dynamics. In Section~\ref{Paper02_subsection_heuristics}, we informally present the key ideas behind the proofs of our results. In Section~\ref{Paper02_PDES_sequence_spaces}, we review 
the notion of solutions to the (infinite) system of PDEs used in this article and 
state the existence of solutions that follows from our companion article~\cite{madeira2025LLN}. In Section~\ref{Paper02_uniqueness_weak_solutions_section}, we establish uniqueness and regularity properties of solutions to the system of PDEs studied throughout this article, and their asymptotic behaviour is established in Section~\ref{Paper02_section_asymptotic_behaviour_PDE}.

\medskip
\noindent\textbf{Notation.} Throughout the article we will use the following notation. Let $\ell_{1}$ denote the space of summable real-valued sequences, i.e. 
\begin{equation*}
    \ell_{1} = \ell_{1}\left(\mathbb{N}_{0}\right) \defeq \left\{z = \left(z_{k}\right)_{k \in \mathbb{N}_{0}} \in \mathbb{R}^{\mathbb{N}_{0}}: \, \| z \|_{\ell_1} \defeq \sum_{k = 0}^{\infty}\vert z_{k} \vert < \infty \right\}.
\end{equation*}
Moreover, let $\ell_{1}^{+}$ denote the subset of $\ell_{1}$ consisting of the summable sequences with non-negative entries. For any 
$K \in \mathbb{N}$, we let $\llbracket K \rrbracket \defeq \{1,2,\ldots, K\}$. Let $\lambda$ be the Lebesgue measure on $\mathbb R^d$ for any $d\in \mathbb N$.

For a metric space $(\mathcal{S}, d_{\mathcal{S}})$, we let $\mathscr{C}(\mathcal{S}; \mathbb{R})$ denote the space of continuous real-valued functions on $\mathcal{S}$. Let $\mathscr{C}^{1}(\mathbb{R})$ denote the set of continuously differentiable real-valued functions defined on $\mathbb R$. Let $\mathscr{C}^{1,2}((0,\infty) \times \mathbb{R}; \mathbb{R})$ denote the set of real-valued functions on $(0,\infty) \times \mathbb{R}$ that are continuously differentiable in the first coordinate and twice continuously differentiable in the second coordinate.

For a set $\mathcal{S}$, suppose $f,g: \mathcal{S} \rightarrow (0,\infty)$ are functions on $\mathcal{S}$. We write $f \lesssim g$ to indicate that there exists a constant $C > 0$ such that for every $x \in \mathcal{S}$, $f(x) \leq Cg(x)$. If the constant $C$ depends on a parameter $p$, then we write $f \lesssim_p g$. For $z \in \mathbb R$, we let $z^+ \defeq z \vee 0$.

\section{Model definition and main results} \label{Paper02_model_description}

We now introduce the general class of reaction–diffusion systems that will be studied throughout the article. Let $m > 0$ be the migration rate, and let $(s_{k})_{k \in \mathbb N_0}$ be a sequence of fitness parameters, where $s_k \geq 0$ denotes the fitness of an individual carrying $k$ mutations. Let $q_{+},q_{-}: [0,\infty) \rightarrow [0,\infty)$ be non-negative functions representing the (density-dependent) birth rate of individuals carrying no mutations, and the death rate of all individuals, respectively, let $\mu \in [0,1]$ be the mutation probability, and let $f: \mathbb R \rightarrow \ell_1^+$. We will precisely state our assumptions on $(s_{k})_{k \in \mathbb{N}_{0}}$, $q_{+}$, $q_{-}$ and~$f$ later in this section.

In this article, we will study the solution $u = (u_k)_{k \in \mathbb N_0}: [0,\infty) \times \mathbb{R} \rightarrow \ell_{1}$ of an infinite system of PDEs given by
\begin{equation} \label{Paper02_PDE_scaling_limit}
\begin{aligned}
    \partial_{t} u_{k} & = \frac{m}{2} \Laplace u_{k} + F_{k}(u) \quad 
    \forall k \in \mathbb N_0, \, t > 0, \\
    u(0,\cdot) & = f(\cdot),
\end{aligned}
\end{equation}
where $F = (F_k)_{k \in \mathbb{N}_0}: \ell_{1} \rightarrow \ell_{1}$ is given by, for all $v = (v_k)_{k \in \mathbb{N}_0} \in \ell_1$,
\begin{equation} \label{Paper02_reaction_term_PDE}
    F_{k}(v) = q_{+}(\| v \|_{\ell_1})\left(s_{k}(1-\mu)v_{k} + \mathds{1}_{\{k \geq 1\}}s_{k-1}\mu v_{k-1}\right) - q_{-}(\| v \|_{\ell_1})v_{k} \quad 
    \forall \, k \in \mathbb N_0.
\end{equation}
Note that, in the system of PDEs above, letting $u_k(t, \cdot)$ denote the population density corresponding to individuals carrying exactly~$k$ mutations at time $t$, the term $\displaystyle \frac{m}{2} \Laplace u_{k}$ corresponds to migration of individuals carrying exactly $k$ mutations; $q_{+}(\| u\|_{\ell_{1}})s_{k}(1 - \mu)u_{k}$ is the rate at which individuals carrying exactly $k$ mutations give birth to new individuals carrying exactly $k$ mutations; $q_{-}(\| u\|_{\ell_{1}}) u_{k}$ is the rate at which individuals carrying $k$ mutations die; and $q_{+}(\| u\|_{\ell_{1}})s_{k-1}\mu u_{k-1}$ is the rate at which individuals carrying exactly $k-1$ mutations give birth to new individuals carrying $k$ mutations (when $k \geq 1$).
See~\cite[Section 2]{madeira2025LLN} for an informal definition of the spatial Muller's ratchet particle system for which the system of PDEs~\eqref{Paper02_PDE_scaling_limit} appears as the scaling limit.

We will now state our conditions on $(s_{k})_{k \in \mathbb{N}_{0}}$, $q_{+}$, $q_{-}$ and $f$. We assume that all the mutations in our model are deleterious, i.e.~that $(s_k)_{k \in \mathbb{N}_0}$ is decreasing, so that any mutation decreases the rate of reproduction. More precisely, we assume:

\begin{assumption} [Fitness parameters]\label{Paper02_assumption_fitness_sequence}
The sequence of fitness parameters $\left(s_{k}\right)_{k \in \mathbb{N}_{0}}$ satisfies the following conditions:
\begin{enumerate}[(i)]
\item $s_{0} = 1$.
\item $s_{k} \geq 0$ for all $k \in \mathbb{N}_{0}$.
\item $\left(s_{k}\right)_{k \in \mathbb{N}_{0}}$ is monotonically non-increasing, i.e.~$s_{k} \geq s_{k+1}$ for all $k \in \mathbb{N}_{0}$.
\item $\displaystyle \lim_{k \rightarrow \infty} s_{k} = 0$.
\end{enumerate}
\end{assumption}

We make the following assumptions on the birth and death rates.

\begin{assumption} [Birth and death polynomial rates]\label{Paper02_assumption_polynomials}
Suppose that $q_{+}, q_{-}: [0,\infty) \rightarrow [0,\infty)$ are polynomials such that $0 \leq \deg q_{+} < \deg q_{-}$.
\end{assumption}

By Assumption~\ref{Paper02_assumption_polynomials}, the leading coefficient of the polynomial $q_{+}$ is non-negative, and the leading coefficient of $q_-$ is strictly positive. Note that the fact that $\deg q_{+} < \deg q_{-}$ implies local regulation of the population density. Indeed, for sufficiently large population density, death rates increase faster than birth rates, preventing uncontrolled local growth.

The initial condition of the system of PDEs~\eqref{Paper02_PDE_scaling_limit} is given by a function $f: \mathbb{R} \rightarrow \ell_{1}^{+}$ satisfying the following assumptions. Recall that we let $\lambda$ denote the Lebesgue measure on $\mathbb R$.

\begin{assumption} [Initial condition] \label{Paper02_assumption_initial_condition}
Suppose $f = \left(f_{k}\right)_{k \in \mathbb{N}_{0}}: \mathbb{R} \rightarrow \ell_{1}^{+}$ satisfies the following conditions:
\begin{enumerate}[(i)]
    \item $f$ is continuous $\lambda$-almost everywhere, i.e.~there exists $\mathcal{N}^{(1)} \subset \mathbb{R}$ such that $\lambda(\mathcal{N}^{(1)}) = 0$ and $f$ is continuous on $\mathbb{R} \setminus \mathcal{N}^{(1)}$.
    \item $f \in L_{\infty}(\mathbb{R}; \ell_{1})$, i.e.~$\esssup_{x \in \mathbb{R}} \| f(x) \|_{\ell_1} < \infty$.
    \item There exists $\mathcal{N}^{(2)} \subset \mathbb{R}$ such that $\lambda(\mathcal{N}^{(2)}) = 0$ and
    \begin{equation*}
        \lim_{k \rightarrow \infty} \; \sup_{x \in \mathbb{R} \setminus \mathcal{N}^{(2)}} \; \sum_{j \geq k} f_{j}(x) = 0.
    \end{equation*}
\end{enumerate}
\end{assumption}

Our first main result concerns existence and uniqueness of solutions to the system of PDEs~\eqref{Paper02_PDE_scaling_limit}, as well as regularity properties of solutions. 

\begin{theorem} \label{Paper02_deterministic_scaling_foutel_rodier_etheridge}
Suppose $m > 0$, $\mu \in [0,1]$, $(s_k)_{k \in \mathbb N_0}$ satisfies Assumption~\ref{Paper02_assumption_fitness_sequence}, $q_+,q_-: [0, \infty) \rightarrow [0, \infty)$ satisfy Assumption~\ref{Paper02_assumption_polynomials}, and $f \in L_{\infty}(\mathbb R; \ell_1)$ satisfies Assumption~\ref{Paper02_assumption_initial_condition}. There exists a unique non-negative global mild solution $u = (u_k)_{k \in \mathbb N_0}: [0, \infty) \times \mathbb R \rightarrow \ell_1^+$ to the system of PDEs~\eqref{Paper02_PDE_scaling_limit} such that
\begin{enumerate}[(i)]
\item For all $T > 0$,
\[\sup_{t \in [0,T]} \; \| {u}(t, \cdot) \|_{L_{\infty}(\mathbb{R}; \ell_{1})} < \infty.
\]
\item $u_k \in \mathscr{C}^{1,2}((0, \infty) \times \mathbb R; \mathbb R)$ for every $k \in \mathbb N_0$.
\item The map $(0, \infty) \times \mathbb R \ni (t,x) \mapsto \| u(t,x) \|_{\ell_1}$ is in $\mathscr C^{1,2}((0,\infty) \times \mathbb R; \mathbb R)$.
\end{enumerate}
\end{theorem}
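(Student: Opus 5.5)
Existence is taken from the companion work~\cite{madeira2025LLN}: the hydrodynamic limit there yields a non-negative global mild solution $u$, i.e.
\[
u_k(t,x)=\int_{\mathbb R} p^m_t(x-y)f_k(y)\,dy+\int_0^t\int_{\mathbb R} p^m_{t-r}(x-y)F_k(u(r,y))\,dy\,dr,
\]
where $p^m_t$ is the Gaussian kernel with variance $mt$. It also yields the uniform bound (i) and the uniform tail property $\lim_{k\to\infty}\sup_{t\le T,x}\sum_{j\ge k}u_j(t,x)=0$. If (i) is not supplied directly, I would obtain it by summing the equations over $k$ and using $s_k\le 1$. Then $N=\|u\|_{\ell_1}$ satisfies
\[
\partial_t N\le \tfrac m2\Laplace N+q_+(N)N-q_-(N)N .
\]
Because $\deg q_+<\deg q_-$, there is $M$ with $q_+(z)-q_-(z)<0$ for $z>M$. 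A comparison with the ODE $\dot y=y(q_+(y)-q_-(y))$ started at $\max(M,\|f\|_\infty)$ therefore gives $N\le \max(M,\|f\|_{\infty})$. I would carry this out at the mild level, using a truncation and a Gronwall argument.

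\textbf{Uniqueness.} I would work in the class of mild solutions satisfying (i). Let $u,v$ be two such solutions with a common bound $R$ on $[0,T]$. On the ball $\{\|z\|_{\ell_1}\le R\}$ the map $F:\ell_1\to\ell_1$ is Lipschitz. To see this, note that
\[
\|F(z)-F(w)\|_{\ell_1}\le |q_+(\|z\|)-q_+(\|w\|)|\,\|z\|+q_+(\|w\|)\,\|z-w\|+\dots,
\]
and $\sum_k s_k(1-\mu)|z_k-w_k|+s_{k-1}\mu|z_{k-1}-w_{k-1}|\le\|z-w\|_{\ell_1}$ since $s_k\le1$. Because the polynomials are locally Lipschitz, we get $\|F(z)-F(w)\|\le L_R\|z-w\|$. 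Set $D(t)=\sup_x\|u(t,x)-v(t,x)\|_{\ell_1}$. Subtracting the mild formulations and using $\int p^m=1$ gives $D(t)\le L_R\int_0^tD(r)\,dr$, and Gronwall's inequality gives $D\equiv0$. Measurability of $D$ needs a small amount of care; I would use an essential supremum, or first establish continuity of the solutions.

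\textbf{Regularity (ii)--(iii).} This is the main obstacle, because $f$ is only bounded and a.e.\ continuous, and there are infinitely many coupled components. I would bootstrap componentwise. Each $g_k:=F_k(u)$ is bounded, so Duhamel plus standard heat-kernel estimates show $u_k$ is Hölder continuous in $(t,x)$ on $[\varepsilon,T]\times\mathbb R$. For $N=\|u\|_{\ell_1}$, summing the mild formulas is justified by dominated convergence. This gives
\[
N=p_t*\|f\|+\int p*G,\qquad G=q_+(N)\sum_k s_k u_k-q_-(N)N,
\]
and $N$ is Hölder continuous by the same estimates. The series $\sum_k s_ku_k$ converges uniformly, by the tail property and the fact that $s_k\to0$, so it is Hölder continuous as well. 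The equicontinuity of the uniform tail gives uniform Hölder bounds on the tail sums. Hence each $g_k$ is locally Hölder, and Schauder theory for the heat equation gives $u_k\in\mathscr C^{1,2}$. The same argument applied to the scalar equation for $N$, whose reaction term $G$ is Hölder, gives $N\in\mathscr C^{1,2}$, which is (iii). The delicate point is obtaining Hölder control of the infinite sum $\sum_k s_ku_k$ uniformly in $k$, rather than merely componentwise. If the uniform tail estimate is not available from~\cite{madeira2025LLN}, I would derive it first from the mild formula for $\sum_{j\ge k}u_j$. The mutation term feeds mass into this tail only at rate $q_+\mu s_{k-1}u_{k-1}\le C s_{k-1}$, which tends to zero, so the tail stays uniformly small on $[0,T]$ by Assumption~\ref{Paper02_assumption_initial_condition}(iii) and Gronwall.
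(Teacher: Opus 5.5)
Your overall architecture matches the paper's: existence from the companion hydrodynamic limit, an a priori $L_\infty(\mathbb R;\ell_1)$ bound, local Lipschitz continuity of $F$ plus Gr\"onwall for uniqueness, and heat-kernel H\"older estimates plus Schauder theory for (ii)--(iii). However, two steps do not go through as written.

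\textbf{The bound (i).} Proposition~\ref{Paper02_absolutely_continuity_limiting_process}, as stated here, only provides, for each $T$, an a.e.\ non-negative mild solution in $L_{4\deg q_-}([0,T]\times\mathbb R,\hat\lambda;\ell_1)$ whose Duhamel identity holds a.e. It gives neither (i) nor any tail control, so your fallback is needed. So are two further steps: patching the local solutions together via uniqueness, and redefining $u$ everywhere by the right-hand side of the Duhamel formula.

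Your mild-level comparison with the ODE does not close by Gr\"onwall alone. Put $h(z)=z(q_+(z)-q_-(z))$ and $K\ge\max(M,\|f\|_{L_\infty(\mathbb R;\ell_1)})$. Then you get $N-K\le\int_0^tP_{t-r}\big(h(N)-h(K)\big)\,dr$, and $h(N)-h(K)$ can be positive where $N<K$. So no inequality for $\sup_x(N-K)^+$ follows without an extra device. One such device is shifting by a Lipschitz constant to make $h$ monotone, but that already presupposes a bound, since $h'(z)\to-\infty$.

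None of this is needed. Since $\deg q_+<\deg q_-$, we have $\sup_{U\ge0}h(U)<\infty$ and $\sum_{k\le J}F_k(z)\le\sup_{U\ge0}h(U)$ for $z\in\ell_1^+$. Summing the first $J$ Duhamel identities and letting $J\to\infty$ by monotone convergence gives $\|u(t,x)\|_{\ell_1}\le\|f\|_{L_\infty(\mathbb R;\ell_1)}+t\sup_{U\ge0}h(U)$. This is exactly what (i) asks for, and it is how Lemma~\ref{Paper02_uniform_bound_limiting_process_L_infty_norm} proceeds.

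\textbf{The regularity step.} In the step you flag as the main obstacle, the inference ``$\sum_ks_ku_k$ converges uniformly, so it is H\"older'' is a non sequitur. A uniform limit of H\"older functions need only be continuous, so (iii) is not justified. Uniform smallness of the tails does not repair this by itself; it yields a modulus of continuity, not a H\"older bound.

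The missing observation is that the heat-kernel estimates apply verbatim to $\ell_1$-valued functions. Since $f\in L_\infty(\mathbb R;\ell_1)$ and $F(u)\in L_\infty([0,T]\times\mathbb R;\ell_1)$, summing your componentwise bounds gives $\|u(t_1,x_1)-u(t_2,x_2)\|_{\ell_1}\le C_{\delta,T}\big(|x_1-x_2|+|t_1-t_2|^{1/2}\big)$ on $[\delta,T]\times\mathbb R$ (Lemma~\ref{Paper02lem:heat_continuity_ell1}). Combine this with the local Lipschitz bound on $F$ that you already use for uniqueness. Every $F_k(u)$ and $G=\sum_kF_k(u)$ then become locally H\"older simultaneously. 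Schauder theory applied to the Duhamel formula restarted at time $\delta$ yields (ii) and (iii), with no tail property at all.

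Equivalently, within your scalar framework: $\sum_ks_ku_k$ satisfies its own Duhamel formula with bounded source $\sum_ks_kF_k(u)$. It is therefore H\"older by the same estimate you used for $N$.
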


We will carefully define $L_{p}$ spaces of $\ell_{1}$-valued functions and what it means to say that $u$ is a mild solution of~\eqref{Paper02_PDE_scaling_limit} in Section~\ref{Paper02_PDES_sequence_spaces}. We will refer to the mild solution $u = (u_k)_{k \in \mathbb N_0}: [0, \infty) \times \mathbb R \rightarrow \ell_1^+$ to the system of PDEs~\eqref{Paper02_PDE_scaling_limit} satisfying the conditions of Theorem~\ref{Paper02_deterministic_scaling_foutel_rodier_etheridge} as the unique continuous mild solution to~\eqref{Paper02_PDE_scaling_limit}.

\subsection{Asymptotic behaviour in the monostable regime} \label{Paper02_sec:subsection_asymptotic_behaviour_PDE}

In~\cite{foutel2020spatial}, Foutel-Rodier and Etheridge state (without rigorous proofs) that if the system of PDEs~\eqref{Paper02_PDE_scaling_limit} is started from equilibrium, 
then the proportions of individuals carrying a certain number of mutations would subsequently be the same in the bulk as in the front of the range expansion. Using this hypothesis, they determine (non-rigorously) the spreading speed of the population into an empty habitat.

Our next main results establish rigorous versions of these conjectures under certain conditions on the reaction term~\eqref{Paper02_reaction_term_PDE} of the system of PDEs~\eqref{Paper02_PDE_scaling_limit}. To introduce these conditions, we first recall the terminology usually used to describe one-dimensional reaction-diffusion equations. Consider the PDE
\begin{equation} \label{Paper02_one_dimensional_RD}
    \partial_{t}U = \frac{m}{2}\Laplace U + g(U) \quad \textrm{for } t > 0,
\end{equation}
where $g \in \mathscr{C}^{1}(\mathbb{R})$, $g(0) = g(1) = 0$ and $\int_{0}^{1} g(U) \, dU > 0$. We say that the reaction term $g$ is \textit{monostable} if $g'(0) > 0$, $g'(1) < 0$ and $g(x) > 0$ for all $x \in (0,1)$ (see e.g.~\cite{garnier2012inside}). We say that the reaction term $g$ is of \emph{Fisher-KPP type} if~$g$ is monostable and $g(U) \leq g'(0)U$ for all $U \in (0,1)$. From a population dynamics point of view, the Fisher-KPP condition corresponds to the setting in which competition dominates over cooperation. In our next definition, we adapt these concepts to the system of PDEs~\eqref{Paper02_PDE_scaling_limit}.

\begin{definition}[Monostable and Fisher-KPP reaction terms] \label{Paper02_assumption_monostable_condition}
We say that the reaction term $F=(F_{k})_{k \in \mathbb{N}_{0}}: \ell_1^+ \rightarrow \ell_1$ defined in~\eqref{Paper02_reaction_term_PDE} is \emph{monostable} if the sequence of fitness parameters $(s_{k})_{k \in \mathbb{N}_{0}}$ satisfies Assumption~\ref{Paper02_assumption_fitness_sequence} and is strictly decreasing, the mutation rate satisfies $\mu \in (0,1)$, and the functions $q_{+},q_{-}: [0,\infty) \rightarrow [0,\infty)$ satisfy Assumption~\ref{Paper02_assumption_polynomials} and the following conditions:
\begin{enumerate}[(i)]
    \item $q_{+}(U) \geq q_{-}(U) \quad \forall \, U \in [0,1]$.
    \item $q_{+}(U) > 0 \; \forall \, U \in [0,1]$.
    \item $q_{+}(1) = q_{-}(1)$.
    \item $q'_{+}(1) < q'_{-}(1)$.
    \item $(1-\mu)q_{+}(0) - q_{-}(0) > 0$.
\end{enumerate}
Finally, if in addition to the conditions above, we also have
\begin{equation} \label{Paper02_fisher_kpp_condition_muller_ratchet}
        (1-\mu)q_{+}(U) - q_{-}(U) \leq (1-\mu)q_{+}(0) - q_{-}(0) \quad \forall \, U \in [0,1],
\end{equation}
then we say that the reaction term $F=(F_{k})_{k \in \mathbb{N}_{0}}$ is of \emph{Fisher-KPP type}.
\end{definition}

Note that if the reaction term $g(U) = U(q_+(U) - q_-(U))$ of the one-dimensional PDE
\begin{equation} \label{Paper02_correspondent_one_dimensional_system}
    \partial_{t} U = \frac{m}{2} \Laplace U + U(q_{+}(U) - q_{-}(U))
\end{equation}
is monostable, then conditions (i), (iii) and (iv) of Definition~\ref{Paper02_assumption_monostable_condition} are automatically satisfied. Moreover, if the reaction term of~\eqref{Paper02_correspondent_one_dimensional_system} is monostable, then $q_{+}(0) >q_{-}(0)$, and therefore condition~(v) of Definition~\ref{Paper02_assumption_monostable_condition} must hold for sufficiently small $\mu$. We also must have $q_+(U) > 0$ for all $U \in (0,1)$, but the additional condition $q_+(1) > 0$ in condition (ii) is not implied by the monostability of the reaction term of~\eqref{Paper02_correspondent_one_dimensional_system}; this is a technical assumption required in our proofs. This condition, however, is not restrictive, since it is reasonable to assume in biological models that the per-capita reproduction and death rates are both strictly positive in high population density.

In order to determine the evolution of the ratios between the population densities carrying particular numbers of mutations, we will need some control on these ratios in the initial condition. This will be our next assumption. Recall that we denote the Lebesgue measure on $\mathbb R$ by $\lambda$.

\begin{assumption}[Control on the initial prevalence of mutations] \label{Paper02_assumption_initial_condition_modified}
Let $f = \left(f_{k}\right)_{k \in \mathbb{N}_{0}}: \mathbb{R} \rightarrow \ell_{1}^{+}$ be a function satisfying Assumption~\ref{Paper02_assumption_initial_condition} and the following additional conditions:
\begin{enumerate}[(i)]
    \item $\displaystyle \| f \|_{L_{\infty}(\mathbb{R}; \ell_{1})} \defeq \esssup_{x \in \mathbb{R}} \, \| f(x) \|_{\ell_{1}} \leq 1$.
   \item $\lambda(\{x \in \mathbb R: \, f_0(x) > 0\}) > 0$.
   \item There exists a sequence $\left(\hat{\pi}_{k}\right)_{k \in \mathbb{N}_{0}} \in \ell_{1}^{+}$ such that for each $k \in \mathbb{N}$ and $\lambda$-almost every $x \in \mathbb R$,
   \begin{equation*}
       0 \leq f_{k}(x) \leq \hat{\pi}_{k} f_{0}(x).
   \end{equation*}
\end{enumerate}
\end{assumption}

For $\mu \in (0,1)$, let $(\alpha_{k})_{k \in \mathbb{N}_{0}} = \Big(\alpha_{k}(\mu, (s_j)_{j \in \mathbb N_0})\Big)_{k \in \mathbb{N}_{0}}$ be given by $\alpha_{0} \defeq 1$ and
\begin{equation} \label{Paper02_definition_alpha_k}
    \alpha_{k} \defeq \prod_{i=1}^{k} \frac{\mu s_{i-1}}{(1-\mu) (1 - s_{i})} \quad \; \forall \, k \in \mathbb{N}.
\end{equation}
Since, by Assumption~\ref{Paper02_assumption_fitness_sequence}, $\lim_{k \rightarrow \infty} s_{k} = 0$, we have that $(\alpha_{k})_{k \in \mathbb{N}_{0}} \in \ell_{1}^{+}$. Moreover, under the assumption that $s_k > 0$ for every $k \in \mathbb{N}_0$, which is required in Definition~\ref{Paper02_assumption_monostable_condition} for the reaction term $F = (F_k)_{k \in \mathbb{N}_0}$ defined in~\eqref{Paper02_reaction_term_PDE} to be monostable, we also have $\alpha_{k} > 0$ for every $k \in \mathbb{N}_{0}$.

\begin{remark}
We claim that, at least heuristically, it is straightforward to see that the sequence~$(\alpha_k)_{k \in \mathbb N_0}$ defined in~\eqref{Paper02_definition_alpha_k} gives the proportions of the population carrying different numbers of mutations at equilibrium. Indeed, suppose that the polynomials $q_+$ and $q_-$ satisfy Assumption~\ref{Paper02_assumption_polynomials} and are strictly positive on $(0,\infty)$. Let $F = (F_k)_{k \in \mathbb N_0}$ be the reaction term defined in~\eqref{Paper02_reaction_term_PDE}, and let $u^{(\mathrm{eq})} = (u^{(\mathrm{eq})}_k)_{k \in \mathbb N_0} \in \ell_1^+$ be a stationary solution of~\eqref{Paper02_PDE_scaling_limit} such that $u^{(\mathrm{eq})}_0 \neq 0$ and $F(u^{(\mathrm{eq})}) = 0$.
From the definition of $F_0$ in~\eqref{Paper02_reaction_term_PDE}, we obtain
\[
q_-(\|u^{(\mathrm{eq})}\|_{\ell_1}) = (1-\mu) q_+(\|u^{(\mathrm{eq})}\|_{\ell_1}).
\]
Using this identity together with the definition of the reaction term $F$,~\eqref{Paper02_definition_alpha_k} and the fact that we assume $q_+$ and $q_-$ to be strictly positive on $(0,\infty)$, it follows that $
u^{(\mathrm{eq})}_k = \alpha_k u^{(\mathrm{eq})}_0 \; \forall\, k \in \mathbb N_0$.
Thus, up to normalisation,~$(\alpha_k)_{k \in \mathbb N_0}$ gives the equilibrium distribution of the different numbers of mutations.
\end{remark}

Let
\begin{equation} \label{Paper02_definition_max_min_birth_polynomial}
\mathfrak{Q}_{\min} \defeq \displaystyle \min_{U \in [0,1]} q_{+}(U) \quad \textrm{and} \quad \mathfrak{Q}_{\max} \defeq \displaystyle \max_{U \in [0,1]} q_{+}(U).
\end{equation}
By Definition~\ref{Paper02_assumption_monostable_condition}(ii), if the reaction term $F = (F_{k})_{k \in \mathbb{N}_{0}}$ is monostable, then $\mathfrak{Q}_{\max} \geq \mathfrak{Q}_{\min} > 0$. We can now state our result determining the prevalence of mutations during a range expansion.

\begin{theorem} \label{Paper02_prop_control_proportions}
    Suppose that $(s_k)_{k \in \mathbb N_0}$, $q_+$ and $q_-$ satisfy Assumptions~\ref{Paper02_assumption_fitness_sequence} and~\ref{Paper02_assumption_polynomials}, that $\mu \in (0,1)$, that $m > 0$, that the reaction term $F = (F_{k})_{k \in \mathbb{N}_{0}}$ defined in~\eqref{Paper02_reaction_term_PDE} is monostable in the sense of Definition~\ref{Paper02_assumption_monostable_condition},  and that $f$ satisfies Assumption~\ref{Paper02_assumption_initial_condition_modified}. Let $u = (u_{k})_{k \in \mathbb{N}_{0}}$ be the unique continuous mild solution to the system of PDEs~\eqref{Paper02_PDE_scaling_limit}. Define $(\alpha_k)_{k \in \mathbb N_0}$ as in~\eqref{Paper02_definition_alpha_k}, and $\mathfrak{Q}_{\min}$,~$\mathfrak{Q}_{\max}$ as in~\eqref{Paper02_definition_max_min_birth_polynomial}. Then there exist functions $\underline{\pi} = \left(\underline{\pi}_{k}\right)_{k \in \mathbb{N}_0}: [0, \infty) \rightarrow \ell_1^+$ and $\overline{\pi} = \left(\overline{\pi}_{k}\right)_{k \in \mathbb{N}_0}: [0,\infty) \rightarrow \ell_{1}^{+}$ such that the following limits hold in $\ell_1$:
    \begin{equation*}
        \lim_{T \rightarrow \infty} \underline{\pi}(T) = \Bigg(\alpha_{k}\left(\frac{\mathfrak{Q}_{\min}}{\mathfrak{Q}_{\max}}\right)^{k}\Bigg)_{k \in \mathbb{N}_0} \quad \textrm{ and } \quad  \lim_{T \rightarrow \infty} \overline{\pi}(T) = \Bigg(\alpha_{k}\left(\frac{\mathfrak{Q}_{\max}}{\mathfrak{Q}_{\min}}\right)^{k}\Bigg)_{k \in \mathbb{N}_0},
    \end{equation*}
    and such that 
    for all $k \in \mathbb{N}$, $T > 0$ and $x \in \mathbb{R}$, 
    \begin{equation} \label{Paper02_final_control_prop_eq_determ}
        \underline{\pi}_{k}(T) u_{0}(T,x) \leq u_{k}(T,x) \leq \overline{\pi}_{k}(T) u_{0}(T,x).
    \end{equation}
\end{theorem}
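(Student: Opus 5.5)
We argue by induction on $k$, comparing each $u_k$ with a multiple of $u_0$ via the parabolic comparison principle applied to linear equations with a common potential. Write $Q(t,x) \defeq q_+(\|u(t,x)\|_{\ell_1})$ and $D(t,x) \defeq q_-(\|u(t,x)\|_{\ell_1})$. Then $u_0$ solves the linear equation $\partial_t u_0 = \frac m2\Laplace u_0 + ((1-\mu)Q - D)u_0$, and for $k\ge1$,
\[
\partial_t u_k = \frac m2 \Laplace u_k + \big((1-\mu)s_k Q - D\big)u_k + \mu s_{k-1} Q u_{k-1}.
\]
We first show that $\|u(t,x)\|_{\ell_1}\le 1$ for all $t,x$. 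Summing the equations over $k$, $\|u\|_{\ell_1}$ is a sub-solution of a scalar equation whose reaction term is at most $U(q_+(U)-q_-(U))$. By Definition~\ref{Paper02_assumption_monostable_condition}(iii)–(iv), $1$ is a super-solution of that equation, and Assumption~\ref{Paper02_assumption_initial_condition_modified}(i) gives $\|f\|_{L_\infty(\mathbb R;\ell_1)}\le 1$. The regularity in Theorem~\ref{Paper02_deterministic_scaling_foutel_rodier_etheridge}(iii) justifies the comparison. Consequently $Q\in[\mathfrak Q_{\min},\mathfrak Q_{\max}]$ everywhere.

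For the induction, we consider the ratio $w_k \defeq u_k/u_0$, where $u_0>0$ for $t>0$ by the strong maximum principle and Assumption~\ref{Paper02_assumption_initial_condition_modified}(ii). Writing $\phi_k \defeq \overline\pi_k(t)u_0$, the difference $u_k-\phi_k$ satisfies a linear equation with potential $(1-\mu)s_kQ-D$. The forcing term is
\[
\mu s_{k-1}Qu_{k-1} - \overline\pi_k' u_0 - (1-\mu)(1-s_k)Q\overline\pi_k u_0 .
\]
By the induction hypothesis $u_{k-1}\le \overline\pi_{k-1}u_0$, and $Q$ lies between $\mathfrak Q_{\min}$ and $\mathfrak Q_{\max}$. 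So this forcing is non-positive provided
\[
\overline\pi_k'(t) = \mu s_{k-1}\mathfrak Q_{\max}\overline\pi_{k-1}(t) - (1-\mu)(1-s_k)\mathfrak Q_{\min}\overline\pi_k(t).
\]
We take this ODE with initial datum $\overline\pi_k(0)=\hat\pi_k$ and $\overline\pi_0\equiv1$. The lower bound is symmetric: swap $\mathfrak Q_{\max}$ and $\mathfrak Q_{\min}$ and start from $\underline\pi_k(0)=0$.

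The comparison principle then yields \eqref{Paper02_final_control_prop_eq_determ}. We apply it to the bounded classical solutions $u_k-\phi_k$ on $(0,\infty)\times\mathbb R$ (Phragmén–Lindelöf type, using boundedness from Theorem~\ref{Paper02_deterministic_scaling_foutel_rodier_etheridge}(i)). At $t=0$ the initial inequality holds by Assumption~\ref{Paper02_assumption_initial_condition_modified}(iii). The mild/classical transition at $t=0$ is handled by approximating from $t=\varepsilon$, or by working directly with the mild (Duhamel/Feynman–Kac) formulation.

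Next we identify the limits. The ODE system is lower-triangular and linear, with decay rates $(1-\mu)(1-s_k)\mathfrak Q_{\min}>0$ for $k\ge1$, since $s_k<1$ by strict monotonicity. Hence $\overline\pi_k(T)$ converges to the fixed point
\[
\overline\pi_{k-1}\frac{\mu s_{k-1}\mathfrak Q_{\max}}{(1-\mu)(1-s_k)\mathfrak Q_{\min}} = \alpha_k\left(\frac{\mathfrak Q_{\max}}{\mathfrak Q_{\min}}\right)^k .
\]
Convergence of each coordinate follows inductively, since a linear ODE with convergent forcing converges.

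The main obstacle is upgrading coordinatewise convergence to convergence in $\ell_1$, and even showing that the limiting sequence lies in $\ell_1$. Summability of the limit holds because $s_{k-1}/(1-s_k)\to0$, so the ratio of consecutive terms tends to $0$ and the factor $(\mathfrak Q_{\max}/\mathfrak Q_{\min})^k$ is harmless. For $\ell_1$ convergence we need uniform tail control of $\overline\pi(T)$. Choose $K$ such that for $k\ge K$ the ratio $\rho_k \defeq \mu s_{k-1}\mathfrak Q_{\max}/((1-\mu)(1-s_k)\mathfrak Q_{\min})\le 1/2$, with decay rates bounded below by some $c>0$. Then by induction,
\[
\overline\pi_k(T)\le \max\big(\hat\pi_k,\ \rho_k\sup_t\overline\pi_{k-1}(t)\big)\cdot(\text{geometric}),
\]
or more carefully, $\sup_t\overline\pi_k(t)\le \hat\pi_k+\rho_k\sup_t\overline\pi_{k-1}(t)$. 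This gives a summable dominating sequence, using $\hat\pi\in\ell_1$, and dominated convergence concludes. The lower bound is easier, since $0\le\underline\pi_k\le\overline\pi_k$.
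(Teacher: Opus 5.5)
Your proof is correct. It follows the same inductive skeleton as the paper: compare $u_k$ with a time-dependent multiple of $u_0$, using that the two equations share their potential up to the term $(1-\mu)(1-s_k)q_+$, and that $q_+(\|u\|_{\ell_1})$ is trapped in $[\mathfrak Q_{\min},\mathfrak Q_{\max}]$ once $\|u\|_{\ell_1}\le 1$. You differ from the paper at two points.

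First, the paper does not use a comparison principle for $u_k-\pi_k(t)u_0$. It uses the Feynman--Kac representations of Lemma~\ref{Paper02_feynman_kac_representation_lemma_each_u_k}: it inserts the induction hypothesis into the source term $\mu s_{k-1}u_{k-1}q_+$, bounds $q_+$ by $\mathfrak Q_{\min}$ or $\mathfrak Q_{\max}$, and recognises $u_0(T,x)$ through the representation of $u_0$ started from time $T-t$. This is the probabilistic form of your comparison step. Through Proposition~\ref{Paper02_general_version_feynman_kac} it also handles the merely a.e.-continuous initial data with no $\varepsilon$-approximation. Applying that proposition to $u_k-\overline\pi_k u_0$, whose source is non-positive, is also the cleanest way to close your argument at $t=0$.

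Second, your $\underline\pi_k$ is exactly the paper's, since its integral recursion is the Duhamel form of your ODE. For the upper envelope the paper is cruder: with $\lambda_k=\mathfrak Q_{\min}(1-s_k)(1-\mu)$, it replaces $\int_0^T e^{-\lambda_k t}\,dt$ by $1/\lambda_k$ at each step. This gives the explicit function $\hat\pi_k e^{-\lambda_k T}+\phi_k(T)+\alpha_k(\mathfrak Q_{\max}/\mathfrak Q_{\min})^k$. Its $\ell_1$ convergence comes from Lemma~\ref{Paper02_evolution_proportions_upper_bound_characterisation}, which shows $\sum_k\phi_k(T)\to0$ at an explicit exponential rate. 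For the lower bound the paper uses monotonicity of $\underline\pi_k$ in $T$.

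Your sharp ODE envelope, with the recursion $\sup_t\overline\pi_k\le\hat\pi_k+\rho_k\sup_t\overline\pi_{k-1}$ and dominated convergence, is shorter and treats both bounds symmetrically. The paper's explicit version buys a quantitative rate. It also gives a ready-made split into the contribution of the initial mutants, which vanishes, and the contribution generated from $u_0$; this split is reused verbatim for the tracer result in Lemma~\ref{Paper02_prop_pop_with_mutations_die_out}. By linearity of your ODE system you could make the same split.
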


\begin{remark} \label{Paper02_very_good_control_proportions_FKPP}
\begin{enumerate}[(i)]
    \item One can always take $\underline{\pi}_0(T) = \overline{\pi}_0(T) = 1$ for all $T \geq 0$, and so~\eqref{Paper02_final_control_prop_eq_determ} holds trivially for $k=0$.
    \item If $\mathfrak{Q}_{\min} = \mathfrak{Q}_{\max}$, i.e.~if $q_{+}$ is a constant function, Theorem~\ref{Paper02_prop_control_proportions} shows that the ratio of population density carrying exactly $k$ mutations to population density without mutations converges uniformly (in space) as $T \rightarrow \infty$ to $\alpha_{k}$, where $\alpha_{k}$ is given by~\eqref{Paper02_definition_alpha_k}. This is the case for the system corresponding to the classical Fisher-KPP equation, with $q_{+}(U) = 1$ and $q_{-}(U) = U$ for all $U \in [0,1]$.
\end{enumerate}

\end{remark}

We highlight again that by combining the definition of $\alpha_{k}$ in~\eqref{Paper02_definition_alpha_k}, the fact that $\lim_{k \rightarrow \infty} s_{k} = 0$ by Assumption~\ref{Paper02_assumption_fitness_sequence}, and the fact that Definition~\ref{Paper02_assumption_monostable_condition}(ii) implies that $0 < \mathfrak{Q}_{\min} \leq \mathfrak{Q}_{\max}$, we see that both sequences $$\left(\alpha_{k}\left(\frac{\mathfrak{Q}_{\min}}{\mathfrak{Q}_{\max}}\right)^{k}\right)_{k \in \mathbb{N}_{0}}\quad \textrm{and} \quad \left(\alpha_{k}\left(\frac{\mathfrak{Q}_{\max}}{\mathfrak{Q}_{\min}}\right)^{k}\right)_{k \in \mathbb{N}_{0}}$$ are elements of $\ell_{1}^{+}$. 
Theorem~\ref{Paper02_prop_control_proportions} implies that the ratio between the local population density with $k$ mutations and the local population density with $0$ mutations can be uniformly bounded
away from $0$ and $\infty$ at large times.
Although this result is not the same as the conjecture from~\cite{foutel2020spatial} described at the start of this subsection, it indicates that the proportion of the population carrying a certain number of mutations is well behaved during a range expansion. It also implies that if the population without mutations spreads into an empty habitat with some speed, then the population with mutations also spreads into the empty habitat with the same speed. Using these consequences of Theorem~\ref{Paper02_prop_control_proportions}, we can determine the spreading speed for solutions of the system of PDEs in the case when the reaction term $F = (F_k)_{k \in \mathbb N_0}$ is of Fisher-KPP type.

\begin{theorem} \label{Paper02_spreading_speed_Fisher_KPP}
    Suppose that $(s_k)_{k \in \mathbb N_0}$, $q_+$ and $q_-$ satisfy Assumptions~\ref{Paper02_assumption_fitness_sequence} and~\ref{Paper02_assumption_polynomials}, that $\mu \in (0,1)$, that $m > 0$, that the reaction term $F = (F_{k})_{k \in \mathbb{N}_{0}}$ defined in~\eqref{Paper02_reaction_term_PDE} is of Fisher-KPP type in the sense of Definition~\ref{Paper02_assumption_monostable_condition},  and that $f$ satisfies Assumption~\ref{Paper02_assumption_initial_condition_modified}. Further, assume that there exists $R > 0$ such that the support of the initial condition satisfies $\supp f \subset (-\infty, R]$. Let $$c^{*} \defeq \sqrt{2m\Big((1-\mu)q_{+}(0) - q_{-}(0)\Big)},$$ and let $u = (u_k)_{k \in \mathbb N_0}$ denote the unique continuous mild solution to the system of PDEs~\eqref{Paper02_PDE_scaling_limit} with initial condition~$f$. Then
    \begin{equation*}
        \lim_{T \rightarrow \infty} \, \sup_{x \geq c^{*}T} \| u(T,x) \|_{\ell_{1}} = 0.
    \end{equation*}
    Moreover, there exists a sequence $(\nu_{k})_{k \in \mathbb{N}_{0}} \in \ell_{1}^{+}$ with $\nu_k>0$ for every $k \in \mathbb N_0$ such that for any $\varepsilon \in (0,c^{*})$,
    \begin{equation*}
        \liminf_{T \rightarrow \infty} \, \inf_{x \in [0, (c^{*} -\varepsilon)T]} \, u_{k}(T,x) \geq \nu_{k} \quad \forall \, k \in \mathbb N_0.
    \end{equation*}
\end{theorem}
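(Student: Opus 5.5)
The proof has two parts: an upper bound on the speed, via comparison of $u_0$ with a linear equation, and a lower bound on the speed, via a scalar KPP subsolution for $u_0$. Theorem~\ref{Paper02_prop_control_proportions} is then used to pass from $u_0$ to the whole vector $u$.

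\textbf{Upper bound.} Write $N(t,x)=\|u(t,x)\|_{\ell_1}$. The equation for $u_0$ reads
\[
\partial_t u_0=\tfrac m2\Laplace u_0+\big((1-\mu)q_+(N)-q_-(N)\big)u_0 .
\]
First I will check that $N\le 1$ for all times. Summing the equations gives $\partial_t N=\tfrac m2\Laplace N+\sum_k F_k(u)$, and $\sum_k F_k(u)\le q_+(N)N-q_-(N)N$ because $s_k\le 1$. By Definition~\ref{Paper02_assumption_monostable_condition}, the scalar reaction $U(q_+(U)-q_-(U))$ is non-positive for $U\ge 1$ near $1$, and $N(0,\cdot)\le1$. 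A scalar comparison, using the regularity from Theorem~\ref{Paper02_deterministic_scaling_foutel_rodier_etheridge}(iii), then gives $N\le1$. On $[0,1]$ the Fisher--KPP condition~\eqref{Paper02_fisher_kpp_condition_muller_ratchet} says the growth coefficient is at most $\beta:=(1-\mu)q_+(0)-q_-(0)=(c^*)^2/(2m)$.

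Hence $u_0$ is a subsolution of $\partial_t v=\tfrac m2\Laplace v+\beta v$, and by the Feynman--Kac formula $u_0(T,x)\le e^{\beta T}\mathbb P(x+\sqrt m B_T\le R)$. For $x\ge c^*T$ this bound is at most $C T^{-1/2}e^{-(c^*/\sqrt m)(x-R)/\sqrt{T}\cdot 0}$, i.e.\ of order $T^{-1/2}$. This is exactly the classical Bramson logarithmic correction, and it tends to $0$ uniformly on $x\ge c^*T$.

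To pass from $u_0$ to $N$, Theorem~\ref{Paper02_prop_control_proportions} gives $N\le \|\overline\pi(T)\|_{\ell_1}u_0$. The limit of $\overline\pi(T)$ lies in $\ell_1$, so $\|\overline\pi(T)\|_{\ell_1}$ is bounded for large $T$. Therefore $\sup_{x\ge c^*T}N(T,x)\to0$.

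\textbf{Lower bound.} I will bound $N$ above by a multiple of $u_0$ and then compare $u_0$ with a scalar KPP equation. Theorem~\ref{Paper02_prop_control_proportions} gives $N\le K u_0$ for $T\ge T_0$, with $K=\sup_{T\ge T_0}\|\overline\pi(T)\|_{\ell_1}$. Let $h(U)=(1-\mu)q_+(U)-q_-(U)$, which is Lipschitz on $[0,1]$. Then $h(N)\ge h(0)-LN\ge \beta-LKu_0$, so $u_0$ is a supersolution of
\[
\partial_t w=\tfrac m2\Laplace w+w(\beta-LKw),
\]
a Fisher--KPP equation with speed exactly $c^*$ and positive equilibrium $\beta/(LK)$. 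The comparison is legitimate because $u_0\in\mathscr C^{1,2}$ by Theorem~\ref{Paper02_deterministic_scaling_foutel_rodier_etheridge}.

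Next I need $u_0(T_0,\cdot)$ to dominate some compactly supported positive bump. This follows from Assumption~\ref{Paper02_assumption_initial_condition_modified}(ii) together with the strong positivity of the heat semigroup, since $u_0$ satisfies a linear equation with bounded coefficient. The classical Aronson--Weinberger spreading result then gives $\liminf_T\inf_{|x|\le (c^*-\varepsilon)T}w(T,x)\ge \beta/(LK)-\delta$. Applied to $u_0$, this yields $\liminf_T\inf_{x\in[0,(c^*-\varepsilon)T]}u_0\ge\nu_0>0$. Finally, $u_k\ge\underline\pi_k(T)u_0$, and $\underline\pi_k(T)\to\alpha_k(\mathfrak Q_{\min}/\mathfrak Q_{\max})^k>0$. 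So I can set $\nu_k:=\nu_0\,\alpha_k(\mathfrak Q_{\min}/\mathfrak Q_{\max})^k/2$, and $(\nu_k)\in\ell_1^+$.

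\textbf{Main obstacle.} The hardest step is the sharp upper bound at exactly $x\ge c^*T$ rather than $(c^*+\varepsilon)T$. The crude linear bound only gives decay of order $T^{-1/2}$, and I need to confirm this carefully. If it is insufficient, I would refine the estimate by Bramson's killed Brownian motion / Feynman--Kac argument, as in~\cite{penington2018spreading}. A secondary technical point is justifying the comparison principle near $N=1$ for the summed equation.
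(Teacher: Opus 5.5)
Your proof is correct. The upper bound is the paper's own argument:
\begin{itemize}
\item the bound $\|u\|_{\ell_1}\le 1$, which is Lemma~\ref{Paper02_uniform_bound_total_mass}, so you can cite it instead of re-deriving it by summing the equations term by term;
\item the KPP bound $(1-\mu)q_+-q_-\le\beta$ on $[0,1]$, inserted into the Feynman--Kac formula for $u_0$;
\item the Gaussian tail estimate;
\item Corollary~\ref{Paper02_same_speed_u_0_total_mass}, to pass from $u_0$ to $\|u\|_{\ell_1}$.
\end{itemize}
The worry in your final paragraph is unfounded. Your garbled exponent should read $\sup_{x\ge c^*T}u_0(T,x)\le \frac{\sqrt{mT}}{(c^*T-R)\sqrt{2\pi}}e^{c^*R/m}=O(T^{-1/2})$, exactly as in the paper. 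Since only convergence to $0$ is claimed, no Bramson-type refinement is needed.

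For the lower bound you take a genuinely different and shorter route than the paper. The paper follows~\cite{penington2018spreading}. It proves a uniform-in-time modulus of continuity for $u_0$ (Lemma~\ref{Paper02_coordinate-wise_uniform_continuity}). It then proves an exponential-growth lemma and a propagation lemma (Lemmas~\ref{Paper02_exponential_growth} and~\ref{Paper02_propagation_mass_u_0}) via Feynman--Kac and Brownian tube estimates, and defers the final assembly to Penington.

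You instead observe that the pointwise bound $\|u\|_{\ell_1}\le Ku_0$ localises the type-space non-locality. With $L$ the Lipschitz constant of $h=(1-\mu)q_+-q_-$ on $[0,1]$, you get $h(\|u\|_{\ell_1})\ge\beta-LKu_0$. So $u_0$ is a classical supersolution of $\partial_t w=\frac m2\Laplace w+w(\beta-LKw)$, a scalar KPP equation with speed $\sqrt{2m\beta}=c^*$. Strict positivity of $u_0(T_0,\cdot)$ (from Feynman--Kac and positivity of $P_{T_0}f_0$) plus Aronson--Weinberger then finishes the argument.

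Your route has two advantages:
\begin{itemize}
\item It gives an explicit level (any $\nu_0<\beta/(LK)$) that is visibly independent of $\varepsilon$.
\item It never uses the KPP inequality~\eqref{Paper02_fisher_kpp_condition_muller_ratchet}, so your lower bound holds for every monostable $F$; the KPP condition is needed only for the upper bound.
\end{itemize}
The paper's route stays inside the Feynman--Kac framework used throughout. It mirrors a method designed for spatially non-local problems, where scalar comparison is unavailable. Here Corollary~\ref{Paper02_same_speed_u_0_total_mass} makes that machinery unnecessary for the lower bound.

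Two small fixes:
\begin{itemize}
\item Write $\nu_k\defeq\nu_0\,\alpha_k(\mathfrak Q_{\min}/\mathfrak Q_{\max})^k$; your version with the factor $1/2$ redefines $\nu_0$ at $k=0$.
\item Note that $\alpha_k>0$ because strict decrease of $(s_k)$ to $0$ forces $s_k>0$ for every $k$.
\end{itemize}
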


We will apply this result to some biologically relevant examples in Section~\ref{Paper02_example_pop_dynamics} below.

\subsection{Tracer dynamics and gene surfing} \label{Paper02_sec:tracer_dynamics_intro}

Our final result answers the question that motivated this article:~can deleterious mutations surf population waves? Theorem~\ref{Paper02_prop_control_proportions} 
implies that in the monostable regime, the subpopulation carrying deleterious mutations propagates through space together with the subpopulation without mutations. 
It remains then to answer whether this phenomenon occurs due to mutation-selection equilibrium or if deleterious mutations can indeed surf. To distinguish these effects, 
we will use the idea of \emph{tracer dynamics} introduced by Hallatschek and Nelson in~\cite{hallatschek2008gene} and formalised by Garnier and co-authors in~\cite{garnier2012inside} in the deterministic setting.

In the setting of the underlying particle system, the idea of tracer dynamics is to label some subset of particles at time $0$. The label does not affect the mutation probability or the migration, reproduction and death rates. When a labelled particle reproduces, its offspring particle is also a labelled particle. The aim is to track the number of labelled particles at each spatial location over time, and use this to infer information about the typical history of particles in the system.
In order to translate this idea to the context of the system of PDEs~\eqref{Paper02_PDE_scaling_limit}, we take $f^{*} = (f^{*}_{k})_{k \in \mathbb{N}_{0}}: \mathbb{R} \rightarrow \ell_{1}^{+}$, and we think of $f^{*}_{k}$ as indicating the initial population density that carries exactly $k$ mutations and is labelled. As usual, we will take $f = (f_k)_{k \in \mathbb N_0}: \mathbb R \rightarrow \ell_1^+$ and think of $f_k$ as indicating the initial population density that carries exactly $k$ mutations. We make the following assumptions about $f^{*}$.

\begin{assumption}[Tracer dynamics initial condition] \label{Paper02_assumption_initial_condition_labelled_particles}
    Let $f: \mathbb{R} \rightarrow \ell_{1}^{+}$ satisfy Assumptions~\ref{Paper02_assumption_initial_condition} and~\ref{Paper02_assumption_initial_condition_modified}. Let $f^{*} = (f^{*}_{k})_{k \in \mathbb{N}_{0}}: \mathbb{R} \rightarrow \ell_{1}^{+}$ satisfy the following conditions:
    \begin{enumerate}[(i)]
        \item $f^{*} \in L_{\infty}(\mathbb{R};\ell_{1})$, i.e.~$\esssup_{x \in \mathbb R} \| f^*(x) \|_{\ell_1} < \infty$.
        \item $f^{*}$ is continuous almost everywhere, i.e.~there exists $\mathcal{N} \subset \mathbb{R}$ such that $\lambda(\mathcal{N}) = 0$ and $f^*$ is continuous on $\mathbb R \setminus \mathcal N$.
        \item $f_{k}^{*}(x) \leq f_{k}(x)$ for all $x \in \mathbb{R}$ and all $k \in \mathbb{N}_{0}$.
    \end{enumerate}
\end{assumption}

Note that condition~(iii) 
guarantees that the initially labelled population is a subset of the total initial population. For $t \geq 0$, $x \in \mathbb R$ and $k \in \mathbb N_0$, we indicate the density of the labelled population carrying exactly $k$ mutations at position $x$ at time $t$ by $u^{*}_{k}(t,x)$. Suppose $f$ and $f^*$ satisfy Assumption~\ref{Paper02_assumption_initial_condition_labelled_particles}. Then, recalling the heuristics after~\eqref{Paper02_PDE_scaling_limit}-\eqref{Paper02_reaction_term_PDE}, we can see from the definition of the labelled set in the particle system setting that $u^{*} = (u^{*}_{k})_{k \in \mathbb{N}_{0}}: [0, \infty) \times \mathbb{R} \rightarrow \ell_{1}^{+}$ should satisfy the system of PDEs
\begin{equation} \label{Paper02_PDE_system_labelled_particles}
\begin{aligned}
    \partial_{t} u^{*}_{k} & = \frac{m}{2} \Laplace u^{*}_{k} + F^{*}_{k}(u, u^{*}) \quad 
    \forall k \in \mathbb N_0, \, t > 0, \\
    u^{*}(0,\cdot) & = f^{*}(\cdot),
\end{aligned}
\end{equation}
where $u: [0,\infty) \times \mathbb{R} \rightarrow \ell_{1}^{+}$ is the unique continuous mild solution to the system of PDEs~\eqref{Paper02_PDE_scaling_limit}, and the reaction term $F^{*} = (F^{*}_{k})_{k \in \mathbb{N}_{0}}: \ell_{1}^{+} \times \ell_{1} \rightarrow \ell_{1}$ is given by, for all $u = (u_k)_{k \in \mathbb N_0} \in \ell_1^+$ and $u^* = (u^*_k)_{k \in \mathbb N_0} \in \ell_1$,
\begin{equation} \label{Paper02_reaction_term_PDE_labelled}
    F^{*}_{k}(u, u^{*}) \defeq q_{+}(\| u \|_{\ell_1})\left(s_{k}(1-\mu)u^{*}_{k} + \mathds{1}_{\{k \geq 1\}}s_{k-1}\mu u^{*}_{k-1}\right) - q_{-}(\| u \|_{\ell_1})u^{*}_{k} \quad 
    \forall \, k \in \mathbb N_0.
\end{equation}
The definition of $F^{*}= (F^{*}_{k})_{k \in \mathbb{N}_{0}}$ reflects the fact that in the particle system with tracer dynamics, the per-capita birth and death rates of labelled particles are determined by the local density of the total population, but only labelled particles can give birth to other labelled particles. 
In order to study gene surfing of deleterious mutations, we label some subset of the initial population that carries at least one mutation, and determine the behaviour of $u^{*}(T, \cdot)$ as $T \rightarrow \infty$.

\begin{theorem} \label{Paper02_thm_tracer_dynamics_no_gene_surfing}
    Suppose that $(s_k)_{k \in \mathbb N_0}$, $q_+$ and $q_-$ satisfy Assumptions~\ref{Paper02_assumption_fitness_sequence} and~\ref{Paper02_assumption_polynomials}, that $\mu \in (0,1)$ and $m > 0$, and that $f$ and $f^*$ satisfy Assumption~\ref{Paper02_assumption_initial_condition_labelled_particles}, and that the reaction term $F = (F_{k})_{k \in \mathbb{N}_{0}}$ defined in~\eqref{Paper02_reaction_term_PDE} is monostable in the sense of Definition~\ref{Paper02_assumption_monostable_condition}. Then there exists a unique mild solution $u^{*} = (u^{*}_{k})_{k \in \mathbb{N}_{0}}: [0, \infty) \times \mathbb{R} \rightarrow \ell_{1}^{+}$ to the system of PDEs~\eqref{Paper02_PDE_system_labelled_particles} satisfying the following conditions:
    \begin{enumerate}[(i)]
        \item $u^* \in \mathscr C((0,\infty) \times \mathbb R; \ell_1)$.
         \item $\sup_{t \in [0,T]} \| u^*(t,\cdot) \|_{L_{\infty}(\mathbb R; \ell_1)} < \infty$ for all $T > 0$.
        \item $u^{*}_{k} \in \mathscr{C}^{1,2}((0, \infty) \times \mathbb{R}; \mathbb R)$ for every $k \in \mathbb N_0$.
    \end{enumerate}
    Moreover, if $f^{*}_{0}(\cdot) \equiv 0$, then
    \begin{equation*}
        \lim_{T \rightarrow \infty} \| u^{*}(T, \cdot) \|_{L_{\infty}(\mathbb{R}; \ell_{1})} = 0.
    \end{equation*}
\end{theorem}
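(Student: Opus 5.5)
The plan is to treat \eqref{Paper02_PDE_system_labelled_particles} as a \emph{linear} system with bounded continuous coefficients once $u$ is fixed. The decay will then come from comparing a suitably weighted sum of the $u^*_k$ with an exponentially damped multiple of $u_0$.

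\textbf{Well-posedness.} Let $u$ be the unique continuous mild solution from Theorem~\ref{Paper02_deterministic_scaling_foutel_rodier_etheridge}, and set $a(t,x)\defeq q_+(\|u(t,x)\|_{\ell_1})$ and $b(t,x)\defeq q_-(\|u(t,x)\|_{\ell_1})$. These are continuous on $(0,\infty)\times\mathbb R$ and bounded on $[0,T]\times\mathbb R$ by Theorem~\ref{Paper02_deterministic_scaling_foutel_rodier_etheridge}(i). The system for $u^*$ is lower triangular in $k$:
\[
\partial_t u^*_k=\tfrac m2\Laplace u^*_k+\big(a s_k(1-\mu)-b\big)u^*_k+\mathds 1_{\{k\ge1\}}a s_{k-1}\mu\, u^*_{k-1}.
\]
I would therefore construct $u^*_0,u^*_1,\dots$ recursively. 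Each $u^*_k$ solves a scalar linear heat equation with bounded potential and a known source, so existence, uniqueness, non-negativity and the $\mathscr C^{1,2}$ regularity in (iii) follow from standard Feynman--Kac/Duhamel arguments and parabolic regularity, using Hölder continuity of $a,b$ inherited from Theorem~\ref{Paper02_deterministic_scaling_foutel_rodier_etheridge}(ii)--(iii).

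Next I sum the equations over $k$. The total reaction is bounded by $(\max a+\max b)\sum_k u^*_k$, so Gronwall gives (ii). The same argument applied to differences gives uniqueness in the class (i)--(ii). For continuity in $\ell_1$ (property (i)), I would note that $u$ itself solves the same linear system with initial datum $f$, and so does $u-u^*$ with initial datum $f-f^*\ge0$. Hence $0\le u^*_k\le u_k$. The tail $\sum_{j\ge k}u^*_j\le\sum_{j\ge k}u_j$ is then controlled uniformly on compacts by the corresponding property of $u$, which upgrades coordinatewise continuity to continuity in $\ell_1$.

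\textbf{Decay when $f^*_0\equiv0$.} Since $f^*_0\equiv0$, uniqueness gives $u^*_0\equiv0$. The idea is to build a weight $\beta_k\in[\rho^{-K},1]$ such that $W\defeq\sum_{k\ge1}\beta_k u^*_k$ is a subsolution of the $u_0$-equation with an extra damping term. Strict monotonicity of $(s_k)$ gives $s_1<1$.

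Choose $\eta>0$ and $K$ so that $s_K\le 1-\mu-\eta$, and then $\rho$ large so that
\[
s_1\Big(1-\mu+\frac{\mu}{\rho}\Big)\le 1-\mu-\eta .
\]
Set $\beta_k=\rho^{-\min(k,K)}$. Collecting the coefficient of $u^*_k$ in $\sum_k\beta_k F^*_k$, the birth part is $a s_k\big((1-\mu)\beta_k+\mu\beta_{k+1}\big)$.
\begin{itemize}
\item For $k<K$ this is at most $a\,\beta_k\, s_1(1-\mu+\mu/\rho)\le a(1-\mu-\eta)\beta_k$.
\item For $k\ge K$ it is at most $a\, s_K\beta_k\le a(1-\mu-\eta)\beta_k$.
\end{itemize}
Hence
\[
\partial_t W\le \tfrac m2\Laplace W+\big(a(1-\mu-\eta)-b\big)W\le \tfrac m2\Laplace W+\big(a(1-\mu)-b-\eta\mathfrak Q_{\min}\big)W,
\]
where the last step uses $a\ge\mathfrak Q_{\min}$. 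This requires $\|u\|_{\ell_1}\le1$, which I would obtain from the scalar comparison for $\|u\|_{\ell_1}$ together with Assumption~\ref{Paper02_assumption_initial_condition_modified}(i) and Definition~\ref{Paper02_assumption_monostable_condition}.

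Now let $Z\defeq C e^{-\eta\mathfrak Q_{\min}t}u_0$. Since $\partial_t u_0=\tfrac m2\Laplace u_0+(a(1-\mu)-b)u_0$, the function $Z$ solves the comparison equation exactly. Initially,
\[
W(0)\le\sum_{k\ge1}f_k\le \|\hat\pi\|_{\ell_1}f_0,
\]
by Assumption~\ref{Paper02_assumption_initial_condition_labelled_particles}(iii) and Assumption~\ref{Paper02_assumption_initial_condition_modified}(iii), so $C=\|\hat\pi\|_{\ell_1}$ works. The linear comparison principle for bounded sub- and supersolutions, or equivalently Feynman--Kac along a common Brownian path, then gives $W\le Z$. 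Therefore
\[
\|u^*(T,\cdot)\|_{L_\infty(\mathbb R;\ell_1)}\le \rho^K\sup_x W(T,x)\le \rho^K\|\hat\pi\|_{\ell_1}e^{-\eta\mathfrak Q_{\min}T}\to0 .
\]

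\textbf{Main obstacle.} I expect the main difficulty to be the weight construction. The naive choice $\beta\equiv1$ only works if $s_1<1-\mu$, so the two-regime weight above is needed. A secondary technical point is justifying the comparison for the infinite sum $W$: interchanging $\partial_t$ and $\Laplace$ with the sum, or working at the mild/Feynman--Kac level with truncations $\sum_{k\le N}$ and letting $N\to\infty$.
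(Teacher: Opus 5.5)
Your proposal is correct, and it takes a different route from the paper in both halves.

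\textbf{Well-posedness.} The paper does not recurse in $k$. It runs a Banach fixed-point argument for the whole $\ell_1$-valued system in $\mathscr C_b((T_1,T_2];L_\infty(\mathbb R;\ell_1))$, stepping forward in time. It then gets $\ell_1$-continuity from the smoothing estimate of Lemma~\ref{Paper02lem:heat_continuity_ell1}, and proves $0\le u^*_k\le u_k$ afterwards by scalar comparison, inductively in $k$. Your recursive construction, which exploits the lower-triangular structure, is more elementary. Your Dini-type tail argument based on $u^*_k\le u_k$ is a valid alternative way to upgrade coordinatewise continuity to continuity in $\ell_1$.

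\textbf{Decay.} The paper uses no weighted sum. It proves the coordinatewise bound $u^*_k(T,x)\le\big(\hat\pi_k e^{-T\mathfrak Q_{\min}(1-s_k)(1-\mu)}+\phi_k(T)\big)u_0(T,x)$ in Lemma~\ref{Paper02_prop_pop_with_mutations_die_out}. This comes from the same Feynman--Kac induction as the upper bound in Lemma~\ref{Paper02_uniform_evolution_proportion_over_space_fkpp_case}, with the $\alpha_k$ term dropped because $u^*_0\equiv0$. The paper then sums over $k$ and uses Lemma~\ref{Paper02_evolution_proportions_upper_bound_characterisation} to show $\Phi(T)\to0$; that lemma needs a split at an index $J$ beyond which $s_k$ is small.

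Your weight $\beta_k=\rho^{-\min(k,K)}$ encodes the same dichotomy in a single Lyapunov-type functional: $s_1<1$ controls small $k$, and $s_k\to0$ controls large $k$. This buys you three things:
\begin{itemize}
\item you avoid the combinatorial functions $\phi_k$;
\item you never use $\mathfrak Q_{\max}$;
\item you get a pure exponential rate $\eta\mathfrak Q_{\min}$ relative to $u_0$.
\end{itemize}
The cost is the constant $\rho^K$. Note that $\eta<(1-s_1)(1-\mu)$ must be imposed before $\rho$ can be chosen. The paper's route instead reuses machinery it needs anyway for Theorem~\ref{Paper02_prop_control_proportions}, and yields explicit bounds on each $u^*_k$ separately.

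\textbf{The technical point you flag.} Truncation settles it. $W_N=\sum_{k=1}^N\beta_k u^*_k$ is a finite sum of $\mathscr C^{1,2}$ functions, and it satisfies the same differential inequality as $W$. Compared with your computation for $W$, the coefficient of $u^*_N$ only loses the non-negative term $\mu\beta_{N+1}a s_N$. Feynman--Kac (Proposition~\ref{Paper02_general_version_feynman_kac}) then gives $W_N\le Z$, and monotone convergence gives $W\le Z$.
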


We will give a precise definition of continuous mild solutions to the system of PDEs~\eqref{Paper02_PDE_system_labelled_particles} in Section~\ref{Paper02_tracer_dynamics_section}, and define the norm $\| \cdot \|_{L_{\infty}(\mathbb R; \ell_1)}$ in Section~\ref{Paper02_PDES_sequence_spaces}. Theorem~\ref{Paper02_thm_tracer_dynamics_no_gene_surfing} complements the description of the asymptotic behaviour of the solution of~\eqref{Paper02_PDE_scaling_limit} given by Theorem~\ref{Paper02_prop_control_proportions} (in the monostable case). It shows that for a population expanding according to~\eqref{Paper02_PDE_scaling_limit}, although by Theorem~\ref{Paper02_prop_control_proportions} there is a positive fraction of the population carrying deleterious mutations in both the front and the bulk of the expanding population, this fraction descends from the expanding subpopulation without mutations and is not the result of surfing of previously existing deleterious mutations; see Figure~\ref{Paper02_FKPP_with_tracer} for a simulation of this phenomenon. Therefore, we can say that in the monostable regime, deleterious mutations cannot surf deterministic population waves.

\begin{figure}[htp]
\centering
\includegraphics[width=\linewidth,trim=0cm 0cm 0cm 0cm,clip=true]{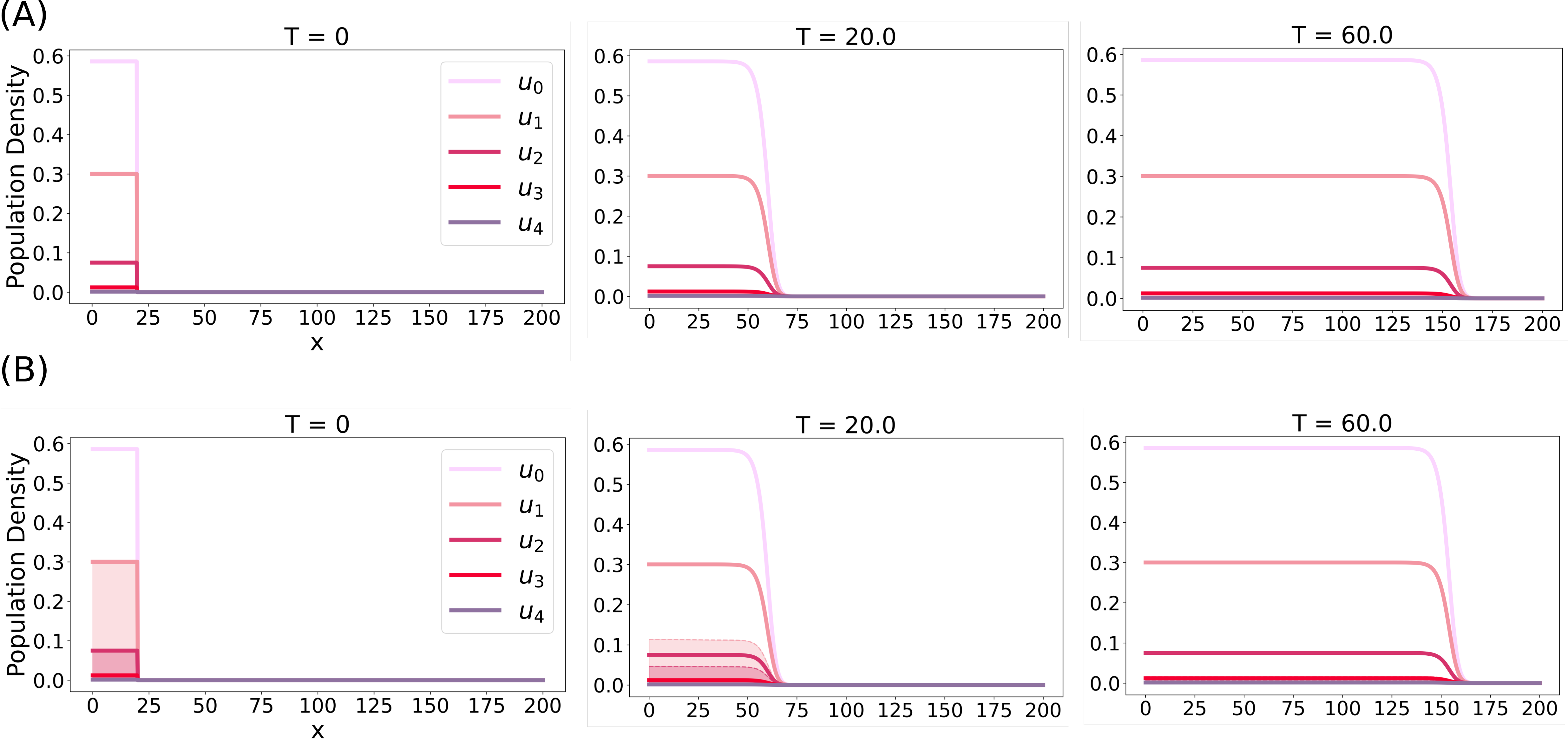}
\caption{\label{Paper02_FKPP_with_tracer}~Numerical simulation of a population with Fisher–KPP type dynamics, governed by the systems of PDEs~\eqref{Paper02_PDE_scaling_limit} and~\eqref{Paper02_PDE_system_labelled_particles}. The parameters used in the simulation are $m = 3$, $q_+(U) = 1$ and $q_-(U) = U$ for all $U \geq 0$, $\mu = 0.025$, and $s_k = 0.95^k$ for all $k \in \mathbb{N}_0$. The horizontal axis indicates spatial position $x$, and the vertical axis shows population density. The time at which the snapshot is taken is denoted by $T$. Colours indicate the density of population carrying different numbers of mutations, i.e.~$u_k(T,\cdot)$ for $k \in \{0,1,2,3,4\}$ as shown in the legend of the $T = 0$ figure. In~(A), the simulation is performed without tracers. The simulation illustrates that the fractions of the population carrying positive numbers of mutations invade previously uncolonised habitat at the same speed as the fraction carrying no mutations. In~(B), the shaded regions indicate the labelled fraction of the population. At time $T = 0$, only the subpopulation without mutations is unlabelled. As shown in Theorem~\ref{Paper02_thm_tracer_dynamics_no_gene_surfing}, the labelled population, corresponding to descendants of
the initial mutant subpopulation, does not propagate with the expansion wave. This shows that the propagation of the fractions of the population carrying mutations occurs due to a mutation-selection equilibrium, and deleterious mutations cannot surf in the deterministic setting.}
\end{figure}

\section{Application to different population dynamics} \label{Paper02_example_pop_dynamics}

Since the system of PDEs in~\eqref{Paper02_PDE_scaling_limit} is rather general, it can be used to model asexual populations under different biological assumptions. For instance, taking, for some~$r > 0$ and~$B \geq 0$,
\begin{equation} \label{Paper02_foutel_etheridge_birth_death_rates}
   q_{+}(U) \defeq r(BU + 1) \quad \textrm{ and } \quad q_{-}(U) \defeq r(BU+1)U \quad \forall U \geq 0,
\end{equation}
we recover the same system of PDEs proposed by Foutel-Rodier and Etheridge in~\cite{foutel2020spatial} (see~\eqref{Paper02_conjectured_PDE_heuristics}). Phenomenologically, $r$ indicates the Malthusian growth factor, and $B$ reflects the dependence of the growth rate on the local population density. Note that the value of the population density $U$ at which the maximum  of the function
\begin{equation*}
    [0, \infty) \ni U \mapsto q_{+}(U) - q_{-}(U) = r(1 - U)(BU+1)
\end{equation*}
is attained depends on the parameter $B$. If $0 \leq B \leq 1$, then the function attains its maximum when $U = 0$, i.e.~the maximum per-capita growth rate is attained at $U = 0$. We say that in this case the population dynamics (neglecting, for now, the effect of deleterious mutations) satisfies a Fisher-KPP condition~\cite{garnier2012inside}. Biologically, this means that competition prevails over cooperation in determining the population dynamics.

For $B > 1$, the function $[0,\infty) \ni U \mapsto q_{+}(U) - q_{-}(U)$ attains its maximum at $U = \frac{B-1}{2B} > 0$. In this case, we say the population (neglecting the effect of deleterious mutations) exhibits a weak Allee effect, since the maximum per-capita growth rate is attained at an intermediate population density rather than at low population density \cite{foutel2020spatial}. From a biological point of view, this scenario corresponds to the case where both competition and cooperation affect the population dynamics.

Since the polynomials $q_+$ and $q_-$ satisfy Definition~\ref{Paper02_assumption_monostable_condition}(i)-(v) for any $B \geq 0$, $r > 0$ and $\mu \in (0,1)$, from Theorem~\ref{Paper02_prop_control_proportions} we have quantitative bounds on the proportions of the population carrying $k \in \mathbb{N}_0$ mutations in the solution of the system of PDEs~\eqref{Paper02_PDE_scaling_limit} at large times, for suitable initial conditions. Theorem~\ref{Paper02_thm_tracer_dynamics_no_gene_surfing} implies that fractions with mutations expand through space due to the fact that the fraction of the population without mutations propagates and due to a mutation-selection equilibrium, i.e.~there is no gene surfing of deleterious mutations in this regime. From a modelling perspective, in the spatial Muller's ratchet, stochastic behaviour arises from the finite local population size, while the behaviour of very large populations is described by the deterministic system of PDEs~\eqref{Paper02_PDE_scaling_limit} (see~\cite[Theorem~2.1]{madeira2025LLN}). We highlight that our results do not answer whether gene surfing is possible in the stochastic setting. However, they shed light on the understanding of surfing: even if deleterious mutations can surf when the population is finite (i.e.~in the stochastic case), the effect of the surfing must be limited, since Theorem~\ref{Paper02_thm_tracer_dynamics_no_gene_surfing} shows that no gene surfing of deleterious mutations occurs in the deterministic limit (in the monostable regime). Therefore, our results may 
help explain the contradictory findings regarding gene surfing of deleterious mutations in biological data (see the discussion in Section~\ref{Paper02_introduction} and the references therein).

The assumptions for Theorem~\ref{Paper02_spreading_speed_Fisher_KPP}, which 
determines the spreading speed, require that the reaction term $F = (F_k)_{k \in \mathbb N_0}$ defined in~\eqref{Paper02_reaction_term_PDE} is of Fisher-KPP type in the sense of Definition~\ref{Paper02_assumption_monostable_condition}, i.e.~that for all $U \in [0,1]$,
\begin{equation} \label{Paper02_regime_fisher_kpp_mullers_ratchet}
    (1-\mu)q_{+}(U) - q_{-}(U) = r(1 - \mu - U)(BU + 1) \leq (1-\mu)q_{+}(0) - q_{-}(0) = r(1 - \mu).
\end{equation}
Inequality~\eqref{Paper02_regime_fisher_kpp_mullers_ratchet} holds for all $U \in [0,1]$ if and only if $0 \leq B \leq \frac{1}{1 - \mu}$. We note that for $B \in \left(1, \frac{1}{1-\mu}\right]$, the reaction term $F = (F_k)_{k \in \mathbb N_0}$ is of Fisher-KPP type but, as noted above, the population dynamics, neglecting the effect of mutations, exhibits a weak Allee effect.

If $0 \leq B \leq \frac{1}{1 - \mu}$, then by Theorem~\ref{Paper02_spreading_speed_Fisher_KPP}, the spreading speed of a population evolving according to the system of PDEs~\eqref{Paper02_PDE_scaling_limit} into an empty habitat, under suitable assumptions on the initial condition, is given by $$c^{*} = \sqrt{2mr(1-\mu)}.$$
This confirms the calculations in~\cite{foutel2020spatial} for $B \leq \frac{1}{1-\mu}$ (recall~\eqref{Paper02_conjecture_spreading_speed_foutel_rodier_etheridge}), with a more general choice of fitness parameters $(s_k)_{k \in \mathbb N_0}$ and mutation rate $\mu$ than considered in~\cite{foutel2020spatial}. Theorem~\ref{Paper02_spreading_speed_Fisher_KPP} also implies that, under the same assumptions, $c^*$ is the spreading speed of the fraction of the population carrying $k$ mutations, for every~$k \in \mathbb{N}_0$. Note that Theorem~\ref{Paper02_spreading_speed_Fisher_KPP} does not apply when $B > \frac{1}{1-\mu}$. Consequently, the validity of~\eqref{Paper02_conjecture_spreading_speed_foutel_rodier_etheridge} for these values of~$B$ remains an open problem.

\medskip
Another interesting example is to take
\begin{equation*}
    q_{+}(U) \defeq U(B+1) \quad \textrm{and} \quad q_{-}(U) \defeq U^{2} + B \quad \forall \, U \geq 0, \textrm{ for some } B \in \left(0, 1/2\right).
\end{equation*}
Then the function $U \mapsto q_{+}(U) - q_{-}(U) = (1-U)(U-B)$ is negative for $U \in [0,B)$. We say that the population (neglecting the effect of deleterious mutations) exhibits a strong Allee effect \cite{garnier2012inside}. Biologically, this means that cooperation is fundamental for population growth, 
as the population shrinks at low population densities. In this case, our Theorem~\ref{Paper02_deterministic_scaling_foutel_rodier_etheridge} again shows existence and uniqueness of a mild solution of the corresponding system of PDEs~\eqref{Paper02_PDE_scaling_limit}. However, the asymptotic behaviour of solutions of~\eqref{Paper02_PDE_scaling_limit} in this case is not covered by Theorems~\ref{Paper02_prop_control_proportions} and~\ref{Paper02_spreading_speed_Fisher_KPP}, because the reaction term $F = (F_{k})_{k \in \mathbb{N}_{0}}$ given by~\eqref{Paper02_reaction_term_PDE} is not monostable in the sense of Definition~\ref{Paper02_assumption_monostable_condition}. Since we do not expect gene surfing of deleterious mutations in the presence of a strong Allee effect, we conjecture that a result analogous to Theorem~\ref{Paper02_thm_tracer_dynamics_no_gene_surfing} should hold in that case.
To our knowledge, there are no rigorous 
results on the spreading speed and long-term asymptotic behaviour of (even finite) systems of PDEs in the presence of a strong Allee effect, and hence this is an interesting open problem.

We can also consider the effect of different choices of fitness parameters $(s_k)_{k \in \mathbb N_0}$. We first consider the model introduced in~\cite{foutel2020spatial}, where the sequence of fitness parameters is given by $s_{k} \defeq (1 - s)^{k}$, for $k \in \mathbb N_0$ and for some fixed $s \in (0,1)$. In this case, when a new mutation is acquired, the fitness of a chromosome always decreases by a factor $(1-s)$. In other words, the effect of each mutation is independent of the genetic background. Biologically, we say there is no \emph{epistasis}. As explained in Section~\ref{Paper02_introduction}, in the case $s, \mu \ll 1$, i.e.~under a weak selection--low mutation regime, Foutel-Rodier and Etheridge derive non-rigorously that the stationary solution of the system of PDEs~\eqref{Paper02_conjectured_PDE_heuristics} can be approximated by~\eqref{Paper02_approximating_equilibrium_weak_selection_low_mutation_regime}. To formalise the weak selection--low mutation regime, let $(s^{(n)})_{n \in \mathbb{N}}, (\mu^{(n)})_{n \in \mathbb{N}} \subset (0,\infty)$ and $s, \mu \in (0, \infty)$ be such that
\begin{equation} \label{Paper02_weak_selection_low_mutation_regime}
    \lim_{n \rightarrow \infty} ns^{(n)} = s \quad \textrm{and} \quad  \lim_{n \rightarrow \infty} n\mu^{(n)} = \mu.
\end{equation}
Then, for each $n \in \mathbb{N}$ and $k \in \mathbb{N}_{0}$, let $s^{(n)}_k \defeq (1 -s^{(n)})^k$. For $n \in \mathbb{N}$ sufficiently large that $s^{(n)}, \mu^{(n)} \in (0,1)$, we can let $s_k = s^{(n)}_k \; \forall \, k \in \mathbb N_0$ and $\mu = \mu^{(n)}$ determine the sequence of fitness parameters and the mutation probability for the system of PDEs~\eqref{Paper02_PDE_scaling_limit}. In this case, the (rescaled) stationary solution of~\eqref{Paper02_PDE_scaling_limit} defined in~\eqref{Paper02_definition_alpha_k} is given by $\alpha^{(n)} = (\alpha^{(n)}_k)_{k \in \mathbb{N}_{0}}$, where $\alpha^{(n)}_0 \defeq 1$ and
\begin{equation} \label{Paper02_sequence_proportions_weak_selection_low_mutation}
    \alpha^{(n)}_k \defeq \left(\frac{\mu^{(n)}}{1 - \mu^{(n)}}\right)^{k} \prod_{i = 1}^{k} \frac{(1 - s^{(n)})^{i-1}}{1 - (1 - s^{(n)})^{i}} \quad \forall \, k \in \mathbb{N}.
\end{equation}
If the polynomials $q_+$ and $q_-$ satisfy Assumption~\ref{Paper02_assumption_polynomials} and Definition~\ref{Paper02_assumption_monostable_condition}(i)-(v), under suitable assumptions on the initial condition,  Theorem~\ref{Paper02_prop_control_proportions} shows for each $n \in \mathbb{N}$ sufficiently large that the ratio of the population density carrying $k$ mutations to the population density carrying $0$ mutations is uniformly bounded between $\alpha^{(n)}_{k}(\frac{\mathfrak{Q}_{\min}}{\mathfrak{Q}_{\max}})^{k}$ and $\alpha^{(n)}_{k}(\frac{\mathfrak{Q}_{\max}}{\mathfrak{Q}_{\min}})^k$.
In particular, as noted in Remark~\ref{Paper02_very_good_control_proportions_FKPP}(ii), in the special case where $q_+$ is constant, the ratio converges uniformly to $\alpha^{(n)}_{k}$. We will prove in Lemma~\ref{Paper02_app_lem_weak_selec_low_mut} in the appendix that $(\alpha^{(n)}_k)_{k \in \mathbb{N}_0}$ converges as $n \rightarrow \infty$
in $\ell_1$ to $(\hat{\alpha}_k)_{k \in \mathbb{N}_0}$ given by
    \begin{equation} \label{Paper02_lim_Poisson_equil}
        \hat{\alpha}_k \defeq \frac{(\mu/s)^{k}}{k!} \quad  \forall \, k \in \mathbb{N}_0.
    \end{equation}
    In particular, for every $k \in \mathbb{N}_0$,
    \begin{equation} \label{Paper02_lim_Poisson_equil_ell_1}
        \lim_{n \rightarrow \infty} \, \frac{\alpha^{(n)}_k}{\| \alpha^{(n)}\|_{\ell_1}} = \exp\left(- \frac{\mu}{s}\right)\frac{(\mu / s)^k}{k!},
    \end{equation}
    which agrees with the right-hand side of~\eqref{Paper02_approximating_equilibrium_weak_selection_low_mutation_regime}. In other words, 
    in the weak selection--low mutation regime, our Theorem~\ref{Paper02_prop_control_proportions} shows that the approximate stationary solution given in~\cite{foutel2020spatial} describes 
    the proportions of the population carrying different numbers of mutations at large times in solutions of~\eqref{Paper02_PDE_scaling_limit}.
    
    Our Assumption~\ref{Paper02_assumption_fitness_sequence} also allows for other choices of fitness parameters. For instance, one could take $s_{k} \defeq \frac{1}{k+1} \; \forall k \in \mathbb{N}_0$. In this case, the addition of another mutation to a genotype with $k$ mutations leads to a reduction of fitness by a factor $s_{k+1}/s_{k} = (k+1)/(k+2)$, 
    which depends on the previous number of mutations $k$. In biological terms, we say that the model allows some degree of epistasis, which can be important in some biological applications~\cite{gros2009evolution}.

\section{Overview of the proofs} \label{Paper02_subsection_heuristics}

Before introducing the main ideas behind the proofs, we highlight the main challenges arising in the analysis of the system of PDEs~\eqref{Paper02_PDE_scaling_limit}. Since~\eqref{Paper02_PDE_scaling_limit} is an infinite system of nonlinear reaction--diffusion equations whose components are coupled through the dependence of the per-capita reproduction and death rates on the total local population density, it can be viewed as a non-local system in type space, with types indexed by~$\mathbb N_0$. A major consequence of this non-local interaction is that the standard comparison principle is not applicable. Many classical tools used in the analysis of reaction--diffusion equations (see e.g.~\cite{aronson1975nonlinear,fife1981comparison}) therefore cannot be applied directly, making it considerably more difficult to study the long-term behaviour and spreading properties of solutions to~\eqref{Paper02_PDE_scaling_limit}. As we explain below, we overcome this difficulty by deriving a Feynman--Kac representation for each coordinate~$u_k$ of the solution. This representation provides quantitative control on the evolution of the mutational profile and forms the basis for our analysis of the spreading behaviour of the solution to~\eqref{Paper02_PDE_scaling_limit}.

The proof of the main results can be roughly divided into two parts. The well-posedness and regularity properties of solutions to~\eqref{Paper02_PDE_scaling_limit} are established in Section~\ref{Paper02_uniqueness_weak_solutions_section}, while the asymptotic behaviour of solutions is characterised in Section~\ref{Paper02_section_asymptotic_behaviour_PDE}. Existence of mild solutions follows directly from our companion article~\cite[Proposition~6.2]{madeira2025LLN}, where we prove that, under an appropriate scaling regime, a sequence of interacting particle systems converges (up to a subsequence) to a mild solution of~\eqref{Paper02_PDE_scaling_limit}. To establish uniqueness, we first show that any non-negative mild solution admits a continuous version which is uniformly bounded on finite time intervals. We then prove a local Lipschitz property of the reaction term $F=(F_k)_{k\in\mathbb N_0}:\ell_1^+\to\ell_1$ defined in~\eqref{Paper02_reaction_term_PDE}. This allows us to apply a standard Gr\"onwall argument. Once uniqueness has been established, regularity properties follow from the smoothing effect of the heat semigroup together with classical Schauder estimates for linear parabolic equations.

The regularity properties of solutions allow us to express each coordinate~$u_k$ through a Feynman--Kac representation.
This representation is the key ingredient in the proof of the asymptotic behaviour results. In Section~\ref{Paper02_section_asymptotic_behaviour_PDE}, assuming that the reaction term is monostable in the sense of Definition~\ref{Paper02_assumption_monostable_condition}, we combine the Feynman--Kac representation with an induction argument to establish Theorem~\ref{Paper02_prop_control_proportions}. This theorem provides quantitative bounds on the ratios between the densities of individuals carrying different numbers of mutations and shows that the mutational profile remains well controlled during the range expansion.

To determine the spreading speed, we adapt an argument from~\cite{penington2018spreading} (which in turn is based on ideas in~\cite{bramson1983convergence}). In~\cite{penington2018spreading}, the spreading speed of a non-local Fisher--KPP equation is obtained using a Feynman--Kac representation together with Brownian motion estimates. The argument exploits the fact that regions of low population density correspond to high per-capita growth rates, while regions of high population density have already been colonised by the population.

In our setting, we analyse the dynamics of the mutation-free coordinate~$u_0$ through the total population density~$\|u\|_{\ell_1}$. The crucial observation is that Theorem~\ref{Paper02_prop_control_proportions} links these two quantities, i.e.~by Theorem~\ref{Paper02_prop_control_proportions}, there exists $C > 0$ such that for all~$T \in [0, \infty)$ and~$x \in \mathbb R$,
\[
u_0(T,x) \leq \|u(T,x)\|_{\ell_1} \leq C u_0(T,x).
\]
Hence, when~$\|u\|_{\ell_1}$ is small, the Fisher--KPP condition guarantees a large per-capita growth rate for~$u_0$. Conversely, when~$\|u\|_{\ell_1}$ is large, then~$u_0$ is uniformly bounded away from zero. Combining this dichotomy with the Brownian motion estimates of~\cite{penington2018spreading} yields Theorem~\ref{Paper02_spreading_speed_Fisher_KPP}, which determines the spreading speed of the population in the Fisher--KPP regime. Theorem~\ref{Paper02_thm_tracer_dynamics_no_gene_surfing}, i.e.~the characterisation of the asymptotic behaviour of the tracer, also follows from the Feynman--Kac representation and the control on the evolution of the mutational profile. Essentially, we show that the density of descendants of the subpopulation carrying mutations decays exponentially fast due to the competition with the fraction without mutations and its descendants. As a consequence, labelled mutants do not persist in the expanding wave, thereby ruling out gene surfing of deleterious mutations in the deterministic monostable regime.

\section{Partial differential equations in $\ell_{1}$} \label{Paper02_PDES_sequence_spaces}

In this section, we define the meaning of weak and mild $\ell_1$-valued solutions to partial differential equations, and state the existence result that we borrow from our companion article~\cite{madeira2025LLN}. Since solutions take values in~$\ell_1$, we first introduce the function spaces needed to formulate the PDE rigorously. Let $(S, \Sigma, \nu)$ be a measure space. We say a function $g = (g_k)_{k \in \mathbb N_0}: S \rightarrow \ell_{1}$ is (Bochner) measurable with respect to $(S, \Sigma, \nu)$ if $g_{k}: S \rightarrow \mathbb{R}$ is measurable for every $k \in \mathbb{N}_{0}$. Note that in this case, $\| g \|_{\ell_{1}}: S \rightarrow [0, \infty)$ is the pointwise limit of a sequence of measurable functions, i.e.~$\| g(x) \|_{\ell_{1}} = \lim_{n \rightarrow \infty} \sum_{k = 0}^{n} \vert g_{k}(x) \vert \; \forall \, x \in S$, and so $\| g \|_{\ell_{1}}$ is also measurable. Following \cite[Chapters 1 and 2]{hytonen2016analysis}, for $r \in [1, \infty]$, we let
\begin{equation*}
    L_r(S; \ell_{1}) \defeq \left\{g: S \rightarrow \ell_{1}: \; g \textrm{ is measurable and } \| g \|_{L_r(S;\ell_1)} \defeq \Big \| \, \| g \|_{\ell_{1}} \Big\|_{L_r(\nu)} < \infty\right\}.
\end{equation*}
We consider two functions to define the same element of~$L_r(S; \ell_{1})$ if they are equal $\nu$-almost everywhere in~$S$. Note that~$L_r(S; \ell_1)$ is a Banach space (see e.g.~\cite[Proposition~1.2.29]{hytonen2016analysis}). For $d \in \mathbb N$, for the Lebesgue measure space $(\mathbb{R}^{d}, \mathscr{R}^{(d)}, \lambda)$, for $S \in \mathscr{R}^{(d)}$, we say that a measurable function $g: S \rightarrow \ell_{1}$ is an element of $L_{1,\textrm{loc}}(S; \ell_{1})$ if for all compact sets $\mathcal{K} \subseteq S$, we have
\begin{equation*}
    \int_{\mathcal{K}}  \| g(x) \|_{\ell_{1}} \; dx < \infty.
\end{equation*}

We are now ready to properly define the meaning of a weak solution to the PDE~\eqref{Paper02_PDE_scaling_limit}. We will be thinking of weak solutions in the sense of distributions, rather than in the sense of elements of Sobolev spaces (see~\cite[Chapter~2]{hytonen2016analysis} for the difference between these concepts).

\begin{definition}[Weak solution] \label{Paper02_weak_solution_distributional_sense}
    We say that a measurable function $u \in L_{1, \textrm{loc}}([0, \infty) \times \mathbb{R}; \ell_{1})$ is a weak solution to the system of PDEs~\eqref{Paper02_PDE_scaling_limit} if the following conditions are satisfied:
    \begin{enumerate}[(i)]
        \item $u(0,x) = f(x)$ for $\lambda$-almost every $x \in \mathbb{R}$.
        \item $u(T, \cdot) \in L_{1,\textrm{loc}}(\mathbb{R}; \ell_{1})$ for all $T \geq 0$.
        \item $F(u): [0,\infty) \times \mathbb{R} \rightarrow \ell_{1}$ is an element of $L_{1, \textrm{loc}}([0,\infty) \times \mathbb{R}; \ell_{1})$, where $F = (F_{k})_{k \in \mathbb{N}_{0}}$ is the reaction term defined in~\eqref{Paper02_reaction_term_PDE}.
        \item For any $\varphi \in \mathscr{C}^{1,2}_c([0,\infty) \times \mathbb R; \mathbb R)$, i.e.~for any continuous function $\varphi: [0,\infty) \times \mathbb{R} \rightarrow \mathbb{R}$ with compact support, continuously differentiable in time and twice continuously differentiable in space, and any $T > 0$,
    \begin{equation} \label{Paper02_coordinate_wise_integral}
    \begin{aligned}
        \int_{\mathbb{R}} u(T,x) \varphi(T,x) \, dx = & \int_{\mathbb{R}} u(0,x) \varphi(0,x) \, dx + \int_{0}^{T} \int_{\mathbb{R}} u(t,x) \partial_{t} \varphi(t,x) \, dx \, dt \\ & + \int_{0}^{T} \int_{\mathbb{R}} \frac{m}{2} u(t,x) \Laplace \varphi(t,x) \, dx \, dt + \int_{0}^{T} \int_{\mathbb{R}} F(u(t,x)) \varphi(t,x) \, dx \, dt.
    \end{aligned}
    \end{equation}
    \end{enumerate}
\end{definition}

Note that~\eqref{Paper02_coordinate_wise_integral} holds if and only if it holds in a coordinate-wise manner, i.e.~if and only if for all $k \in \mathbb{N}_{0}$,
\begin{equation} \label{Paper02_coordinate_wise_weak_sol}
\begin{aligned}
     \int_{\mathbb{R}} u_{k}(T,x) \varphi(T,x) \, dx = & \int_{\mathbb{R}} u_{k}(0,x) \varphi(0,x) \, dx + \int_{0}^{T} \int_{\mathbb{R}} u_{k}(t,x) \partial_{t} \varphi(t,x) \, dx \, dt \\ & + \int_{0}^{T} \int_{\mathbb{R}} \frac{m}{2} u_{k}(t,x) \Laplace \varphi(t,x) \, dx \, dt + \int_{0}^{T} \int_{\mathbb{R}} F_{k}(u(t,x)) \varphi(t,x) \, dx \, dt.
\end{aligned}
\end{equation}
We refer the reader to~\cite[Chapter~1]{hytonen2016analysis} for a proof of this equivalence.

We will also need the definition of mild solutions to the system of PDEs~\eqref{Paper02_PDE_scaling_limit}. Let $\{P_{t}\}_{t \geq 0}$ denote the semigroup of Brownian motion run at speed $m$. The following notation will be useful: for $t > 0$ and $x \in \mathbb R$, we let
\begin{equation}
\label{Paper02_Gaussian_kernel}
p(t,x) \defeq \frac{1}{\sqrt{2\pi mt}} e^{-x^2/(2mt)}.
\end{equation}
Analogously to the definition of the action of $\{P_{t}\}_{t \geq 0}$ on real-valued functions, with a slight abuse of notation we can define its action on $\ell_{1}$-valued functions as follows. Let $v: \mathbb{R} \rightarrow \ell_{1}$ be a measurable function. For all $(t,x) \in [0, \infty) \times \mathbb R$ such that $(P_{t}\| v \|_{\ell_{1}}) (x) < \infty$,
we can define
\begin{equation}
\label{Paper02_semigroup_BM_action_ell_1_functions}
    \left(P_{t}v\right)(x) \defeq \left\{\begin{array}{lc}
        \displaystyle \int_{\mathbb{R}} p(t,x-y) v(y) \; dy & \textrm{ if } t > 0, \\
        v(x) & \textrm{ if } t = 0.
    \end{array}\right.
\end{equation}

\begin{definition}[Mild solution] \label{Paper02_definition_mild_solution}
    We say that a measurable function $u: [0,\infty) \times \mathbb{R} \rightarrow \ell_{1}$ is a (global) mild solution to the system of PDEs~\eqref{Paper02_PDE_scaling_limit} if and only if the following conditions are satisfied:
    \begin{enumerate}[(i)]
    \item For $\lambda$-almost every $(T,x) \in [0,\infty) \times \mathbb{R}$, $\left(P_{T-t}\| F(u(t,\cdot)) \|_{\ell_{1}}\right) (x) < \infty$ for $\lambda$-almost every $t \in [0,T]$.
    \item For $\lambda$-almost every $(T,x) \in [0,\infty) \times \mathbb{R}$,
    \begin{equation} \label{Paper02_mild_solution_weaker_version}
        u(T,x) = \left(P_{T} f\right)(x) + \int_{0}^{T} \Big(P_{T-t} F(u(t,\cdot))\Big)(x) \, dt.
    \end{equation}
    \end{enumerate}
\end{definition}

\begin{remark} \label{Paper02_remark_general_mild_sol}
    We collect here some observations regarding Definition~\ref{Paper02_definition_mild_solution}.
    \begin{enumerate}[(a)]
        \item Although our definition of a mild solution is compatible with the integral formulations used in the $L_p$-regularity literature (see e.g.~\cite[Chapter 5]{pruss2016moving}), classical semigroup approaches to deterministic PDEs typically require mild solutions to be continuous in time and to satisfy the variation-of-constants formula~\eqref{Paper02_mild_solution_weaker_version} pointwise in time (see e.g.~\cite[Section 4.1]{pazy2012semigroups}). In contrast, we work with a weaker formulation in which~\eqref{Paper02_mild_solution_weaker_version} holds almost everywhere in time and in space. This choice reflects the fact that we use the characterisation of solutions to the system of PDEs~\eqref{Paper02_PDE_scaling_limit} in our companion article~\cite{madeira2025LLN}, where time and space continuity of solutions is not available \textit{a priori} since they are constructed directly as the scaling limit of an interacting particle system.

        \item Let $T' > 0$ be fixed. If a measurable function $u: [0, T'] \times \mathbb R \rightarrow \ell_1$ satisfies Definition~\ref{Paper02_definition_mild_solution}(i) and~(ii) for $\lambda$-almost every $(T,x) \in [0, T'] \times \mathbb R$, we say $u$ is a (local) mild solution to the system of PDEs~\eqref{Paper02_PDE_scaling_limit}.
    \end{enumerate}
    
\end{remark}

Our next result shows that, under mild additional assumptions, a mild solution
is also a weak solution.

\begin{lemma} \label{Paper02_lem_equiv_mild_weak_sol}
Let $u: [0, \infty) \times \mathbb R \rightarrow \ell_1^+$ denote a measurable function. Suppose that $u$ is a mild solution to the system of PDEs~\eqref{Paper02_PDE_scaling_limit} in the sense of Definition~\ref{Paper02_definition_mild_solution}, and that it satisfies the following conditions:
\begin{enumerate}[(i)]
    \item $u(0,x) = f(x)$ for $\lambda$-almost every $x \in \mathbb R$.
    \item For all $T > 0$, $u \big|_{[0,T] \times \mathbb R} \in L_{\infty}([0,T] \times \mathbb R; \ell_1)$,
    where $ u \big|_{[0,T] \times \mathbb R}$ denotes the restriction of $u$ to $[0,T] \times \mathbb R$.
    \item For all $T > 0$, $u$ satisfies identity~\eqref{Paper02_mild_solution_weaker_version} for $\lambda$-almost every $x \in \mathbb R$.
\end{enumerate}
Then $u$ is a weak solution to the system of PDEs~\eqref{Paper02_PDE_scaling_limit} in the sense of Definition~\ref{Paper02_weak_solution_distributional_sense}.
\end{lemma}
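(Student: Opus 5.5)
We first check the integrability conditions of Definition~\ref{Paper02_weak_solution_distributional_sense}, and then verify the integral identity~\eqref{Paper02_coordinate_wise_weak_sol} one coordinate at a time; by the equivalence noted after~\eqref{Paper02_coordinate_wise_integral}, this gives~\eqref{Paper02_coordinate_wise_integral}.

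Condition~(i) is hypothesis~(i). For~(ii) and~(iii), fix $T>0$ and let $M_T \defeq \|u|_{[0,T]\times\mathbb R}\|_{L_\infty}$. Since $u \ge 0$, we have
\[
\|F(v)\|_{\ell_1} \le \big(2q_+(\|v\|_{\ell_1}) + q_-(\|v\|_{\ell_1})\big)\|v\|_{\ell_1},
\]
using $s_k \le 1$. So $\|F(u)\|_{\ell_1}$ is essentially bounded on $[0,T]\times\mathbb R$ by a constant $C_T$ depending only on $M_T$ and the polynomials. Local integrability of $u$ and of $F(u)$ then follows at once. For $u(T,\cdot)$ at every fixed $T$ (not just a.e.\ $T$), we use hypothesis~(iii): $u(T,\cdot)$ equals $P_Tf + \int_0^T P_{T-t}F(u(t))\,dt$ a.e.\ in $x$, which is bounded by $\|f\|_{L_\infty} + TC_T$.

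For the identity, fix $k$, $T>0$ and $\varphi\in\mathscr C^{1,2}_c$. We substitute the mild formula for $u_k(T,\cdot)$ and for $u_k(t,\cdot)$ (valid for a.e.\ $t$, which suffices inside the $dt$ integrals) into the right-hand side of~\eqref{Paper02_coordinate_wise_weak_sol}. The linear part is classical. The function $w(t,x)=(P_tf_k)(x)$ satisfies $\partial_t w = \frac m2\Laplace w$ smoothly for $t>0$. Integrating by parts in $x$ against $\varphi$ and in $t$ on $[\varepsilon,T]$, then letting $\varepsilon\downarrow0$ and using $\int P_\varepsilon f_k\,\varphi(\varepsilon,\cdot)\to\int f_k\varphi(0,\cdot)$ (the heat semigroup is symmetric, and $P_\varepsilon\varphi\to\varphi$ uniformly), gives the identity for $w$ with zero forcing.

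For the Duhamel term $v(t,x)=\int_0^t (P_{t-r}g(r))(x)\,dr$ with $g=F_k(u)$ bounded, I would use Fubini, which is justified by boundedness and compact support. Moving the semigroup onto the test function via symmetry of $p$ gives
\[
\int v(t,x)\psi(x)\,dx = \int_0^t\int g(r,y)(P_{t-r}\psi)(y)\,dy\,dr .
\]
We then compute
\[
\int_0^T\!\!\int v\,(\partial_t\varphi+\tfrac m2\Laplace\varphi)\,dx\,dt
=\int_0^T\!\!\int g(r,y)\int_r^T \big(P_{t-r}(\partial_t\varphi+\tfrac m2\Laplace\varphi)(t)\big)(y)\,dt\,dy\,dr .
\]
Next, by the fundamental theorem of calculus,
\[
\frac{d}{dt}\big(P_{t-r}\varphi(t)\big)=P_{t-r}\big(\partial_t\varphi+\tfrac m2\Laplace\varphi\big),
\]
since $\frac{d}{dt}P_{t-r}h = P_{t-r}\frac m2\Laplace h$ for $h\in\mathscr C^2_c$. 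So the inner integral equals $(P_{T-r}\varphi(T))(y)-\varphi(r,y)$. This yields exactly $\int v(T)\varphi(T) - \int_0^T\!\int g\varphi$, which is the desired identity for the forcing term. Adding the two parts gives~\eqref{Paper02_coordinate_wise_weak_sol}.

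The main obstacle is the measure-theoretic bookkeeping. The mild identity holds only almost everywhere, and we have to make sure that both sides of the weak formulation are evaluated consistently. That is why hypothesis~(iii) (holding for every $T$) is needed on the left-hand side. We also need to check that the differentiation under $P_{t-r}$ and the Fubini interchanges are legitimate for $\varphi$ which is merely $\mathscr C^{1,2}$ with compact support, for example by using uniform bounds on $\partial_t\varphi$ and $\Laplace\varphi$ and dominated convergence.
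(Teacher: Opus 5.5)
Your proof is correct. The verification of conditions (i)--(iii) of Definition~\ref{Paper02_weak_solution_distributional_sense} matches the paper's argument: the $L_\infty$ bound on $F(u)$ from the $\ell_1$ estimate on $F$, and hypothesis (iii) at the fixed time $T$. So does your treatment of the $P_tf_k$ term, which uses a cutoff at $t=\varepsilon$ followed by integration by parts.

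You take a genuinely different route for the Duhamel term. The paper keeps the semigroup on the data. It splits $\int_0^T\!\int_0^t$ into three pieces with an $\varepsilon$-cutoff and integrates by parts so that $\partial_t$ and $\Laplace$ fall on the kernel $p(t-\tau,\cdot)$, which is smooth only away from $t=\tau$. It then passes to the limit $\varepsilon\downarrow 0$ separately in the boundary term at time $T$ and in the term involving $P_\varepsilon F_k(u(t-\varepsilon,\cdot))$.

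You instead use Fubini and the symmetry of $p$ to move the semigroup onto the test function. Everything then reduces to the pointwise identity $\int_r^T P_{t-r}\big(\partial_t+\tfrac m2\Laplace\big)\varphi(t)\,dt=P_{T-r}\varphi(T)-\varphi(r)$. Since $\varphi(t,\cdot)\in\mathscr C^2_c$, the map $t\mapsto P_{t-r}\varphi(t)$ is $\mathscr C^1$ on all of $[r,T]$, so there is no singularity at $t=r$ and no limit to take. The only inputs are absolute convergence for Fubini (from $\|g\|_{L_\infty}$ and the $L_1$ norm of the compactly supported test function) and the commutation $\frac{d}{ds}P_sh=P_s\frac m2\Laplace h$ for $h\in\mathscr C^2_c$. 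This is shorter and cleaner than the paper's three-way split.

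One step in your linear term is under-justified. You claim $\int (P_\varepsilon f_k)\varphi(\varepsilon,\cdot)\,dx\to\int f_k\varphi(0,\cdot)\,dx$ because $P_\varepsilon\varphi\to\varphi$ uniformly, but uniform convergence is not enough here. The function $f_k$ is only bounded, and $P_\varepsilon\varphi(\varepsilon,\cdot)$ is not compactly supported. You need convergence in $L_1(\mathbb R)$, which follows from strong continuity of the heat semigroup on $L_1$ together with $\varphi(\varepsilon,\cdot)\to\varphi(0,\cdot)$ in $L_1$. Alternatively, use a Gaussian tail domination, which is exactly what the paper supplies in its dominated-convergence step.

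Better still, you can bypass this entirely by reusing your own duality argument with $r=0$ and $g$ replaced by $f_k$. This gives $\int_0^T\!\int (P_tf_k)\big(\partial_t+\tfrac m2\Laplace\big)\varphi\,dx\,dt=\int (P_Tf_k)\varphi(T,\cdot)\,dx-\int f_k\varphi(0,\cdot)\,dx$ directly. The whole lemma then follows from a single Fubini-plus-fundamental-theorem-of-calculus computation.
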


Although the equivalence between mild and weak solutions is classical under higher regularity assumptions, we did not find a result in the literature at the level of generality required in this article. For this reason, and for the sake of completeness, we include the proof of Lemma~\ref{Paper02_lem_equiv_mild_weak_sol} in Section~\ref{Paper02_appendix_section_proof_auxiliary_no_technical_lemmas} of the appendix.

We now precisely introduce the function spaces where we will establish well-posedness of solutions to the system of PDEs~\eqref{Paper02_PDE_scaling_limit}. Recall that~$\lambda$ denotes the Lebesgue measure on~$\mathbb{R}^{2}$, and, for any $T \geq 0$, let $\hat{\lambda}$ denote the measure on $[0,T] \times \mathbb{R}$ given by
\begin{equation} \label{Paper02_measure_time_space_box_i}
    \hat{\lambda}(dt \; dx) \defeq \frac{\mathds{1}_{\{t \in [0,T]\}}}{1 + \vert x \vert^{2}} \lambda (dt \; dx).
\end{equation}
For~$T >0$ and~$r \in [1,\infty)$, we introduce the function space
\begin{equation} \label{Paper02_functional_space_L1_l1}
\begin{aligned}
    & {L}_{r}([0,T] \times \mathbb{R}, \hat{\lambda}; \ell_{1})
    \\ & \quad \defeq \Bigg\{v: [0,T] \times \mathbb{R} \rightarrow \ell_{1} \; \textrm{such that}\; v \; \textrm{is measurable and}
    \\[-3mm]
    & \hspace{3cm} \| v \|_{{L}_{r}([0,T] \times \mathbb{R}, \hat{\lambda}; \ell_{1})} \defeq \Bigg(\int_{0}^{T} \int_{\mathbb{R}} \frac{\| v(t,x) \|_{\ell_{1}}^{r}}{1 + \vert x \vert^{2}} \, dx \, dt \Bigg)^{1/r} < \infty\Bigg\}.
\end{aligned}
\end{equation}

\begin{remark}
    Observe that, by~\eqref{Paper02_functional_space_L1_l1}, the space~$L_{r}([0,T] \times \mathbb{R}, \hat{\lambda}; \ell_{1})$ differs from the space
    \begin{equation*}
    \begin{aligned}
        & {L}_{r}([0,T] \times \mathbb{R} \times \mathbb{N}_{0}, \hat{\lambda}; \mathbb{R}) 
        \\ & \quad \defeq \Bigg\{v: [0,T] \times \mathbb{R} \times \mathbb{N}_{0} \rightarrow \mathbb{R} \; \textrm{measurable such that} \;  \sum_{k \in \mathbb N_0} \, \int_{0}^{T} \int_{\mathbb{R}} \frac{\vert v_{k}(t,x) \vert^{r}}{1 + \vert x \vert^{2}} \, dx \, dt < \infty\Bigg\}.
    \end{aligned}
    \end{equation*}
    In particular, the advantage of embedding the sequence of density processes in~$L_r([0,T] \times \mathbb{R}, \hat{\lambda}; \ell_1)$ rather than in~${L}_{r}([0,T] \times \mathbb{R} \times \mathbb{N}_{0}, \hat{\lambda}; \mathbb{R})$ is because of the structure of the reaction term~$F = (F_{k})_{k \in \mathbb{N}_{0}}$ defined in~\eqref{Paper02_reaction_term_PDE}, since for $u \in \ell_1^+$ and $k \in \mathbb{N}_{0}$, the expression defining $F_k(u)$ contains polynomials of~$\| u \|_{\ell_{1}}$.
\end{remark}

Since~$\ell_{1}$ is a complete and separable Banach space, for every $r \in [1, \infty)$, $L_{r}([0,T] \times \mathbb{R}, \hat{\lambda}; \ell_{1})$ is also a complete and separable Banach space when equipped with the norm~$\| \cdot \|_{L_{r}([0,T] \times \mathbb{R}, \hat{\lambda}; \ell_{1})}$ defined in~\eqref{Paper02_functional_space_L1_l1} (see~\cite[Proposition~1.2.29]{hytonen2016analysis} for a proof of this fact).

Next, we state the existence of mild solutions (in the sense of Definition~\ref{Paper02_definition_mild_solution}) in $L_{4\deg q_-}([0,T] \times \mathbb R, \hat \lambda; \ell_1)$ for all $T > 0$. The result is an immediate consequence of~\cite[Proposition~6.2]{madeira2025LLN}, where we show that, under an appropriate scaling, 
along  a subsequence, the rescaled particle numbers in the corresponding interacting particle system
converge in distribution to a mild solution of~\eqref{Paper02_PDE_scaling_limit}. In order to precisely state the convergence result, we would need to define the interacting particle system, which requires a fair amount of notation. We refer the interested reader to~\cite{madeira2025existence} for the construction and uniqueness in law of the interacting particle system and to~\cite{madeira2025LLN} for the proof of~\cite[Proposition~6.2]{madeira2025LLN}. Since the aim of this article is to study the PDE system~\eqref{Paper02_PDE_scaling_limit} directly, we only state the existence of mild solutions result.
Note that although results from this article are used at the end of the proof of the law of large numbers in~\cite{madeira2025LLN}, no results from this article are used in the proof of~\cite[Proposition~6.2]{madeira2025LLN}.

\begin{proposition} \label{Paper02_absolutely_continuity_limiting_process}
    Suppose $m > 0$, $\mu \in [0,1]$, $(s_k)_{k \in \mathbb N_0}$ satisfies Assumption~\ref{Paper02_assumption_fitness_sequence}, $q_+,q_-: [0, \infty) \rightarrow [0, \infty)$ satisfy Assumption~\ref{Paper02_assumption_polynomials}, and $f \in L_{\infty}(\mathbb R; \ell_1)$ satisfies Assumption~\ref{Paper02_assumption_initial_condition}. Then, for all $T > 0$, there exists $v = (v_k)_{k \in \mathbb N_0} \in L_{4\deg q_-}([0, T] \times \mathbb R, \hat \lambda; \ell_1)$ such that~$v$ is a mild solution to the system of PDEs~\eqref{Paper02_PDE_scaling_limit} in the sense of Definition~\ref{Paper02_definition_mild_solution}. Moreover, for every $k \in \mathbb N_0$, $v_k(t,x) \geq 0$ for Lebesgue-almost every $(t,x) \in [0,T] \times \mathbb R$.
\end{proposition}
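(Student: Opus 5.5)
The statement is imported from the companion article, so the proof will be a reduction to \cite[Proposition~6.2]{madeira2025LLN} rather than a direct PDE construction. First I will check that the hypotheses here (Assumptions~\ref{Paper02_assumption_fitness_sequence}, \ref{Paper02_assumption_polynomials} and~\ref{Paper02_assumption_initial_condition}, with $m>0$ and $\mu\in[0,1]$) match those under which the interacting particle system of~\cite{madeira2025existence} is well defined. I also need the initial particle configurations, suitably discretised from $f$, to converge to $f$. Two conditions on $f$ do the work here. Almost-everywhere continuity lets the discretised initial densities converge to $f$ pointwise almost everywhere, and the uniform tail condition (iii) gives tightness in the type variable in $\ell_1$.

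The convergence result gives the rest. The rescaled density processes converge in distribution, along a subsequence, to a random limit. I will take any realisation of the limit on the event of full probability where it is a mild solution. Since $f$ is deterministic and the limit equation is identified, I fix one such realisation $v$. This $v$ is a deterministic mild solution in the sense of Definition~\ref{Paper02_definition_mild_solution}.

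For membership in $L_{4\deg q_-}([0,T]\times\mathbb R,\hat\lambda;\ell_1)$, the companion article's uniform moment bounds give $\sup_N \mathbb E\|\cdot\|^{4\deg q_-}_{L_{4\deg q_-}(\hat\lambda)}<\infty$ for the prelimit densities. Fatou's lemma along the convergent subsequence then passes this bound to the limit. The weight $(1+|x|^2)^{-1}$ is what makes these norms finite despite infinite total mass.

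Non-negativity is preserved because each prelimit density is non-negative. The set $\{v\in L_r(\hat\lambda;\ell_1): v_k\ge 0 \text{ a.e. for all } k\}$ is closed, being an intersection of closed half-spaces tested against non-negative test functions. So the Portmanteau theorem shows the limit lies in it almost surely.

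The main obstacle is the identification of the limit as a mild solution, i.e.\ passing to the limit in the nonlinear term $\int_0^T P_{T-t}F(u(t,\cdot))\,dt$, where $F$ involves polynomials in $\|u\|_{\ell_1}$ up to degree $\deg q_-+1$. This requires strong (not just weak) convergence and uniform integrability from the $4\deg q_-$ moments. That work is done in~\cite{madeira2025LLN}, and here I would only invoke it and verify that nothing from the present article enters that argument, which avoids circularity.
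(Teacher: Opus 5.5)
Your proposal matches the paper, which likewise obtains this proposition directly from \cite[Proposition~6.2]{madeira2025LLN}, noting as you do that no results of the present article enter that proof, so there is no circularity. The steps you add (fixing a realisation of the subsequential limit, passing the $L_{4\deg q_-}$ moment bound to the limit via Fatou, and getting non-negativity from Portmanteau on a closed set) are reasonable elaborations of what the cited result already delivers; the paper simply treats the proposition as an immediate consequence.
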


\section{Uniqueness and regularity of mild solutions} \label{Paper02_uniqueness_weak_solutions_section}

Existence of $\lambda$-almost everywhere non-negative mild solutions to the system of PDEs~\eqref{Paper02_PDE_scaling_limit} follows from Proposition~\ref{Paper02_absolutely_continuity_limiting_process}. In this section, we will establish uniqueness and regularity properties of mild solutions to the system of PDEs~\eqref{Paper02_PDE_scaling_limit}. For~$T > 0$ and~$r \in [1, \infty)$, recall the function space~$
{L}_{r}([0,T] \times \mathbb{R}, \hat \lambda; \ell_{1})$ defined in~\eqref{Paper02_functional_space_L1_l1}. For~$v^{(1)},v^{(2)} \in L_{r}([0,T] \times \mathbb{R}, \hat{\lambda}; \ell_{1})$, we call~$v^{(2)}$ a \emph{version} of~$v^{(1)}$ in~$L_{r}([0,T] \times \mathbb{R}, \hat{\lambda}; \ell_{1})$ if
\begin{equation} \label{Paper02_modification_version_L_p}
   v^{(1)}(t,x) = {v}^{(2)}(t,x) \textrm{ for } \lambda\textrm{-almost every } (t,x) \in [0,T] \times \mathbb{R},
\end{equation}
where $\lambda$ denotes the Lebesgue measure on~$\mathbb{R}^{2}$. Recall the definition of mild solutions to the system of PDEs~\eqref{Paper02_PDE_scaling_limit} in Definition~\ref{Paper02_definition_mild_solution}. Our first main result of this section concerns uniqueness of mild solutions to the system of PDEs~\eqref{Paper02_PDE_scaling_limit}.

\begin{proposition} \label{Paper02_thm_uniqueness_weak_solutions}
    Suppose $m > 0$, $\mu \in [0,1]$, $(s_k)_{k \in \mathbb N_0}$ satisfies Assumption~\ref{Paper02_assumption_fitness_sequence}, $q_+,q_-: [0, \infty) \rightarrow [0, \infty)$ satisfy Assumption~\ref{Paper02_assumption_polynomials}, and $f \in L_{\infty}(\mathbb R; \ell_1)$ satisfies Assumption~\ref{Paper02_assumption_initial_condition}. For~$T > 0$, let $v^{(1)}, v^{(2)} \in L_{4\deg q_-}([0,T] \times \mathbb{R}, \hat{\lambda}; \ell_{1})$ be $\lambda$-almost everywhere non-negative mild solutions to the system of PDEs~\eqref{Paper02_PDE_scaling_limit}. Then~$v^{(2)}$ is a version of~$v^{(1)}$ in~$L_{4\deg q_-}([0,T] \times \mathbb{R}, \hat{\lambda}; \ell_{1})$, i.e.~$v^{(1)}$ and $v^{(2)}$ satisfy~\eqref{Paper02_modification_version_L_p}.
\end{proposition}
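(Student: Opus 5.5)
The overview in Section~\ref{Paper02_subsection_heuristics} suggests a three-step proof. First show that each solution has a bounded version. Then use a local Lipschitz bound for $F$ on bounded subsets of $\ell_1^+$. Finally close with Gr\"onwall.

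\textbf{Step 1: a priori bound.} For a non-negative mild solution $v$, I would show that $v$ has a version with $\sup_{(t,x)\in[0,T]\times\mathbb R}\|v(t,x)\|_{\ell_1}\le M$ for some $M<\infty$ depending only on $\|f\|_{L_\infty(\mathbb R;\ell_1)}$, $q_\pm$ and $T$. Summing the coordinates of \eqref{Paper02_reaction_term_PDE} and using $s_k\le 1$ gives
\[
\sum_k F_k(v)\le \big(q_+(\|v\|_{\ell_1})-q_-(\|v\|_{\ell_1})\big)\|v\|_{\ell_1}=: g(\|v\|_{\ell_1}).
\]
Since $\deg q_+<\deg q_-$, the function $g$ satisfies $g(U)\le C_0U$ for all $U\ge0$, and moreover $g(U)<0$ for $U$ large. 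Summing the mild formulation \eqref{Paper02_mild_solution_weaker_version} over $k$, which is legitimate by Definition~\ref{Paper02_definition_mild_solution}(i) and non-negativity, gives for a.e.\ $(T,x)$
\[
\|v(T,x)\|_{\ell_1}\le P_T\|f\|_{\ell_1}(x)+\int_0^T P_{T-t}\big(C_0\|v(t,\cdot)\|_{\ell_1}\big)(x)\,dt .
\]
Gr\"onwall cannot be applied directly, because $\|v(t,\cdot)\|_{\ell_1}$ is only known to lie in a weighted $L_{4\deg q_-}$ space. To get around this, I would truncate: set $w_N=\|v\|_{\ell_1}\wedge N$, or equivalently work with $\sup_x$ of $e^{-C_0t}\|v\|_{\ell_1}$ restricted to bounded sets. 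Iterating the inequality (a Picard-type comparison with the linear equation $\partial_t w=\tfrac m2\Laplace w+C_0w$) then yields $\|v(T,x)\|_{\ell_1}\le e^{C_0T}\|f\|_{L_\infty}$ a.e. Each iterate is finite because the Gaussian kernel integrates the weighted-$L_r$ function. Redefining $v$ on a null set gives the bounded version.

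\textbf{Step 2: local Lipschitz bound.} Let $u,w\in\ell_1^+$ with $\|u\|_{\ell_1},\|w\|_{\ell_1}\le M$. I would show $\|F(u)-F(w)\|_{\ell_1}\le L_M\|u-w\|_{\ell_1}$. Write
\[
F_k(u)-F_k(w)=\big(q_+(\|u\|_{\ell_1})-q_+(\|w\|_{\ell_1})\big)(\cdots)_k(u)+q_+(\|w\|_{\ell_1})\big[(\cdots)_k(u)-(\cdots)_k(w)\big],
\]
and treat the $q_-$ term in the same way. Summing over $k$ uses four facts: $s_k\le1$; the linear map $z\mapsto(s_k(1-\mu)z_k+\mathds 1_{\{k\ge1\}}s_{k-1}\mu z_{k-1})_k$ has $\ell_1$-norm at most $1$; $q_\pm$ are Lipschitz on $[0,M]$; and $|\|u\|_{\ell_1}-\|w\|_{\ell_1}|\le\|u-w\|_{\ell_1}$.

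\textbf{Step 3: Gr\"onwall.} Take the bounded versions of $v^{(1)},v^{(2)}$ from Step 1 and set $h(t)=\operatorname{ess\,sup}_x\|v^{(1)}(t,x)-v^{(2)}(t,x)\|_{\ell_1}$. Subtracting the two mild formulations, the $P_Tf$ terms cancel. Using that $P_t$ is an $L_\infty$-contraction on $\ell_1$-valued functions together with Step 2, we get $h(T)\le L_M\int_0^T h(t)\,dt$ for a.e.\ $T$, with $h$ bounded. Gr\"onwall then gives $h\equiv0$ a.e., so $v^{(1)}=v^{(2)}$ $\lambda$-a.e., which is \eqref{Paper02_modification_version_L_p}.

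The main obstacle is Step 1. Because \eqref{Paper02_mild_solution_weaker_version} holds only almost everywhere and a priori integrability is merely weighted $L_{4\deg q_-}$, the truncation and iteration must be justified carefully, in particular the measurability of the relevant $\operatorname{ess\,sup}$ and the finiteness of each iterate. The choice of exponent $4\deg q_-$ is presumably what makes $P_{T-t}|F(v)|$ finite in that argument.
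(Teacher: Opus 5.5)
Your proposal is correct and has the same architecture as the paper's proof: a bounded version, a local Lipschitz bound for $F$ on bounded subsets of $\ell_1^+$, and Gr\"onwall. Your Steps 2 and 3 essentially coincide with Lemma~\ref{Paper02_lipschitz_continuity_reaction_term_PDE} and the final argument.

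The real difference is Step 1, which you flag as the main obstacle. The paper disposes of it in a few lines by using the fact you mention in passing, that $g(U)<0$ for large $U$, rather than the linear bound $g(U)\le C_0U$. It bounds the \emph{partial} sums,
$\sum_{k=0}^J F_k(z)\le \sum_{k=0}^J z_k\,(q_+-q_-)(\|z\|_{\ell_1})\le \sup_{U\ge 0}U\bigl(q_+(U)-q_-(U)\bigr)<\infty$,
splitting according to the sign of $(q_+-q_-)(\|z\|_{\ell_1})$. The mild formulation then gives directly
$\sum_{k\le J}v_k(t,x)\le \|f\|_{L_\infty(\mathbb R;\ell_1)}+T\sup_{U\ge0}U\bigl(q_+(U)-q_-(U)\bigr)$ for a.e.\ $(t,x)$, and one lets $J\to\infty$. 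This has three advantages:
\begin{itemize}
\item no iteration is needed;
\item the weighted $L_{4\deg q_-}$ integrability is never used;
\item working with finite partial sums avoids exchanging $\sum_k$ with the time integral, which Definition~\ref{Paper02_definition_mild_solution}(i) alone (finiteness for a.e.\ $t$) does not obviously license.
\end{itemize}

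Your Picard iteration does work. What makes it work is that $\int_0^T P_{T-t}\|v(t,\cdot)\|_{\ell_1}(x)\,dt<\infty$ for every $x$, by H\"older against the weight $(1+|y|^2)^{-1}$; any exponent $r>3/2$ suffices. This kills the remainder $C_0^n\int_0^T\frac{(T-t)^{n-1}}{(n-1)!}P_{T-t}\|v(t,\cdot)\|_{\ell_1}(x)\,dt$. By contrast, the truncation $\|v\|_{\ell_1}\wedge N$ you offer as an equivalent alternative does not close by itself, since the right-hand side still involves the untruncated $\|v\|_{\ell_1}$.

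The two routes trade off as follows. Yours would survive for reaction terms whose total is only bounded above linearly, at the price of needing a priori integrability. The paper's is shorter and independent of the integrability class.

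In Step 3, the paper first passes to the version defined everywhere by the Duhamel formula (Lemma~\ref{Paper02_uniform_bound_limiting_process_L_infty_norm}). That version is continuous on $(0,T]\times\mathbb R$, so the paper can use genuine suprema and the monotone quantity $\sup_{\theta\le t}\|v^{(1)}(\theta,\cdot)-v^{(2)}(\theta,\cdot)\|_{L_\infty(\mathbb R;\ell_1)}$. Your a.e.\ version with $h(t)=\esssup_x\|v^{(1)}(t,x)-v^{(2)}(t,x)\|_{\ell_1}$ is also fine, provided you note two things. First, $t\mapsto h(t)$ is measurable, for instance as the limit of $L_p(\nu)$ norms with $\nu$ a probability measure equivalent to Lebesgue measure. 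Second, Gr\"onwall must be applied in integrated form, since the inequality only holds for a.e.\ $T$.
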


We will also establish the following regularity property of mild solutions.

\begin{proposition} \label{Paper02_smoothness_mild_solution}
    Under the assumptions of Proposition~\ref{Paper02_thm_uniqueness_weak_solutions}, there exists a unique function $v = (v_k)_{k \in \mathbb N_0}: [0, \infty) \times \mathbb R \rightarrow \ell_1^+$ such that $v \in \mathscr C((0,\infty) \times \mathbb  R; \ell_1^+)$ and $v$ satisfies the following conditions:
    \begin{enumerate}[(i)]
       \item $v = (v_k)_{k \in \mathbb N_0}$ is a global non-negative mild solution to the system of PDEs~\eqref{Paper02_PDE_scaling_limit} in the sense of Definition~\ref{Paper02_definition_mild_solution}, and it satisfies~\eqref{Paper02_mild_solution_weaker_version} for all $(t,x) \in [0,\infty) \times \mathbb R$, i.e.~for any $k \in \mathbb N_0$ and~$(t,x) \in [0, \infty) \times \mathbb R$,
       \begin{equation} \label{Paper02_coordinate_wise_strong_duhamel}
           v_k(t,x) = (P_tf_k)(x) + \int_0^t \Big(P_{t-\tau} F_k(v(\tau, \cdot))\Big)(x) \, d\tau.
       \end{equation}
        \item For all $T > 0$,
        \[
        v \vert_{[0,T] \times \mathbb R} \in L_{4\deg q_-}([0,T] \times \mathbb R, \hat \lambda; \ell_1) \cap L_\infty([0,T] \times \mathbb R; \ell_1).
        \]
        \item For all $T > 0$, $\sup_{t \in [0,T]} \; \| {v}(t, \cdot) \|_{L_{\infty}(\mathbb{R}; \ell_{1})} < \infty$.
        \end{enumerate}
        Moreover, the unique function $v = (v_k)_{k \in \mathbb N_0}: [0, \infty) \times \mathbb R \rightarrow \ell_1^+$ satisfies the following conditions:
        \begin{enumerate}
        \item[(iv)] $v_k \in \mathscr{C}^{1,2}((0, \infty) \times \mathbb R; \mathbb R)$ for every $k \in \mathbb N_0$.
        \item[(v)] The map $(0, \infty) \times \mathbb R \ni (t,x) \mapsto \| v(t,x) \|_{\ell_1}$ is in $\mathscr C^{1,2}((0,\infty) \times \mathbb R; \mathbb R)$.
    \end{enumerate}
\end{proposition}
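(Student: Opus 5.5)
We will build the continuous version in four stages: a local $L_\infty$ solution by Picard iteration, an a priori bound to extend it globally, identification with the mild solution of Proposition~\ref{Paper02_absolutely_continuity_limiting_process} via uniqueness, and then Schauder estimates for (iv)--(v).

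\emph{Local solution.} Fix $M > \|f\|_{L_\infty(\mathbb R;\ell_1)}$. On the closed set $\mathcal X_{t_0} \defeq \{w:[0,t_0]\times\mathbb R\to\ell_1^+ \text{ measurable},\ \sup_{t,x}\|w(t,x)\|_{\ell_1}\le 2M\}$ we consider the map
\[
\Phi(w)(t,x) \defeq (P_t f)(x) + \int_0^t \big(P_{t-\tau}F(w(\tau,\cdot))\big)(x)\,d\tau .
\]
Two facts about $F$ will be used.
\begin{itemize}
\item[(a)] $F$ is Lipschitz on $\{v\in\ell_1^+:\|v\|_{\ell_1}\le 2M\}$, because $q_\pm$ are polynomials and $s_k\le 1$. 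This is the local Lipschitz property mentioned in Section~\ref{Paper02_subsection_heuristics}.
\item[(b)] $\Phi$ preserves non-negativity. To see this, rewrite the equation with an added linear term $+Cw_k$ on both sides, where $C\ge \sup_{\|v\|\le 2M} q_-(\|v\|_{\ell_1})$. This makes the reaction term $F_k(w)+Cw_k$ non-negative for $w\ge 0$, and we use the correspondingly weighted semigroup $e^{-Ct}P_t$.
\end{itemize}
For $t_0$ small, $\Phi$ is then a contraction on $\mathcal X_{t_0}$ in the $L_\infty(\ell_1)$ norm.

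\emph{Global existence.} To extend globally we need an a priori bound on $N(t,x) \defeq \|v(t,x)\|_{\ell_1}$. Summing the coordinates and using $\sum_k F_k(v) \le q_+(N)N - q_-(N)N$, we get that $N$ is a mild subsolution of $\partial_t N = \frac m2\Laplace N + N(q_+(N)-q_-(N))$. Since $\deg q_+<\deg q_-$, the function $N(q_+(N)-q_-(N))$ is negative for $N\ge K$ with $K$ large, so the constant $\max(K,M)$ is a supersolution. A comparison argument applied to the Duhamel formula, rather than to a classical PDE since regularity is not yet available, yields $N\le \max(K,\|f\|_{L_\infty})$. The iteration therefore restarts with a uniform step size $t_0$, and we obtain a global solution satisfying (i) with identity~\eqref{Paper02_coordinate_wise_strong_duhamel} for every $(t,x)$, together with (ii) and (iii). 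The $L_{4\deg q_-}(\hat\lambda)$ membership in (ii) is immediate from boundedness, because $\int (1+|x|^2)^{-1}dx<\infty$.

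\emph{Continuity and uniqueness.} Continuity on $(0,\infty)\times\mathbb R$ in $\ell_1$ follows from the Duhamel formula. The term $P_tf$ is continuous for $t>0$; coordinatewise this is standard, and the sum is controlled uniformly by the tail condition in Assumption~\ref{Paper02_assumption_initial_condition}(iii) or simply by dominated convergence. The integral term is continuous because $F(v)$ is bounded in $L_\infty(\ell_1)$. Uniqueness within the class (i)--(iii) follows from Gr\"onwall applied to $\sup_x\|v^{(1)}-v^{(2)}\|_{\ell_1}$, using (a). Proposition~\ref{Paper02_thm_uniqueness_weak_solutions} then identifies $v$ with any other mild solution almost everywhere.

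\emph{Regularity (iv)--(v).} Since $F(v)$ is bounded, $v_k(t,\cdot)$ is Hölder continuous in space and time for $t\ge\delta>0$, by standard heat-kernel gradient bounds on the Duhamel term. Then $F_k(v)$ is locally Hölder, since $q_\pm(\|v\|_{\ell_1})$ is Hölder provided $\|v\|_{\ell_1}$ is. This needs Hölder continuity of $v$ in $\ell_1$, not just coordinatewise, and the heat-kernel estimates give it directly in $\ell_1$ norm. Classical Schauder theory for $\partial_t w=\frac m2\Laplace w + g$ with Hölder $g$ then gives $v_k\in\mathscr C^{1,2}$, which is (iv). For (v), $N=\sum_k v_k$ satisfies the scalar Duhamel formula with source $\sum_k F_k(v) = N(q_+(N)-q_-(N)) - q_+(N)\sum_k(1-s_k)v_k$. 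This source is Hölder by the same $\ell_1$-Hölder bound, so Schauder again applies.

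\emph{Main obstacle.} The step I expect to be most delicate is the $\ell_1$-valued (rather than coordinatewise) Hölder regularity needed for (v), and the handling of the initial datum near $t=0$. I would first try to get the Hölder estimate by applying the scalar heat-kernel bounds to the $\ell_1$-valued Bochner integrals directly.
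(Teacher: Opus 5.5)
Your proof is correct, and from the continuity step onward it follows the paper. The $\ell_1$-valued H\"older bound is obtained as in Lemma~\ref{Paper02lem:heat_continuity_ell1}, by applying scalar heat-kernel increment estimates to the Bochner integrals. Krylov's Schauder theorem is then applied on $(\delta,\infty)$ to each $v_k$, and also to the scalar Duhamel formula for $\|v\|_{\ell_1}$, whose source $\sum_k F_k(v)$ is H\"older because $F$ is locally Lipschitz on $\ell_1^+$. Uniqueness by Gr\"onwall in $\sup_x\|\cdot\|_{\ell_1}$ is also the paper's argument.

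The genuine difference is existence. The paper never constructs a solution by PDE methods. It imports, for each $T$, an a.e.\ non-negative mild solution in $L_{4\deg q_-}([0,T]\times\mathbb R,\hat\lambda;\ell_1)$ from the particle-system limit (Proposition~\ref{Paper02_absolutely_continuity_limiting_process}). Lemma~\ref{Paper02_uniform_bound_limiting_process_L_infty_norm} then shows that any such solution is essentially bounded, via the crude bound $\|v(t,x)\|_{\ell_1}\le\|f\|_{L_\infty(\mathbb R;\ell_1)}+T\sup_{U\ge0}U(q_+(U)-q_-(U))$, and that it has a continuous version satisfying the Duhamel identity everywhere. The solutions for different $T$ are glued with Proposition~\ref{Paper02_thm_uniqueness_weak_solutions}. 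So existence and non-negativity come for free, and uniqueness in the large a.e.\ class is exactly what the companion article needs to identify its limit.

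Your Picard construction makes the result self-contained and independent of the particle system. The price is that you must handle positivity and global bounds yourself, and a few of those steps need to be stated correctly:
\begin{itemize}
\item The unshifted map $\Phi$ does not preserve non-negativity. The contraction must be run for the shifted map, with kernel $e^{-Ct}P_t$ and source $F(w)+Cw$. You then need a Fubini--Gr\"onwall computation showing that its bounded fixed points satisfy the original Duhamel identity.
\item The mild comparison for $N$ needs the same device: choose $C$ so that $U\mapsto U(q_+(U)-q_-(U))+CU$ is non-decreasing on the relevant range. A naive Gr\"onwall argument on $(N-K')^+$ fails, because the source can be positive where $N<K'$.
\item You can skip this comparison altogether. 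The linear-in-time bound above already gives a uniform step size on each $[0,T]$, which is all that global existence requires.
\item Since $f$ is only essentially bounded, $\mathcal X_{t_0}$ should be defined with essential suprema in $x$, or on $(0,t_0]$.
\end{itemize}
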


We are now ready to establish Theorem~\ref{Paper02_deterministic_scaling_foutel_rodier_etheridge}.

\begin{proof}[Proof of Theorem~\ref{Paper02_deterministic_scaling_foutel_rodier_etheridge}]
Existence of a global non-negative mild solution $u: [0, \infty) \times \mathbb R \rightarrow \ell_1^+$ to~\eqref{Paper02_PDE_scaling_limit} satisfying conditions~(i)--(iii) follows from Proposition~\ref{Paper02_smoothness_mild_solution}. In order to establish uniqueness, let~$u^{(1)},u^{(2)}: [0,\infty) \times \mathbb R \rightarrow \ell_1^+$ be global non-negative mild solutions to the system of PDEs~\eqref{Paper02_PDE_scaling_limit} satisfying~(i)--(iii). By~(ii), the maps~$u^{(1)}_k, u^{(2)}_k: [0,\infty) \times \mathbb R \rightarrow [0, \infty)$ are measurable with respect to the Lebesgue measure for every~$k \in \mathbb N_0$. Hence, as explained at the beginning of Section~\ref{Paper02_PDES_sequence_spaces}, the maps~$u^{(1)},u^{(2)}: [0,\infty) \times \mathbb R \rightarrow \ell_1^+$ are both (Bochner) measurable. Therefore, condition~(i) together with~\eqref{Paper02_functional_space_L1_l1} imply that, for all $T \in (0, \infty)$,
\begin{equation} \label{eq:measurability_candidate_solutions}
u^{(1)}\vert_{[0,T] \times \mathbb R},u^{(2)}\vert_{[0,T] \times \mathbb R} \in L_{4\deg q_-}([0,T] \times \mathbb R, \hat \lambda;\ell_1).
\end{equation}
Since~\eqref{eq:measurability_candidate_solutions} holds for all $T \in (0, \infty)$, it follows from Proposition~\ref{Paper02_thm_uniqueness_weak_solutions} that
\begin{equation} \label{eq:lebesgue_ae_uniqueness_final}
u^{(1)}(t,x) = {u}^{(2)}(t,x) \textrm{ for } \lambda\textrm{-almost every } (t,x) \in [0,\infty) \times \mathbb{R}.
\end{equation}
Finally, combining condition~(ii) with~\eqref{eq:lebesgue_ae_uniqueness_final}, we conclude that $u^{(1)}(t,x) = u^{(2)}(t,x)$ for all $t \in (0, \infty)$ and~$x \in \mathbb R$, which completes the proof.
\end{proof}

In the remainder of this section, we will prove Propositions~\ref{Paper02_thm_uniqueness_weak_solutions} and~\ref{Paper02_smoothness_mild_solution}.

\subsection{Proof of Proposition~\ref{Paper02_thm_uniqueness_weak_solutions}} \label{Paper02_uniqueness_mild_solutions_subsection}

Our first step towards the proof of Proposition~\ref{Paper02_thm_uniqueness_weak_solutions} will be to prove that any non-negative mild solution to the system of PDEs~\eqref{Paper02_PDE_scaling_limit} must be essentially bounded. Although we do not follow any particular reference, similar strategies to establish regularity properties of weak or mild solutions to partial differential equations are common in the PDE literature (e.g.~\cite[Proof of Theorem~1]{brezis1979uniqueness} and~\cite[Section~4.2]{pazy2012semigroups}).

\begin{lemma} \label{Paper02_uniform_bound_limiting_process_L_infty_norm}
    Suppose the assumptions of Proposition~\ref{Paper02_thm_uniqueness_weak_solutions} hold. For $T > 0$, let $v \in L_{4\deg q_-}([0,T] \times \mathbb{R}, \hat{\lambda}; \ell_{1})$ be a $\lambda$-almost everywhere~non-negative mild solution to the system of PDEs~\eqref{Paper02_PDE_scaling_limit}. Then $v \in L_{\infty}([0,T] \times \mathbb{R}; \ell_{1})$. Moreover, there exists a version~$\tilde{v} \in L_{4\deg q_-}([0,T] \times \mathbb{R}, \hat{\lambda}; \ell_{1})$ of~$v$ satisfying the following conditions:
    \begin{enumerate}[(i)]
        \item For every~$k \in \mathbb{N}_{0}$ and any~$(t,x) \in [0,T] \times \mathbb{R}$,
        \begin{equation} \label{Paper02_stronger_formulation_mild_solution}
        \tilde{v}_{k}(t,x) = (P_{t}f_{k})(x) + \int_{0}^{t} \Big(P_{t - \tau}F_{k}(\tilde{v}(\tau, \cdot))\Big)(x) \, d\tau.
    \end{equation} 
        \item $\tilde{v} \in \mathscr{C}((0,T] \times \mathbb{R}; \ell_{1}^{+})$.
        \item $\sup_{t \in [0,T]} \; \| \tilde{v}(t, \cdot) \|_{L_{\infty}(\mathbb{R}; \ell_{1})} < \infty.$
    \end{enumerate}
\end{lemma}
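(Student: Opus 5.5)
The plan is to bound the total density $\|v\|_{\ell_1}$ first, using only the sign structure of $F$; the $\ell_1$-valued statements (i)--(iii) will then follow from that scalar bound.

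\textbf{Step 1: a scalar inequality for the total density.} Summing \eqref{Paper02_reaction_term_PDE} over $k$ and using $s_k\le 1$ and non-negativity of $v$ gives, for $v\in\ell_1^+$,
\[
\sum_k F_k(v)= q_+(\|v\|_{\ell_1})\sum_k s_k v_k - q_-(\|v\|_{\ell_1})\|v\|_{\ell_1}\le g(\|v\|_{\ell_1}), \qquad g(U)\defeq U\big(q_+(U)-q_-(U)\big).
\]
By Assumption~\ref{Paper02_assumption_polynomials}, $g(U)\le CU$ for all $U\ge 0$ and some $C$, and $g(U)\le 0$ for $U\ge M$. Let $w\defeq\|v\|_{\ell_1}=\sum_k v_k$, which is valid since $v_k\ge 0$. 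Summing the coordinatewise mild formulation gives, for a.e.\ $(T,x)$,
\[
w(T,x)\le (P_T\|f\|_{\ell_1})(x)+C\int_0^T (P_{T-t}w(t,\cdot))(x)\,dt .
\]
Summing over $k$ and splitting $F$ into positive and negative parts is justified by Fubini/Tonelli. Definition~\ref{Paper02_definition_mild_solution}(i) and the integrability $v\in L_{4\deg q_-}(\hat\lambda)$ make the negative parts integrable, so nothing is undefined.

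\textbf{Step 2: bootstrap to $L_\infty$.} The difficulty here is that $w$ is a priori only in a weighted $L_{4\deg q_-}$ space, not bounded. I would iterate the linear inequality: substituting it into itself $n$ times gives a term $\frac{(CT)^n}{n!}$ times a heat-smoothed remainder. The main obstacle is controlling $\int_0^T P_{T-t}w\,dt$ pointwise, since the weight $(1+|x|^2)^{-1}$ and the time integrability do not immediately give sup bounds.

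To handle it I would use Hölder with the Gaussian kernel: $p(t,\cdot)\in L_{r'}$ with norm $\lesssim t^{-(1-1/r')/2}\cdot$(polynomial weight factor). The factor $t^{-(1-1/r')/2}$ is time-integrable for $r=4\deg q_-\ge 4$. Because $p(t,x-y)(1+|y|^2)^{1/r}$ is integrable in $y$, this gives a locally-in-$x$ bound for each $n\ge1$. A Grönwall/Picard comparison then yields $w\le e^{CT}\|f\|_{L_\infty}$ a.e.

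Alternatively, I could take the minimal solution $\bar w$ of $\bar w = P_T\|f\| + C\int P\bar w$, which is $e^{CT}P_T\|f\|$. I would then show $(w-\bar w)^+$ satisfies the homogeneous inequality and hence vanishes by the same iteration. This gives $v\in L_\infty([0,T]\times\mathbb R;\ell_1)$.

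\textbf{Step 3: the continuous version.} With $\|v\|\le K$, each $F_k(v)$ is bounded by $C_K$. Moreover $\|F(v)\|_{\ell_1}\le C_K$, and the tails satisfy $\sum_{j\ge k}|F_j(v)|\le C_K\sum_{j\ge k-1}v_j$. Define $\tilde v_k(t,x)$ by the right side of \eqref{Paper02_stronger_formulation_mild_solution} with $v$ in place of $\tilde v$ inside $F$. This is defined for every $(t,x)$, and is continuous on $(0,T]\times\mathbb R$: $P_t f_k$ is smooth for $t>0$, and the Duhamel term with bounded integrand is continuous by dominated convergence using heat-kernel continuity. Also $\tilde v=v$ a.e. by the mild-solution property, so replacing $v$ by $\tilde v$ inside $F$ changes nothing (an a.e.\ change in $\tau,y$ does not affect the integrals). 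Hence (i) holds everywhere.

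Non-negativity and the bound (iii) pass to $\tilde v$ from the integral form. For $\ell_1$-continuity in (ii), I need uniform tail control of $\sum_{j\ge k}\tilde v_j$. I would get it by summing tails. The tail $S_k=\sum_{j\ge k}\tilde v_j$ satisfies $S_k\le P_t\sum_{j\ge k} f_j + C_K\int_0^t P\,S_{k-1}$, where the positive part of the tail reaction involves only $s_{k-1}$-weighted terms. Since $s_{k-1}\to0$, the contribution becomes small, and with Assumption~\ref{Paper02_assumption_initial_condition}(iii) this gives $\sup_{t,x}S_k\to0$. Coordinatewise continuity plus uniform tails then gives continuity in $\ell_1$.
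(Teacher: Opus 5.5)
Your proof is correct, but both halves take a longer route than the paper.

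\textbf{The $L_\infty$ bound.} The paper never linearises. Since $\deg q_+<\deg q_-$, the function $g(U)=U(q_+(U)-q_-(U))$ is bounded above on $[0,\infty)$; you yourself observed $g\le 0$ for $U\ge M$, so $g\le\max_{[0,M]}g$. Hence $\sum_{k\le J}F_k(v)\le\sup_{U\ge0}g(U)$ pointwise. The Duhamel formula then gives at once $\sum_{k\le J}v_k(t,x)\le\|f\|_{L_\infty(\mathbb R;\ell_1)}+T\sup_{U\ge0}g(U)$ for a.e.\ $(t,x)$, and letting $J\to\infty$ finishes. This needs no iteration, no weighted H\"older estimate and no use of the class $L_{4\deg q_-}(\hat\lambda)$. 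Working with finite sums also avoids your Fubini-in-$k$ step.

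Your bootstrap does close. The $n$-th remainder is at most $\frac{(CT)^{n-1}}{(n-1)!}\,C\int_0^T (P_{T-t}w(t,\cdot))(x)\,dt$, which is finite for every $(T,x)$ by your H\"older bound, since $4\deg q_-\ge 4>3/2$. What it buys is that it only uses that $q_+-q_-$ is bounded above, not that $U(q_+-q_-)(U)$ is. That extra generality is not needed in the present setting.

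\textbf{Continuity in (ii).} Uniform tail control is not actually required. The paper puts the $\ell_1$-norm inside the heat integral:
\[
\Bigl\|\int(p(t_1,x_1-y)-p(t_2,x_2-y))h(y)\,dy\Bigr\|_{\ell_1}\le\int|p(t_1,x_1-y)-p(t_2,x_2-y)|\,\|h(y)\|_{\ell_1}\,dy .
\]
So Lemma~\ref{Paper02lem:heat_continuity_ell1} gives $\ell_1$-valued H\"older continuity directly from $f\in L_\infty(\mathbb R;\ell_1)$ and $F(v)\in L_\infty([0,T]\times\mathbb R;\ell_1)$. Your tail argument via $s_{k-1}\to0$ and Assumption~\ref{Paper02_assumption_initial_condition}(iii) is valid but superfluous.

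\textbf{One correction.} Non-negativity of $\tilde v$ does not come from the integral form, since the $F_k$ can be negative. As in the paper, it follows from continuity of each $\tilde v_k$ on $(0,T]\times\mathbb R$ together with $\tilde v=v\ge0$ a.e. Your tail argument needs this non-negativity before it can run.
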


\begin{proof}
  By the definition of the reaction term~$F = (F_{k})_{k \in \mathbb{N}_{0}}$ in~\eqref{Paper02_reaction_term_PDE}, the fact that $s_k \leq 1$ for all~$k \in \mathbb N_0$ by Assumption~\ref{Paper02_assumption_fitness_sequence}(i) and~(iii), and using that $q_+$ and $q_-$ are non-negative by Assumption~\ref{Paper02_assumption_polynomials}, it follows that for all~$z \in \ell_{1}^{+}$,
    \begin{equation}
    \label{Paper02_trivial_ell_1_bound_reaction_term}
        \| F(z) \|_{\ell_{1}} \leq \| z \|_{\ell_{1}}(q_{+}(\| z \|_{\ell_{1}}) + q_{-}(\| z \|_{\ell_{1}})), 
    \end{equation}
    and that for all $J \in \mathbb N_0$,
    \begin{equation}
    \label{Paper02_trivial_ell_1_bound_reaction_term_ii}
        \sum_{k = 0}^J F_k(z)  \leq \sum_{k = 0}^J z_k (q_{+}(\| z \|_{\ell_{1}}) - q_{-}(\| z \|_{\ell_{1}})) \leq \sup_{U \geq 0} \Big(U(q_+(U) - q_-(U))\Big).
    \end{equation}
    
    Since~$v$ is a  mild solution to the system of PDEs~\eqref{Paper02_PDE_scaling_limit}, by Definition~\ref{Paper02_definition_mild_solution}(ii) we have that for $\lambda$-almost every $(t,x) \in [0,T] \times \mathbb R$,
      \[
      v_k(t,x) = (P_tf_k)(x) + \int_0^t \Big(P_{t-\tau} F_k(v(\tau, \cdot))\Big)(x) \, d\tau \quad \forall \, k \in \mathbb N_0,
      \]
      where $\{P_t\}_{t \geq 0}$ is the semigroup of Brownian motion run at speed $m$ defined before~\eqref{Paper02_Gaussian_kernel}. Hence, by using Assumption~\ref{Paper02_assumption_initial_condition}(ii) and~\eqref{Paper02_trivial_ell_1_bound_reaction_term_ii}, we conclude that for $\lambda$-almost every $(t,x) \in [0, T] \times \mathbb R$, the following estimate holds for every $J \in \mathbb N_0$:
      \begin{equation} \label{Paper02_trivial_ell_1_bound_reaction_term_iii}
      \begin{aligned}
          \sum_{k = 0}^J v_k(t,x) & = \left(P_t \sum_{k = 0}^J f_k\right)(x) + \int_0^t \left(P_{t-\tau} \sum_{k = 0}^J F_k(v(\tau, \cdot))\right)(x) \, d\tau\\ & \leq  \| f \|_{L_{\infty}(\mathbb{R}; \ell_{1})} + T \sup_{U \geq 0} \Big(U(q_{+}(U) - q_{-}(U))\Big).
      \end{aligned}
      \end{equation}
      Moreover, since $v$ is $\lambda$-almost everywhere non-negative, we have that for $\lambda$-almost every $(t,x) \in [0,T] \times \mathbb R$, $ \lim_{J \rightarrow \infty} \; \sum_{k = 0}^J v_k(t,x) = \| v(t,x) \|_{\ell_1}$,
      and therefore, by taking the limit as $J \rightarrow \infty$ on the left-hand side of~\eqref{Paper02_trivial_ell_1_bound_reaction_term_iii}, we conclude that for $\lambda$-almost every $(t,x) \in [0,T] \times \mathbb{R}$,
    \begin{equation} \label{Paper02_step_i_uniform_bound_mild_solution}
    \begin{aligned}
        \| {v} (t,x) \|_{\ell_{1}} \leq \| f \|_{L_{\infty}(\mathbb{R}; \ell_{1})} + T \sup_{U \geq 0} \Big( U(q_{+}(U) - q_{-}(U))\Big).
    \end{aligned}
    \end{equation}
    Since, by Assumption~\ref{Paper02_assumption_polynomials}, $q_+$ and $q_-$ are non-negative polynomials with $0 \leq \deg q_{+} < \deg q_{-}$, the term on the right-hand side of~\eqref{Paper02_step_i_uniform_bound_mild_solution} is finite, and therefore~$v \in L_{\infty}([0,T] \times \mathbb{R}; \ell_{1})$.

    Now, let $\tilde{v} = (\tilde{v}_k)_{k \in \mathbb{N}_0}: [0,T] \times \mathbb{R} \rightarrow \ell_1$ be given by, for every $k \in \mathbb{N}_0$ and any $(t,x) \in [0,T]\times \mathbb R$, 
    \begin{equation} \label{Paper02:trivial_construction_mild_solution_not_so_trivial}
        \tilde{v}_k(t,x) = (P_{t}f_{k})(x) + \int_{0}^{t} \Big(P_{t - \tau}F_{k}({v}(\tau, \cdot))\Big)(x) \, d\tau.
    \end{equation}
    Then, by Definition~\ref{Paper02_definition_mild_solution}(ii), $\tilde{v}$ is a version of $v$, and so $\tilde v = (\tilde v_k)_{k \in \mathbb N_0}$ is equal to $v = (v_k)_{k \in \mathbb N_0}$ $\lambda$-almost everywhere in $[0,T] \times \mathbb R$. Hence, identity~\eqref{Paper02:trivial_construction_mild_solution_not_so_trivial} implies that for all $k \in \mathbb N_0$ and $(t,x) \in [0,T] \times \mathbb R$,
    \begin{equation*}
    \begin{aligned}
        \tilde{v}_k(t,x) & = (P_tf_k)(x) + \int_0^t \int_\mathbb R p(t-\tau, x-y) F_k(v(\tau,y)) \, dy \, d\tau \\ & = (P_tf_k)(x) + \int_0^t \int_\mathbb R p(t-\tau, x-y) F_k(\tilde v(\tau,y)) \, dy \, d\tau,
    \end{aligned}
    \end{equation*}
    where $p$ is the Gaussian kernel function defined in~\eqref{Paper02_Gaussian_kernel}. Hence, we conclude that $\tilde v = (\tilde v_k)_{k \in \mathbb N_0}$ satisfies~\eqref{Paper02_stronger_formulation_mild_solution} for all $k \in \mathbb N_0$ and $(t,x) \in [0, T] \times \mathbb R$. Moreover, since $v \in L_{\infty}([0,T] \times \mathbb R; \ell_1)$ and $\tilde v$ is a version of $v$, we have that $\tilde v \in L_{\infty}([0,T] \times \mathbb R; \ell_1)$. Hence, by~\eqref{Paper02_trivial_ell_1_bound_reaction_term}, the map
    \begin{equation} \label{Paper02_F(v)_in_L_infty}
    [0,T] \times \mathbb R \ni (t,x) \mapsto F(\tilde v(t,x)) \textrm{ is in } L_{\infty}([0,T] \times \mathbb R; \ell_1).
    \end{equation}
    Combining~\eqref{Paper02_F(v)_in_L_infty} and~\eqref{Paper02_stronger_formulation_mild_solution}, and then applying the smoothing properties of the Gaussian kernel (see Lemma~\ref{Paper02lem:heat_continuity_ell1} in the appendix), it follows that $\tilde v = (\tilde v_k)_{k \in \mathbb N_0} \in \mathscr C((0,T] \times \mathbb R; \ell_1)$. By continuity and the fact that $\tilde v = (\tilde v_k)_{k \in \mathbb N_0}$ is almost everywhere non-negative, we conclude that $\tilde{v}_k(t,x) \geq 0 \; \forall \, (k,t,x) \in \mathbb N_0 \times (0,T] \times \mathbb R$, i.e.~that condition~(ii) of the lemma holds.
    
    Moreover, condition~(iii) of the lemma follows from the fact that $\tilde v \in L_{\infty}([0,T] \times \mathbb R; \ell_1)$ and that $\tilde v \in \mathscr C((0,T] \times \mathbb  R; \ell_1)$, together with the observation that by~\eqref{Paper02_stronger_formulation_mild_solution}, $\tilde v(0, \cdot) = f(\cdot)$, and that $f \in L_{\infty}(\mathbb R; \ell_1)$ by Assumption~\ref{Paper02_assumption_initial_condition}(ii). This completes the proof.
\end{proof}

Next, we establish a local Lipschitz condition for the reaction term $F = (F_{k})_{k \in \mathbb{N}_{0}}$, which, together with Lemma~\ref{Paper02_uniform_bound_limiting_process_L_infty_norm}, will enable us to apply Gr\"{o}nwall's inequality in the proof of Proposition~\ref{Paper02_thm_uniqueness_weak_solutions}.

\begin{lemma} \label{Paper02_lipschitz_continuity_reaction_term_PDE}
    Suppose the assumptions of Proposition~\ref{Paper02_thm_uniqueness_weak_solutions} hold. Then the reaction term $F = (F_{k})_{k \in \mathbb{N}_{0}}: \ell_{1} \rightarrow \ell_{1}$ defined in~\eqref{Paper02_reaction_term_PDE} is locally Lipschitz continuous on~$\ell_{1}^{+}$, i.e.~$F = (F_{k})_{k \in \mathbb{N
    }_{0}}$ is  Lipschitz continuous on bounded subsets of $\ell_{1}^{+}$.
\end{lemma}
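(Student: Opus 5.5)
The plan is to write $F$ as a sum of simple pieces and check that each is Lipschitz on bounded subsets of $\ell_1^+$. Fix $M>0$ and take $v,w \in \ell_1^+$ with $\|v\|_{\ell_1},\|w\|_{\ell_1}\le M$. Define the linear operators $A,I:\ell_1\to\ell_1$ by
\[
(Az)_k \defeq s_k(1-\mu)z_k + \mathds{1}_{\{k\ge1\}}s_{k-1}\mu z_{k-1}, \qquad (Iz)_k \defeq z_k .
\]
With this notation
\[
F(z)=q_+(\|z\|_{\ell_1})Az - q_-(\|z\|_{\ell_1})z .
\]
Since $0\le s_k\le 1$ by Assumption~\ref{Paper02_assumption_fitness_sequence}, we have
\[
\|Az\|_{\ell_1}\le (1-\mu)\|z\|_{\ell_1}+\mu\|z\|_{\ell_1}=\|z\|_{\ell_1}.
\]
So $A$ is a bounded linear operator with norm at most $1$, and in particular it is Lipschitz with constant $1$.

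Next, bound the difference by adding and subtracting $q_+(\|v\|_{\ell_1})Aw$:
\[
\|q_+(\|v\|)Av - q_+(\|w\|)Aw\|_{\ell_1}\le |q_+(\|v\|)|\,\|A(v-w)\|_{\ell_1} + |q_+(\|v\|)-q_+(\|w\|)|\,\|Aw\|_{\ell_1}.
\]
The same splitting works for the $q_-$ term. A polynomial restricted to $[0,M]$ is bounded, say by $C_M$, and is Lipschitz there with constant $L_M=\sup_{[0,M]}(|q_+'|+|q_-'|)$. The reverse triangle inequality gives
\[
|\|v\|_{\ell_1}-\|w\|_{\ell_1}|\le\|v-w\|_{\ell_1}.
\]
Combining these three facts yields
\[
\|F(v)-F(w)\|_{\ell_1}\le \bigl(2C_M+2ML_M\bigr)\|v-w\|_{\ell_1}.
\]

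This argument also shows that $F$ maps $\ell_1$ into $\ell_1$. No step is a real obstacle. The only points needing care are the uniform bound $s_k\le1$, which makes $A$ bounded uniformly in $k$ (the infinite type space causes no difficulty), and the fact that $q_\pm$ depend on $z$ only through the scalar $\|z\|_{\ell_1}$, which is $1$-Lipschitz in $z$. Non-negativity is not actually needed; the restriction to $\ell_1^+$ just matches the statement.
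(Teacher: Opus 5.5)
Your proof is correct and takes essentially the same route as the paper. Both add and subtract to split $F(v)-F(w)$, use $s_k\le 1$ to control the linear selection/mutation part by $\|v-w\|_{\ell_1}$, and then use that $q_\pm$ are polynomials (bounded and Lipschitz on $[0,M]$) composed with the $1$-Lipschitz map $z\mapsto\|z\|_{\ell_1}$; the only cosmetic difference is that the paper factors $q_\pm(a)-q_\pm(b)=(a-b)Q_\pm(a,b)$ with non-negative-coefficient polynomials $Q_\pm$, where you use the mean value theorem.
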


\begin{proof}
    Take $A > 0$, and then take $z, \tilde{z} \in \ell_{1}^{+}$ such that $\| z \|_{\ell_{1}} \vee \| \tilde{z} \|_{\ell_{1}} \leq A$. Recalling the definition of $F = (F_{k})_{k \in \mathbb{N}_{0}}$ in~\eqref{Paper02_reaction_term_PDE}, and then using the triangle inequality and the fact that by Assumption~\ref{Paper02_assumption_fitness_sequence}, $s_k \leq 1 \; \forall \, k \in \mathbb N_0$, we conclude that
    \begin{equation} \label{Paper02_intermediate_step_local_lipschitz_condition}
    \begin{aligned}
        & \| F(z) - F(\tilde{z}) \|_{\ell_{1}} \\ & \quad \leq \| z - \tilde{z} \|_{\ell_{1}} \Big(q_{+}(\| z \|_{\ell_{1}}) + q_{-}(\| z \|_{\ell_{1}})\Big) \\ & \quad \quad \quad + \| \tilde{z} \|_{\ell_{1}}\Big\vert q_{+}(\| z \|_{\ell_{1}}) - q_{+}(\| \tilde{z} \|_{\ell_{1}})\Big\vert +  \| \tilde{z} \|_{\ell_{1}}\Big\vert q_{-}(\| z \|_{\ell_{1}}) - q_{-}(\| \tilde{z} \|_{\ell_{1}})\Big\vert \\ & \quad \lesssim_{A} \| z - \tilde{z} \|_{\ell_{1}} + \Big\vert q_{+}(\| z \|_{\ell_{1}}) - q_{+}(\| \tilde{z} \|_{\ell_{1}})\Big\vert + \Big\vert q_{-}(\| z \|_{\ell_{1}}) - q_{-}(\| \tilde{z} \|_{\ell_{1}})\Big\vert,
    \end{aligned}
    \end{equation}
    where for the last inequality we used the fact that we chose $z,\tilde z$ such that $\| z \|_{\ell_{1}} \vee \| \tilde{z} \|_{\ell_{1}} \leq A$. Observe that by the triangle inequality and the fact that $q_{+}$ and $q_{-}$ are polynomials, there exist polynomials $Q_{+}, Q_{-}: [0, \infty) \times [0, \infty) \rightarrow [0, \infty)$ with non-negative coefficients such that for any $z, \tilde{z} \in \ell_{1}^{+}$,
    \begin{equation} \label{Paper02_intermediate_step_local_lipschitz_condition_i}
    \begin{aligned}
        & \Big\vert q_{+}(\| z \|_{\ell_{1}}) - q_{+}(\| \tilde{z} \|_{\ell_{1}})\Big\vert \leq \| z - \tilde{z} \|_{\ell_{1}}Q_{+}\left(\| z \|_{\ell_{1}}, \| \tilde{z} \|_{\ell_{1}}\right), \\ \textrm{ and } & \Big\vert q_{-}(\| z \|_{\ell_{1}}) - q_{-}(\| \tilde{z} \|_{\ell_{1}})\Big\vert \leq \| z - \tilde{z} \|_{\ell_{1}}Q_{-}\left(\| z \|_{\ell_{1}}, \| \tilde{z} \|_{\ell_{1}}\right).
    \end{aligned}
    \end{equation}
    Hence, applying~\eqref{Paper02_intermediate_step_local_lipschitz_condition_i} to~\eqref{Paper02_intermediate_step_local_lipschitz_condition}, and then recalling that $\| z \|_{\ell_{1}} \vee \| \tilde{z} \|_{\ell_{1}} \leq A$, it follows that for any $A > 0$ and any $z, \tilde{z} \in \ell_{1}^{+}$ such that $\| z \|_{\ell_{1}} \vee \| \tilde{z} \|_{\ell_{1}} \leq A$, we have
    \begin{equation*}
         \| F(z) - F(\tilde{z}) \|_{\ell_{1}} \lesssim_{A} \| z - \tilde{z} \|_{\ell_{1}},
    \end{equation*}
    which completes the proof.
\end{proof}

We are now ready to prove Proposition~\ref{Paper02_thm_uniqueness_weak_solutions}.

\begin{proof}[Proof of Proposition~\ref{Paper02_thm_uniqueness_weak_solutions}]
  By Lemma~\ref{Paper02_uniform_bound_limiting_process_L_infty_norm}, we can assume without loss of generality that~$v^{(1)}$ and~$v^{(2)}$ satisfy conditions~(i)-(iii) of Lemma~\ref{Paper02_uniform_bound_limiting_process_L_infty_norm}. Also by Lemma~\ref{Paper02_uniform_bound_limiting_process_L_infty_norm}, we can take $A > 0$ sufficiently large that
  \begin{equation} \label{Paper02_bound_total_local_mass_finite_time_allowed_PDE}
      \| v^{(1)} \|_{L_{\infty}([0,T] \times \mathbb{R}; \ell_{1})} \vee \| v^{(2)} \|_{L_{\infty}([0,T] \times \mathbb{R}; \ell_{1})} \leq A.
  \end{equation}
  By Lemma~\ref{Paper02_uniform_bound_limiting_process_L_infty_norm}(i) and the triangle inequality, we have that for all~$k \in \mathbb{N}_{0}$, $t \in [0,T]$ and~$x \in \mathbb{R}$,
  \begin{equation} \label{Paper02_proof_uniqueness_intermediate_step_ii}
  \begin{aligned}
      \vert v^{(1)}_{k}(t,x) - v^{(2)}_{k}(t,x)\vert \leq \int_{0}^{t} \Big(P_{t - \tau} \vert F_{k}({v}^{(1)}(\tau, \cdot)) - F_{k}({v}^{(2)}(\tau, \cdot)) \vert\Big)(x) \, d\tau.
  \end{aligned}
  \end{equation}
  By summing both sides of~\eqref{Paper02_proof_uniqueness_intermediate_step_ii} over~$k \in \mathbb{N}_{0}$, and then using~\eqref{Paper02_bound_total_local_mass_finite_time_allowed_PDE} and the fact that by Lemma~\ref{Paper02_lipschitz_continuity_reaction_term_PDE}, the reaction term~$F = (F_{k})_{k \in \mathbb{N}_{0}}$ is locally Lipschitz, it follows that for every~$(t,x) \in [0,T] \times \mathbb{R}$,
  \begin{equation*}
        \| v^{(1)}(t,x) - v^{(2)}(t,x)\|_{\ell_{1}} \lesssim_{A} \int_{0}^{t} \sup_{\theta \leq \tau}  \| v^{(1)}(\theta,\cdot) - v^{(2)}(\theta,\cdot)\|_{L_{\infty}(\mathbb{R};\ell_{1})} \, d\tau.
  \end{equation*}
  Hence, for every~$t \in [0,T]$,
  \begin{equation} \label{Paper02_proof_uniqueness_intermediate_step_iii}
        \sup_{\theta \leq t} \| v^{(1)}(\theta,\cdot) - v^{(2)}(\theta,\cdot)\|_{L_{\infty}(\mathbb{R};\ell_{1})} \lesssim_{A} \int_{0}^{t} \sup_{\theta \leq \tau}  \| v^{(1)}(\theta,\cdot) - v^{(2)}(\theta,\cdot)\|_{L_{\infty}(\mathbb{R};\ell_{1})} \, d\tau.
  \end{equation}
  Applying Lemma~\ref{Paper02_uniform_bound_limiting_process_L_infty_norm}(iii) and Gr\"{o}nwall's lemma to~\eqref{Paper02_proof_uniqueness_intermediate_step_iii}, we conclude that
  \begin{equation*}
      \sup_{\theta \leq T} \| v^{(1)}(\theta,\cdot) - v^{(2)}(\theta,\cdot)\|_{L_{\infty}(\mathbb{R};\ell_{1})} = 0.
  \end{equation*}
  Therefore, $v^{(2)}$ is a version of~$v^{(1)}$, and the proof is complete.
\end{proof}

\subsection{Proof of Proposition~\ref{Paper02_smoothness_mild_solution}} \label{Paper02_section_regularity_solutions}

Our first step towards the proof of Proposition~\ref{Paper02_smoothness_mild_solution} will be to derive a weak formulation of H\"{o}lder continuity for a mild solution $v \in L_{4\deg q_-}([0,T] \times \mathbb{R}, \hat{\lambda}; \ell_{1})$ to the system of PDEs~\eqref{Paper02_PDE_scaling_limit}.

\begin{lemma} \label{Paper02_uniform_Holder_continuity}
    Suppose the assumptions of Proposition~\ref{Paper02_thm_uniqueness_weak_solutions} hold. Let $T > 0$ and~$v \in L_{4\deg q_-}([0,T] \times \mathbb{R}, \hat{\lambda}; \ell_{1})$ be a version of the non-negative mild solution to the system of PDEs~\eqref{Paper02_PDE_scaling_limit} satisfying Lemma~\ref{Paper02_uniform_bound_limiting_process_L_infty_norm}(i)-(iii). For any~$\mathcal{I} \subseteq \mathbb{N}_{0}$, let
    \[
    V_{\mathcal{I}}(t,x) \defeq \sum_{k \in \mathcal{I}} v_{k}(t,x) \quad \forall \, (t,x) \in [0,T] \times \mathbb R.
    \]
    Then, for all~$p \in \mathbb{N}_{0}$ and~$\delta \in (0,T)$, there exists~$C_{\delta,p,T} > 0$ such that for any~$\mathcal{I} \subseteq \mathbb{N}_{0}$, any~$t_{1}, t_{2} \in [\delta,T]$ and any~$x_{1},x_{2} \in \mathbb{R}$,
    \begin{equation*}
        \Big\vert V_{\mathcal{I}}(t_{1},x_{1}) \| v(t_{1},x_{1}) \|_{\ell_1}^{p} - V_{\mathcal{I}}(t_{2},x_{2}) \| v(t_{2},x_{2}) \|_{\ell_1}^{p} \Big\vert \leq C_{\delta,p,T} \Big(\vert x_1 - x_2 \vert + \vert t_1 - t_2 \vert^{1/2} \Big).
    \end{equation*}
\end{lemma}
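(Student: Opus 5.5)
The plan is to show that both $V_{\mathcal I}$ and $N \defeq \|v\|_{\ell_1}$ are Lipschitz in space and $1/2$-Hölder in time on $[\delta,T]\times\mathbb R$, with constants uniform in $\mathcal I$. The product estimate then follows from boundedness. Indeed, by Lemma~\ref{Paper02_uniform_bound_limiting_process_L_infty_norm}(iii) there is $A$ with $0\le V_{\mathcal I}\le N\le A$ on $[0,T]\times\mathbb R$. Writing $ab-cd=(a-c)b+c(b-d)$ and $N_1^p-N_2^p=(N_1-N_2)\sum_{j<p}N_1^jN_2^{p-1-j}$, we get
\[
\big|V_{\mathcal I}(t_1,x_1)N_1^p-V_{\mathcal I}(t_2,x_2)N_2^p\big|\le A^p|V_{\mathcal I}(t_1,x_1)-V_{\mathcal I}(t_2,x_2)|+pA^{p}|N_1-N_2|,
\]
where $N_i=N(t_i,x_i)$. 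Since $N=V_{\mathbb N_0}$, it suffices to prove the Hölder bound for $V_{\mathcal I}$ uniformly over $\mathcal I\subseteq\mathbb N_0$.

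To do this, sum \eqref{Paper02_stronger_formulation_mild_solution} over $k\in\mathcal I$, using monotone or dominated convergence for the $\ell_1$-sum. This gives
\[
V_{\mathcal I}(t,x)=(P_tg_{\mathcal I})(x)+\int_0^t\big(P_{t-\tau}G_{\mathcal I}(\tau,\cdot)\big)(x)\,d\tau,
\]
with $g_{\mathcal I}=\sum_{k\in\mathcal I}f_k$ and $G_{\mathcal I}=\sum_{k\in\mathcal I}F_k(v)$. The key point is that $\|g_{\mathcal I}\|_{L_\infty}\le\|f\|_{L_\infty(\mathbb R;\ell_1)}$ and $\|G_{\mathcal I}\|_{L_\infty}\le \|F(v)\|_{L_\infty([0,T]\times\mathbb R;\ell_1)}\le M$, where $M$ comes from \eqref{Paper02_trivial_ell_1_bound_reaction_term} and the bound $A$. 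Both bounds are independent of $\mathcal I$. So the task reduces to standard heat-kernel estimates for $w=P_tg+\int_0^tP_{t-\tau}G\,d\tau$ with $g,G$ bounded, and all constants will depend only on $\|g\|_\infty,\|G\|_\infty,\delta,T,m$.

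For the spatial estimate, differentiate the Gaussian kernel, using $\int|\partial_xp(s,y)|\,dy= c/\sqrt{ms}$. This gives $|(P_tg)(x_1)-(P_tg)(x_2)|\le c\|g\|_\infty|x_1-x_2|/\sqrt{t}\le c\delta^{-1/2}\|g\|_\infty|x_1-x_2|$, and for the Duhamel term a bound of $\|G\|_\infty|x_1-x_2|\int_0^t c(t-\tau)^{-1/2}d\tau\lesssim \sqrt T\,|x_1-x_2|$.

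For the time estimate, take $\delta\le t_1<t_2\le T$. For the initial term, write $P_{t_2}g-P_{t_1}g=\int_{t_1}^{t_2}\partial_sP_sg\,ds$ and use $\int|\partial_sp(s,y)|\,dy\le c/s$. This gives a bound $c\|g\|_\infty(t_2-t_1)/\delta$, which is $\lesssim_{\delta,T}(t_2-t_1)^{1/2}$ because $t_2-t_1\le T$. For the Duhamel term, split it as
\[
\int_{t_1}^{t_2}P_{t_2-\tau}G\,d\tau+\int_0^{t_1}(P_{t_2-\tau}-P_{t_1-\tau})G\,d\tau.
\]
The first piece is at most $\|G\|_\infty(t_2-t_1)$. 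For the second, bound $\|P_{s+h}G-P_sG\|_\infty\le\|G\|_\infty\min(2,ch/s)$ with $h=t_2-t_1$, and integrate over $s\in(0,t_1)$. This gives $\lesssim\|G\|_\infty\,h(1+\log(T/h))$, which is $\lesssim_T h^{1/2}$.

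The main point requiring care is this last time-regularity step near $\tau=t_1$. There the kernel derivative is singular, and the splitting into $s<h$ and $s\ge h$ handles it. The other delicate point is checking that the interchange of the sum over $\mathcal I$ with the semigroup integrals is legitimate and yields bounds uniform in $\mathcal I$. This follows from nonnegativity of $v$, $|F_k|$ summable with $\sum_k|F_k(v)|\le M$, and Fubini.
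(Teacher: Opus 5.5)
Your proposal is correct and is essentially the paper's argument. Both use the uniform bound on $\|v\|_{\ell_1}$ over $[\delta,T]\times\mathbb R$ to reduce the product estimate to H\"older regularity of the factors, and then get that regularity from the Duhamel formula with bounded data and bounded forcing via Gaussian kernel increment bounds (Lemma~\ref{Paper02lem:heat_continuity_ell1} in the appendix). The differences are cosmetic. The paper bounds both $|V_{\mathcal I}(t_1,x_1)-V_{\mathcal I}(t_2,x_2)|$ and $\big|\|v(t_1,x_1)\|_{\ell_1}-\|v(t_2,x_2)\|_{\ell_1}\big|$ by $\|v(t_1,x_1)-v(t_2,x_2)\|_{\ell_1}$ and proves the estimate once for the $\ell_1$-valued function, so uniformity in $\mathcal I$ comes for free without re-summing the Duhamel formula over $\mathcal I$. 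For the time increment, the paper uses $\int_{\mathbb R}|p(t+t',x)-p(t,x)|\,dx\le C\sqrt{t'/t}$ in place of your $\min(2,ch/s)$ splitting.
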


\begin{proof}
    First observe that from Lemma~\ref{Paper02_uniform_bound_limiting_process_L_infty_norm}(ii) and~(iii), we have that
    \begin{equation} \label{Paper02_auxiliary_holder_i}
    \sup_{(t,x) \in [\delta,T] \times \mathbb R} \| v(t,x) \|_{\ell_1} < \infty.
    \end{equation}
    Hence, applying the triangle inequality,  then the elementary inequality
    \[
    \vert a^{p} - b^{p} \vert \leq p (a \vee b)^{p-1} \vert a - b\vert \leq p (a + b)^{p-1} \vert a - b\vert \quad \forall \, p \in \mathbb N,
    \]
    and finally~\eqref{Paper02_auxiliary_holder_i}, we conclude that for any $\mathcal I \subseteq \mathbb N_0$, $t_1,t_2 \in [\delta,T]$ and~$x_1,x_2 \in \mathbb R$,
    \begin{equation} \label{Paper02_Holder_continuity_step_i}
    \begin{aligned}
        & \Big\vert V_{\mathcal{I}}(t_{1},x_{1}) \| v(t_{1},x_{1}) \|^{p}_{\ell_1} - V_{\mathcal{I}}(t_{2},x_{2}) \| v(t_{2},x_{2}) \|^{p}_{\ell_1} \Big\vert \\ & \quad \leq \Big\vert V_{\mathcal{I}}(t_{1},x_{1}) - V_{\mathcal{I}}(t_{2},x_{2}) \Big\vert \| v(t_{1},x_{1}) \|^{p}_{\ell_1} + V_{\mathcal{I}}(t_{2},x_{2})\Big\vert \| v(t_{1},x_{1})\|^{p}_{\ell_1} - \| v(t_{2},x_{2})\|^{p}_{\ell_1} \Big\vert \\ & \quad \lesssim_{p,T} \Big\vert V_{\mathcal{I}}(t_{1},x_{1}) - V_{\mathcal{I}}(t_{2},x_{2}) \Big\vert + \Big\vert \| v(t_{1},x_{1}) \|_{\ell_1} - \| v(t_{2},x_{2}) \|_{\ell_1}\Big\vert \\
        & \quad \leq 2 \| v(t_1,x_1) - v(t_2,x_2) \|_{\ell_1},
    \end{aligned}
    \end{equation}
    where the last inequality follows from the triangle inequality. By Lemma~\ref{Paper02_uniform_bound_limiting_process_L_infty_norm}, $v \in L_{\infty}([0,T] \times \mathbb R; \ell_1)$, and therefore~\eqref{Paper02_trivial_ell_1_bound_reaction_term} implies that the map $[0,T] \times \mathbb R \ni (t,x) \mapsto F(v(t,x))$
    is in $L_{\infty}([0,T] \times \mathbb R; \ell_1)$, i.e.~that the source term in~\eqref{Paper02_stronger_formulation_mild_solution} is essentially bounded. Hence, by Assumption~\ref{Paper02_assumption_initial_condition}(ii) and Lemma~\ref{Paper02_uniform_bound_limiting_process_L_infty_norm}(i), we can apply estimate~\eqref{Paper02_pre_step_holder} from Lemma~\ref{Paper02lem:heat_continuity_ell1} in the appendix to bound the term on the right-hand side of~\eqref{Paper02_Holder_continuity_step_i}, which completes the proof.
\end{proof}

We are now ready to prove Proposition~\ref{Paper02_smoothness_mild_solution}.

\begin{proof}[Proof of Proposition~\ref{Paper02_smoothness_mild_solution}]
    We will first establish the existence of a unique function $v = (v_k)_{k \in \mathbb N_0} \in \mathscr C((0, \infty) \times \mathbb R; \ell_1)$ which is a global mild solution to the system of PDEs~\eqref{Paper02_PDE_scaling_limit}. For $T_2 > T_1 > 0$ and functions $v^{(1)} \in L_{4\deg q_-}([0,T_1] \times \mathbb R, \hat \lambda; \ell_1)$ and $v^{(2)} \in L_{4\deg q_-}([0,T_2] \times \mathbb R, \hat \lambda; \ell_1)$ which are both local mild solutions to the system of PDEs~\eqref{Paper02_PDE_scaling_limit} in the sense of Definition~\ref{Paper02_definition_mild_solution}, Proposition~\ref{Paper02_thm_uniqueness_weak_solutions} implies that $v^{(1)}(t,x) = v^{(2)}(t,x)$ for $\lambda$-almost every $(t,x) \in [0,T_1] \times \mathbb R$. Since for any $T > 0$, Proposition~\ref{Paper02_absolutely_continuity_limiting_process} implies the existence of a local mild solution to the system of PDEs~\eqref{Paper02_PDE_scaling_limit} up to time $T$, Proposition~\ref{Paper02_thm_uniqueness_weak_solutions} and Lemma~\ref{Paper02_uniform_bound_limiting_process_L_infty_norm} imply the existence of a unique function $v = (v_k)_{k \in \mathbb N_0} : [0, \infty) \times \mathbb R \rightarrow \ell_1^+$ satisfying conditions (i)-(iii) of Proposition~\ref{Paper02_smoothness_mild_solution}.
    
    To complete the proof, it will suffice then to establish that $v = (v_k)_{k \in \mathbb N_0}$ satisfies conditions~(iv) and~(v). Take $\delta > 0$. By the semigroup property of the heat kernel~$\{P_{t}\}_{t \geq 0}$ defined in~\eqref{Paper02_semigroup_BM_action_ell_1_functions} and by condition~(i) of this proposition, observe that
    for all~$t > \delta$ and~$x \in \mathbb{R}$, we have
    \begin{equation} \label{Paper02_aux_global_mild_holder_i}
        v(t,x) = (P_{t - \delta}v(\delta, \cdot))(x) + \int_{\delta}^{t} \Big(P_{t - \tau}F(v(\tau, \cdot))\Big)(x) \, d\tau.
    \end{equation}
    By the definition of the reaction term $F = (F_{k})_{k \in \mathbb{N}_{0}}$ in~\eqref{Paper02_reaction_term_PDE} and the fact that $q_{+},q_{-}: [0,\infty) \rightarrow [0,\infty)$ are polynomials, combined with Lemma~\ref{Paper02_uniform_Holder_continuity}, it follows that for every~$k \in \mathbb{N}_{0}$, the map
    \begin{equation*}
        [\delta, \infty) \times \mathbb R \ni (t,x) \mapsto F_{k}(v(t,x))
    \end{equation*}
    is locally H\"{o}lder continuous. Standard theory for linear parabolic PDEs (see e.g.~\cite[Theorem~8.10.1]{krylov1996lectures}) then implies that $v_{k} \in \mathscr{C}^{1,2}((\delta, \infty) \times \mathbb{R}; \mathbb{R})$, for all~$\delta > 0$. It follows that $v_{k} \in \mathscr{C}^{1,2}((0, \infty) \times \mathbb{R}; \mathbb{R})$, for every~$k \in \mathbb{N}_{0}$, i.e.~that $v = (v_k)_{k \in \mathbb N_0}$ satisfies condition~(iv) of this proposition. Finally, by~\eqref{Paper02_F(v)_in_L_infty} stated in the proof of Lemma~\ref{Paper02_uniform_bound_limiting_process_L_infty_norm}, we can combine identity~\eqref{Paper02_aux_global_mild_holder_i} and Fubini's theorem to conclude that for $\delta > 0$, for all~$t > \delta$ and~$x \in \mathbb{R}$,
    \begin{equation} \label{Paper02_F(v)_in_L_infty_aux_func_g_ii}
        \| v(t,x)  \|_{\ell_1} = (P_{t - \delta} \| v(\delta, \cdot)\|_{\ell_1})(x) + \int_{\delta}^{t} \Big(P_{t - \tau}g(\tau, \cdot)\Big)(x) \, d\tau,
    \end{equation}
    where the map $g: [0, \infty) \times \mathbb R \rightarrow \mathbb R$ is given by
    \begin{equation} \label{Paper02_F(v)_in_L_infty_aux_func_g}
        g(t,x) \defeq \sum_{k = 0}^{\infty} F_k(v(t,x)) \quad \forall \, (t,x) \in [0, \infty) \times \mathbb R.
    \end{equation}
    Then, by applying the triangle inequality to~\eqref{Paper02_F(v)_in_L_infty_aux_func_g}, and then using Lemma~\ref{Paper02_uniform_bound_limiting_process_L_infty_norm}(iii) and the fact that by Lemma~\ref{Paper02_lipschitz_continuity_reaction_term_PDE}, the reaction term $F = (F_k)_{k \in \mathbb N_0}: \ell_1^+ \rightarrow \ell_1$ is locally Lipschitz continuous, we have that for all $T > 0$, $\delta \in (0,T)$, $t_1,t_2 \in [\delta,T]$ and $x_1,x_2 \in \mathbb R$,
    \begin{equation*}
    \begin{aligned}
        \vert g(t_1,x_1) - g(t_2,x_2) \vert & \leq  \| F(v(t_1,x_1)) - F(v(t_2,x_2)) \|_{\ell_1} \\
        & \lesssim_{T} \| v(t_1,x_1) - v(t_2,x_2) \|_{\ell_1}
        \\ & \lesssim_{\delta, T} \vert x_1 - x_2 \vert + \vert t_1 - t_2 \vert^{1/2},
    \end{aligned}
    \end{equation*}
    where for the last estimate we used estimate~\eqref{Paper02_pre_step_holder} from Lemma~\ref{Paper02lem:heat_continuity_ell1} in the appendix (by the same argument as at the end of the proof of Lemma~\ref{Paper02_uniform_Holder_continuity}). Hence, the map $g: [0, \infty) \times \mathbb R \rightarrow \mathbb R$ defined in~\eqref{Paper02_F(v)_in_L_infty_aux_func_g} is locally H\"older continuous on $(0, \infty) \times \mathbb R$. In particular,~\eqref{Paper02_F(v)_in_L_infty_aux_func_g_ii} shows that the scalar function \( V_{\mathbb N_0}(t,x) \defeq \|v(t,x)\|_{\ell_1}\) is a mild solution to a linear parabolic problem with locally H\"older continuous forcing term \(g\) on $(\delta,\infty)\times\mathbb R $, for any $\delta > 0$. Therefore, again applying standard regularity properties of linear PDEs (see~\cite[Theorem~8.10.1]{krylov1996lectures}) to~\eqref{Paper02_F(v)_in_L_infty_aux_func_g_ii}, we conclude that condition~(v) of this proposition holds, which completes the proof.
\end{proof}

Next, we establish a bound on the local mass that holds uniformly over time and space, for the continuous non-negative mild solution~$v$ given in Proposition~\ref{Paper02_smoothness_mild_solution}, with the extra assumption that the reaction term~$F = (F_{k})_{k \in \mathbb{N}_{0}}$ is monostable (see Definition~\ref{Paper02_assumption_monostable_condition}), and that the initial condition~$f \in L_{\infty}(\mathbb{R};\ell_{1})$ satisfies Assumption~\ref{Paper02_assumption_initial_condition_modified} in addition to Assumption~\ref{Paper02_assumption_initial_condition}. Although this result will be used only in Section~\ref{Paper02_section_asymptotic_behaviour_PDE} to address the asymptotic behaviour of the system of PDEs~\eqref{Paper02_PDE_scaling_limit}, we will state and prove it here since its proof follows from results in this section.

\begin{lemma} \label{Paper02_uniform_bound_total_mass}
     Suppose $m > 0$, $\mu \in [0,1]$, $(s_k)_{k \in \mathbb N_0}$ satisfies Assumption~\ref{Paper02_assumption_fitness_sequence}, $q_+,q_-: [0, \infty) \rightarrow [0, \infty)$ satisfy Assumption~\ref{Paper02_assumption_polynomials}, and $f \in L_{\infty}(\mathbb R; \ell_1)$ satisfies Assumption~\ref{Paper02_assumption_initial_condition} and~\ref{Paper02_assumption_initial_condition_modified}.
     Suppose also that the reaction term~$F = (F_{k})_{k \in \mathbb{N}_{0}}$ is monostable in the sense of Definition~\ref{Paper02_assumption_monostable_condition}. Let $v = (v_k)_{k \in \mathbb N_0}: [0, \infty) \times \mathbb R \rightarrow \ell_1^+$ be the version of the mild solution to the system of PDEs~\eqref{Paper02_PDE_scaling_limit} given in Proposition~\ref{Paper02_smoothness_mild_solution}. Then,
    \begin{equation*}
        \sup_{T \geq 0} \; \| v(T, \cdot) \|_{L_{\infty}(\mathbb{R}; \ell_{1})} \leq 1.
    \end{equation*}
\end{lemma}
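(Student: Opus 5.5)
The plan is to apply a scalar comparison (maximum-principle) argument to the total mass $W(t,x) \defeq \| v(t,x) \|_{\ell_1}$. By Proposition~\ref{Paper02_smoothness_mild_solution}(v), $W$ is in $\mathscr C^{1,2}((0,\infty)\times\mathbb R;\mathbb R)$. By \eqref{Paper02_F(v)_in_L_infty_aux_func_g_ii}, together with the same Schauder argument, it is a classical solution of
\[
\partial_t W = \frac m2 \Laplace W + g, \qquad g(t,x) = \sum_{k\ge 0}F_k(v(t,x)) = q_+(W)\sum_{k\ge0}s_k v_k - q_-(W)\,W .
\]
Since $0\le s_k\le 1$, $v_k\ge 0$ and $q_+\ge 0$, we get $\sum_k s_kv_k\le W$ and hence
\[
g \le \varphi(W), \qquad \varphi(U)\defeq U\big(q_+(U)-q_-(U)\big).
\]
The monostable condition Definition~\ref{Paper02_assumption_monostable_condition}(iii) gives $\varphi(1)=0$. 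So the constant $1$ is a solution of the scalar equation $\partial_t w=\frac m2\Laplace w+\varphi(w)$, and the goal is to show that $W$ is a subsolution lying below it.

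First fix $T>0$. By Proposition~\ref{Paper02_smoothness_mild_solution}(iii), $A\defeq\sup_{t\le T}\|v(t,\cdot)\|_{L_\infty(\mathbb R;\ell_1)}<\infty$. Since $\varphi$ is a polynomial, it is Lipschitz on $[0,A\vee 1]$ with some constant $K$. Set $D\defeq W-1$. Then on $(0,T]\times\mathbb R$,
\[
\partial_t D-\frac m2\Laplace D\le \varphi(W)-\varphi(1)=c(t,x)\,D,
\]
where $c\defeq(\varphi(W)-\varphi(1))/(W-1)$ (set to $0$ where $W=1$) satisfies $|c|\le K$. At time $0$, Assumption~\ref{Paper02_assumption_initial_condition_modified}(i) gives $D(0,\cdot)\le 0$ almost everywhere. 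The conclusion $D\le 0$ then follows from the Phragmén--Lindelöf maximum principle for bounded subsolutions with bounded zeroth-order coefficient on $\mathbb R$.

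I would carry this out directly as follows. Let $E\defeq e^{-2Kt}D-\delta(1+x^2+t)$ for small $\delta>0$. Then $E\to-\infty$ as $|x|\to\infty$, uniformly in $t\le T$. Suppose $E$ had a positive maximum at some $(t_0,x_0)$ with $t_0>0$. There $\partial_tE\ge0$ and $\Laplace E\le 0$, while the inequality for $D$ gives
\[
\partial_t E-\frac m2\Laplace E\le (c-2K)e^{-2Kt}D+\delta(m-1),
\]
which is negative once $e^{-2Kt}D>0$ and $\delta$ is small enough relative to that value. This is a contradiction, after adjusting the coefficient of $t$ in the penalisation, e.g.\ using $\delta(1+x^2+(m+1)t)$ so that the $\delta$-terms have the right sign. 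Letting $\delta\to0$ gives $D\le0$ on $(0,T]\times\mathbb R$, and since $T$ is arbitrary the lemma follows.

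The main obstacle is behaviour near $t=0$, where $W$ is only known to be continuous on $(0,\infty)\times\mathbb R$ and the initial datum is merely bounded. I would handle this by starting the argument at time $\eta>0$ instead. By the mild formula \eqref{Paper02_coordinate_wise_strong_duhamel} and \eqref{Paper02_trivial_ell_1_bound_reaction_term},
\[
W(\eta,x)\le (P_\eta\|f\|_{\ell_1})(x)+\eta C_A\le 1+\eta C_A .
\]
Running the maximum principle on $[\eta,T]$ with $D\le \eta C_A$ at time $\eta$ yields $D\le \eta C_Ae^{KT}$, and letting $\eta\downarrow0$ gives $D\le 0$. A secondary point to check is that $g$ is indeed the classical right-hand side of the equation for $W$, i.e.\ that termwise summation of the $F_k$ is legitimate. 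This follows from \eqref{Paper02_F(v)_in_L_infty} and the Fubini argument already used to derive \eqref{Paper02_F(v)_in_L_infty_aux_func_g_ii}.
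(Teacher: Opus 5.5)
Your proposal is correct and follows essentially the same route as the paper. You use $s_k\le 1$ to bound $\sum_k F_k(v)\le \|v\|_{\ell_1}\bigl(q_+(\|v\|_{\ell_1})-q_-(\|v\|_{\ell_1})\bigr)$, and then apply a parabolic comparison principle to the total mass $\|v\|_{\ell_1}$, which is a classical solution by Proposition~\ref{Paper02_smoothness_mild_solution}(v).

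There are two minor differences. First, you compare directly with the stationary solution $1$, so only $q_+(1)=q_-(1)$ is used; the paper instead compares with the scalar solution $w$ started from $\|f\|_{\ell_1}$ and then bounds $w\le 1$. Second, your penalised maximum principle, started at time $\eta>0$, makes explicit how to handle the lack of continuity at $t=0$, which the paper's appeal to the classical comparison theorem leaves implicit.
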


\begin{proof}
    Observe that since $s_k \leq 1 \; \forall \, k \in \mathbb N_0$ by Assumption~\ref{Paper02_assumption_fitness_sequence}, and by the definition of the reaction term in~\eqref{Paper02_reaction_term_PDE}, for any $t \geq 0$ and $x \in \mathbb R$ we have
    \begin{equation} \label{Paper02_pre_step_comparison_principle}
        \sum_{k = 0}^{\infty} F_{k}(v(t,x)) \leq \| v(t,x) \|_{\ell_1} \Big(q_{+}(\| v(t,x) \|_{\ell_1}) - q_{-}(\| v(t,x) \|_{\ell_1})\Big).
    \end{equation}
    Let $w: [0, \infty) \times \mathbb R \rightarrow \mathbb R$ be the unique non-negative function which is a classical solution to the reaction-diffusion PDE
    \begin{equation} \label{Paper02_classical_monostable_PDE}
    \left\{\begin{array}{lc}
         \partial_{t}w  = \frac{m}{2} \Laplace w + w\Big(q_{+}(w) - q_{-}(w)\Big), & t > 0,\\
         w(0, \cdot) = \| f(\cdot) \|_{\ell_1}. & 
    \end{array}
    \right.
    \end{equation}
    By conditions~(iii) and~(iv) from Definition~\ref{Paper02_assumption_monostable_condition}, and by Assumption~\ref{Paper02_assumption_initial_condition_modified}(i), we have (see e.g.~\cite[Proposition~2.1]{aronson1975nonlinear})
    \begin{equation}  \label{Paper02_uniform_bound_classical_sol}
        \sup_{T \geq 0} \| w(T,\cdot) \|_{L_{\infty}(\mathbb R; \mathbb R)} \leq 1.
    \end{equation}
    Moreover, by Proposition~\ref{Paper02_smoothness_mild_solution}(v), identities~\eqref{Paper02_F(v)_in_L_infty_aux_func_g_ii} and~\eqref{Paper02_F(v)_in_L_infty_aux_func_g}, and by~\eqref{Paper02_pre_step_comparison_principle} and~\eqref{Paper02_classical_monostable_PDE}, we have that for all $(t,x) \in (0, \infty) \times \mathbb R$,
    \begin{equation} \label{Paper02_pre_step_comparison_principle_ii}
    \begin{aligned}
          & \partial_t \| v(t,x) \|_{\ell_{1}} - \frac{m}{2} \Laplace \| v(t,x) \|_{\ell_{1}} - \| v(t,x)\|_{\ell_1} \Big(q_{+}(\| v(t,x) \|_{\ell_1}) - q_{-}(\| v(t,x) \|_{\ell_1})\Big)
          \\ & \quad \leq \partial_{t}w(t,x) - \frac{m}{2} \Laplace w(t,x) - w(t,x)\Big(q_{+}(w(t,x)) - q_{-}(w(t,x))\Big).
    \end{aligned}
    \end{equation}
    By Proposition~\ref{Paper02_smoothness_mild_solution}(iii) and  by~\eqref{Paper02_pre_step_comparison_principle_ii}, we can apply classical comparison theorems for parabolic PDEs (see e.g.~\cite[Proposition~2.1]{aronson1975nonlinear}), and conclude from~\eqref{Paper02_uniform_bound_classical_sol} that
    \begin{equation*}
        \sup_{(T,x) \in (0,\infty) \times \mathbb R}  \| v(T,x) \|_{\ell_{1}} \leq   \sup_{(T,x) \in (0,\infty) \times \mathbb R} w(T,x) \leq 1,
    \end{equation*}
    which (together with Assumption~\ref{Paper02_assumption_initial_condition_modified}(i)) yields the desired uniform bound and completes the proof.
\end{proof}

\section{Asymptotic behaviour of the system of PDEs} \label{Paper02_section_asymptotic_behaviour_PDE}

In this section, we will investigate the asymptotic behaviour of solutions of the system of PDEs~\eqref{Paper02_PDE_scaling_limit}, and prove Theorem~\ref{Paper02_prop_control_proportions}, as well as Theorems~\ref{Paper02_spreading_speed_Fisher_KPP} and~\ref{Paper02_thm_tracer_dynamics_no_gene_surfing}. In what follows, we denote the version of the mild solution of the system of PDEs~\eqref{Paper02_PDE_scaling_limit} given in Proposition~\ref{Paper02_smoothness_mild_solution} by $$u = (u_{k}(t,x))_{k \in \mathbb{N}_{0}, \, t \geq 0, \, x \in \mathbb{R}}.$$
We will refer to this solution as the continuous mild solution of~\eqref{Paper02_PDE_scaling_limit}.

We will assume that the reaction term~$F = (F_{k})_{k \in \mathbb{N}_{0}}$ defined in~\eqref{Paper02_reaction_term_PDE} is monostable, i.e.~that $F = (F_{k})_{k \in \mathbb{N}_{0}}$ satisfies conditions (i)-(v) of Definition~\ref{Paper02_assumption_monostable_condition}, and that the initial condition~$f = (f_{k})_{k \in \mathbb{N}_{0}}$ of the system of PDEs~\eqref{Paper02_PDE_scaling_limit} satisfies both Assumptions~\ref{Paper02_assumption_initial_condition} and~\ref{Paper02_assumption_initial_condition_modified}. Our strategy to determine the long-term behaviour of~$u$ will be to represent each of the functions~$u_{k}: [0,T] \times \mathbb{R} \rightarrow \mathbb{R}$ in terms of a Feynman--Kac formula. We divide the remainder of this section into four subsections. In Section~\ref{Paper02_subsection_feynman_kac}, we state and prove the Feynman--Kac representation. We prove Theorem~\ref{Paper02_prop_control_proportions} in Section~\ref{Paper02_subsection_evol_proportion_mutations}. The control over the evolution of the prevalence of mutations gained in Section~\ref{Paper02_subsection_evol_proportion_mutations}, together with ideas adapted from~\cite{penington2018spreading}, will be used to prove Theorem~\ref{Paper02_spreading_speed_Fisher_KPP} in Section~\ref{Paper02_fkpp_case}. Finally, we will prove Theorem~\ref{Paper02_thm_tracer_dynamics_no_gene_surfing} in Section~\ref{Paper02_tracer_dynamics_section}.

\subsection{Feynman--Kac representation formula} \label{Paper02_subsection_feynman_kac}

Fix $m \in (0, \infty)$, let $(W(t))_{t \geq 0}$ denote a Brownian motion run at speed~$m$, and for every $x \in \mathbb R$, let $\mathbb{P}_{x}$ denote the probability measure associated to $(W(t))_{t \geq 0}$ started from~$x$, and let $\mathbb E_x$ denote the corresponding expectation. In this section, we will extensively use a Feynman--Kac formula. For ease of reference, we quote a version stated and proved in~\cite[Proposition~3.1]{berestycki2019global}, which does not require continuity of the solution at time $0$.

\begin{proposition}[Feynman--Kac formula] \label{Paper02_general_version_feynman_kac}
    Suppose $G,H: (0, \infty) \times \mathbb{R} \rightarrow \mathbb{R}$ are continuous, and uniformly bounded on compact time intervals. Suppose that $v \in \mathscr{C}^{1,2}((0,\infty) \times \mathbb{R})$ is bounded on compact time intervals, and satisfies
    \begin{equation*}
    \begin{aligned}
        \partial_{t} v & = \frac{m}{2} \Laplace v + Gv + H \quad \forall (t,x) \in (0,\infty) \times \mathbb{R},
        \quad v(0, \cdot) = f(\cdot),
    \end{aligned}
    \end{equation*}
    and $v(t,x) \rightarrow f(x)$ as $t \rightarrow 0$ for every $x \in \mathbb R$ such that $f$ is continuous at $x$, where~$f \in L_{\infty}(\mathbb R)$ is continuous at $\lambda$-almost every $x \in \mathbb{R}$. Then, for any $T \geq 0$, $t \in [0,T]$ and $x \in \mathbb{R}$,
    \begin{equation*}
    \begin{aligned}
        v(T,x) = & \mathbb{E}_{x}\Bigg[v(T-t,W(t))\exp\left(\int_{0}^{t} G(T-\tau, W(\tau)) d\tau \right) \\ & \quad \quad + \int_{0}^{t}H(T-\tau, W(\tau))\exp\left(\int_{0}^{\tau} G(T-{\theta}, W(\theta)) \,  d\theta \right) d\tau\Bigg].
    \end{aligned}
    \end{equation*}
\end{proposition}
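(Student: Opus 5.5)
The plan is the standard Itô-formula argument applied to a time-reversed process. Fix $T>0$ and $x\in\mathbb R$, and for $\tau\in[0,T)$ set
\[
E(\tau)\defeq\exp\Big(\int_0^\tau G(T-\theta,W(\theta))\,d\theta\Big).
\]
Define
\[
M(\tau)\defeq v(T-\tau,W(\tau))E(\tau)+\int_0^\tau H(T-r,W(r))E(r)\,dr .
\]
Since $v\in\mathscr C^{1,2}((0,\infty)\times\mathbb R)$, Itô's formula applies on $[0,t]$ for every $t<T$. For Brownian motion at speed $m$ the generator is $\frac m2\Laplace$. The drift of $v(T-\tau,W(\tau))E(\tau)$ is
\[
\Big(-\partial_t v+\tfrac m2\Laplace v+Gv\Big)(T-\tau,W(\tau))\,E(\tau)=-H(T-\tau,W(\tau))\,E(\tau),
\]
by the PDE. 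This drift exactly cancels the derivative of the integral term, so $M$ is a local martingale on $[0,t]$ with differential $E(\tau)\,\partial_x v(T-\tau,W(\tau))\,dW(\tau)$.

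The first step is to upgrade $M$ to a true martingale on $[0,t]$ for $t<T$. Here $v$, $G$ and $H$ are bounded on $[T-t,T]\times\mathbb R$ by hypothesis, so $E$ is bounded by $e^{t\|G\|_\infty}$. Hence $M$ is uniformly bounded on $[0,t]$, and a bounded local martingale is a martingale. Alternatively, localize with exit times $\sigma_n$ of $[-n,n]$ and pass to the limit by dominated convergence, using boundedness; this avoids any gradient bounds on $\partial_x v$. Taking expectations, $\mathbb E_x[M(t)]=M(0)=v(T,x)$, which is the claimed identity for every $t\in[0,T)$.

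The remaining, and main, obstacle is the endpoint $t=T$, where $v(T-t,\cdot)$ must be replaced by $f$ and $v$ may fail to be continuous at time $0$. I will let $t\uparrow T$ in the identity. The integral term converges by dominated convergence, since $H$ and $G$ are bounded on $(0,T]\times\mathbb R$; I am assuming here that the uniform bound on compact time intervals extends down to time $0^+$, as is implicit in the hypotheses. For the main term, $v(T-t,W(t))\to f(W(T))$ almost surely on the event that $f$ is continuous at $W(T)$, because $W(t)\to W(T)$. Note that pointwise convergence $v(s,y)\to f(y)$ alone is not quite enough, since both arguments move. I would get around this in one of two ways:
\begin{itemize}
\item use the Markov property to write the term as $\mathbb E_x[\,\cdot\,]$ of a heat-semigroup smoothed quantity; or
\item more simply, use the representation at an intermediate time.
\end{itemize}
I expect this joint-limit step to require care. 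The cleanest route I would try is the Duhamel-type representation: apply the identity up to time $t$, condition at time $t-\epsilon$, and use the Gaussian density of $W(T)$ together with $f$ being continuous almost everywhere. Since $W(T)$ has a density, $\mathbb P_x(f \text{ discontinuous at } W(T))=0$, which gives almost-sure convergence. The convergence of expectations then follows by bounded convergence, using that $v$ is bounded near time $0$ and $f\in L_\infty$.
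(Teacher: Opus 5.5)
The paper does not prove this proposition; it quotes it from \cite[Proposition~3.1]{berestycki2019global}. Your argument for $t<T$ is correct and standard: It\^o's formula for $\tau\mapsto v(T-\tau,W(\tau))E(\tau)$, cancellation of the drift via the PDE, and the fact that $M$ is a bounded local martingale on $[0,t]$. Reading the boundedness hypotheses as boundedness on $(0,T]\times\mathbb R$ is also how the paper uses the result.

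\textbf{The gap is at the endpoint $t=T$.} Your chosen route ends with ``since $W(T)$ has a density, $\mathbb P_x(f \text{ discontinuous at } W(T))=0$, which gives almost-sure convergence'', followed by bounded convergence. But continuity of $f$ at $W(T)$ does not give $v(T-t,W(t))\to f(W(T))$. That is a joint limit $v(s,y_s)\to f(y)$ with $s\downarrow 0$ and $y_s\to y$, whereas the hypothesis only controls $s\downarrow 0$ at fixed $y$. The joint convergence is true a posteriori, but only as a consequence of the formula being proved. You flag exactly this issue one sentence earlier and then reassert the almost-sure convergence, so bounded convergence has nothing to act on. There is a second problem: any ``condition at an intermediate time'' argument must handle the weight $E$. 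After the conditioning time it depends on the path, so it cannot simply be pushed inside $\mathbb E_{W(\cdot)}[\,\cdot\,]$.

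\textbf{The fix.} Let $s\downarrow 0$ only inside a spatial integral against a fixed Gaussian density, where Lebesgue-a.e.\ pointwise convergence suffices. Let $K\defeq\sup_{(0,T]\times\mathbb R}|G|$ and fix $r\in(0,T)$.

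For $0<s<r$, write $E(T-s)=E(T-r)\exp\big(\int_{T-r}^{T-s}G(T-\theta,W(\theta))\,d\theta\big)$. Replacing $E(T-s)$ by $E(T-r)$ then costs at most $\sup_{(0,T]\times\mathbb R}|v|\;e^{TK}(e^{rK}-1)$. Since $E(T-r)$ is $\mathcal F_{T-r}$-measurable, the Markov property at time $T-r$ gives
\[
\mathbb E_x\big[E(T-r)\,v(s,W(T-s))\big]=\mathbb E_x\big[E(T-r)\,(P_{r-s}v(s,\cdot))(W(T-r))\big].
\]

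For every $y$, $(P_{r-s}v(s,\cdot))(y)=\int_{\mathbb R}p(r-s,y-z)\,v(s,z)\,dz\to (P_rf)(y)$ as $s\downarrow 0$. This holds because:
\begin{itemize}
\item $\|p(r-s,\cdot)-p(r,\cdot)\|_{L_1}\to 0$;
\item $v$ is bounded on $(0,T]\times\mathbb R$;
\item $v(s,z)\to f(z)$ for Lebesgue-a.e.\ $z$, which is where a.e.\ continuity of $f$ enters.
\end{itemize}

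Bounded convergence and the Markov property then give the limit $\mathbb E_x[E(T-r)f(W(T))]$. This differs from $\mathbb E_x[E(T)f(W(T))]$ by at most $\|f\|_{L_\infty}e^{TK}(e^{rK}-1)$, and letting $r\downarrow 0$ closes the argument.

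Equivalently, you can prove $v(s,W(T-s))\to f(W(T))$ in $L_1(\mathbb P_x)$. Integrate $|v(s,\cdot)-f|$ against the density of $W(T-s)$ (not of $W(T)$), and control $\mathbb E_x|f(W(T-s))-f(W(T))|$ by $L_1$-continuity of Gaussian shifts of the bounded function $f$. Convergence in $L_1$, rather than almost surely, is what the hypotheses give directly. Either route makes the endpoint rigorous; as written, it is not.
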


Our next result provides a Feynman–Kac representation of $u_{k}$, for each~$k \in \mathbb{N}_{0}$.

\begin{lemma}[Feynman--Kac representation for solutions of~\eqref{Paper02_PDE_scaling_limit}] \label{Paper02_feynman_kac_representation_lemma_each_u_k}
    Suppose $m > 0$, $\mu \in [0,1]$, $(s_k)_{k \in \mathbb N_0}$ satisfies Assumption~\ref{Paper02_assumption_fitness_sequence}, $q_+,q_-: [0, \infty) \rightarrow [0, \infty)$ satisfy Assumption~\ref{Paper02_assumption_polynomials}, and $f \in L_{\infty}(\mathbb R; \ell_1)$ satisfies Assumption~\ref{Paper02_assumption_initial_condition}. Let $u = (u_k(t,x))_{k \in \mathbb N_0, \, t \geq 0, \, x \in \mathbb R}$ denote the continuous mild solution to the system of PDEs~\eqref{Paper02_PDE_scaling_limit} given in Proposition~\ref{Paper02_smoothness_mild_solution}. Then, recalling that $u_k(0,x) = f_k(x) \; \forall k \in \mathbb N_0, \, x \in \mathbb R$, for any~$T \geq 0$, $t \in [0,T]$ and~$x \in \mathbb{R}$,
    \begin{equation} \label{Paper02_feynman_kac_no_mutations}
    \begin{aligned}
    u_{0}(T,x) = \mathbb{E}_{x}\left[u_{0}(T-t, W(t)) \exp\left(\int_{0}^{t} \Big((1 - \mu)q_{+} - q_{-}\Big)\Big(\| u(T-\tau, W(\tau)) \|_{\ell_{1}}\Big) \, d\tau\right)\right].
    \end{aligned}
    \end{equation}
    Moreover, for any~$k \in \mathbb{N}$, $T \geq 0$ and $x \in \mathbb R$,
    \begin{equation} \label{Paper02_feynman_kac_with_mutations}
\begin{aligned}
    u_{k}(T,x) = & \mathbb{E}_{x}\left[f_{k}(W(T))\exp\left(\int_{0}^{T} \Big(s_{k} (1-\mu)q_{+} - q_{-}\Big) \Big(\| u(T-t,W(t))\|_{\ell_{1}}\Big) dt\right)\right] \\ & + \mathbb{E}_{x}\Bigg[\int_{0}^{T} \Big(s_{k-1}\mu u_{k-1}q_{+}(\| u \|_{\ell_{1}})\Big)(T-t, W(t)) \\ & \quad \quad \quad \quad\quad \quad\cdot \exp\left(\int_{0}^{t} \Big(s_{k} (1-\mu)q_{+} - q_{-}\Big) \Big(\| u(T-\tau,W(\tau))\|_{\ell_{1}}\Big) d\tau\right) \; dt\Bigg].
\end{aligned}
\end{equation}
\end{lemma}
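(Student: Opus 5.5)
The plan is to apply Proposition~\ref{Paper02_general_version_feynman_kac} coordinate by coordinate, with the coefficients $G$ and $H$ chosen so that the $k$-th equation of~\eqref{Paper02_PDE_scaling_limit} becomes linear in $u_k$. By Proposition~\ref{Paper02_smoothness_mild_solution}(iv), each $u_k$ lies in $\mathscr C^{1,2}((0,\infty)\times\mathbb R)$. Because $u_k$ is a mild solution with Hölder continuous source (the argument behind Proposition~\ref{Paper02_smoothness_mild_solution}), it is also a classical solution of
\[
\partial_t u_k=\tfrac m2\Laplace u_k+G_k u_k+H_k .
\]
Here
\[
G_k(t,x)\defeq \bigl(s_k(1-\mu)q_+-q_-\bigr)\bigl(\|u(t,x)\|_{\ell_1}\bigr),\qquad H_k\defeq \mathds 1_{\{k\ge1\}}\,s_{k-1}\mu\, u_{k-1}\,q_+(\|u\|_{\ell_1}).
\]
To apply the proposition, I check its hypotheses. $G_k$ and $H_k$ are continuous on $(0,\infty)\times\mathbb R$ by Proposition~\ref{Paper02_smoothness_mild_solution}(iv),(v) together with continuity of the polynomials $q_\pm$. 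They are bounded on compact time intervals by Proposition~\ref{Paper02_smoothness_mild_solution}(iii), since $\|u\|_{\ell_1}$ and $u_{k-1}\le\|u\|_{\ell_1}$ are bounded there. By the same bound, $u_k$ itself is bounded on compact time intervals.

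The remaining hypothesis, which I expect to be the only genuinely delicate point, is the initial trace: $u_k(t,x)\to f_k(x)$ as $t\downarrow0$ at every continuity point $x$ of $f_k$. I would prove this from the Duhamel formula~\eqref{Paper02_coordinate_wise_strong_duhamel}, which holds for every $(t,x)$. The integral term is bounded by $t\,\sup_{\tau\le 1}\|F(u(\tau,\cdot))\|_{L_\infty(\mathbb R;\ell_1)}$, which tends to $0$ by~\eqref{Paper02_trivial_ell_1_bound_reaction_term} and Proposition~\ref{Paper02_smoothness_mild_solution}(iii). For the first term, $(P_tf_k)(x)\to f_k(x)$ at continuity points because $f_k$ is bounded (Assumption~\ref{Paper02_assumption_initial_condition}(ii)) and the Gaussian kernel concentrates at $x$; this is the standard approximate-identity argument. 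Assumption~\ref{Paper02_assumption_initial_condition}(i) says $f$ is continuous in $\ell_1$ off a null set, so each $f_k$ is continuous off that null set, as the proposition requires.

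Proposition~\ref{Paper02_general_version_feynman_kac} then gives the representation for every $T\ge0$, $t\in[0,T]$ and $x$. For $k=0$ we have $s_0=1$ and $H_0=0$, so only the first term survives, and this is exactly~\eqref{Paper02_feynman_kac_no_mutations}. For $k\ge1$, I take $t=T$ and use $u_k(0,\cdot)=f_k$ to obtain~\eqref{Paper02_feynman_kac_with_mutations}. When $T=0$ both identities are trivial.
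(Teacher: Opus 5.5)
Your proposal is correct and follows the paper's proof. Both apply Proposition~\ref{Paper02_general_version_feynman_kac} coordinatewise with $G_k=(s_k(1-\mu)q_+-q_-)(\|u\|_{\ell_1})$ and $H_k=\mathds 1_{\{k\ge1\}}s_{k-1}\mu u_{k-1}q_+(\|u\|_{\ell_1})$, and use Proposition~\ref{Paper02_smoothness_mild_solution} for regularity and boundedness. Your explicit verification of the initial-trace hypothesis via the Duhamel formula~\eqref{Paper02_coordinate_wise_strong_duhamel} spells out a step the paper only alludes to.
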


\begin{remark} \label{Paper02_remark_difference_between_feynman_kac_representation_u_0_u_k}
We collect some observations concerning the Feynman--Kac representation below.

\begin{enumerate}[(a)]
    \item We observe that the Feynman--Kac formula for~$u_{0}$ differs from the formula for~$u_{k}$ with~$k \in \mathbb N$ since, by the reaction term $F = (F_{k})_{k \in \mathbb{N}_{0}}$ defined in~\eqref{Paper02_reaction_term_PDE}, only the subpopulation without mutations can be a source for~$u_{0}$, while for~$k \in \mathbb N$, the subpopulations with~$k-1$ or $k$ mutations can both be sources for~$u_{k}$.
    \item In the special case~$t = T$ of~\eqref{Paper02_feynman_kac_no_mutations}, we have
    \begin{equation} \label{Paper02_feynman_kac_no_mutations_from_begining}
    u_{0}(T,x) = \mathbb{E}_{x}\left[f_{0}(W(T)) \exp\left(\int_{0}^{T} \Big((1 - \mu)q_{+} - q_{-}\Big)\Big(\| u(T-\tau, W(\tau)) \|_{\ell_{1}}\Big) \, d\tau\right)\right].
\end{equation}
\end{enumerate}
\end{remark}

\begin{proof}[Proof of Lemma~\ref{Paper02_feynman_kac_representation_lemma_each_u_k}]
    By the properties of $u$ given  in Proposition~\ref{Paper02_smoothness_mild_solution}, for every~$k \in \mathbb{N}_{0}$ and $T > 0$, $u_{k} \in \mathscr{C}^{1,2}((0, T] \times \mathbb{R}; \mathbb{R})$ is uniformly bounded on~$[0,T] \times \mathbb R$, and~$u_{k}$ satisfies~\eqref{Paper02_coordinate_wise_strong_duhamel}. Moreover, the functions
    \begin{equation*}
    \begin{aligned}
        (t,x) & \mapsto G^{(k)}(t,x) \defeq \Big(s_{k}(1-\mu)q_{+} - q_{-}\Big)\Big(\| u(t,x) \|_{\ell_{1}}\Big),\\ \textrm{and } (t,x) & \mapsto H^{(k)}(t,x) \defeq \Big(\mathds{1}_{\{k \geq 1\}}s_{k-1}\mu u_{k-1}q_{+}(\| u \|_{\ell_{1}})\Big)(t,x)
    \end{aligned}
    \end{equation*}
    are bounded and continuous on~$(0,T] \times \mathbb{R}$. Since, by Assumption~\ref{Paper02_assumption_initial_condition}, $f_{k} \in L_{\infty}(\mathbb R)$ is continuous at $\lambda$-almost every $x \in \mathbb{R}$, identities~\eqref{Paper02_feynman_kac_no_mutations} and~\eqref{Paper02_feynman_kac_with_mutations} follow directly from Proposition~\ref{Paper02_general_version_feynman_kac}.
\end{proof}

\subsection{Evolution of the prevalence of mutations} \label{Paper02_subsection_evol_proportion_mutations}

In this subsection, we will prove Theorem~\ref{Paper02_prop_control_proportions}. Throughout this subsection, we assume that $F = (F_{k})_{k \in \mathbb N_0}$ is monostable in the sense of Definition~\ref{Paper02_assumption_monostable_condition}, and so in particular~$(s_{k})_{k \in \mathbb{N}_{0}}$ satisfies Assumption~\ref{Paper02_assumption_fitness_sequence} and is strictly decreasing, and~$\mu \in (0,1)$. Recall that the sequence~$(\alpha_{k})_{k \in \mathbb{N}_{0}}$ defined in~\eqref{Paper02_definition_alpha_k} is given by $\alpha_0 = 1$ and
\begin{equation} \label{Paper02_redefining_alpha_k}
    \alpha_{k} = \prod_{i=1}^{k} \frac{\mu s_{i-1}}{(1-\mu) (1 - s_{i})} \quad \forall \, k \in \mathbb N.
\end{equation}
Observe that since, by Assumption~\ref{Paper02_assumption_fitness_sequence}(iv), $s_{k} \rightarrow 0$ as $k \rightarrow \infty$, we have $(\alpha_{k})_{k\in \mathbb{N}_0} \in \ell_{1}^{+}$. Moreover, recall from~\eqref{Paper02_definition_max_min_birth_polynomial} that
\[
\mathfrak{Q}_{\min} = \min_{U \in [0,1]} q_+(U) \quad \textrm{and} \quad \mathfrak{Q}_{\max} = \max_{U \in [0,1]} q_+(U).
\]
Our first step towards the proof of Theorem~\ref{Paper02_prop_control_proportions} will be to define a sequence of functions~$(\underline{\pi}_{k})_{k \in \mathbb{N}_{0}}$ that we will use to obtain a lower bound on the ratio of~$u_{k}$ and~$u_{0}$, for each~$k \in \mathbb{N}_{0}$.

\begin{lemma} \label{Paper02_evolution_proportions_characterisation}
   Suppose $F = (F_k)_{k \in \mathbb N_0}$ is monostable in the sense of Definition~\ref{Paper02_assumption_monostable_condition}. Define a sequence $(\underline{\pi}_k)_{k \in \mathbb{N}_0}$ of non-negative real-valued functions on $[0, \infty)$  with the following recursive formula: for all $T \geq 0$, let
    \begin{equation} \label{Paper02_evolution_proportion_time}
    \underline{\pi}_k(T) \defeq \left\{\begin{array}{lc}
       1 & \textrm{if } k = 0, \\ \mu \mathfrak{Q}_{\min}\int_{0}^{T} \exp\Big(-t\mathfrak{Q}_{\max}(1 - s_{1})(1 - \mu)\Big) \, dt  & \textrm{if } k =1, \\
        \mu s_{k-1} \mathfrak{Q}_{\min} \int_{0}^{T} \underline{\pi}_{k-1} (T-t) \exp\Big(- t\mathfrak{Q}_{\max}(1 - s_{k})(1 - \mu)\Big) \, dt & \textrm{for } k \geq 2.
    \end{array} \right.
\end{equation}
Then, for $k \in \mathbb{N}$, $\underline{\pi}_{k}$ is strictly increasing on $[0, \infty)$ and strictly positive on $(0, \infty)$. Moreover, for all $k \in \mathbb{N}$,
    \begin{equation} \label{Paper02_limit_time_lower_bound_prevalence_mutations}
        \lim_{T \rightarrow \infty} \underline{\pi}_{k}(T) = \alpha_{k}\left(\frac{\mathfrak{Q}_{\min}}{\mathfrak{Q}_{\max}}\right)^{k} > 0.
    \end{equation}
\end{lemma}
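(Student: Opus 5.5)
Induction on $k$, with everything driven by the convolution structure of the recursion. Abbreviate $\beta_k \defeq \mathfrak{Q}_{\max}(1-s_k)(1-\mu)$, and note that $\beta_k>0$ for all $k\in\mathbb N$: the sequence $(s_k)$ is strictly decreasing with $s_0=1$, so $s_k<1$, and $\mathfrak{Q}_{\max}>0$.

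\emph{Base case $k=1$.} Computing directly,
\[
\underline{\pi}_1(T)=\frac{\mu\mathfrak{Q}_{\min}}{\beta_1}\bigl(1-e^{-\beta_1 T}\bigr).
\]
This is strictly increasing, vanishes at $T=0$, is positive on $(0,\infty)$, and tends to
\[
\frac{\mu\mathfrak{Q}_{\min}}{\mathfrak{Q}_{\max}(1-s_1)(1-\mu)}=\alpha_1\,\frac{\mathfrak{Q}_{\min}}{\mathfrak{Q}_{\max}},
\]
using $s_0=1$.

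\emph{Monotonicity and positivity for $k\ge 2$.} Substitute $r=T-t$ to write
\[
\underline{\pi}_k(T)=\mu s_{k-1}\mathfrak{Q}_{\min}\int_0^T \underline{\pi}_{k-1}(r)\,e^{-\beta_k(T-r)}\,dr .
\]
This form is awkward for monotonicity because $T$ appears in the exponent. The original form is better: $\underline{\pi}_k(T)=c\int_0^T \underline{\pi}_{k-1}(T-t)e^{-\beta_k t}\,dt$ with $c=\mu s_{k-1}\mathfrak{Q}_{\min}>0$. Here $c>0$ because $s_{k-1}>0$; monostability requires $s_j>0$, as noted after \eqref{Paper02_definition_alpha_k}. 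For $T_1<T_2$, the integrand at $T_2$ is at least the integrand at $T_1$ on $[0,T_1]$, since $\underline{\pi}_{k-1}$ is increasing by the inductive hypothesis. The integral at $T_2$ also includes the extra interval $[T_1,T_2]$, on which $\underline{\pi}_{k-1}(T_2-t)\ge 0$. This gives monotonicity. For strictness, observe that on $t\in[0,T_1)$ we have $\underline{\pi}_{k-1}(T_2-t)>\underline{\pi}_{k-1}(T_1-t)$ by strict monotonicity, so the first part is already strictly larger. Positivity on $(0,\infty)$ follows because the integrand is positive for $t<T$, using $\underline{\pi}_{k-1}>0$ on $(0,\infty)$. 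Non-negativity of $\underline{\pi}_{k-1}$ at $0$ also holds, since $\underline{\pi}_{k-1}(0)=0$ for $k-1\ge1$.

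\emph{Limit.} By the inductive hypothesis, $\underline{\pi}_{k-1}(T-t)\mathds{1}_{\{t\le T\}}$ is bounded by its limit $L_{k-1}$ (monotone increasing) and converges to $L_{k-1}$ pointwise for each fixed $t$. The factor $e^{-\beta_k t}$ is integrable on $[0,\infty)$. Dominated convergence then gives
\[
\lim_{T\to\infty}\underline{\pi}_k(T)=\frac{\mu s_{k-1}\mathfrak{Q}_{\min}}{\beta_k}\,L_{k-1}=\frac{\mu s_{k-1}}{(1-\mu)(1-s_k)}\cdot\frac{\mathfrak{Q}_{\min}}{\mathfrak{Q}_{\max}}\,L_{k-1}.
\]
Substituting $L_{k-1}=\alpha_{k-1}(\mathfrak{Q}_{\min}/\mathfrak{Q}_{\max})^{k-1}$, the recursion closes and gives exactly $\alpha_k(\mathfrak{Q}_{\min}/\mathfrak{Q}_{\max})^k$, by the product formula \eqref{Paper02_redefining_alpha_k}. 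This limit is positive since every factor is positive.

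No step is a real obstacle. The only points needing care are strictness of monotonicity, which is handled by the $T\mapsto T-t$ form and strict monotonicity of $\underline{\pi}_{k-1}$, and verifying $\beta_k>0$ and $s_{k-1}>0$ from the strict decrease of $(s_k)$ assumed in Definition~\ref{Paper02_assumption_monostable_condition}.
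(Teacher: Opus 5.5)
Your proof is correct and follows the same induction as the paper: an explicit base case, then the convolution form $\underline{\pi}_k(T)=\mu s_{k-1}\mathfrak{Q}_{\min}\int_0^T\underline{\pi}_{k-1}(T-t)e^{-\beta_k t}\,dt$ with $\beta_k=\mathfrak{Q}_{\max}(1-s_k)(1-\mu)$ combined with the inductive hypothesis. The only real difference is how you get monotonicity.

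The paper differentiates under the integral sign, using $\underline{\pi}_{k-1}(0)=0$ to remove the boundary term, and shows
$\frac{d}{dT}\underline{\pi}_k(T)=\mu s_{k-1}\mathfrak{Q}_{\min}\int_0^T\underline{\pi}_{k-1}'(T-t)e^{-\beta_k t}\,dt>0$.
This gives the slightly stronger fact that the derivative is strictly positive. The cost is that the induction must carry an extra hypothesis, namely that $\underline{\pi}_{k-1}$ is $C^1$ with bounded derivative, and this hypothesis then has to be propagated to the next step.

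Your direct comparison of the integrals at $T_1<T_2$ needs only strict monotonicity and positivity of $\underline{\pi}_{k-1}$, so it is leaner. One edge case: when $T_1=0$ the interval $[0,T_1)$ is empty, so strictness there comes from your positivity claim rather than from the comparison. You have proved positivity, so nothing is missing.

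For the limit, the paper uses monotone convergence and you use dominated convergence. Both rely on the monotonicity of $\underline{\pi}_{k-1}$, and either works.
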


By our observation after~\eqref{Paper02_redefining_alpha_k} that $(\alpha_k)_{k \in \mathbb N_0} \in \ell_1^+$, and the fact that by~\eqref{Paper02_definition_max_min_birth_polynomial} and by Definition~\ref{Paper02_assumption_monostable_condition}(ii), $0 < \mathfrak Q_{\min} \leq \mathfrak Q_{\max}$, we have that $\left(\alpha_{k}\left(\frac{\mathfrak{Q}_{\min}}{\mathfrak{Q}_{\max}}\right)^{k}\right)_{k \in \mathbb{N}_{0}} \in \ell_{1}^{+}$. 

\begin{proof}
    We will use an induction argument on~$k \in \mathbb{N}$. Observe that by the definition of~$\underline{\pi}_{1}$ in~\eqref{Paper02_evolution_proportion_time}, we have for all $T > 0$,
    \begin{equation} \label{Paper02_derivative_lower_bound_proportion_0}
        \frac{d}{dT}\underline{\pi}_{1}(T) = \mu \mathfrak{Q}_{\min} \exp\Big(-T\mathfrak{Q}_{\max}(1- s_{1})(1 - \mu)\Big) > 0,
    \end{equation}
    since by Definition~\ref{Paper02_assumption_monostable_condition}(ii), the fact that~$F=(F_{k})_{k \in \mathbb{N}_{0}}$ is monostable implies that~$\mathfrak{Q}_{\min} > 0$. It follows from~\eqref{Paper02_derivative_lower_bound_proportion_0} that~$\underline{\pi}_{1}$ is strictly increasing on $[0,\infty)$. By taking the limit in~\eqref{Paper02_evolution_proportion_time} as~$T \rightarrow \infty$, we have
    \begin{equation} \label{Paper02_limit_on_pi_0_lower}
    \begin{aligned}
        \lim_{T \rightarrow \infty} \underline{\pi}_{1}(T) & = \mu \mathfrak{Q}_{\min} \int_{0}^{\infty} \exp \Big(-t \mathfrak{Q}_{\max}(1 - s_{1})(1-\mu)\Big) dt \\ & = \frac{\mu \mathfrak{Q}_{\min}}{(1 - \mu)(1 - s_{1})\mathfrak{Q}_{\max}} \\ & = \alpha_{1} \frac{\mathfrak{Q}_{\min}}{\mathfrak{Q}_{\max}},
    \end{aligned}
    \end{equation}
    where for the second equality we used our assumption that~$F = (F_{k})_{k \in \mathbb{N}_{0}}$ is monostable and so $(1 - s_{1})(1-\mu) > 0$ by Definition~\ref{Paper02_assumption_monostable_condition}, and for the last line we used the definition of~$\alpha_{1}$ in~\eqref{Paper02_definition_alpha_k}.
    
    Suppose now that for some~$k \in \mathbb{N}$, $\underline{\pi}_{k}$ satisfies the following conditions:
    \begin{enumerate}[(i)]
        \item $\underline{\pi}_{k} \in \mathscr C^{1}((0,\infty))$ with bounded derivative, and for all $T >0$,
        \begin{equation} \label{Paper02_intermediate_step_control_lower_bound_prop_mut_ii}
            \frac{d}{dT}\underline{\pi}_{k}(T) > 0.
        \end{equation}
        \item The limit in~\eqref{Paper02_limit_time_lower_bound_prevalence_mutations} holds for $\underline{\pi}_{k}$.
    \end{enumerate}
    Under conditions~(i) and~(ii), by differentiating~$\underline{\pi}_{k+1}(T)$ with respect to~$T$, using~\eqref{Paper02_evolution_proportion_time}, dominated convergence and the fact that~\eqref{Paper02_evolution_proportion_time} implies that~$\underline{\pi}_{k}(0) = 0$, we conclude that for $T > 0$,
    \begin{equation} \label{Paper02_intermediate_step_lower_bound_proportions_i}
    \begin{aligned}
          \frac{d}{dT}\underline{\pi}_{k+1}(T) = \mu s_{k} \mathfrak{Q}_{\min} \int_{0}^{T} \frac{d}{dT} \underline{\pi}_{k} (T-t) \exp\Big(- t\mathfrak{Q}_{\max}(1 - s_{k+1})(1 - \mu)\Big) \, dt > 0,
    \end{aligned}
    \end{equation}
    where the inequality follows from~\eqref{Paper02_intermediate_step_control_lower_bound_prop_mut_ii}. Observe that~\eqref{Paper02_intermediate_step_lower_bound_proportions_i} implies, in particular, that~$\underline{\pi}_{k+1}$ is strictly increasing on $[0,\infty)$. Hence, to complete the induction argument, it remains to show that the limit in~\eqref{Paper02_limit_time_lower_bound_prevalence_mutations} holds with~$k$ replaced by~$k+1$. By taking the limit in~\eqref{Paper02_evolution_proportion_time} as~$T \rightarrow \infty$, and then using monotone convergence and our assumption that~\eqref{Paper02_limit_time_lower_bound_prevalence_mutations} holds for $\underline \pi_k$,
    \begin{equation*}
    \begin{aligned}
        \lim_{T \rightarrow \infty} \underline{\pi}_{k+1}(T) & = \mu s_{k} \mathfrak{Q}_{\min} \lim_{T \rightarrow \infty}  \int_{0}^{\infty} \mathds{1}_{\{t \leq T\}}\underline{\pi}_{k} (T-t) \exp\Big(- t\mathfrak{Q}_{\max}(1 - s_{k+1})(1 - \mu)\Big) \, dt \\ & = \mu s_{k} \mathfrak{Q}_{\min} \int_{0}^{\infty} \alpha_{k} \left(\frac{\mathfrak{Q}_{\min}}{\mathfrak{Q}_{\max}}\right)^{k} \exp\Big(- t\mathfrak{Q}_{\max}(1 - s_{k+1})(1 - \mu)\Big) \, dt \\ & = \alpha_{k+1}\left(\frac{\mathfrak{Q}_{\min}}{\mathfrak{Q}_{\max}}\right)^{k+1},
    \end{aligned}
    \end{equation*}
    where we used the definition of~$\alpha_{k}$ in~\eqref{Paper02_definition_alpha_k} for the last identity. This completes the proof, by induction on~$k$.
\end{proof}

We now define a sequence of auxiliary functions that we will use for upper bounds on the ratio of~$u_k$ and~$u_0$. Suppose that $f = (f_{k})_{k \in \mathbb N_0}$ satisfies Assumptions~\ref{Paper02_assumption_initial_condition} and~\ref{Paper02_assumption_initial_condition_modified}, and recall that $\left(\hat{\pi}_{k}\right)_{k \in \mathbb{N}_{0}} \in \ell_1^+$ is defined in Assumption~\ref{Paper02_assumption_initial_condition_modified}(iii).

\begin{lemma} \label{Paper02_evolution_proportions_upper_bound_characterisation}
   Suppose $F = (F_k)_{k \in \mathbb N_0}$ is monostable in the sense of Definition~\ref{Paper02_assumption_monostable_condition}. Define $\phi = (\phi_k)_{k \in \mathbb N}$, where $\phi_k: [0,\infty) \rightarrow [0, \infty)$ for every $k \in \mathbb N$, as follows. Let $\phi_{1} \equiv 0$ and, for each~$k \geq 2$ and $T \geq 0$, let
\begin{equation} \label{Paper02_evolution_proportion_time_upper_bound}
   {\phi_{k}}(T) \defeq \sum_{i = 1}^{k - 1} \frac{(\mu \mathfrak{Q}_{\max}T)^{k-i}}{(k - i)!}  \Bigg(\prod_{j = i}^{k-1} s_{j}\Bigg)\hat{\pi}_{i}\exp\Big(-T\mathfrak{Q}_{\min}(1-s_{i})(1 - \mu)\Big).
\end{equation}
Then~$\Phi(T) \defeq \|  {\phi(T)} \|_{\ell_{1}} < \infty$ for every~$T \geq 0$, with $\sup_{T \geq 0} \Phi(T) < \infty$ and
    \begin{equation} \label{Paper02_vanishing_sum_error_term}
        \lim_{T \rightarrow \infty} \Phi(T) = 0.
    \end{equation}
\end{lemma}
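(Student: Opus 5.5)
The plan is to exchange the order of summation in $\Phi(T)$ and then bound the resulting inner series uniformly in the starting index $i$, using the decay $s_j \to 0$. All terms are non-negative, so by Tonelli, writing $n = k - i$ and $a_i \defeq \mathfrak{Q}_{\min}(1-s_i)(1-\mu)$,
\begin{equation*}
\Phi(T) = \sum_{i=1}^{\infty} \hat{\pi}_i e^{-T a_i} \sum_{n=1}^{\infty} \frac{(\mu \mathfrak{Q}_{\max} T)^{n}}{n!} \prod_{j=i}^{i+n-1} s_j .
\end{equation*}
Since $F$ is monostable, $(s_k)$ is strictly decreasing with $s_0 = 1$, so $s_i \le s_1 < 1$ for $i \ge 1$. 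Moreover $\mathfrak{Q}_{\min} > 0$ by Definition~\ref{Paper02_assumption_monostable_condition}(ii), and $\mu < 1$. Hence $a_i \ge a_1 > 0$ for all $i \ge 1$, and the exponential prefactor is at most $e^{-a_1 T}$ uniformly in $i$.

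The main obstacle is the inner series. The crude bound $\prod_{j} s_j \le s_i^{n}$ gives $e^{\mu \mathfrak{Q}_{\max} s_i T}$, and for small $i$ this growth rate may beat $a_i$. I would instead exploit that the product eventually contains factors at most $s_N$, where $N$ is chosen so that $c\, s_N \le a_1/2$ with $c \defeq \mu\mathfrak{Q}_{\max}$. Such an $N$ exists by Assumption~\ref{Paper02_assumption_fitness_sequence}(iv), and we may take $s_N > 0$ by strict monotonicity. In the product $\prod_{j=i}^{i+n-1} s_j$, at most $N$ indices satisfy $j < N$; each such factor is bounded by $1$, and each remaining factor is bounded by $s_N$. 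This gives $\prod_{j=i}^{i+n-1} s_j \le s_N^{(n-N)^+}$, independently of $i$.

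Splitting the inner series at $n = N$ then gives
\begin{equation*}
\sum_{n=1}^{\infty} \frac{(cT)^n}{n!} \prod_{j=i}^{i+n-1} s_j \le \sum_{n=1}^{N-1} \frac{(cT)^n}{n!} + s_N^{-N} \sum_{n\ge N} \frac{(c s_N T)^n}{n!} \le P_N(T) + s_N^{-N} e^{a_1 T/2},
\end{equation*}
where $P_N$ is a polynomial that does not depend on $i$. Therefore
\begin{equation*}
\Phi(T) \le \|\hat{\pi}\|_{\ell_1}\Big(P_N(T) e^{-a_1 T} + s_N^{-N} e^{-a_1 T/2}\Big).
\end{equation*}
Here $\|\hat{\pi}\|_{\ell_1} < \infty$ by Assumption~\ref{Paper02_assumption_initial_condition_modified}(iii). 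The right-hand side is finite for every $T \ge 0$ and continuous in $T$, which gives $\Phi(T) < \infty$. It tends to $0$ as $T \to \infty$, which gives~\eqref{Paper02_vanishing_sum_error_term}. A continuous function on $[0,\infty)$ with a finite limit is bounded, so $\sup_{T\ge 0}\Phi(T) < \infty$ follows as well. The term $\phi_1 \equiv 0$ contributes nothing and needs no separate treatment.
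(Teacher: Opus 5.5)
Your proof is correct and follows essentially the same route as the paper: exchange the order of summation, choose a cutoff index (your $N$, the paper's $J$) with $\mu\mathfrak{Q}_{\max}s_N \le \tfrac12\mathfrak{Q}_{\min}(1-s_1)(1-\mu)$, split the inner series there, and use $1-s_i\ge 1-s_1$ to extract the uniform decay $e^{-T\mathfrak{Q}_{\min}(1-s_1)(1-\mu)/2}$. The only cosmetic difference is in the tail of the inner series: you bound the factors with index below the cutoff by $1$ and compensate with the constant $s_N^{-N}$, whereas the paper reindexes and bounds the corresponding factor by $1\vee(\mu\mathfrak{Q}_{\max}T)^J$.
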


\begin{proof}
    
    We start by noticing that by~\eqref{Paper02_evolution_proportion_time_upper_bound} and Fubini's theorem, $\Phi(T) = \| \phi (T) \|_{\ell_{1}}$ is given by
    \begin{equation} \label{Paper02_rewriting_Theta_T}
    \begin{aligned}
        \Phi(T) & = \sum_{k = 2}^{\infty} \; \sum_{i = 1}^{k - 1} \frac{(\mu \mathfrak{Q}_{\max}T)^{k-i}}{(k - i)!}  \Bigg(\prod_{j = i}^{k-1} s_{j}\Bigg)\hat{\pi}_{i}\exp\Big(-T \mathfrak{Q}_{\min}(1 - s_{i})(1 - \mu)\Big) \\ & = \sum_{i = 1}^{\infty} \; \sum_{k = i + 1}^{\infty} \frac{(\mu \mathfrak{Q}_{\max}T)^{k-i}}{(k - i)!}  \Bigg(\prod_{j = i}^{k-1} s_{j}\Bigg)\hat{\pi}_{i}\exp\Big(-T \mathfrak{Q}_{\min}(1 - s_{i})(1 - \mu)\Big)
        \\ & = \sum_{i = 1}^{\infty} \hat{\pi}_{i}\exp\Big(-T \mathfrak{Q}_{\min}(1 - s_{i})(1 - \mu)\Big)  \sum_{k = i + 1}^{\infty} \frac{(\mu \mathfrak{Q}_{\max}T)^{k-i}}{(k - i)!}  \Bigg(\prod_{j = i}^{k-1} s_{j}\Bigg).
    \end{aligned}
    \end{equation}
    Since, by Definition~\ref{Paper02_assumption_monostable_condition}, $(s_k)_{k \in \mathbb N_0}$ is strictly decreasing, $\mu \in (0,1)$ and $\mathfrak{Q}_{\min} > 0$, and by Assumption~\ref{Paper02_assumption_fitness_sequence}(iv), $s_{k} \rightarrow 0$ as $k \rightarrow \infty$, there exists $J \in \mathbb{N}$ such that
    \begin{equation} \label{Paper02_definition_J_bound_capital_Theta}
    0 < s_{k} < \frac{(1-s_{1})(1 - \mu)}{2\mu} \cdot \frac{\mathfrak{Q}_{\min}}{\mathfrak{Q}_{\max}} \quad \forall k \geq J.
    \end{equation}
For any $i \in \mathbb{N}$,
\begin{equation} \label{Paper02_splitting_sum_small_large_fitness_parameters}
\begin{aligned}
   & \sum_{k = i + 1}^{\infty} \frac{(\mu \mathfrak{Q}_{\max}T)^{k-i}}{(k - i)!}  \Bigg(\prod_{j = i}^{k-1} s_{j}\Bigg)  \\ & = \sum_{k = i + 1}^{\infty} \frac{(\mu \mathfrak{Q}_{\max}T)^{k-i}}{(k - i)!}  \Bigg(\prod_{j = i}^{k-1} s_{j}\Bigg) \mathds{1}_{\{k \leq J\}} + \sum_{k = i + 1}^{\infty} \frac{(\mu \mathfrak{Q}_{\max}T)^{k-i}}{(k - i)!}  \Bigg(\prod_{j = i}^{k-1} s_{j}\Bigg) \mathds{1}_{\{k > J\}}.
\end{aligned}
\end{equation}
We will bound each of the sums on the right-hand side of~\eqref{Paper02_splitting_sum_small_large_fitness_parameters} separately. For the first sum, we note that since $s_j \leq 1 \; \forall \, j \in \mathbb N_0$ by Assumption~\ref{Paper02_assumption_fitness_sequence}, for any $i \in \mathbb N$, we have
\begin{equation} \label{Paper02_first_bound_capital_Theta_T}
\begin{aligned}
    \sum_{k = i + 1}^{\infty} \frac{(\mu \mathfrak{Q}_{\max}T)^{k-i}}{(k - i)!}  \Bigg(\prod_{j = i}^{k-1} s_{j}\Bigg) \mathds{1}_{\{k \leq J\}} \leq \sum_{k = 1}^{J-1} \frac{(\mu\mathfrak{Q}_{\max}T)^{k}}{k!}.
\end{aligned}
\end{equation}
For the second sum on the right-hand side of~\eqref{Paper02_splitting_sum_small_large_fitness_parameters}, we note that by letting $k' = k - (J \vee i)$ and using that~$(s_{k})_{k \in \mathbb N_0}$ is monotonically decreasing by Assumption~\ref{Paper02_assumption_fitness_sequence}(iii), we have for all $i \in \mathbb{N}$,
\begin{equation} \label{Paper02_second_bound_capital_Theta_T}
\begin{aligned}
    \sum_{k = i + 1}^{\infty}  \frac{(\mu \mathfrak{Q}_{\max}T)^{k-i}}{(k - i)!}  \Bigg(\prod_{j = i}^{k-1} s_{j}\Bigg) \cdot \mathds{1}_{\{k > J\}} & \leq \Big(1 \vee (\mu \mathfrak{Q}_{\max}T)^{J}\Big) \cdot \sum_{k' = 0}^{\infty} \frac{(\mu\mathfrak{Q}_{\max}s_{J}T)^{k'}}{(k')!} \\ & = \Big(1 \vee (\mu \mathfrak{Q}_{\max}T)^{J}\Big) \exp\Big(\mu\mathfrak{Q}_{\max}s_{J}T\Big) \\ & \leq \Big(1 \vee (\mu \mathfrak{Q}_{\max}T)^{J}\Big) \exp\left(\frac{ \mathfrak{Q}_{\min}(1 - s_{1})(1 - \mu)T}{2}\right),
\end{aligned}
\end{equation}
where for the last inequality we used~\eqref{Paper02_definition_J_bound_capital_Theta}. Finally, applying estimates~\eqref{Paper02_splitting_sum_small_large_fitness_parameters},~\eqref{Paper02_first_bound_capital_Theta_T} and~\eqref{Paper02_second_bound_capital_Theta_T} to~\eqref{Paper02_rewriting_Theta_T}, and then using the fact that~$(s_{k})_{k \in \mathbb{N}_{0}}$ is monotonically decreasing again, we conclude that for all $T \geq 0$,
\begin{equation} \label{Paper02_upper_bound_sum_theta_T}
\begin{aligned}
    \Phi(T) & \leq \sum_{k = 1}^{J-1} \frac{(\mu\mathfrak{Q}_{\max}T)^{k}}{k!} \cdot \sum_{i = 1}^{\infty} \hat{\pi}_{i}\exp\Big(-T\mathfrak{Q}_{\min}(1-s_{i})(1 - \mu)\Big)\\ & \quad \; + \Big(1 \vee (\mu \mathfrak{Q}_{\max}T)^{J}\Big) \exp\left(\frac{T \mathfrak{Q}_{\min}(1 - s_{1})(1 - \mu)}{2}\right)\sum_{i = 1}^{\infty} \hat{\pi}_{i}\exp\Big(- T\mathfrak{Q}_{\min}(1 - s_{i})(1 - \mu)\Big) \\ & \leq \exp\left(- \frac{T\mathfrak{Q}_{\min}(1 -s_{1})(1 - \mu)}{2}\right) \| \hat{\pi} \|_{\ell_{1}} \Bigg(\sum_{k = 1}^{J-1} \frac{(\mu\mathfrak{Q}_{\max}T)^{k}}{k!} + \Big(1 \vee (\mu \mathfrak{Q}_{\max}T)^{J}\Big)\Bigg).
\end{aligned}
\end{equation}
Hence, for any $T \geq 0$, we have $\Phi(T) < \infty$. Moreover, by Definition~\ref{Paper02_assumption_monostable_condition} we have $s_1, \mu < 1$ and $\mathfrak{Q}_{\min} > 0$, and so $\mathfrak{Q}_{\min}(1 - s_{1})(1-\mu) > 0$.  Hence, recalling that~$J \in \mathbb{N}$ is a fixed parameter satisfying~\eqref{Paper02_definition_J_bound_capital_Theta}, we see that $\Phi(T)$ is bounded, and by taking~$T \rightarrow \infty$, the right-hand side of~\eqref{Paper02_upper_bound_sum_theta_T} vanishes, which completes the proof.
\end{proof}

Our next result combines the Feynman–Kac representation from Lemma~\ref{Paper02_feynman_kac_representation_lemma_each_u_k} with Lemmas~\ref{Paper02_evolution_proportions_characterisation} and~\ref{Paper02_evolution_proportions_upper_bound_characterisation} to control the evolution of the ratio of~$u_k$ and~$u_0$ over time. We recall the sequences of non-negative real numbers~$(\alpha_{k})_{k \in \mathbb{N}_{0}}$ and~$(\hat{\pi}_{k})_{k \in \mathbb{N}_{0}}$ defined in~\eqref{Paper02_definition_alpha_k} and in Assumption~\ref{Paper02_assumption_initial_condition_modified}(iii). Also, recall the sequences of non-negative functions~$(\underline{\pi}_{k})_{k \in \mathbb{N}_0}$ and~$(\phi_{k})_{k \in \mathbb N}$ defined in~\eqref{Paper02_evolution_proportion_time} and~\eqref{Paper02_evolution_proportion_time_upper_bound}.

\begin{lemma} \label{Paper02_uniform_evolution_proportion_over_space_fkpp_case}
      Suppose that the reaction term $F = (F_{k})_{k \in \mathbb{N}_{0}}$ is monostable in the sense of Definition~\ref{Paper02_assumption_monostable_condition}, and that $f$ satisfies Assumptions~\ref{Paper02_assumption_initial_condition} and~\ref{Paper02_assumption_initial_condition_modified}, and let $u = (u_{k})_{k \in \mathbb{N}_{0}}$ be the continuous mild solution to the system of PDEs~\eqref{Paper02_PDE_scaling_limit} given in Proposition~\ref{Paper02_smoothness_mild_solution}. Then, for any $T > 0$, $x \in \mathbb{R}$ and every~$k \in \mathbb{N}$,
    \begin{equation} \label{Paper02_not_so_narrow_control_over_the_proportions_PTW_case_k_1}
      \underline{\pi}_{k}(T)u_{0}(T,x) \leq u_{k}(T,x) \leq \Bigg(\hat{\pi}_{k}\exp\Big(-T\mathfrak{Q}_{\min}(1 - s_{k})(1 - \mu)\Big) + \phi_{k}(T) + \alpha_{k}\left(\frac{\mathfrak{Q}_{\max}}{\mathfrak{Q}_{\min}}\right)^{k}\Bigg)u_{0}(T,x).
    \end{equation}
\end{lemma}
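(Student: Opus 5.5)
The plan is to prove both inequalities in~\eqref{Paper02_not_so_narrow_control_over_the_proportions_PTW_case_k_1} together, by induction on $k \in \mathbb N$. The key device is to compare the Feynman--Kac representations of $u_k$ and $u_0$ along the same Brownian path. By Lemma~\ref{Paper02_uniform_bound_total_mass}, $\|u(t,x)\|_{\ell_1} \le 1$ everywhere, so $q_+(\|u\|_{\ell_1}) \in [\mathfrak Q_{\min}, \mathfrak Q_{\max}]$. Write $G_0 \defeq ((1-\mu)q_+ - q_-)(\|u\|_{\ell_1})$ and $G_k \defeq (s_k(1-\mu)q_+ - q_-)(\|u\|_{\ell_1})$. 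Then
\[
G_k = G_0 - (1-s_k)(1-\mu)q_+(\|u\|_{\ell_1}),
\]
and hence, for every path and every $t \ge 0$,
\[
e^{-t\mathfrak Q_{\max}(1-s_k)(1-\mu)}\, e^{\int_0^t G_0} \;\le\; e^{\int_0^t G_k} \;\le\; e^{-t\mathfrak Q_{\min}(1-s_k)(1-\mu)}\, e^{\int_0^t G_0},
\]
where the time arguments are $(T-\tau, W(\tau))$ as in Lemma~\ref{Paper02_feynman_kac_representation_lemma_each_u_k}. The second identity needed is~\eqref{Paper02_feynman_kac_no_mutations}, used with intermediate time $t$:
\[
\mathbb E_x\Big[u_0(T-t,W(t))\, e^{\int_0^t G_0(T-\tau,W(\tau))\,d\tau}\Big] = u_0(T,x).
\]
This is what converts source terms evaluated along the path back into $u_0(T,x)$.

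\textbf{Lower bound.} Drop the (non-negative) initial-condition term in~\eqref{Paper02_feynman_kac_with_mutations}. In the source term, bound $q_+ \ge \mathfrak Q_{\min}$, and use the induction hypothesis $u_{k-1}(T-t,\cdot) \ge \underline\pi_{k-1}(T-t)\,u_0(T-t,\cdot)$; for $k=1$ this holds trivially with $\underline\pi_0 = 1$. Bound the weight below by $e^{-t\mathfrak Q_{\max}(1-s_k)(1-\mu)} e^{\int_0^t G_0}$. Applying Fubini and the identity above gives
\[
u_k(T,x) \ge u_0(T,x)\, \mu s_{k-1}\mathfrak Q_{\min}\int_0^T \underline\pi_{k-1}(T-t)\, e^{-t\mathfrak Q_{\max}(1-s_k)(1-\mu)}\,dt,
\]
and the right-hand side is exactly $\underline\pi_k(T)\,u_0(T,x)$ by~\eqref{Paper02_evolution_proportion_time}.

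\textbf{Upper bound.} For the initial term in~\eqref{Paper02_feynman_kac_with_mutations}, use $f_k \le \hat\pi_k f_0$ a.e. Together with the weight comparison with $t = T$ and~\eqref{Paper02_feynman_kac_no_mutations_from_begining}, this gives at most $\hat\pi_k e^{-T\mathfrak Q_{\min}(1-s_k)(1-\mu)}\,u_0(T,x)$. For the source term, bound $q_+ \le \mathfrak Q_{\max}$ and insert the induction hypothesis for $u_{k-1}$ (for $k=1$, simply $u_0$). This yields $u_0(T,x)$ times
\[
\mu s_{k-1}\mathfrak Q_{\max}\int_0^T \Big(\hat\pi_{k-1}e^{-(T-t)\mathfrak Q_{\min}(1-s_{k-1})(1-\mu)} + \phi_{k-1}(T-t) + \alpha_{k-1}\big(\tfrac{\mathfrak Q_{\max}}{\mathfrak Q_{\min}}\big)^{k-1}\Big) e^{-t\mathfrak Q_{\min}(1-s_k)(1-\mu)}\,dt .
\]
The $\alpha$-part is handled by bounding the integral by $\int_0^\infty$. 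Using the definition~\eqref{Paper02_definition_alpha_k} of $\alpha_k$, it is then at most $\alpha_k(\mathfrak Q_{\max}/\mathfrak Q_{\min})^k$.

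The remaining part must be shown to be at most $\phi_k(T)$. For $k=1$ this part is absent, consistent with $\phi_1 \equiv 0$. For $k \ge 2$, every exponential appearing has the form $e^{-(T-t)\mathfrak Q_{\min}(1-s_i)(1-\mu)}$ with $i \le k-1$. Since $s_k \le s_i$, we have $e^{-t\mathfrak Q_{\min}(1-s_k)(1-\mu)} \le e^{-t\mathfrak Q_{\min}(1-s_i)(1-\mu)}$, so the product collapses to at most $e^{-T\mathfrak Q_{\min}(1-s_i)(1-\mu)}$. What remains is an integral of polynomial factors:
\[
\mu \mathfrak Q_{\max}s_{k-1}\int_0^T \frac{(\mu\mathfrak Q_{\max}(T-t))^{k-1-i}}{(k-1-i)!}\,dt = \frac{(\mu\mathfrak Q_{\max}T)^{k-i}}{(k-i)!}\,s_{k-1}.
\]
Combined with the product $\prod_{j=i}^{k-2}s_j$ carried by $\phi_{k-1}$ (or, for $i=k-1$, by the $\hat\pi_{k-1}$ term directly), this reproduces exactly the $i$-th summand of $\phi_k(T)$ in~\eqref{Paper02_evolution_proportion_time_upper_bound}.

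The main obstacle I expect is this last bookkeeping: checking that the recursion produced by the Duhamel integral matches the closed form of $\phi_k$ term by term, including the $i=k-1$ term coming from $\hat\pi_{k-1}$. A secondary point is justifying the Fubini and Markov-property step. Conditioning on $W(t)$, the source term at $(T-t, W(t))$ multiplied by the weight up to time $t$ must be rewritten as $u_0(T,x)$; this is legitimate because all quantities involved are bounded on $[0,T]$.
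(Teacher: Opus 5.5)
Your proposal is correct and follows the paper's proof essentially step by step: compare the Feynman--Kac weights for $u_k$ and $u_0$ using $q_+(\|u\|_{\ell_1})\in[\mathfrak Q_{\min},\mathfrak Q_{\max}]$, convert source terms back to $u_0(T,x)$ via Fubini and the intermediate-time identity~\eqref{Paper02_feynman_kac_no_mutations}, and split the upper-bound source term into $\hat\pi$, $\phi$ and $\alpha$ pieces that reassemble into $\phi_k(T)$ and $\alpha_k(\mathfrak Q_{\max}/\mathfrak Q_{\min})^k$. The differences are cosmetic: the paper runs the two inductions separately, and no extra conditioning or Markov argument is needed, since~\eqref{Paper02_feynman_kac_no_mutations} already holds at every intermediate time $t\in[0,T]$.
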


\begin{proof}
    We start by using an induction argument to establish the lower bound in~\eqref{Paper02_not_so_narrow_control_over_the_proportions_PTW_case_k_1}, i.e.~that for all $k \in \mathbb{N}$,
    \begin{equation} \label{Paper02_lower_bound_proportions_uniform_space}
        \underline{\pi}_{k}(T)u_{0}(T,x) \leq u_{k}(T,x) \; \forall \, T > 0 \textrm{ and } \forall \, x \in \mathbb{R}.
    \end{equation}
    Observe that by Definition~\ref{Paper02_assumption_monostable_condition},
    \begin{equation} \label{Paper02_s_k_lower_than_1}
        s_{k} < s_{0} = 1 \quad \forall \, k \in \mathbb N.
    \end{equation}
    Moreover, by Lemma~\ref{Paper02_uniform_bound_total_mass} and~\eqref{Paper02_definition_max_min_birth_polynomial},
    \begin{equation} \label{Paper02_trivial_conclusion_Q_max_Q_min}
        \inf_{t \geq 0} \; \inf_{x \in \mathbb{R}} q_{+}(\| u(t,x) \|_{\ell_{1}}) \geq \mathfrak{Q}_{\min} \quad \textrm{and} \quad \sup_{t \geq 0} \; \sup_{x \in \mathbb{R}} q_{+}(\| u(t,x) \|_{\ell_{1}}) \leq \mathfrak{Q}_{\max}.
    \end{equation}
    
    By using~\eqref{Paper02_feynman_kac_with_mutations} from Lemma~\ref{Paper02_feynman_kac_representation_lemma_each_u_k} with $k = 1$, and using the fact that $f_{1}$ is non-negative by Assumption~\ref{Paper02_assumption_initial_condition}, together with identity~\eqref{Paper02_s_k_lower_than_1}, and then using estimate~\eqref{Paper02_trivial_conclusion_Q_max_Q_min}, we have that for any $T \geq 0$ and $x \in \mathbb{R}$,
    \begin{equation*}
    \begin{aligned}
        & u_{1}(T,x) \\ & \quad \geq \mathbb{E}_{x}\Bigg[\int_{0}^{T} \Big(\mu u_{0}q_{+}(\| u \|_{\ell_{1}})\Big)(T-t, W(t)) \\ & \quad \quad \quad \quad\quad \quad \quad \quad \cdot \exp\left(\int_{0}^{t} \Big(s_{1} (1-\mu)q_{+} - q_{-}\Big) \Big(\| u(T-\tau,W(\tau))\|_{\ell_{1}}\Big) d\tau\right) \; dt\Bigg] \\ & \quad \geq \int_{0}^{T} \mu \mathfrak{Q}_{\min}\exp\Big(-t\mathfrak{Q}_{\max}(1 - s_{1})(1-\mu)\Big)  \\ & \quad \quad \quad \quad \quad \quad \cdot \mathbb{E}_{x}\Bigg[u_{0}(T-t, W(t))\exp\left(\int_{0}^{t} \Big((1-\mu)q_{+} - q_{-}\Big) \Big(\| u(T-\tau,W(\tau))\|_{\ell_{1}}\Big) d\tau\right) \Bigg] dt
      \\ & \quad = \underline{\pi}_{1}(T)u_{0}(T,x),
    \end{aligned}
    \end{equation*}
    where the final equality follows from the Feynman--Kac formula given in~\eqref{Paper02_feynman_kac_no_mutations} and the definition of $\underline{\pi}_{1}$ in~\eqref{Paper02_evolution_proportion_time}.
    
    Now take $k \in \mathbb{N}$ and suppose that~\eqref{Paper02_lower_bound_proportions_uniform_space} holds. Then, by using the Feynman--Kac representation in~\eqref{Paper02_feynman_kac_with_mutations} for~$u_{k+1}$ and the fact that $f_{k+1} \geq 0$ by Assumption~\ref{Paper02_assumption_initial_condition}, and then by our assumption~\eqref{Paper02_lower_bound_proportions_uniform_space}, and by~\eqref{Paper02_s_k_lower_than_1} and~\eqref{Paper02_trivial_conclusion_Q_max_Q_min}, and then Fubini's theorem, we have that for any $T \geq 0$ and $x \in \mathbb{R}$,
    \begin{equation*}
    \begin{aligned}
         & u_{k+1}(T,x) \\ & \quad \geq \mathbb{E}_{x}\Bigg[\int_{0}^{T} \Big(s_{k}\mu u_{k}q_{+}(\| u \|_{\ell_{1}})\Big)(T-t, W(t)) \\ & \quad \quad \quad \quad\quad \quad \quad\quad \quad\cdot \exp\left(\int_{0}^{t} \Big(s_{k+1} (1-\mu)q_{+} - q_{-}\Big) \Big(\| u(T-\tau,W(\tau))\|_{\ell_{1}}\Big) d\tau\right) \; dt\Bigg] \\ & \quad \geq \int_{0}^{T} \mu s_{k} \mathfrak{Q}_{\min} \underline{\pi}_{k}(T-t) \exp\Big(-t\mathfrak{Q}_{\max}(1-s_{k+1})(1-\mu)\Big) \\ & \quad \quad \quad \quad\quad \quad \cdot \mathbb{E}_{x}\Bigg[u_{0}(T-t, W(t))\exp\left(\int_{0}^{t} \Big((1-\mu)q_{+} - q_{-}\Big) \Big(\| u(T-\tau,W(\tau))\|_{\ell_{1}}\Big) d\tau\right)\Bigg] \; dt \\ & \quad = \underline{\pi}_{k+1}(T) u_{0}(T,x),
    \end{aligned}
    \end{equation*}
    where we used~\eqref{Paper02_evolution_proportion_time} and~\eqref{Paper02_feynman_kac_no_mutations} for the final equality. Hence, by induction,~\eqref{Paper02_lower_bound_proportions_uniform_space} holds for every~$k \in \mathbb{N}$.
    
    It remains to establish the upper bound in~\eqref{Paper02_not_so_narrow_control_over_the_proportions_PTW_case_k_1}, i.e.~that for every~$k \in \mathbb{N}$, $T > 0$ and~$x \in \mathbb{R}$,
    \begin{equation} \label{Paper02_definition_upper_bound_prevalence_mutations}
        u_{k}(T,x) \leq \Bigg(\hat{\pi}_{k}\exp\Big(-T\mathfrak{Q}_{\min}(1 - s_{k})(1 - \mu)\Big) + \phi_{k}(T) + \alpha_{k}\left(\frac{\mathfrak{Q}_{\max}}{\mathfrak{Q}_{\min}}\right)^{k}\Bigg)u_{0}(T,x).
    \end{equation}
    We will also use an induction argument to establish~\eqref{Paper02_definition_upper_bound_prevalence_mutations}. For $k = 1$, by using~\eqref{Paper02_feynman_kac_with_mutations}, Assumption~\ref{Paper02_assumption_initial_condition_modified}(iii),~\eqref{Paper02_trivial_conclusion_Q_max_Q_min} and that~$s_0 = 1$ by Assumption~\ref{Paper02_assumption_fitness_sequence}(i), we have that for $T > 0$ and $x \in \mathbb R$,
    \begin{equation} \label{Paper02_step_i_upper_bound_prevalence_k_=_1}
    \begin{aligned}
        u_{1}(T,x) & \leq \hat{\pi}_{1} \mathbb{E}_{x}\left[f_{0}(W(T))\exp\left(\int_{0}^{T} \Big(s_{1} (1-\mu)q_{+} - q_{-}\Big) \Big(\| u(T-t,W(t))\|_{\ell_{1}}\Big) dt\right)\right] \\ & \quad + \mu\mathfrak{Q}_{\max} \mathbb{E}_{x}\Bigg[\int_{0}^{T}u_{0}(T-t, W(t)) \\ & \quad \quad \quad \quad \quad \quad \quad \quad \quad \cdot \exp\left(\int_{0}^{t} \Big(s_{1} (1-\mu)q_{+} - q_{-}\Big) \Big(\| u(T-\tau,W(\tau))\|_{\ell_{1}}\Big) d\tau\right) \; dt\Bigg].
    \end{aligned}
    \end{equation}
    We now bound the terms on the right-hand side of~\eqref{Paper02_step_i_upper_bound_prevalence_k_=_1} separately. For the first term, by using~\eqref{Paper02_s_k_lower_than_1} and~\eqref{Paper02_trivial_conclusion_Q_max_Q_min}, we have
    \begin{equation} \label{Paper02_first_step_control_proportion_density_one_mutation_PTW}
    \begin{aligned}
        & \hat{\pi}_{1} \mathbb{E}_{x}\left[f_{0}(W(T))\exp\left(\int_{0}^{T} \Big(s_{1} (1-\mu)q_{+} - q_{-}\Big) \Big(\| u(T-t,W(t))\|_{\ell_{1}}\Big) dt\right)\right] \\ & \quad \leq \hat{\pi}_{1}\exp\Big(-T\mathfrak{Q}_{\min}(1 -s_{1})(1 - \mu)\Big)  \\ & \quad \quad \quad \cdot \mathbb{E}_{x}\left[f_{0}(W(T))\exp\left(\int_{0}^{T} \Big((1-\mu)q_{+} - q_{-}\Big) \Big(\| u(T-t,W(t))\|_{\ell_{1}}\Big) dt\right)\right] \\ & \quad = \hat{\pi}_{1}\exp\Big(-T\mathfrak{Q}_{\min}(1 -s_{1})(1 - \mu)\Big) u_{0}(T,x),
    \end{aligned}
    \end{equation}
    where for the last equality we used~\eqref{Paper02_feynman_kac_no_mutations_from_begining}. To bound the second term on the right-hand side of~\eqref{Paper02_step_i_upper_bound_prevalence_k_=_1}, we use~\eqref{Paper02_s_k_lower_than_1},~\eqref{Paper02_trivial_conclusion_Q_max_Q_min} and Fubini's theorem, and then~\eqref{Paper02_feynman_kac_no_mutations}, obtaining
    \begin{equation} \label{Paper02_second_step_control_proportion_density_one_mutation_PTW}
    \begin{aligned}
        & \mu\mathfrak{Q}_{\max} \mathbb{E}_{x}\Bigg[\int_{0}^{T}u_{0}(T-t, W(t)) \exp\left(\int_{0}^{t} \Big(s_{1} (1-\mu)q_{+} - q_{-}\Big) \Big(\| u(T-\tau,W(\tau))\|_{\ell_{1}}\Big) d\tau\right) \; dt\Bigg] \\
        & \quad \leq  \int_{0}^{T} \mu\mathfrak{Q}_{\max} \exp\Big(-t\mathfrak{Q}_{\min}(1 - s_{1})(1 - \mu)\Big) \\ & \quad \quad \quad \quad \cdot \mathbb{E}_{x}\Bigg[u_{0}(T-t, W(t)) \exp\left(\int_{0}^{t} \Big((1-\mu)q_{+} - q_{-}\Big) \Big(\| u(T-\tau,W(\tau))\|_{\ell_{1}}\Big) d\tau\right)\Bigg] \, dt \\ & \quad = \mu \mathfrak{Q}_{\max} u_{0}(T,x) \int_{0}^{T} \exp\Big(-t\mathfrak{Q}_{\min}(1 - s_{1})(1 - \mu)\Big) \, dt \\ & \quad \leq \mu \mathfrak{Q}_{\max} u_{0}(T,x) \int_{0}^{\infty} \exp\Big(-t\mathfrak{Q}_{\min}(1 - s_{1})(1 - \mu)\Big) \, dt \\ & \quad = \alpha_{1} \frac{\mathfrak{Q}_{\max}}{\mathfrak{Q}_{\min}}u_{0}(T,x),
    \end{aligned}
    \end{equation}
    where for the last equality we used~\eqref{Paper02_definition_alpha_k} and the fact that $\mathfrak Q_{\min} (1-s_1)(1-\mu) > 0$ by Definition~\ref{Paper02_assumption_monostable_condition}. Applying~\eqref{Paper02_first_step_control_proportion_density_one_mutation_PTW} and~\eqref{Paper02_second_step_control_proportion_density_one_mutation_PTW} to~\eqref{Paper02_step_i_upper_bound_prevalence_k_=_1}, and recalling that we defined~$\phi_{1} \equiv 0$ in the statement of Lemma~\ref{Paper02_evolution_proportions_upper_bound_characterisation}, we conclude that~\eqref{Paper02_definition_upper_bound_prevalence_mutations} holds for all~$T > 0$ and~$x \in \mathbb R$ for~$k = 1$.
    
    Suppose now that for some~$k \in \mathbb{N}$,~\eqref{Paper02_definition_upper_bound_prevalence_mutations} holds for all $T > 0$ and $x \in \mathbb R$. By applying~\eqref{Paper02_feynman_kac_with_mutations} for $u_{k+1}$, and using Assumption~\ref{Paper02_assumption_initial_condition_modified}(iii), the induction hypothesis~\eqref{Paper02_definition_upper_bound_prevalence_mutations} for~$k$,~\eqref{Paper02_trivial_conclusion_Q_max_Q_min}, and~\eqref{Paper02_feynman_kac_no_mutations}, for $T > 0$ and $x \in \mathbb R$ we have
    \begin{equation} \label{Paper02_intermediate_step_upper_bound_prevalence_mutations_ii}
    \begin{aligned}
        & u_{k+1}(T,x) \\ & \; \leq \hat{\pi}_{k+1} \mathbb{E}_{x}\left[f_{0}(W(T))\exp\left(\int_{0}^{T} \Big(s_{k+1} (1-\mu)q_{+} - q_{-}\Big) \Big(\| u(T-t,W(t))\|_{\ell_{1}}\Big) dt\right)\right] \\ & \quad \; + \mu \mathfrak{Q}_{\max} s_{k} \hat{\pi}_{k}u_{0}(T,x)\int_{0}^{T} \exp\Big(-t\mathfrak{Q}_{\min}(1 - s_{k+1})(1 - \mu) - (T-t)\mathfrak{Q}_{\min}(1 - s_{k})(1 - \mu)\Big) \, dt \\ & \quad \; + \mu \mathfrak{Q}_{\max} s_{k} u_{0}(T,x)\int_{0}^{T} \exp\Big(-t\mathfrak{Q}_{\min}(1 - s_{k+1})(1 - \mu)\Big) \phi_{k}(T-t) \, dt \\ & \quad \; +\mu \mathfrak{Q}_{\max} s_{k} u_{0}(T,x)\int_{0}^{T} \exp\Big(-t\mathfrak{Q}_{\min}(1 - s_{k+1})(1 - \mu)\Big) \alpha_{k}\left(\frac{\mathfrak{Q}_{\max}}{\mathfrak{Q}_{\min}}\right)^{k} \, dt.
    \end{aligned}
    \end{equation}
    We will bound each of the terms on the right-hand side of~\eqref{Paper02_intermediate_step_upper_bound_prevalence_mutations_ii} separately. For the first term, by using~\eqref{Paper02_s_k_lower_than_1},~\eqref{Paper02_trivial_conclusion_Q_max_Q_min}, and~\eqref{Paper02_feynman_kac_no_mutations_from_begining}, we have
    \begin{equation} \label{Paper02_intermediate_step_upper_bound_prevalence_mutations_iii}
    \begin{aligned}
        & \hat{\pi}_{k+1} \mathbb{E}_{x}\left[f_{0}(W(T))\exp\left(\int_{0}^{T} \Big(s_{k+1} (1-\mu)q_{+} - q_{-}\Big) \Big(\| u(T-t,W(t))\|_{\ell_{1}}\Big) dt\right)\right] \\ & \quad \leq  \hat{\pi}_{k+1} \exp\Big(- T \mathfrak{Q}_{\min}(1 - s_{k+1})(1 - \mu) \Big) u_{0}(T,x).
    \end{aligned}
    \end{equation}
    For the second term on the right-hand side of~\eqref{Paper02_intermediate_step_upper_bound_prevalence_mutations_ii}, we use that $1 - s_{k+1} \geq 1 - s_{k}$ by Assumption~\ref{Paper02_assumption_fitness_sequence}(iii), and therefore
    \begin{equation} \label{Paper02_intermediate_step_upper_bound_prevalence_mutations_iv}
    \begin{aligned}
         & \mu \mathfrak{Q}_{\max} s_{k} \hat{\pi}_{k}u_{0}(T,x)\int_{0}^{T} \exp\Big(-t\mathfrak{Q}_{\min}(1 - s_{k+1})(1 - \mu) - (T-t)\mathfrak{Q}_{\min}(1 - s_{k})(1 - \mu)\Big) \, dt \\ & \quad \leq \mu \mathfrak{Q}_{\max} s_{k} \hat{\pi}_{k}u_{0}(T,x)\int_{0}^{T} \exp\Big(-T\mathfrak{Q}_{\min}(1 - s_{k})(1 - \mu)\Big) \, dt \\ & \quad = \mu \mathfrak{Q}_{\max}T s_{k} \hat{\pi}_{k} \exp \Big(-T \mathfrak{Q}_{\min}(1 - s_{k})(1 - \mu)\Big)u_{0}(T,x).
    \end{aligned}
    \end{equation}
    To bound the third term on the right-hand side of~\eqref{Paper02_intermediate_step_upper_bound_prevalence_mutations_ii}, we use the definition of~$\phi_{k}$ in~\eqref{Paper02_evolution_proportion_time_upper_bound}, and then that $1 - s_{i} \leq 1 - s_{k+1}$ for every~$i \in \mathbb{N} \cap \{1, \ldots, k\}$  by Assumption~\ref{Paper02_assumption_fitness_sequence}(iii), to deduce that
    \begin{equation} \label{Paper02_intermediate_step_upper_bound_prevalence_mutations_v}
    \begin{aligned}
        & \int_{0}^{T} \exp\Big(-t\mathfrak{Q}_{\min}(1 - s_{k+1})(1 - \mu)\Big) \phi_{k}(T-t) \, dt \\ & \quad = \int_{0}^{T} \exp\Big(-t\mathfrak{Q}_{\min}(1 - s_{k+1})(1 - \mu)\Big) \\ & \quad \quad \quad \quad \quad \cdot \sum_{i = 1}^{k - 1} \frac{(\mu \mathfrak{Q}_{\max}(T- t))^{k-i}}{(k - i)!}  \Bigg(\prod_{j = i}^{k-1} s_{j}\Bigg)\hat{\pi}_{i}\exp\Big(-(T-t)\mathfrak{Q}_{\min}(1-s_{i})(1 - \mu)\Big) \, dt \\ & \quad \leq \sum_{i = 1}^{k-1} \Bigg(\int_{0}^{T} \frac{(\mu \mathfrak{Q}_{\max}(T- t))^{k-i}}{(k - i)!} \, dt\Bigg) \Bigg(\prod_{j = i}^{k-1} s_{j}\Bigg)\hat{\pi}_{i}\exp\Big(-T\mathfrak{Q}_{\min}(1-s_{i})(1 - \mu)\Big) \\ & \quad = \sum_{i = 1}^{k - 1} \frac{\mu^{k-i} \mathfrak{Q}_{\max}^{k-i}T^{k+1-i}}{(k + 1 - i)!}  \Bigg(\prod_{j = i}^{k-1} s_{j}\Bigg)\hat{\pi}_{i}\exp\Big(-T\mathfrak{Q}_{\min}(1-s_{i})(1 - \mu)\Big).
    \end{aligned}
    \end{equation}    Applying~\eqref{Paper02_intermediate_step_upper_bound_prevalence_mutations_v} to the third term on the right-hand side of~\eqref{Paper02_intermediate_step_upper_bound_prevalence_mutations_ii}, we get
    \begin{equation} \label{Paper02_intermediate_step_upper_bound_prevalence_mutations_vi}
    \begin{aligned}
        & \mu \mathfrak{Q}_{\max} s_{k} u_{0}(T,x)\int_{0}^{T} \exp\Big(-t\mathfrak{Q}_{\min}(1 - s_{k+1})(1 - \mu)\Big) \phi_{k}(T-t) \, dt \\ & \quad \leq \sum_{i = 1}^{k - 1} \frac{(\mu \mathfrak{Q}_{\max}T)^{k+1-i}}{(k + 1 - i)!}  \Bigg(\prod_{j = i}^{k} s_{j}\Bigg)\hat{\pi}_{i}\exp\Big(-T\mathfrak{Q}_{\min}(1-s_{i})(1 - \mu)\Big) u_0(T,x).
    \end{aligned}
    \end{equation}
    Combining~\eqref{Paper02_intermediate_step_upper_bound_prevalence_mutations_iv},~\eqref{Paper02_intermediate_step_upper_bound_prevalence_mutations_vi} and the definition of~$\phi_{k+1}$ in~\eqref{Paper02_evolution_proportion_time_upper_bound}, we conclude that the second and the third terms on the right-hand side of~\eqref{Paper02_intermediate_step_upper_bound_prevalence_mutations_ii} satisfy
    \begin{equation} \label{Paper02_intermediate_step_upper_bound_prevalence_mutations_vii}
    \begin{aligned}
        & \mu \mathfrak{Q}_{\max} s_{k} \hat{\pi}_{k}u_{0}(T,x)\int_{0}^{T} \exp\Big(-t\mathfrak{Q}_{\min}(1 - s_{k+1})(1 - \mu) - (T-t)\mathfrak{Q}_{\min}(1 - s_{k})(1 - \mu)\Big) \, dt \\ & \quad + \mu \mathfrak{Q}_{\max} s_{k} u_{0}(T,x)\int_{0}^{T} \exp\Big(-t\mathfrak{Q}_{\min}(1 - s_{k+1})(1 - \mu)\Big) \phi_{k}(T-t) \, dt \\ & \quad \leq \phi_{k+1}(T) u_{0}(T,x).
    \end{aligned}
    \end{equation}
    Finally, for the fourth term on the right-hand side of~\eqref{Paper02_intermediate_step_upper_bound_prevalence_mutations_ii}, we observe that
    \begin{equation} \label{Paper02_intermediate_step_upper_bound_prevalence_mutations_viii}
    \begin{aligned}
        & \mu \mathfrak{Q}_{\max} s_{k} u_{0}(T,x)\int_{0}^{T} \exp\Big(-t\mathfrak{Q}_{\min}(1 - s_{k+1})(1 - \mu)\Big) \alpha_{k}\left(\frac{\mathfrak{Q}_{\max}}{\mathfrak{Q}_{\min}}\right)^{k} \, dt \\ & \quad \leq \mu \mathfrak{Q}_{\max} s_{k} \alpha_{k}\left(\frac{\mathfrak{Q}_{\max}}{\mathfrak{Q}_{\min}}\right)^{k} u_{0}(T,x)\int_{0}^{\infty} \exp\Big(-t\mathfrak{Q}_{\min}(1 - s_{k+1})(1 - \mu)\Big) \, dt \\ & \quad = \alpha_{k+1}\left(\frac{\mathfrak{Q}_{\max}}{\mathfrak{Q}_{\min}}\right)^{k+1} u_{0}(T,x),
    \end{aligned}
    \end{equation}
    where we used~\eqref{Paper02_definition_alpha_k} and the fact that $\mathfrak Q_{\min} (1 -s_{k+1})(1-\mu) > 0$ by Definition~\ref{Paper02_assumption_monostable_condition} for the last equality. Applying~\eqref{Paper02_intermediate_step_upper_bound_prevalence_mutations_iii},~\eqref{Paper02_intermediate_step_upper_bound_prevalence_mutations_vii} and~\eqref{Paper02_intermediate_step_upper_bound_prevalence_mutations_viii} to~\eqref{Paper02_intermediate_step_upper_bound_prevalence_mutations_ii}, we conclude that~\eqref{Paper02_definition_upper_bound_prevalence_mutations} holds with~$k$ replaced by~$k+1$, which completes the induction argument. The proof is then complete.
\end{proof}

Before proving Theorem~\ref{Paper02_prop_control_proportions}, we state and prove a result that is a corollary of Lemmas~\ref{Paper02_evolution_proportions_upper_bound_characterisation} and~\ref{Paper02_uniform_evolution_proportion_over_space_fkpp_case}, and that will be used in bounding the spreading speed in Section~\ref{Paper02_fkpp_case}.

\begin{corollary} \label{Paper02_same_speed_u_0_total_mass}
    Suppose that $F = (F_{k})_{k \in \mathbb N_0}$ is monostable in the sense of Definition~\ref{Paper02_assumption_monostable_condition}, and that~$f$ satisfies Assumptions~\ref{Paper02_assumption_initial_condition} and~\ref{Paper02_assumption_initial_condition_modified}, and let $u = (u_k)_{k \in \mathbb N_0}$ be the continuous mild solution to the system of PDEs~\eqref{Paper02_PDE_scaling_limit} given in Proposition~\ref{Paper02_smoothness_mild_solution}. Then there exists~$A > 0$ such that for any $T > 0$ and any $x \in \mathbb{R}$,
    \begin{equation*}
        u_{0}(T,x) \leq \| u(T,x) \|_{\ell_{1}} \leq (1 + A) u_{0}(T,x).
    \end{equation*}
\end{corollary}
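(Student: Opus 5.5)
The lower bound is immediate: since $u(T,x)\in\ell_1^+$, we have $\|u(T,x)\|_{\ell_1}=\sum_{k\ge 0}u_k(T,x)\ge u_0(T,x)$. For the upper bound, I will sum the upper bound of Lemma~\ref{Paper02_uniform_evolution_proportion_over_space_fkpp_case} over $k\in\mathbb N$. This gives, for all $T>0$ and $x\in\mathbb R$,
\[
\|u(T,x)\|_{\ell_1}\le \Bigg(1+\sum_{k=1}^\infty \hat\pi_k e^{-T\mathfrak Q_{\min}(1-s_k)(1-\mu)}+\Phi(T)+\sum_{k=1}^\infty \alpha_k\Big(\frac{\mathfrak Q_{\max}}{\mathfrak Q_{\min}}\Big)^k\Bigg)u_0(T,x),
\]
where $\Phi(T)=\|\phi(T)\|_{\ell_1}$ is as in Lemma~\ref{Paper02_evolution_proportions_upper_bound_characterisation}. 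Interchanging the sum over $k$ with the bounds is justified because all terms are non-negative.

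It then remains to bound each of the three sums uniformly in $T$. For the first, I use $e^{-T\mathfrak Q_{\min}(1-s_k)(1-\mu)}\le 1$, so it is at most $\|\hat\pi\|_{\ell_1}<\infty$ by Assumption~\ref{Paper02_assumption_initial_condition_modified}(iii). For the second, Lemma~\ref{Paper02_evolution_proportions_upper_bound_characterisation} gives $\sup_{T\ge0}\Phi(T)<\infty$.

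The third sum needs the only real check: that $\big(\alpha_k(\mathfrak Q_{\max}/\mathfrak Q_{\min})^k\big)_{k}$ is summable. This was noted after Remark~\ref{Paper02_very_good_control_proportions_FKPP}. Explicitly, the ratio of consecutive terms is
\[
\frac{\mu s_{k-1}\,\mathfrak Q_{\max}}{(1-\mu)(1-s_k)\,\mathfrak Q_{\min}},
\]
which tends to $0$ as $k\to\infty$ because $s_k\to0$ by Assumption~\ref{Paper02_assumption_fitness_sequence}(iv) and $\mathfrak Q_{\min}>0$. The ratio test then gives convergence.

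Setting $A$ to be the sum of these three uniform bounds completes the argument. I expect no genuine obstacle, since all the work is contained in Lemmas~\ref{Paper02_evolution_proportions_upper_bound_characterisation} and~\ref{Paper02_uniform_evolution_proportion_over_space_fkpp_case}.
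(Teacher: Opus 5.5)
Your proposal is correct and follows the paper's proof: the paper sets $A$ to be the supremum over $t\ge 0$ of the summed upper bounds from Lemma~\ref{Paper02_uniform_evolution_proportion_over_space_fkpp_case}. It then argues finiteness exactly as you do, using summability of $\hat\pi$, $\sup_{T\ge0}\Phi(T)<\infty$ from Lemma~\ref{Paper02_evolution_proportions_upper_bound_characterisation}, and summability of $\big(\alpha_k(\mathfrak Q_{\max}/\mathfrak Q_{\min})^k\big)_k$. Your explicit ratio-test check and the observation that $A>0$ because $\alpha_1>0$ are harmless additions.
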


\begin{proof}
    The first inequality follows from the definition of the norm $\| \cdot \|_{\ell_{1}}$. For the second inequality, let
    \begin{equation} \label{Paper02_definition_maximal_proportion_mutants_in_general}
        A \defeq \sup_{t \geq 0} \; \sum_{k \in \mathbb N} \left( \hat{\pi}_{k}\exp\Big(-t\mathfrak{Q}_{\min}(1 - s_{k})(1 - \mu)\Big) + \phi_{k}(t) + \alpha_{k}\left(\frac{\mathfrak{Q}_{\max}}{\mathfrak{Q}_{\min}}\right)^{k}\right),
    \end{equation}
    where~$(\alpha_{k})_{k \in \mathbb{N}_{0}}$ and~$(\hat{\pi}_{k})_{k \in \mathbb{N}_{0}}$ are defined in~\eqref{Paper02_definition_alpha_k} and Assumption~\ref{Paper02_assumption_initial_condition_modified}(iii), and~$(\phi_{k})_{k \in \mathbb N}$ is defined in~\eqref{Paper02_evolution_proportion_time_upper_bound}.
    By Assumption~\ref{Paper02_assumption_initial_condition_modified}(iii), the sequence  $(\hat{\pi}_{k})_{k \in \mathbb{N}}$ is summable. Moreover, by Lemma~\ref{Paper02_evolution_proportions_upper_bound_characterisation}, we have $\sup_{t \geq 0} \Phi(t) < \infty$, where $  \Phi(t) = \sum_{k \in \mathbb N} \phi_k(t) \; \forall \, t \geq 0$. Also, as observed after Remark~\ref{Paper02_very_good_control_proportions_FKPP}, the sequence $\left(\alpha_{k}\left(\frac{\mathfrak{Q}_{\max}}{\mathfrak{Q}_{\min}}\right)^{k}\right)_{k \in \mathbb{N}_{0}}$ is summable. Therefore, the constant~$A$ defined in~\eqref{Paper02_definition_maximal_proportion_mutants_in_general} is finite, and the result follows from Lemma~\ref{Paper02_uniform_evolution_proportion_over_space_fkpp_case}.
\end{proof}

We are now ready to prove Theorem~\ref{Paper02_prop_control_proportions}.

\begin{proof}[Proof of Theorem~\ref{Paper02_prop_control_proportions}]
    For every~$k \in \mathbb{N}$, we define $\underline{\pi}_{k}: [0,\infty) \rightarrow [0,\infty)$ as in~\eqref{Paper02_evolution_proportion_time}, and the map~$\overline{\pi}_{k}: [0,\infty) \rightarrow [0,\infty)$ by
    \begin{equation*}
        \overline{\pi}_{k}(T) \defeq \hat{\pi}_{k}\exp\Big(-T\mathfrak{Q}_{\min}(1 - s_{k})(1 - \mu)\Big) + \phi_{k}(T) + \alpha_{k}\left(\frac{\mathfrak{Q}_{\max}}{\mathfrak{Q}_{\min}}\right)^{k} \quad \forall \, T \geq 0.
    \end{equation*}
    Note that $(\hat \pi_k)_{k \in \mathbb N_0} \in \ell_1^+$ by Assumption~\ref{Paper02_assumption_initial_condition_modified}(iii), and for all $k \in \mathbb N$,
    \[
    \mathfrak Q_{\min}(1-s_k)(1-\mu) \geq \mathfrak Q_{\min}(1-s_1)(1-\mu) > 0
    \]
    by Definition~\ref{Paper02_assumption_monostable_condition}. 
    Moreover, as observed after Remark~\ref{Paper02_very_good_control_proportions_FKPP}, the sequences
    $$\left(\alpha_{k}\left(\frac{\mathfrak{Q}_{\min}}{\mathfrak{Q}_{\max}}\right)^{k}\right)_{k \in \mathbb{N}_{0}}\quad \textrm{and} \quad \left(\alpha_{k}\left(\frac{\mathfrak{Q}_{\max}}{\mathfrak{Q}_{\min}}\right)^{k}\right)_{k \in \mathbb{N}_{0}}$$
    are elements of $\ell_{1}^{+}$.
    Therefore, Theorem~\ref{Paper02_prop_control_proportions} follows directly from Lemmas~\ref{Paper02_evolution_proportions_characterisation},~\ref{Paper02_evolution_proportions_upper_bound_characterisation} and~\ref{Paper02_uniform_evolution_proportion_over_space_fkpp_case}, noting that since $\underline \pi_k$ is increasing for each $k$ by Lemma~\ref{Paper02_evolution_proportions_characterisation}, the limit in~\eqref{Paper02_limit_time_lower_bound_prevalence_mutations} also holds in~$\ell_1$.
\end{proof}

\subsection{Spreading speed under Fisher-KPP dynamics} \label{Paper02_fkpp_case}

In this subsection, we will prove Theorem~\ref{Paper02_spreading_speed_Fisher_KPP} by computing the spreading speed of the solution of~\eqref{Paper02_PDE_scaling_limit} when the reaction term $F = (F_k)_{k \in \mathbb N_0}$ satisfies a Fisher-KPP condition. Throughout this subsection, we let~$u = (u_{k})_{k \in \mathbb{N}_{0}}: [0,\infty) \times \mathbb{R} \rightarrow \ell_{1}^{+}$ denote the continuous mild solution to the system of PDEs~\eqref{Paper02_PDE_scaling_limit} given in Proposition~\ref{Paper02_smoothness_mild_solution}. We suppose that $(s_k)_{k \in \mathbb N_0}$, $q_+$, $q_-$ and $f$ satisfy Assumptions~\ref{Paper02_assumption_fitness_sequence},~\ref{Paper02_assumption_polynomials},~\ref{Paper02_assumption_initial_condition} and~\ref{Paper02_assumption_initial_condition_modified}, and that the reaction term~$F = (F_{k})_{k \in \mathbb{N}_{0}}$ defined in~\eqref{Paper02_reaction_term_PDE} is monostable (in the sense of Definition~\ref{Paper02_assumption_monostable_condition}) and is of Fisher-KPP type, i.e.~we suppose that~\eqref{Paper02_fisher_kpp_condition_muller_ratchet} holds.

Our strategy will be to adapt the arguments used in the proof of~\cite[Theorem~1.1]{penington2018spreading}. We start by proving a H\"{o}lder estimate for~$u_{0}$ that holds uniformly in time, analogous to~\cite[Lemma~2.2]{penington2018spreading}. To simplify notation, let
\begin{equation} \label{Paper02_min_max_difference_polynomial}
    H_{\min} \defeq \min_{U \in [0,1]} \Big( (1-\mu)q_{+}(U) - q_{-}(U)\Big) \quad \textrm{and} \quad H_{\max} \defeq (1-\mu)q_{+}(0) - q_{-}(0),
\end{equation}
and observe that since we are assuming throughout this subsection that the reaction term~$F = (F_{k})_{k \in \mathbb{N}_{0}}$ is of Fisher-KPP type in the sense of Definition~\ref{Paper02_assumption_monostable_condition}, we have
\begin{equation} \label{Paper02_min_max_difference_polynomial_ii}
    H_{\max} = \max_{U \in [0,1]} \Big( (1-\mu)q_{+}(U) - q_{-}(U)\Big) > 0.
\end{equation}

\begin{lemma} \label{Paper02_coordinate-wise_uniform_continuity}
    Suppose that $(s_k)_{k \in \mathbb N_0}$, $q_+$ and $q_-$ satisfy Assumptions~\ref{Paper02_assumption_fitness_sequence} and~\ref{Paper02_assumption_polynomials}, that $\mu \in (0,1)$, that $m > 0$, that the reaction term $F = (F_{k})_{k \in \mathbb{N}_{0}}$ defined in~\eqref{Paper02_reaction_term_PDE} is of Fisher-KPP type in the sense of Definition~\ref{Paper02_assumption_monostable_condition},  and that $f$ satisfies Assumptions~\ref{Paper02_assumption_initial_condition} and~\ref{Paper02_assumption_initial_condition_modified}. Then, there exists $\varepsilon_0 > 0$ such that for any~$\varepsilon \in (0, \varepsilon_0)$, for any $t \geq 1$ and~$x,y \in \mathbb{R}$ with $\vert x - y \vert \leq \varepsilon^{3}$,
    \begin{equation*}
        \vert u_{0}(t,x) - u_{0}(t,y) \vert \leq \varepsilon.
    \end{equation*}
\end{lemma}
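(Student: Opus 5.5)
We will prove a uniform-in-time Lipschitz-type bound for $u_0$ at times $t\ge 1$, from which the $\varepsilon^3$-modulus statement follows immediately. The argument follows the strategy of \cite[Lemma~2.2]{penington2018spreading}.

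\textbf{Step 1: Feynman--Kac over a unit time window.} Fix $t \geq 1$ and apply~\eqref{Paper02_feynman_kac_no_mutations} over the window of length $1$:
\[
u_0(t,x)=\mathbb E_x\Big[u_0(t-1,W(1))\exp\Big(\int_0^1 h\big(\|u(t-\tau,W(\tau))\|_{\ell_1}\big)\,d\tau\Big)\Big],
\]
where $h \defeq (1-\mu)q_+-q_-$. By Lemma~\ref{Paper02_uniform_bound_total_mass} we have $\|u\|_{\ell_1}\le 1$ and $0\le u_0\le 1$ for all times. Hence the integrand $h$ takes values in $[H_{\min},H_{\max}]$, with these constants as in~\eqref{Paper02_min_max_difference_polynomial}. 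The obstacle is that the potential depends on the whole path, so we cannot simply differentiate the heat kernel in $x$.

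\textbf{Step 2: Coupling the Brownian motions.} To compare $x$ and $y$, couple the Brownian motion $W^x$ started at $x$ with $W^y = W^x + (y-x)$. This shift alone does not give closeness, because the potential is evaluated at shifted points and we have no a priori modulus of continuity for $\|u\|_{\ell_1}$ uniform in $t$.

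Instead, decompose on a short initial time interval $[0,\delta]$ and use the Markov property at time $\delta$:
\[
u_0(t,x)=\mathbb E_x\big[u_0(t-\delta,W(\delta))\,e^{\int_0^\delta h\,d\tau}\big].
\]
Since $h$ takes values in $[H_{\min},H_{\max}]$, the exponential factor lies in $[e^{\delta H_{\min}},e^{\delta H_{\max}}]$, so it equals $1+O(\delta)$ uniformly. Consequently
\[
\big|u_0(t,x)-P_\delta u_0(t-\delta,\cdot)(x)\big|\le C\delta,
\]
where $C$ depends only on $H_{\min}$ and $H_{\max}$ (using $u_0\le1$). The same bound holds with $x$ replaced by $y$.

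\textbf{Step 3: Smoothing estimate.} Because $0\le u_0(t-\delta,\cdot)\le 1$, the heat kernel gives
\[
\big|P_\delta u_0(t-\delta,\cdot)(x)-P_\delta u_0(t-\delta,\cdot)(y)\big| \le \|p(\delta,x-\cdot)-p(\delta,y-\cdot)\|_{L_1}\le C'|x-y|/\sqrt{\delta}.
\]
Combining this with Step 2 yields
\[
|u_0(t,x)-u_0(t,y)|\le 2C\delta + C'|x-y|\delta^{-1/2}.
\]
This only requires $t\ge\delta$, which is guaranteed by $t \geq 1$ once $\delta\le1$.

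\textbf{Step 4: Choice of parameters.} Take $\delta=\varepsilon/(4C)$. If $|x-y|\le\varepsilon^3$, the second term is at most
\[
C'\varepsilon^3(4C/\varepsilon)^{1/2}=2C'\sqrt{C}\,\varepsilon^{5/2},
\]
which is at most $\varepsilon/2$ for $\varepsilon$ small. The total is then at most $\varepsilon/2+\varepsilon/2=\varepsilon$. We choose $\varepsilon_0$ so that both $\delta\le 1$ and this last inequality hold for all $\varepsilon<\varepsilon_0$. The uniformity in $t$ comes entirely from the time-independent bounds of Lemma~\ref{Paper02_uniform_bound_total_mass}, and the only delicate point is the path-dependent potential in Step 2, which is handled by making the window short rather than by coupling paths.
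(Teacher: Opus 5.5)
Your proof is correct and is essentially the paper's argument. You apply the Feynman--Kac formula~\eqref{Paper02_feynman_kac_no_mutations} over a short window, trap the exponential weight between $e^{\delta H_{\min}}$ and $e^{\delta H_{\max}}$ using $\|u\|_{\ell_1}\le 1$, and bound the difference of the two heat averages by $|x-y|/\sqrt{\delta}$. The only difference is the window length: the paper takes $\delta=\varepsilon^{2}$, which yields an $O(\varepsilon^{2})$ bound, whereas your choice $\delta\asymp\varepsilon$ gives the bound $\varepsilon$ directly; both close the argument.
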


\begin{proof}
    The proof closely follows the arguments used in~\cite[Lemma~2.2]{penington2018spreading}. By Lemma~\ref{Paper02_uniform_bound_total_mass}, we have $\| u(t,x) \|_{\ell_1} \leq 1$ for all $t > 0$, $x \in \mathbb R$. Therefore, by the Feynman--Kac representation in~\eqref{Paper02_feynman_kac_no_mutations} and by~\eqref{Paper02_min_max_difference_polynomial} and~\eqref{Paper02_min_max_difference_polynomial_ii}, we have that for every~$z \in \mathbb{R}$, any~$t \geq 1$ and any $\varepsilon \in (0,1)$,
    \begin{equation} \label{Paper02_short_window_values_u_0}
        \exp(\varepsilon^{2}H_{\min}) \mathbb{E}_{z}[u_{0}(t - \varepsilon^{2}, W(\varepsilon^{2}))] \leq u_{0}(t,z) \leq \exp(\varepsilon^{2}H_{\max}) \mathbb{E}_{z}[u_{0}(t - \varepsilon^{2}, W(\varepsilon^{2}))].
    \end{equation}
    Hence, for any $t \geq 1$ and any~$x,y \in \mathbb{R}$,
    \begin{equation} \label{Paper02_intermediate_step_uniform_continuity_space_regardles_time_i}
    \begin{aligned}
        u_{0}(t,x) - u_{0}(t,y) & \leq \exp(\varepsilon^{2}H_{\max}) (\mathbb{E}_{x}[u_{0}(t - \varepsilon^{2}, W(\varepsilon^{2}))] - \mathbb{E}_{y}[u_{0}(t - \varepsilon^{2}, W(\varepsilon^{2}))]) \\ & \quad + \exp(\varepsilon^{2}H_{\max})\Big(1 - \exp(- \varepsilon^{2}(H_{\max} - H_{\min}))\Big)\mathbb{E}_{y}[u_{0}(t - \varepsilon^{2}, W(\varepsilon^{2}))].
    \end{aligned}
    \end{equation}
    We will bound each term on the right-hand side of~\eqref{Paper02_intermediate_step_uniform_continuity_space_regardles_time_i} separately. Recall the definition of $p(\tau,z)$ in~\eqref{Paper02_Gaussian_kernel}. By Lemma~\ref{Paper02_uniform_bound_total_mass}, we can write
    \begin{equation*}
    \begin{aligned}
        \mathbb{E}_{x}[u_{0}(t - \varepsilon^{2}, W(\varepsilon^{2}))] - \mathbb{E}_{y}[u_{0}(t - \varepsilon^{2}, W(\varepsilon^{2}))] & \leq \int_\mathbb R (p(\varepsilon^2,x-z) - p(\varepsilon^2,y-z))^+ \, dz \\ 
        & = \mathbb{P}_{0}\Big(\vert W(\varepsilon^{2}) \vert \leq \vert x - y \vert/2\Big).
    \end{aligned}
    \end{equation*}
    Since $p(\tau,z) \leq \frac{1}{\sqrt{2\pi m \tau}} \; \forall \, z \in \mathbb R, \, \tau > 0$, we have that
    \begin{equation} \label{Paper02_intermediate_step_uniform_continuity_space_regardles_time_ii}
    \begin{aligned}
        \mathbb{E}_{x}[u_{0}(t - \varepsilon^{2}, W(\varepsilon^{2}))] - \mathbb{E}_{y}[u_{0}(t - \varepsilon^{2}, W(\varepsilon^{2}))] \leq \frac{\vert x - y \vert}{\varepsilon\sqrt{2\pi m}}.
    \end{aligned}
    \end{equation}
    For the second term on the right-hand side of~\eqref{Paper02_intermediate_step_uniform_continuity_space_regardles_time_i}, we use Lemma~\ref{Paper02_uniform_bound_total_mass} and the fact that $1 - \exp(-r) \leq r$ for every~$r \geq 0$, obtaining
    \begin{equation} \label{Paper02_intermediate_step_uniform_continuity_space_regardles_time_iii}
    \begin{aligned}
        \Big(1 - \exp(- \varepsilon^{2}(H_{\max} - H_{\min}))\Big)\mathbb{E}_{y}[u_{0}(t - \varepsilon^{2}, W(\varepsilon^{2}))] \leq \varepsilon^{2}(H_{\max} - H_{\min}).
    \end{aligned}
    \end{equation}
    By applying~\eqref{Paper02_intermediate_step_uniform_continuity_space_regardles_time_ii} and~\eqref{Paper02_intermediate_step_uniform_continuity_space_regardles_time_iii} to~\eqref{Paper02_intermediate_step_uniform_continuity_space_regardles_time_i}, for $t \geq 1$, $\varepsilon \in (0,1)$ and $x,y \in \mathbb R$ with $\vert x - y \vert \leq \varepsilon^3$, we get
    \begin{equation} \label{Paper02_intermediate_step_uniform_continuity_space_regardles_time_iv}
        u_{0}(t,x) - u_{0}(t,y) \leq \exp(H_{\max})\left(\frac{1}{\sqrt{2 \pi m}} + H_{\max} - H_{\min}\right)\varepsilon^{2}.
    \end{equation}
    Therefore, by taking
    \begin{equation*}
        \varepsilon_0 \leq 1 \wedge \left(\exp(H_{\max})\left(\frac{1}{\sqrt{2 \pi m}} + H_{\max} - H_{\min}\right)\right)^{-1},
    \end{equation*}
    the result follows directly from~\eqref{Paper02_intermediate_step_uniform_continuity_space_regardles_time_iv}.
\end{proof}

In the remainder of this subsection, take~$A > 0$ as given in Corollary~\ref{Paper02_same_speed_u_0_total_mass}, and let
\begin{equation} \label{Paper02_equilibrium_value_general_PDE}
    U_{\textrm{eq}} \defeq \min \Big\{U \geq 0: \, (1 - \mu)q_{+}(U) - q_{-}(U) = 0\Big\}.
\end{equation}
Observe that, since the reaction term~$F = (F_{k})_{k \in \mathbb{N}_{0}}$ is assumed to be of Fisher-KPP type in the sense of Definition~\ref{Paper02_assumption_monostable_condition}, we have~$ U_{\textrm{eq}} \in (0,1)$ and $U_{\textrm{eq}}$ is the smallest strictly positive equilibrium state of the ODE:
\begin{equation*}
    \frac{d}{dt}U = U((1 - \mu)q_{+}(U) - q_{-}(U)).
\end{equation*}

We now prove that for any~$\delta > 0$, if~$u_{0}(T,x)$ is sufficiently small, then $u_{0}$ grows exponentially fast until at some time $T+t$, there is some~$y \in \mathbb{R}$ close to $x$ such that $u_{0}(T+t,y) \geq \frac{U_{\textrm{eq}} - \delta}{1 + A}$. This result is analogous to~\cite[Lemma~2.3]{penington2018spreading}.

\begin{lemma} \label{Paper02_exponential_growth}
    Suppose the assumptions of Lemma~\ref{Paper02_coordinate-wise_uniform_continuity} hold. Let $u = (u_{k})_{k \in \mathbb N_0}: [0, \infty) \times \mathbb R \rightarrow \ell_1^+$ denote the continuous mild solution to~\eqref{Paper02_PDE_scaling_limit} given in Proposition~\ref{Paper02_smoothness_mild_solution}, let $A > 0$ denote the constant given in Corollary~\ref{Paper02_same_speed_u_0_total_mass} and let $\delta \in (0,U_{\textrm{eq}})$. Then, there exist positive real numbers $\tau = \tau(\delta)$, $R = R(\delta)$ and $z^{*} = z^{*}(\delta)$ such that for $z \in (0, z^{*})$ and $x \in \mathbb R$, if $T \geq 1$ and $u_{0}(T,x) > z$, then there exists $t \in [T, T + \tau\log(1/z)]$ and $y \in [x-R,x+R]$ such that $u_{0}(t,y) \geq \frac{U_{\textrm{eq}} - \delta}{1 + A}$. 
\end{lemma}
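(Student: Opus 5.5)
We argue by contradiction, following the structure of \cite[Lemma~2.3]{penington2018spreading}. Suppose that for all $t \in [T, T+\tau\log(1/z)]$ and all $y \in [x-R,x+R]$ we have $u_0(t,y) < \frac{U_{\textrm{eq}}-\delta}{1+A}$. By Corollary~\ref{Paper02_same_speed_u_0_total_mass}, this gives $\|u(t,y)\|_{\ell_1} \leq (1+A)u_0(t,y) < U_{\textrm{eq}}-\delta$ on the same space-time box. By the definition of $U_{\textrm{eq}}$ in~\eqref{Paper02_equilibrium_value_general_PDE}, and since $(1-\mu)q_+(0)-q_-(0)>0$, the continuous function $U \mapsto (1-\mu)q_+(U)-q_-(U)$ is strictly positive on $[0,U_{\textrm{eq}})$. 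Hence
\[
\eta \defeq \min_{U\in[0,U_{\textrm{eq}}-\delta]}\big((1-\mu)q_+(U)-q_-(U)\big) > 0.
\]
Inside the box, the per-capita growth rate appearing in the Feynman--Kac formula~\eqref{Paper02_feynman_kac_no_mutations} is therefore at least $\eta$. Outside the box it is at least $H_{\min}$, as defined in~\eqref{Paper02_min_max_difference_polynomial}; this uses Lemma~\ref{Paper02_uniform_bound_total_mass}, and $H_{\min}$ may be negative.

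Next we apply~\eqref{Paper02_feynman_kac_no_mutations} at the point $(T+s, x)$, with $s = \tau\log(1/z)$, over the time window of length $s$. Restrict the expectation to the event $E_s$ that the Brownian path $W$ started at $x$ stays in $[x-R/2,x+R/2]$ during $[0,s]$ and ends in $[x-\varepsilon^3, x+\varepsilon^3]$, where $\varepsilon$ is as in Lemma~\ref{Paper02_coordinate-wise_uniform_continuity}. On $E_s$ the path remains in the box, so the exponential factor is at least $e^{\eta s}$. By Lemma~\ref{Paper02_coordinate-wise_uniform_continuity} (using $T\geq1$), we also have $u_0(T, W(s)) \geq u_0(T,x)-\varepsilon \geq z/2$, provided we choose $\varepsilon = \varepsilon(z) = z/2$ and $z^*$ small enough that $z/2 < \varepsilon_0$. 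This gives
\[
u_0(T+s,x) \geq \tfrac{z}{2}\, e^{\eta s}\, \mathbb P_x(E_s).
\]
Standard estimates for Brownian motion confined to an interval give $\mathbb P_x(E_s) \geq c\,\varepsilon^3 R^{-1}\exp(-\pi^2 m s/(2R^2))$, for $s\geq 1$ say, via the principal eigenfunction of the Dirichlet Laplacian on $[-R/2,R/2]$.

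We now choose the constants. Take $R$ large so that $\pi^2 m/(2R^2) < \eta/2$. Then
\[
u_0(T+s,x) \geq c' z^4 e^{\eta s/2} = c' z^{4-\tau\eta/2}.
\]
Choose $\tau$ so that $\tau\eta/2 \geq 5$; the right-hand side is then at least $c' z^{-1}$. Finally shrink $z^*$ so that this exceeds $1$. This contradicts Lemma~\ref{Paper02_uniform_bound_total_mass}, and in particular contradicts $u_0 < \frac{U_{\textrm{eq}}-\delta}{1+A}$ at $(T+s,x)$, which lies in the box. Hence the desired $(t,y)$ exists.

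We expect the main obstacle to be the lower bound on $\mathbb P_x(E_s)$. The key requirement is that the confinement cost is exponential in $s$ with a rate that can be made smaller than $\eta$ by taking $R$ large. The endpoint localisation costs only a polynomial factor in $z$, and the choice of $\tau$ then absorbs this factor. If the endpoint condition proves awkward, an alternative is to avoid it. One would first use~\eqref{Paper02_short_window_values_u_0} to show that $u_0(T+1,\cdot) \geq c z$ on an interval of fixed length around $x$; this follows by combining Lemma~\ref{Paper02_coordinate-wise_uniform_continuity} with the heat-kernel smoothing. One would then require only that $W(s)$ lands in that interval.
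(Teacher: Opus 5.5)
Your proposal is correct and follows essentially the same route as the paper. The paper also applies the Feynman--Kac formula for $u_0$ along Brownian paths that stay in the spatial window and end in the $z^3/8$-neighbourhood supplied by Lemma~\ref{Paper02_coordinate-wise_uniform_continuity}, uses a confinement estimate with decay rate of order $R^{-2}$ (Lemma~\ref{Paper02_aux_results_BM}(iii) after Brownian scaling), and then chooses $R$, then $\tau$, then $z^*$. The only difference is presentation: your contradiction hypothesis on $u_0$, converted via Corollary~\ref{Paper02_same_speed_u_0_total_mass}, is exactly the paper's Case~(1), and its Case~(2) corresponds to the negation of your hypothesis.
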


\begin{proof}
    We will closely follow the proof of~\cite[Lemma~2.3]{penington2018spreading}. Let $R > 1$ and $\tau > 1$ be large constants depending on $\delta$ that we will later specify. Let $z^* \in (0, (2\varepsilon_0) \wedge e^{-1})$ to be specified later, where $\varepsilon_0$ is defined in Lemma~\ref{Paper02_coordinate-wise_uniform_continuity}, and suppose $z \in (0,z^*)$, $T \geq 1$ and $x \in \mathbb R$ with $u_0(T,x) > z$. Following~\cite{penington2018spreading}, we divide the proof into two cases:
    \begin{enumerate}[(1)]
        \item For each $t \in  [T, T + \tau\log(1/z)]$ and $y \in [x - R, x + R]$, we have
        \begin{equation} \label{Paper02_case_1_BM_around_region_total_mass_is_small}
            \| u(t,y) \|_{\ell_{1}} < U_{\textrm{eq}} - \delta.
        \end{equation}
        \item There exist $\tilde{t} \in [T, T + \tau\log(1/z)]$ and $\tilde{y} \in [x - R, x + R]$ such that
        \begin{equation} \label{Paper02_case_2_BM_around_region_total_mass_is_big}
            \| u(\tilde{t},\tilde{y}) \|_{\ell_{1}} \geq U_{\textrm{eq}} - \delta.
        \end{equation}
    \end{enumerate}

    \noindent \underline{Case $(1)$:} Since $u_{0}(T,x) > z$, and $z < z^* < 2\varepsilon_0$, by Lemma~\ref{Paper02_coordinate-wise_uniform_continuity} with $\varepsilon = z/2$, we have that
\begin{equation} \label{Paper02_initial_condition_lower_bound_u_0}
    u_{0}(T,y) > \tfrac{1}{2}z \quad \forall \, y \in \left[x - \tfrac{1}{8}z^3, x + \tfrac{1}{8}z^3\right].
\end{equation}
Let
\begin{equation} \label{Paper02_level_set_positive_difference_birth_death_rates}
    H_{\min,\delta} \defeq \min_{U \in [0, U_{\textrm{eq}} - \delta]} \, \Big((1 - \mu)q_{+}(U) - q_{-}(U)\Big) > 0,
\end{equation}
where the inequality holds by~\eqref{Paper02_equilibrium_value_general_PDE} and our assumption that~$F=(F_{k})_{k \in \mathbb{N}_{0}}$ is of Fisher-KPP type in the sense of Definition~\ref{Paper02_assumption_monostable_condition}. Then, by~\eqref{Paper02_case_1_BM_around_region_total_mass_is_small} and~\eqref{Paper02_level_set_positive_difference_birth_death_rates}, on the event that $\vert W(t) - x \vert \leq R$ for all $t \in [0, \tau\log(1/z)]$, we have
\begin{equation} \label{Paper02_estimate_integral_over_constrict_tube}
    \int_{0}^{\tau\log(1/z)} \Big((1 - \mu) q_{+} - q_{-}\Big) \Big(\| u(T + \tau\log(1/z) - t, W(t)) \|_{\ell_{1}}\Big) \, dt \geq \tau \log(1/z) H_{\min,\delta}.
\end{equation}
Hence, by combining the Feynman--Kac representation in~\eqref{Paper02_feynman_kac_no_mutations} with~\eqref{Paper02_initial_condition_lower_bound_u_0} and~\eqref{Paper02_estimate_integral_over_constrict_tube}, we have
\begin{equation} \label{Paper02_exponential_growth_solution_u_0_i}
\begin{aligned}
    & u_{0}(T + \tau\log(1/z), x) \\ & \, \geq \frac{z}{2} \exp\left(\tau \log(1/z)H_{\min,\delta}\right) \mathbb{P}_{x}\left(\vert W(t) - x\vert \leq R \; \forall t \leq \tau\log(1/z), \;\vert W(\tau\log(1/z)) - x\vert \leq \frac{z^{3}}{8}\right) \\ & \, = \frac{z}{2} \exp\left(\tau \log(1/z) H_{\min,\delta}\right) \mathbb{P}_{0}\left(\vert W(t) \vert \leq 1 \; \forall t \leq \tau R^{-2}\log(1/z), \;\vert W(\tau R^{-2}\log(1/z))\vert \leq \frac{z^{3}}{8R}\right),
\end{aligned}
\end{equation}
where the last line holds due to Brownian scaling. Let $C = C^{(1)}_1 > 0$ denote the constant given in Lemma~\ref{Paper02_aux_results_BM}(iii) in the appendix, and recall from the start of the proof that $\tau > 1$, $\log (1/z) > 1$ and $z^3/(8R) < 2/3$. Therefore, by applying the Brownian motion estimate~\eqref{Paper02_roberts_lemma} from Lemma~\ref{Paper02_aux_results_BM}(iii) to~\eqref{Paper02_exponential_growth_solution_u_0_i}, we conclude that
\begin{equation} \label{Paper02_exponential_growth_solution_u_0_ii}
\begin{aligned}
    u_{0}(T + \tau\log(1/z), x) \geq \frac{z}{2} \cdot \frac{z^{3}}{8R} \cdot C \cdot \exp\left(\tau \log(1/z) H_{\min,\delta} - \frac{\pi^{2}m\tau}{8R^{2}}\log(1/z)\right) = \frac{C}{16 R} z^{a},
\end{aligned}
\end{equation}
where
\begin{equation} \label{Paper02_growth_exponent}
    a \defeq \frac{-8\tau H_{\min,\delta} + \pi^{2}m\tau R^{-2} + 32} {8}.
\end{equation}
Hence, by taking $R > 1$,  $\tau > 1$ large enough that the following estimates hold:
    \begin{equation} \label{Paper02_estimates_constants_conditions_i}
        R^{2} > \frac{\pi^{2}m}{8H_{\min,\delta}} \textrm{ and } \tau > \frac{32}{8H_{\min,\delta} - \pi^{2}mR^{-2}},
    \end{equation}
    it follows from~\eqref{Paper02_growth_exponent} that~$a < 0$. Therefore, by~\eqref{Paper02_exponential_growth_solution_u_0_ii}, we can choose $z^* = z^*(\delta) > 0$ to be sufficiently small that for $z \in (0,z^*)$, we have    \begin{equation*}
        u_{0}(T+\tau\log(1/z), x) \geq \frac{U_{\textrm{eq}} -\delta}{1 + A},
    \end{equation*}
    which completes the analysis of case~(1).
    
    \medskip

    \noindent \underline{Case~$(2)$:} By Corollary~\ref{Paper02_same_speed_u_0_total_mass} and then by~\eqref{Paper02_case_2_BM_around_region_total_mass_is_big}, we have
\begin{equation*}
    u_{0}(\tilde{t}, \tilde{y}) \geq \frac{\| u(\tilde{t}, \tilde{y}) \|_{\ell_{1}}}{1 + A} \geq \frac{U_{\textrm{eq}} - \delta}{1 + A},
\end{equation*}
which completes the proof.
\end{proof}

As in the statement of Theorem~\ref{Paper02_spreading_speed_Fisher_KPP}, let
\begin{equation} \label{Paper02_speed_Fisher_KPP_case_specific_constant}
    c^{*} \defeq \sqrt{2m\Big((1-\mu)q_{+}(0) - q_{-}(0)\Big)} > 0.
\end{equation}
Our next result shows that for any~$\varepsilon >0$, $u_{0}$ spreads with speed at least $c^{*} - \varepsilon$. Its proof is an adaptation of~\cite[Lemma~2.4]{penington2018spreading}, with the modification that in our setting the non-local interaction occurs in the type space, i.e.~in terms of numbers of mutations, rather than in the spatial domain, and that the polynomials~$q_{+}$ and~$q_{-}$ must satisfy the Fisher-KPP condition~\eqref{Paper02_fisher_kpp_condition_muller_ratchet} in Definition~\ref{Paper02_assumption_monostable_condition}, which generalises the classical Fisher-KPP condition.

\begin{lemma} \label{Paper02_propagation_mass_u_0}
Suppose the assumptions of Lemma~\ref{Paper02_coordinate-wise_uniform_continuity} hold. Let $u = (u_{k})_{k \in \mathbb N_0}: [0, \infty) \times \mathbb R \rightarrow \ell_1^+$ denote the continuous mild solution to~\eqref{Paper02_PDE_scaling_limit} given in Proposition~\ref{Paper02_smoothness_mild_solution}, and let $A > 0$ denote the constant given in Corollary~\ref{Paper02_same_speed_u_0_total_mass}. For $c \in (0, c^{*})$, there exist ${\nu}^{*}_{0} = {\nu}^{*}_{0}(c) \in  \left(0, \frac{U_{\textrm{eq}}}{2(1+A)}\right)$ and ${t}^{*} = {t}^{*}(c) < \infty$ such that for $T \geq {t}^{*}$ and $t \geq 1$, if $x,x' \in \mathbb R$ with $u_{0}(t,x) \geq {\nu}^{*}_{0}$ and $\vert x - x'\vert \leq cT$, then $u_{0}(t + T, x') \geq {\nu}^{*}_{0}$.
\end{lemma}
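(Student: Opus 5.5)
We adapt the proof of \cite[Lemma~2.4]{penington2018spreading}, working with the Feynman--Kac formula~\eqref{Paper02_feynman_kac_no_mutations} for $u_0$ and using Corollary~\ref{Paper02_same_speed_u_0_total_mass} to convert bounds on $u_0$ into bounds on $\|u\|_{\ell_1}$. Fix $c<c^*$ and choose $c<c_1<c^*$ and $\eta>0$ small with $c_1^2/(2m) < H_{\max}-\eta$. Since $U\mapsto (1-\mu)q_+(U)-q_-(U)$ is continuous and equals $H_{\max}$ at $U=0$, there is $\zeta>0$ such that
\[
(1-\mu)q_+(U)-q_-(U)\ge H_{\max}-\eta \quad \text{for } U\in[0,\zeta].
\]
On the other hand, when $\|u\|_{\ell_1}\ge \zeta$, Corollary~\ref{Paper02_same_speed_u_0_total_mass} gives $u_0\ge \zeta/(1+A)$. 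This yields the dichotomy we will exploit: along a Brownian path, either the total mass stays below $\zeta$, so that the Feynman--Kac weight grows at rate at least $H_{\max}-\eta$, or the path meets a point where $u_0$ is already bounded below. The constant $\nu_0^*$ will be chosen small compared with $\zeta/(1+A)$ and below $U_{\textrm{eq}}/(2(1+A))$.

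\emph{Step 1 (short-range propagation).} Starting from $u_0(t,x)\ge\nu_0^*$, Lemma~\ref{Paper02_coordinate-wise_uniform_continuity} gives $u_0(t,\cdot)\ge \nu_0^*/2$ on an interval of length of order $(\nu_0^*)^3$ around $x$. Since $(1-\mu)q_+-q_-\ge H_{\min}$ on $[0,1]$ and $\|u\|_{\ell_1}\le 1$ by Lemma~\ref{Paper02_uniform_bound_total_mass}, the Feynman--Kac formula~\eqref{Paper02_feynman_kac_no_mutations} yields a crude lower bound: over a time window of order one, $u_0$ remains at least a small positive constant times $\nu_0^*$ on a neighbourhood of $x$. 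Lemma~\ref{Paper02_exponential_growth} then lets us reboost: from any point where $u_0>z$, within time $\tau\log(1/z)$ and distance $R$ there is a point where $u_0\ge (U_{\textrm{eq}}-\delta)/(1+A)$, which is well above $\nu_0^*$.

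\emph{Step 2 (ballistic propagation).} To reach $x'$ with $|x-x'|\le cT$ at time $t+T$, apply~\eqref{Paper02_feynman_kac_no_mutations} from $(t+T,x')$ back to time $t$. We restrict to Brownian paths that travel from $x'$ to a small neighbourhood of $x$, roughly along a straight line of slope at most $c_1$, staying within a tube around it. There are two cases.
\begin{enumerate}[(a)]
\item If $\|u\|_{\ell_1}<\zeta$ along the whole tube, the exponential weight is at least $e^{(H_{\max}-\eta)T}$. The Girsanov or large-deviation cost of the drift is about $e^{-c_1^2T/(2m)}$, up to tube factors handled by Lemma~\ref{Paper02_aux_results_BM}. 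The net exponent is positive, so $u_0(t+T,x')$ is huge compared with $\nu_0^*$ once $T\ge t^*$; this is a contradiction, so in fact the bound $u_0(t+T,x')\ge\nu_0^*$ follows.
\item If instead $\|u(s,y)\|_{\ell_1}\ge\zeta$ at some point $(s,y)$ of the tube, then $u_0(s,y)\ge \zeta/(1+A)$. We must then re-propagate from $(s,y)$ to $(t+T,x')$.
\end{enumerate}

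The main obstacle is case (b), which makes the argument circular: we need propagation from an intermediate point of the tube. Following \cite{penington2018spreading}, we plan to handle it by considering the \emph{last} time along the reversed path, equivalently the latest $s$, at which the mass exceeds $\zeta$. After that time the mass stays below $\zeta$, so the growth estimate of case (a) applies over the remaining interval $[s,t+T]$. When the remaining interval is short, Step 1 together with the continuity bound of Lemma~\ref{Paper02_coordinate-wise_uniform_continuity} keeps $u_0\ge\nu_0^*$, which is where $\nu_0^*\ll\zeta/(1+A)$ is used. The delicate bookkeeping is choosing the tube width, $\eta$, $\zeta$, $\nu_0^*$ and $t^*$ consistently so that both regimes close. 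If this cannot be done directly, we fall back on an iterative scheme: propagate over blocks of length $T_0$ at speed $c_1$, using Lemma~\ref{Paper02_exponential_growth} to restore a lower bound of $\nu_0^*$ at the end of each block.
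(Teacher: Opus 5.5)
Your overall strategy is the paper's own argument, which follows \cite[Lemma~2.4]{penington2018spreading}. The correspondences are as follows:
\begin{itemize}
\item your threshold $\zeta$ plays the role of $U_{\textrm{eq}}-\delta$, chosen so that $H_{\min,\delta}>c^2/(2m)$;
\item your case (a) is the paper's case (1);
\item your ``last time the tube sees mass at least $\zeta$'', split into a short and a long remaining window, is the paper's cases (2) and (3);
\item the order of constants matches: $\nu_0^*$ is chosen below the $T$-independent lower bounds of those two cases, and $t^*$ is chosen afterwards, depending on $\nu_0^*$.
\end{itemize}
Two side remarks. In case (a) there is nothing to contradict: the Feynman--Kac estimate gives $u_0(t+T,x')\ge\nu_0^*$ directly. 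Also, Lemma~\ref{Paper02_exponential_growth} plays no role in this proof; the paper uses it only alongside this lemma when proving Theorem~\ref{Paper02_spreading_speed_Fisher_KPP}. Your iterative fallback would not work as stated. That lemma only yields some point in a space-time window around the starting point, with no drift towards $x'$, so it cannot give a lower bound at the prescribed point $(t+T,x')$.

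The step that fails as written is the short-window case. You take the latest $s$ in all of $[t,t+T]$ at which the tube contains a point $(s,y)$ with $\|u(s,y)\|_{\ell_1}\ge\zeta$. You then claim that when $t+T-s$ is short, crude Feynman--Kac bounds plus Lemma~\ref{Paper02_coordinate-wise_uniform_continuity} give $u_0(t+T,x')\ge\nu_0^*$. This breaks down for the following reasons:
\begin{itemize}
\item $y$ can be at distance up to the tube width $R$ from $x'$, and $R$ must be large to make $\pi^2m/(8R^2)$ negligible.
\item The continuity lemma only spreads the bound over a tiny neighbourhood of $y$.
\item If $t+T-s\to0$, a Brownian motion started at $x'$ essentially never reaches that neighbourhood within time $t+T-s$.
\item Nothing else gives a lower bound near $x'$: after $s$ the tube only carries the upper bound $\|u\|_{\ell_1}<\zeta$.
\end{itemize}

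The paper avoids this by examining the tube only up to time $t+T-1$:
\begin{itemize}
\item Case (1) requires the tube to be clean only on $[t,t+T-1]$.
\item The short case is $\tilde\tau\in[T-3,T-1]$. The remaining time then lies in $[1,3]$, and the Gaussian density at distance at most $R+3c+1$ gives a $T$-independent bound.
\item In the long case, one unit of time after $\tilde\tau$ is spent spreading the bound onto an interval of length $2$ around the line. The tube estimate is then run only up to $t+T-1$.
\item Every case is closed with one crude unit-time Feynman--Kac step to $x'$.
\end{itemize}
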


\begin{proof}
 We start by defining certain constants; the reasons for the choice of constants will become clear later. Recall the definition of~$U_{\textrm{eq}}$ in~\eqref{Paper02_equilibrium_value_general_PDE}. Fix $c \in (0, c^{*})$, and let~$\delta = \delta(c) \in (0, U_{\textrm{eq}})$ be such that
\begin{equation} \label{Paper02_first_constant_spreading_speed_proof_modified_i}
    H_{\min,\delta} > \frac{c^{2}}{2m},
\end{equation}
where~$H_{\min,\delta}$ is defined in~\eqref{Paper02_level_set_positive_difference_birth_death_rates}. Indeed, by~\eqref{Paper02_speed_Fisher_KPP_case_specific_constant} we have
\begin{equation*}
    (1 - \mu)q_{+}(0) - q_{-}(0) = \frac{(c^{*})^{2}}{2m} > \frac{c^{2}}{2m},
\end{equation*}
and so it is always possible to choose~$\delta \in (0, U_{\textrm{eq}})$ such that~\eqref{Paper02_first_constant_spreading_speed_proof_modified_i} holds. Let $R > 2$ be sufficiently large that
\begin{equation} \label{Paper02_first_condition_spread_mass_constants}
        H_{\min,\delta} - \frac{c^{2}}{2m} - \frac{\pi^{2}m}{8(R - 1)^{2}} > 0.
    \end{equation}
By Lemma~\ref{Paper02_coordinate-wise_uniform_continuity}, there exists ${\varepsilon}^{(1)} \in (0,1)$ sufficiently small that for any $x,y \in \mathbb R$ with $\vert x - y \vert \leq {\varepsilon}^{(1)}$ and any $\tau \geq 1$, we have
\begin{equation} \label{Paper02_intermediate_step_propagation_speed_i}
    \vert u_{0}(\tau, x) - u_{0}(\tau, y) \vert \leq \frac{U_{\textrm{eq}}-\delta}{2(1 + A)}.
\end{equation}
We then take ${\nu}^{*}_{0} > 0$ small enough that
\begin{equation} \label{Paper02_second_condition_spread_mass_constants}
    {\nu}^{*}_{0} \leq \frac{(U_{\textrm{eq}}-\delta) {\varepsilon}^{(1)}}{(1 + A)\sqrt{6m\pi}} \exp\left(3H_{\min} -\frac{(3c + R + 1)^{2}}{2m}\right),
\end{equation}
where $H_{\min}$ is defined in~\eqref{Paper02_min_max_difference_polynomial}. Let $C^{(2)}_{R-1} = C^{(2)}_{R-1}(c^*) > 0$ be the constant defined in Lemma~\ref{Paper02_aux_results_BM}(iv) in the appendix for
estimate~\eqref{Paper02_tilted_tube_BM}. We will also assume that ${\nu}^{*}_{0}$ is sufficiently small that
\begin{equation} \label{Paper02_third_condition_spread_mass_constants}
     {\nu}^{*}_{0} \leq e^{2H_{\min}}\frac{(U_{\textrm{eq}}-\delta)C^{(2)}_{R-1}}{4(1 + A)}\mathbb{P}_{c+R+1}\Big(\vert W(1) \vert \leq {\varepsilon}^{(1)}\Big) \mathbb{P}_{0}\left(\vert W(1) + c\vert \leq \frac{1}{2} \right).
\end{equation}
Applying Lemma~\ref{Paper02_coordinate-wise_uniform_continuity} again, let $\varepsilon^{(2)} \in (0,1)$ be sufficiently small that for any $x,y \in \mathbb R$ with $\vert x - y \vert \leq {\varepsilon}^{(2)}$ and any $\tau \geq 1$, we have
\begin{equation} \label{Paper02_intermediate_step_propagation_speed_ii}
    \vert u_{0}(\tau, x) - u_{0}(\tau, y) \vert \leq{{\nu}^{*}_{0}}/{2}.
\end{equation}
Finally, by~\eqref{Paper02_first_condition_spread_mass_constants}, we can take ${t}^{*} > 2$ sufficiently large that
\begin{equation}
\label{Paper02_fourth_condition_spread_mass_constants}
    \frac{\varepsilon^{(2)}}{4} C^{(2)}_{R-1}\exp\left((t^{*} - 1)\left(H_{\min,\delta} - \frac{c^{2}}{2m} - \frac{\pi^{2}m}{8(R - 1)^{2}} \right)\right) > e^{-H_{\min}}\mathbb{P}_{0}\Big(\vert W(1) - c \vert \leq \varepsilon^{(2)}/2\Big)^{-1}.
\end{equation}
Observe that since the reaction term~$F = (F_{k})_{k \in \mathbb{N}_{0}}$ is monostable, Definition~\ref{Paper02_assumption_monostable_condition} and~\eqref{Paper02_min_max_difference_polynomial} imply that~$H_{\min} < 0$, and so $e^{-H_{\min}} > 1$.

We are finally ready to start the proof. Take $T \geq t^* > 2$, $t \geq 1$ and $x,x' \in \mathbb R$ with $\vert x - x' \vert \leq cT$, and let $a \defeq (x' - x)/T$. Suppose that $u_0(t,x) \geq \nu^*_0$. Following the proof of~\cite[Lemma~2.4]{penington2018spreading}, we will consider three different cases:
\begin{enumerate}[(1)]
    \item For each $\tau \in [0, T-1]$ and $y \in \mathbb R$ with $\vert y - (x + a\tau) \vert \leq R$, we have $\| u(t + \tau, y) \|_{\ell_{1}} < U_{\textrm{eq}}-\delta$.
    \item There exist $\tilde{\tau} \in [T-3,T-1]$ and $\tilde{y} \in \mathbb{R}$ such that $\vert \tilde{y} - (x + a\tilde{\tau}) \vert \leq R$ and $\| u(t + \tilde{\tau}, \tilde{y}) \|_{\ell_{1}} \geq U_{\textrm{eq}}-\delta$.
    \item There exist $\tilde{\tau} \in [0,T-3]$ and $\tilde{y} \in \mathbb{R}$ such that $\vert \tilde{y} - (x + a\tilde{\tau}) \vert \leq R$ and $\| u(t + \tilde{\tau}, \tilde{y}) \|_{\ell_{1}} \geq U_{\textrm{eq}}-\delta$, and such that for any $(\tau, y)$ with $\tau \in [\tilde{\tau} + 1, T-1]$ and $\vert y - (x + a\tau) \vert \leq R$, we have $\| u(t + \tau, y) \|_{\ell_{1}} < U_{\textrm{eq}}-\delta$.
\end{enumerate}

To see that these cases exhaust all possibilities, if both (1) and (2) do not hold, for (3) consider the supremum over all $\tau' \in [0,T-3]$ such that there exists ${y}' \in \mathbb{R}$ such that $\vert {y}' - (x + a{\tau}') \vert \leq R$ and $\| u(t + {\tau}', {y}') \|_{\ell_{1}} \geq U_{\textrm{eq}}-\delta$. Then, choose the required $\tilde\tau$ within distance $1/2$ from this supremum.
For simplicity, we assume that $x' \geq x$, and so $a \in [0,c]$, since the proof for $x' \leq x$ is analogous. We will address each of the cases listed separately.

\medskip

\noindent \underline{Case~$(1)$:} Take $\tau \in [0,T-1]$ and suppose $\vert W(\tau) - (x + a(T-1-\tau)) \vert \leq R$; then by the assumptions of case~$(1)$, we have $\| u(t + T -1 - \tau, W(\tau)) \|_{\ell_{1}} < U_{\textrm{eq}}-\delta$. Hence, by~\eqref{Paper02_level_set_positive_difference_birth_death_rates} we have
\begin{equation} \label{Paper02_trivial_consequence_level_set_small_z}
    ((1 - \mu)q_{+} - q_{-})(\| u(t + T -1 - \tau, W(\tau)) \|_{\ell_{1}}) \geq H_{\min,\delta}.
\end{equation}
Moreover, by~\eqref{Paper02_intermediate_step_propagation_speed_ii} and the fact that $t \geq 1$ and $u_{0}(t,x) \geq {\nu}^{*}_{0}$, we also have that
\begin{equation} \label{Paper02_intermediate_step_propagation_speed_iii}
    u_{0}(t,y) \geq \tfrac{1}{2}{\nu}^{*}_{0} \quad \forall y \in [x-\varepsilon^{(2)}, x + \varepsilon^{(2)}].
\end{equation}
By combining the Feynman--Kac representation in~\eqref{Paper02_feynman_kac_no_mutations} with~\eqref{Paper02_trivial_consequence_level_set_small_z} and~\eqref{Paper02_intermediate_step_propagation_speed_iii}, we conclude that for any $y \in [x-\varepsilon^{(2)}/2, x + \varepsilon^{(2)}/2]$, we have
\begin{equation} \label{Paper02_intermediate_step_propagation_speed_iv}
\begin{aligned}
    & u_{0}(t + T - 1, y + a(T-1)) \\ & \quad \geq \frac{{\nu}^{*}_{0}}{2}\exp\Big((T-1)H_{\min,\delta}\Big) \\ & \quad \quad \quad \cdot \mathbb{P}_{y + a(T-1)}\Big(\vert W(\tau) - (x + a(T-1 - \tau))\vert \leq R \; \forall \tau \leq T-1,\,  \vert W(T-1) - x \vert \leq \varepsilon^{(2)}\Big) \\ & \quad \geq \frac{{\nu}^{*}_{0}}{2}\exp\Big((T-1)H_{\min,\delta}\Big) \\ & \quad \quad \quad \cdot \mathbb{P}_{0}\left(\vert W(\tau) + a\tau\vert \leq R - 1 \; \forall \tau \leq T-1,\,  \vert W(T-1) + a(T-1) \vert \leq \tfrac{1}{2}\varepsilon^{(2)}\right),
\end{aligned}
\end{equation}
where for the second inequality we used the fact that $\vert x - y \vert \leq \varepsilon^{(2)}/2$ and $\varepsilon^{(2)} < 1$. Note that since $\vert x' - x \vert \leq cT$, we have $\vert a \vert \leq c$. Thus, applying estimate~\eqref{Paper02_tilted_tube_BM} from Lemma~\ref{Paper02_aux_results_BM}(iv) in the appendix to~\eqref{Paper02_intermediate_step_propagation_speed_iv}, we have
\begin{equation} \label{Paper02_intermediate_step_propagation_speed_v}
\begin{aligned}
    u_{0}(t + T - 1, y + a(T-1)) & \geq \frac{{\nu}^*_{0}\varepsilon^{(2)}}{4} C^{(2)}_{R-1}\exp\left((T-1)\left(H_{\min,\delta} - \frac{a^{2}}{2m} - \frac{\pi^{2}m}{8(R-1)^{2}}\right)\right) \\ & \geq \frac{{\nu}^{*}_{0}\varepsilon^{(2)}}{4} C^{(2)}_{R-1}\exp\left((t^{*}-1)\left(H_{\min,\delta} - \frac{c^{2}}{2m} - \frac{\pi^{2}m}{8(R-1)^{2}}\right)\right) \\ & > {\nu}^{*}_{0} e^{-H_{\min}} \mathbb{P}_{0}\Big(\vert W(1) - c \vert \leq \varepsilon^{(2)}/2\Big)^{-1},
\end{aligned}
\end{equation}
where in the second inequality we used the fact that $T \geq t^{*}$ and~\eqref{Paper02_first_condition_spread_mass_constants}, and in the third inequality we used~\eqref{Paper02_fourth_condition_spread_mass_constants}. Since~\eqref{Paper02_intermediate_step_propagation_speed_v} holds for any $y \in [x-\varepsilon^{(2)}/2, x + \varepsilon^{(2)}/2]$ and since $x' = x + aT$, by using the Feynman--Kac representation in~\eqref{Paper02_feynman_kac_no_mutations}, Lemma~\ref{Paper02_uniform_bound_total_mass} and the definition of~$H_{\min}$ in~\eqref{Paper02_min_max_difference_polynomial}, and then using~\eqref{Paper02_intermediate_step_propagation_speed_v}, we conclude that
\begin{equation} \label{Paper02_desired_result_case_(1)}
\begin{aligned}
    & u_{0}(t + T, x') \\ & \quad \geq e^{H_{\min}}\mathbb{P}_{x + aT}\Big(\vert W(1) - x - a(T-1)\vert \leq \varepsilon^{(2)}/2\Big) \inf_{\vert y - x\vert \leq \varepsilon^{(2)}/2} u_{0}(t + T - 1, y + a(T-1)) \\ & \quad \geq e^{H_{\min}}\mathbb{P}_{0}\Big(\vert W(1) + a\vert \leq \varepsilon^{(2)}/2\Big) {\nu}^{*}_{0}e^{- H_{\min}}\mathbb{P}_{0}\Big(\vert W(1) - c \vert \leq \varepsilon^{(2)}/2\Big)^{-1} \\ & \quad > {\nu}^{*}_{0},
\end{aligned}
\end{equation}
where for the last inequality we used the fact that $0 \leq a \leq c$, together with the fact that the density of~$W(1)$ under $\mathbb P_0$ is given by the symmetric function $p(1,\cdot)$ (recall~\eqref{Paper02_Gaussian_kernel}), and $p(1,y)$ is decreasing in $y$ on $[0,\infty)$. This completes the analysis of case~$(1)$.

\medskip

\noindent \underline{Case~$(2)$:} By the assumptions of case $(2)$ and by Corollary~\ref{Paper02_same_speed_u_0_total_mass}, we have
\begin{equation*}
    u_{0}(t + \tilde{\tau}, \tilde{y}) \geq \frac{U_{\textrm{eq}}-\delta}{1 + A}.
\end{equation*}
Hence, by~\eqref{Paper02_intermediate_step_propagation_speed_i} and since $t \geq 1$, we have
\begin{equation} \label{Paper02_intermediate_step_propagation_speed_vi}
    u_{0}(t + \tilde{\tau}, y) \geq \frac{U_{\textrm{eq}}-\delta}{2(1 + A)} \quad \forall y \in [\tilde{y} - {\varepsilon}^{(1)}, \tilde{y} + {\varepsilon}^{(1)}].
\end{equation}
Since, by the assumptions of case~(2), $T - \tilde{\tau} \leq 3$ and since $x' = x + aT$, by using the Feynman--Kac representation in~\eqref{Paper02_feynman_kac_no_mutations}, and then by applying~\eqref{Paper02_intermediate_step_propagation_speed_vi}, Lemma~\ref{Paper02_uniform_bound_total_mass}, the definition of $H_{\min}$ in~\eqref{Paper02_min_max_difference_polynomial}, and our observation after~\eqref{Paper02_fourth_condition_spread_mass_constants} that~$H_{\min} < 0$, we conclude that
\begin{equation} \label{Paper02_intermediate_step_feynman_kac_x}
\begin{aligned}
    u_{0}(t + T, x') & \geq e^{3H_{\min}}\frac{U_{\textrm{eq}}-\delta}{2(1+A)}\mathbb{P}_{x + aT}\Big(\vert W(T - \tilde{\tau}) - \tilde{y}\vert \leq {\varepsilon}^{(1)}\Big) \\ & = e^{3H_{\min}}\frac{U_{\textrm{eq}}-\delta}{2(1+A)}\mathbb{P}_{0}\Big(\vert W(T - \tilde{\tau}) - (\tilde{y} - x - aT)\vert \leq {\varepsilon}^{(1)}\Big).
\end{aligned}
\end{equation}
Then since, by the triangle inequality and the assumptions of case~(2), we have
\[
\vert \tilde y - x - aT \vert \leq \vert \tilde y - (x + a\tilde \tau) \vert + \vert a(\tilde \tau - T)\vert \leq R +3a,
\]
it follows from~\eqref{Paper02_intermediate_step_feynman_kac_x} that
\begin{equation} \label{Paper02_intermediate_step_propagation_speed_vii}
\begin{aligned}
     u_{0}(t + T, x') \geq e^{3H_{\min}} \frac{U_{\textrm{eq}}-\delta}{2(1 + A)}\mathbb{P}_{0}\Big(\vert W(T - \tilde{\tau}) + 3a + R \vert \leq {\varepsilon}^{(1)}\Big).
\end{aligned}
\end{equation}
Hence, by applying the standard Brownian motion estimate~\eqref{Paper02_elementary_gaussian_estimate} from Lemma~\ref{Paper02_aux_results_BM} in the appendix, and the fact that $1 \leq T - \tilde \tau \leq 3$, $0 \leq a \leq c$ and~$\varepsilon^{(1)} < 1$ to~\eqref{Paper02_intermediate_step_propagation_speed_vii}, it follows that
\begin{equation*}
\begin{aligned}
      u_{0}(t + T, x') \geq \frac{(U_{\textrm{eq}}-\delta) {\varepsilon}^{(1)}}{(1 + A)\sqrt{6m\pi}} \exp\left(3H_{\min} -\frac{(3c + R + 1)^{2}}{2m}\right) \geq {\nu}^{*}_{0},
\end{aligned}
\end{equation*}
where for the second inequality we used~\eqref{Paper02_second_condition_spread_mass_constants}. This completes the proof for case~$(2)$.

\medskip

\noindent \underline{Case~$(3)$:} Observe that by the same argument as in case (2),~\eqref{Paper02_intermediate_step_propagation_speed_vi} holds. Now take $y \in [x-1, x + 1]$. By the Feynman--Kac representation in~\eqref{Paper02_feynman_kac_no_mutations}, Lemma~\ref{Paper02_uniform_bound_total_mass}, the definition of $H_{\min}$ in~\eqref{Paper02_min_max_difference_polynomial}, and~\eqref{Paper02_intermediate_step_propagation_speed_vi}, we have
\begin{equation} \label{Paper02_intermediate_step_propagation_speed_viii}
\begin{aligned}
    u_{0}(t + \tilde{\tau} + 1, y + a(\tilde{\tau} + 1)) \geq e^{H_{\min}}\frac{U_{\textrm{eq}}-\delta}{2(1 + A)} \mathbb{P}_{y + a(\tilde{\tau} + 1)}\Big(\vert W(1) - \tilde{y} \vert \leq {\varepsilon}^{(1)}\Big).
\end{aligned}
\end{equation}
Since, by the assumptions of case~(3), $\vert \tilde{y} - (x + a\tilde{\tau}) \vert \leq R$, and since $0 \leq a \leq c$ and $\vert x - y \vert \leq 1$, we have
\begin{equation*}
   \vert \tilde{y} - (y + a(\tilde{\tau} + 1)) \vert \leq c + R + 1,
\end{equation*}
and we conclude from~\eqref{Paper02_intermediate_step_propagation_speed_viii} that
\begin{equation} \label{Paper02_intermediate_estimate_case_3}
    u_{0}(t + \tilde{\tau}+1, y + a(\tilde{\tau} + 1)) \geq e^{H_{\min}}\frac{U_{\textrm{eq}}-\delta}{2(1 + A)}\mathbb{P}_{c+R+1}\Big(\vert W(1) \vert \leq {\varepsilon}^{(1)}\Big).
\end{equation}
By the assumptions of case~(3), for $\tau \in [0, T - \tilde{\tau} - 2]$, if $\vert W(\tau) - (x + a(T-1-\tau)) \vert \leq R$, we have $\| u(t + T - 1 - \tau, W(\tau)) \|_{\ell_{1}} < U_{\textrm{eq}}-\delta$. Hence, by~\eqref{Paper02_level_set_positive_difference_birth_death_rates}, if $\vert W(\tau) - (x + a(T -1 - \tau)) \vert \leq R$ for some $\tau \in [0,T - \tilde{\tau}-2]$, we have
\begin{equation} \label{Paper02_trivial_consequence_level_set_small_z_ii}
    ((1 - \mu)q_{+} - q_{-})(\| u(t + T -1 - \tau, W(\tau)) \|_{\ell_{1}}) \geq H_{\min,\delta}.
\end{equation}
Hence, for $y' \in [x - 1/2, x + 1/2]$, by applying~\eqref{Paper02_trivial_consequence_level_set_small_z_ii} to~\eqref{Paper02_feynman_kac_no_mutations}, we have
\begin{equation} \label{Paper02_intermediate_step_propagation_speed_ix}
\begin{aligned}
    & u_{0}(t + T - 1, y' + a(T-1)) \\ & \quad \geq \inf_{y \in [x-1, x+1]} u_{0}(t + \tilde{\tau} + 1, y + a(\tilde{\tau} + 1)) \exp\Big(H_{\min,\delta}(T - \tilde{\tau} - 2)\Big) \\ & \quad \quad \quad \cdot \mathbb{P}_{y' + a(T -1)}\Big(\vert W(\tau) - (y' + a(T - 1 - \tau)) \vert \leq R -1 \; \forall \tau \leq T - \tilde{\tau} - 2, \\ & \quad \quad \quad \quad \quad \quad \quad \quad \quad \quad \quad \quad \quad \quad \quad \quad \quad \, \vert W(T - \tilde{\tau} - 2) - (y' + a(\tilde{\tau} + 1)) \vert \leq \tfrac{1}{2}\Big).
\end{aligned}
\end{equation}
Since $T - \tilde \tau - 2 \geq 1$ and $0 \leq a \leq c$, by applying the Brownian motion estimate~\eqref{Paper02_tilted_tube_BM} from Lemma~\ref{Paper02_aux_results_BM} in the appendix together with~\eqref{Paper02_intermediate_estimate_case_3} to~\eqref{Paper02_intermediate_step_propagation_speed_ix}, it follows that, for any $y' \in [x - 1/2, x + 1/2]$,
\begin{equation} \label{Paper02_intermediate_step_propagation_speed_x}
\begin{aligned}
    u_{0}(t + T -1, y' + a(T -1)) & \geq e^{H_{\min}}\frac{(U_{\textrm{eq}}-\delta)C^{(2)}_{R-1}}{4(1 + A)}\mathbb{P}_{c+R+1}\Big(\vert W(1) \vert \leq {\varepsilon}^{(1)}\Big) \\
    & \qquad \cdot \exp\left((T - \tilde{\tau} - 2)\left(H_{\min,\delta} - \frac{c^{2}}{2m} - \frac{\pi^{2}m}{8(R - 1)^{2}} \right)\right) \\ & \geq e^{H_{\min}}\frac{(U_{\textrm{eq}}-\delta)C^{(2)}_{R-1}}{4(1 + A)}\mathbb{P}_{c+R+1}\Big(\vert W(1) \vert \leq {\varepsilon}^{(1)}\Big),
\end{aligned}
\end{equation}
where for the second inequality we used the fact that $T - \tilde{\tau} - 2 > 0$ and~\eqref{Paper02_first_condition_spread_mass_constants}. Finally, by the Feynman--Kac representation in~\eqref{Paper02_feynman_kac_no_mutations}, again together with Lemma~\ref{Paper02_uniform_bound_total_mass} and~\eqref{Paper02_min_max_difference_polynomial}, and then using that $x' = x + aT$ and $0 \leq a \leq c$ and using~\eqref{Paper02_intermediate_step_propagation_speed_x}, we have
\begin{equation*}
\begin{aligned}
    & u_{0}(t + T, x') \\ & \quad  \geq e^{H_{\min}} \inf_{\vert y' - x \vert \leq 1/2} u_{0}(t + T -1, y' + a(T -1)) \cdot \mathbb{P}_{x'}\left(\vert W(1) - (x + a(T-1))\vert \leq \tfrac{1}{2} \right) \\ & \quad \geq e^{2H_{\min}}\frac{(U_{\textrm{eq}}-\delta)C^{(2)}_{R-1}}{4(1 + A)}\mathbb{P}_{c+R+1}\Big(\vert W(1) \vert \leq {\varepsilon}^{(1)}\Big) \mathbb{P}_{0}\left(\vert W(1) + c\vert \leq \tfrac{1}{2} \right) \\ & \quad \geq {\nu}^{*}_{0},
\end{aligned}
\end{equation*}
where for the last inequality we used~\eqref{Paper02_third_condition_spread_mass_constants}. This completes the proof in case~$(3)$.

Since in each of cases (1)-(3) we have~$u_{0}(t + T, x') \geq \nu^{*}_{0}$, the proof is complete.
\end{proof}

We are finally ready to prove Theorem~\ref{Paper02_spreading_speed_Fisher_KPP}. Our proof is similar to the proof of~\cite[Theorem~1.1]{penington2018spreading}.

\begin{proof}[Proof of Theorem~\ref{Paper02_spreading_speed_Fisher_KPP}]
    Recall the definition of~$c^{*}$ in~\eqref{Paper02_speed_Fisher_KPP_case_specific_constant}. We will first establish an upper bound on the propagation speed for $u_{0}$, i.e.~we will show that
    \begin{equation} \label{Paper02_upper_bound_speed_u_0}
        \lim_{T \rightarrow \infty} \, \sup_{x \geq c^{*}T} u_{0}(T,x) = 0.
    \end{equation}
     Observe that, by Corollary~\ref{Paper02_same_speed_u_0_total_mass}, if~\eqref{Paper02_upper_bound_speed_u_0} holds, then we must have
     \[
     \lim_{T \rightarrow \infty} \, \sup_{x \geq c^*T} \| u(T,x)\|_{\ell_{1}} = 0.
     \]
     In order to prove~\eqref{Paper02_upper_bound_speed_u_0}, recall from the assumptions of the theorem that there exists~$R > 0$ such that $\supp f_{0} \subseteq (-\infty, R]$, where $f = (f_{k})_{k \in \mathbb{N}_{0}} \in L_{\infty}(\mathbb{R};\ell_{1})$ is the initial condition to the system of PDEs~\eqref{Paper02_PDE_scaling_limit} satisfying Assumptions~\ref{Paper02_assumption_initial_condition} and~\ref{Paper02_assumption_initial_condition_modified}. Recall the definition of~$H_{\max}$ in~\eqref{Paper02_min_max_difference_polynomial}, and observe that
     \begin{equation} \label{Paper02_relation_c_star_H_max}
         c^{*} = ({2mH_{\max}})^{1/2}.
     \end{equation}
     Recall that since the reaction term $F = (F_k)_{k \in \mathbb N_0}$ is of Fisher-KPP type in the sense of Definition~\ref{Paper02_assumption_monostable_condition}, we have that~\eqref{Paper02_min_max_difference_polynomial_ii} holds. Therefore, by the Feynman-Kac representation in~\eqref{Paper02_feynman_kac_no_mutations_from_begining}, and by Lemma~\ref{Paper02_uniform_bound_total_mass}, we have that for any~$T \geq 0$ and any~$x \geq c^{*}T$,
     \begin{equation} \label{Paper02_upper_bound_speed_u_0_step_i}
    \begin{aligned}
        u_{0}\left(T, x\right) & \leq e^{H_{\max}{T}}\mathbb{E}_{x}\left[f_{0}(W(T))\right] \\ & \leq e^{H_{\max}{T}} \| f \|_{L_{\infty}(\mathbb{R};\ell_{1})} \mathbb{P}_{0}(W(T) \geq c^{*}T - R).
    \end{aligned}
     \end{equation}
     Observe that by the standard Gaussian tail estimate~\eqref{Paper02_elementary_gaussian_estimate_ii} from Lemma~\ref{Paper02_aux_results_BM} in the appendix, for $T > R/c^*$,
     \begin{equation*}
     \begin{aligned}
         \mathbb{P}_{0}(W(T) \geq c^{*}T - R) \leq \frac{(mT)^{1/2}}{(c^*T - R) \sqrt{2\pi}} \exp\left(- \frac{(c^*T-R)^2}{2mT}\right),
     \end{aligned}
     \end{equation*}
     and so for $T > R/c^*$,~\eqref{Paper02_upper_bound_speed_u_0_step_i} combined with~\eqref{Paper02_relation_c_star_H_max} yields
     \begin{equation} \label{Paper02_upper_bound_speed_u_0_step_ii}
    \begin{aligned}
        \sup_{x \geq c^*T} u_{0}\left(T, x\right) & \leq \|f\|_{L_\infty(\mathbb R; \ell_1)} \frac{(mT)^{1/2}}{(c^*T - R) \sqrt{2\pi}} \exp\left(\frac{(c^*)^2T}{2m} - \frac{(c^{*}T - R)^{2}}{2mT}\right) \\ & = \|f\|_{L_\infty(\mathbb R; \ell_1)} \frac{(mT)^{1/2}}{(c^*T - R)\sqrt{2\pi}} \exp\left(\frac{c^{*}R}{m} - \frac{R^{2}}{2mT}\right).
    \end{aligned}
     \end{equation}
     Taking $T \rightarrow \infty$ on both sides of~\eqref{Paper02_upper_bound_speed_u_0_step_ii}, we conclude that~\eqref{Paper02_upper_bound_speed_u_0} holds.

     For the lower bound on the spreading speed, by Theorem~\ref{Paper02_prop_control_proportions}, it is enough to establish that there exists~$\nu_{0} > 0$ such that for any~$\varepsilon \in (0,c^*)$, 
     \begin{equation} \label{Paper02_lower_bound_propagation_speed_u_0}
         \liminf_{T \rightarrow \infty} \; \inf_{x \in [0, (c^{*} - \varepsilon)T]} \; u_{0}(T,x) \geq \nu_{0}.
     \end{equation}
     To prove~\eqref{Paper02_lower_bound_propagation_speed_u_0}, we use the same argument as in the proof of the lower bound on the spreading speed in~\cite[Theorem~1.1]{penington2018spreading}, replacing Lemmas~2.3 and~2.4 in~\cite{penington2018spreading} by Lemmas~\ref{Paper02_exponential_growth} and~\ref{Paper02_propagation_mass_u_0} respectively. Since the arguments are the same, we omit the proof and refer the interested reader to~\cite{penington2018spreading}.
\end{proof}

\subsection{Tracer dynamics}
\label{Paper02_tracer_dynamics_section}

In this subsection, we will prove Theorem~\ref{Paper02_thm_tracer_dynamics_no_gene_surfing}. Throughout this subsection, let~$u$ denote the continuous mild solution to the system of PDEs~\eqref{Paper02_PDE_scaling_limit} given in Proposition~\ref{Paper02_smoothness_mild_solution}. We will assume that the reaction term~$F = (F_{k})_{k \in \mathbb{N}_{0}}$ defined in~\eqref{Paper02_reaction_term_PDE} is monostable in the sense of Definition~\ref{Paper02_assumption_monostable_condition}, and that the initial condition~$f = (f_{k})_{k \in \mathbb{N}_{0}}$ of the system of PDEs~\eqref{Paper02_PDE_scaling_limit} satisfies both Assumptions~\ref{Paper02_assumption_initial_condition} and~\ref{Paper02_assumption_initial_condition_modified}. Recall the system of PDEs for the labelled population~\eqref{Paper02_PDE_system_labelled_particles}, and suppose~$f^{*} = (f^{*}_{k})_{k \in \mathbb{N}_{0}} \in L_{\infty}(\mathbb{R}; \ell_{1})$ satisfies Assumption~\ref{Paper02_assumption_initial_condition_labelled_particles}, recalling that~$f^*$ is the initial condition of the system of PDEs~\eqref{Paper02_PDE_system_labelled_particles}. Recall that~$F^{*} = (F^{*}_{k})_{k \in \mathbb{N}_{0}}: \ell_{1}^{+} \times \ell_{1} \rightarrow \ell_{1}$ is the reaction term defined in~\eqref{Paper02_reaction_term_PDE_labelled}. We start by introducing the definition of mild solutions to the system of PDEs~\eqref{Paper02_PDE_system_labelled_particles} analogously to the definition of a mild solution of~\eqref{Paper02_PDE_scaling_limit} in Definition~\ref{Paper02_definition_mild_solution}.

\begin{definition}
    Suppose that $(s_k)_{k \in \mathbb N_0}$, $q_+$ and $q_-$ satisfy Assumptions~\ref{Paper02_assumption_fitness_sequence} and~\ref{Paper02_assumption_polynomials}, that $\mu \in (0,1)$ and $m > 0$, and that $f$ and $f^*$ satisfy Assumption~\ref{Paper02_assumption_initial_condition_labelled_particles}, and that the reaction term $F = (F_{k})_{k \in \mathbb{N}_{0}}$ defined in~\eqref{Paper02_reaction_term_PDE} is monostable in the sense of Definition~\ref{Paper02_assumption_monostable_condition}. Let $u = (u_k)_{k \in \mathbb N_0}: [0, \infty) \times \mathbb R \rightarrow \ell_1^+$ be the continuous mild solution to~\eqref{Paper02_PDE_scaling_limit} given in Proposition~\ref{Paper02_smoothness_mild_solution}. We say that a Lebesgue-measurable function $u^* = (u^*_k)_{k \in \mathbb N_0}: [0, \infty) \times \mathbb R \rightarrow \ell_1^+$ is a \emph{mild solution} to the system of PDEs~\eqref{Paper02_PDE_system_labelled_particles} if and only if the following conditions are satisfied:
    \begin{enumerate}[(i)]
        \item For $\lambda$-almost every $(T,x) \in [0,\infty) \times \mathbb R$, for $\lambda$-almost every $t \in [0,T]$,
        \[
        \Big(P_{T-t} \| F^*(u(t,\cdot), u^*(t,\cdot)) \|_{\ell_1}\Big)(x) < \infty.
        \]
        \item For $\lambda$-almost every $(T,x) \in [0, \infty) \times \mathbb R$,
        \[
        u^*(T,x) = (P_Tf^*)(x) + \int_0^T \Big(P_{T-t} F^*(u(t,\cdot), u^*(t,\cdot)) \Big)(x) \, dt.
        \]
    \end{enumerate}
\end{definition}

Our first step towards the proof of Theorem~\ref{Paper02_thm_tracer_dynamics_no_gene_surfing} will be to state existence, uniqueness and regularity properties for solutions of~\eqref{Paper02_PDE_system_labelled_particles}.

\begin{proposition}
\label{Paper02_basic_properties_linear_parabolic_pdes_tracer}
   Under the assumptions of Theorem~\ref{Paper02_thm_tracer_dynamics_no_gene_surfing}, letting $u$ denote the continuous mild solution to the system of PDEs~\eqref{Paper02_PDE_scaling_limit} given in Proposition~\ref{Paper02_smoothness_mild_solution}, there exists a unique function~$u^{*} = (u^{*}_{k})_{k \in \mathbb{N}_{0}} : [0,\infty) \times \mathbb{R} \rightarrow \ell_{1}^{+}$ such that:
    \begin{enumerate}[(i)]
        \item For any $k \in \mathbb N_0$ and $(T,x) \in [0,\infty) \times \mathbb R$,
        \begin{equation} \label{Paper02_mild_formulation_labelled_particles}
            u_k^*(T,x) = (P_Tf_k^*)(x) + \int_0^T \Big(P_{T-t} F_k^*(u(t,\cdot), u^*(t,\cdot))\Big)(x) \, dt.
        \end{equation}
        \item $\sup_{t \in [0,T]} \| u^*(t,\cdot) \|_{L_{\infty}(\mathbb R; \ell_1)} < \infty$ for all $T > 0$.
        \item $u^* \in \mathscr C((0,\infty) \times \mathbb R; \ell_1)$.
        \item $u^{*}_{k} \in \mathscr{C}^{1,2}((0, \infty) \times \mathbb{R}; \mathbb R)$ for every $k \in \mathbb N_0$.
    \end{enumerate}
    Moreover,~$u^{*} = (u^{*}_{k})_{k \in \mathbb{N}_{0}} : [0,\infty) \times \mathbb{R} \rightarrow \ell_{1}^{+}$ satisfies the following inequality: for any~$k \in \mathbb{N}_{0}$,~$T \geq 0$ and~$x \in \mathbb{R}$,
    \begin{equation} \label{Paper02_simple_inequality_comparing_labelled_original_system_PDEs}
        0 \leq u^{*}_{k}(T,x) \leq u_{k}(T,x).
    \end{equation}
\end{proposition}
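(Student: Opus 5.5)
Since $u$ is fixed, the system \eqref{Paper02_PDE_system_labelled_particles} is \emph{linear} in $u^*$. Its coefficients are
\[
a(t,x)\defeq q_+(\|u(t,x)\|_{\ell_1}), \qquad b(t,x)\defeq q_-(\|u(t,x)\|_{\ell_1}),
\]
which are continuous on $(0,\infty)\times\mathbb R$. They are bounded on $[0,\infty)\times\mathbb R$ by Lemma~\ref{Paper02_uniform_bound_total_mass}, since $\|u\|_{\ell_1}\le 1$. They are also locally Hölder on $[\delta,\infty)\times\mathbb R$, because $\|u\|_{\ell_1}$ is Hölder there (Lemma~\ref{Paper02_uniform_Holder_continuity} with $\mathcal I=\mathbb N_0$, $p=0$).

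\emph{Existence and uniqueness.} For existence I would use a Picard iteration in the Banach space $X_T\defeq\{w:[0,T]\times\mathbb R\to\ell_1 \text{ bounded measurable}\}$ with the sup-$\ell_1$ norm. The map is
\[
\Gamma(w)(t,x)=(P_tf^*)(x)+\int_0^t \big(P_{t-\tau}F^*(u(\tau,\cdot),w(\tau,\cdot))\big)(x)\,d\tau .
\]
Since $s_k\le 1$,
\[
\|F^*(u,w)-F^*(u,\tilde w)\|_{\ell_1}\le (2\mathfrak Q_{\max}+\sup_{[0,1]}q_-)\,\|w-\tilde w\|_{\ell_1},
\]
so the map is globally Lipschitz with a constant independent of $T$. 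Iterating, or using a weighted norm $e^{-\kappa t}$, gives a unique fixed point on every $[0,T]$. These fixed points are consistent across $T$, so they define a solution on $[0,\infty)$ satisfying \eqref{Paper02_mild_formulation_labelled_particles} pointwise and (ii). Uniqueness among a.e.\ mild solutions follows from the same Grönwall argument as in Proposition~\ref{Paper02_thm_uniqueness_weak_solutions}, after first passing to the version defined by the right-hand side, as in Lemma~\ref{Paper02_uniform_bound_limiting_process_L_infty_norm}.

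\emph{Regularity.} Continuity in $\ell_1$ on $(0,\infty)\times\mathbb R$ (item (iii)) comes from Lemma~\ref{Paper02lem:heat_continuity_ell1}, since the source term is bounded. For item (iv) I would argue exactly as in Proposition~\ref{Paper02_smoothness_mild_solution}. Estimate \eqref{Paper02_pre_step_holder} makes $u^*$ Hölder in $\ell_1$ on $[\delta,T]\times\mathbb R$. Combined with the Hölder continuity of $a$ and $b$, this makes each $F^*_k(u,u^*)$ locally Hölder, and Krylov's Schauder theorem then gives $u_k^*\in\mathscr C^{1,2}$.

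\emph{Comparison $0\le u^*_k\le u_k$.} Rather than a PDE comparison principle, I would use an induction on $k$ with the Feynman--Kac formula (Proposition~\ref{Paper02_general_version_feynman_kac}). This proposition applies to each coordinate, with potential $G^{(k)}=s_k(1-\mu)a-b$ and source $H^{(k)}=s_{k-1}\mu a\,u^*_{k-1}$. The representation is identical in form to \eqref{Paper02_feynman_kac_with_mutations}, with $f_k$ replaced by $f^*_k$ and $u_{k-1}$ by $u^*_{k-1}$, and it holds because both $u_k$ and $u^*_k$ are classical solutions with the same $G^{(k)}$.
\begin{itemize}
\item[] For $k=0$ the source vanishes, and $0\le f^*_0\le f_0$ gives $0\le u^*_0\le u_0$ pointwise.
\item[] For the inductive step, the source term is monotone in $u^*_{k-1}$ (it is multiplied by $s_{k-1}\mu a\ge0$), and $0\le f_k^*\le f_k$. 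Hence $0\le u^*_{k-1}\le u_{k-1}$ implies $0\le u^*_k\le u_k$.
\end{itemize}
This also shows that $u^*$ is $\ell_1^+$-valued, which the existence step needs. If necessary, I would first run the Picard iteration in the closed convex set $\{0\le w_k\le u_k\}$, which $\Gamma$ preserves by the same coordinatewise monotonicity.

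\emph{Main obstacle.} I expect the delicate point to be the Feynman--Kac step. Proposition~\ref{Paper02_general_version_feynman_kac} requires $u^*_k(t,x)\to f^*_k(x)$ as $t\to0$ at continuity points of $f_k^*$, and it requires continuous, locally bounded $G$ and $H$ on $(0,\infty)\times\mathbb R$. I would verify the first from the mild formula: $P_tf^*_k\to f^*_k$ at continuity points of the bounded a.e.-continuous $f^*_k$, and the integral term is $O(t)$. The rest is routine given the results already established for $u$.
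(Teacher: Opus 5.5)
Your proposal is correct. For existence, uniqueness and regularity it is essentially the paper's argument:
\begin{itemize}
\item the paper also takes $u^*$ to be the fixed point of the Duhamel map, using the bound $\|F^*(z,z^*)-F^*(z,z')\|_{\ell_1}\le (q_++q_-)(\|z\|_{\ell_1})\,\|z^*-z'\|_{\ell_1}$;
\item it proves uniqueness in the class (i)--(iv) by the same contraction;
\item it gets (iii) and (iv) from Lemma~\ref{Paper02lem:heat_continuity_ell1}, the H\"older continuity of $q_\pm(\|u\|_{\ell_1})$ and Krylov's Schauder theorem.
\end{itemize}
The differences there are cosmetic. The paper works in $\mathscr C_b((T_1,T_2];L_\infty(\mathbb R;\ell_1))$ on intervals of length $T^*$ with $CT^*<1$ and patches them together. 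Your weighted norm $e^{-\kappa t}$ gives the global fixed point at once, and you recover time continuity afterwards. In $X_T$ you should take the essential supremum in $x$, or restrict to $t>0$, since $f^*$ is only essentially bounded.

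The genuine difference is the comparison $0\le u^*_k\le u_k$. The paper applies a classical parabolic comparison principle inductively in $k$, to the linear equations with zero-order coefficient $h_k=s_k(1-\mu)q_+(\|u\|_{\ell_1})-q_-(\|u\|_{\ell_1})$. You compare Feynman--Kac representations instead, which is exactly the formula the paper proves immediately afterwards in Lemma~\ref{Paper02_feynman_kac_lemma_labelled_population}. The paper's route is shorter to write. Yours is more robust at $t=0$: Proposition~\ref{Paper02_general_version_feynman_kac} is built for initial data that are only a.e.\ continuous, whereas the comparison principle as usually stated assumes continuity up to $t=0$, which is not available here. Your check that $u^*_k(t,x)\to f^*_k(x)$ at continuity points via the mild formula is exactly what is needed. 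There is no circularity, since (iv) is established before the comparison.

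Two side remarks in your proposal are wrong, though your main line does not use them.

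First, the plain Duhamel map $\Gamma$ does \emph{not} preserve $\{0\le w_k\le u_k\}$. The zero-order coefficient $h_k$ can be negative, and the monotonicity you have in mind comes from the exponential weight in the Feynman--Kac formula, not from the Duhamel formula. For instance, let $f$ be the spatially constant equilibrium, $f_k=\alpha_k f_0$ with $(1-\mu)q_+(\|f\|_{\ell_1})=q_-(\|f\|_{\ell_1})$, so that $u\equiv f$, and let $f^*\equiv 0$. Take $w$ equal to $u_k$ in one coordinate $k\ge1$ and $0$ elsewhere. Then $\Gamma(w)_k=t\,(s_k-1)(1-\mu)\,q_+(\|f\|_{\ell_1})\,f_k<0$. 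An order-preserving iteration would require absorbing the zero-order term into the semigroup, using $e^{-\lambda t}P_t$ with $\lambda\ge\max_{[0,1]}q_-$. In any case existence needs no positivity: $F^*(z,\cdot)$ is defined and globally Lipschitz on all of $\ell_1$, and positivity comes out of the Feynman--Kac step.

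Second, uniqueness among \emph{all} a.e.\ mild solutions does not follow as in Lemma~\ref{Paper02_uniform_bound_limiting_process_L_infty_norm}. There the $L_\infty$ bound came from the cap $\sup_{U\ge0}U(q_+(U)-q_-(U))<\infty$, while the corresponding estimate for the tracer is linear in $\|u^*\|_{\ell_1}$. The statement, however, only asks for uniqueness within the class (i)--(iv). There (ii) supplies the bound and your Gr\"onwall argument applies.
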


Since the proof of Proposition~\ref{Paper02_basic_properties_linear_parabolic_pdes_tracer} follows from standard arguments, we postpone it until Section~\ref{Paper02_appendix_auxiliary_PDE_results} in the appendix.

In the remainder of this subsection, we let~$u^{*} = (u^{*}_{k})_{k \in \mathbb{N}_{0}}$ denote the unique solution of~\eqref{Paper02_PDE_system_labelled_particles} satisfying the conditions of Proposition~\ref{Paper02_basic_properties_linear_parabolic_pdes_tracer}. Our next step will be to write~$u^{*}_{k}$ in terms of a Feynman--Kac representation for every~$k \in \mathbb{N}_{0}$.

\begin{lemma} \label{Paper02_feynman_kac_lemma_labelled_population}
    Under the assumptions of Theorem~\ref{Paper02_thm_tracer_dynamics_no_gene_surfing}, for every $k \in \mathbb{N}_{0}$, any~$T \geq 0$, and any~$x \in \mathbb{R}$,
    \begin{equation*}
\begin{aligned}
    u_{k}^{*}(T,x) = & \mathbb{E}_{x}\left[f^{*}_{k}(W(T))\exp\left(\int_{0}^{T} \Big(s_{k} (1-\mu)q_{+} - q_{-}\Big) \Big(\| u(T-t,W(t))\|_{\ell_{1}}\Big) dt\right)\right] \\ & + \mathds{1}_{\{k \geq 1\}}\mathbb{E}_{x}\Bigg[\int_{0}^{T} \Big(s_{k-1}\mu u_{k-1}^{*}q_{+}(\| u \|_{\ell_{1}})\Big)(T-t, W(t)) \\ & \quad \quad \quad \quad\quad \quad\cdot \exp\left(\int_{0}^{t} \Big(s_{k} (1-\mu)q_{+} - q_{-}\Big) \Big(\| u(T-\tau,W(\tau))\|_{\ell_{1}}\Big) d\tau\right)\; dt\Bigg].
\end{aligned}
\end{equation*}
\end{lemma}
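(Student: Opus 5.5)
The plan is to mirror the proof of Lemma~\ref{Paper02_feynman_kac_representation_lemma_each_u_k}, applying Proposition~\ref{Paper02_general_version_feynman_kac} to each coordinate $u^*_k$ separately. We treat $u^*_k$ as the solution of a \emph{linear} scalar parabolic equation. The potential is
\[
G^{(k)}(t,x) \defeq \Big(s_k(1-\mu)q_+ - q_-\Big)\Big(\|u(t,x)\|_{\ell_1}\Big),
\]
and the source term is
\[
H^{*,(k)}(t,x) \defeq \mathds{1}_{\{k\geq 1\}}\, s_{k-1}\mu\, u^*_{k-1}(t,x)\, q_+(\|u(t,x)\|_{\ell_1}).
\]
With these choices, $\partial_t u^*_k = \frac{m}{2}\Laplace u^*_k + G^{(k)}u^*_k + H^{*,(k)}$ on $(0,\infty)\times\mathbb R$, and this is exactly the $k$-th equation of~\eqref{Paper02_PDE_system_labelled_particles} with $F^*_k$ from~\eqref{Paper02_reaction_term_PDE_labelled}.

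The first step is to verify the hypotheses of Proposition~\ref{Paper02_general_version_feynman_kac}.

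\textbf{Continuity and boundedness of the coefficients.} By Proposition~\ref{Paper02_smoothness_mild_solution}(v), the map $(t,x)\mapsto\|u(t,x)\|_{\ell_1}$ is continuous on $(0,\infty)\times\mathbb R$. By Lemma~\ref{Paper02_uniform_bound_total_mass} it is bounded by $1$. Since $q_\pm$ are polynomials, $G^{(k)}$ is therefore continuous and bounded. By Proposition~\ref{Paper02_basic_properties_linear_parabolic_pdes_tracer}(ii)--(iv), $u^*_{k-1}$ is continuous on $(0,\infty)\times\mathbb R$ and bounded on compact time intervals, so $H^{*,(k)}$ has the same properties.

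\textbf{Classical equation.} By Proposition~\ref{Paper02_basic_properties_linear_parabolic_pdes_tracer}(iv), $u^*_k\in\mathscr C^{1,2}((0,\infty)\times\mathbb R)$. Differentiating the Duhamel identity~\eqref{Paper02_mild_formulation_labelled_particles}, which holds pointwise, shows that $u^*_k$ satisfies the equation classically. Alternatively, one argues as in the proof of Proposition~\ref{Paper02_smoothness_mild_solution}, where the classical equation was obtained from the mild formulation together with Hölder continuity of the forcing.

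\textbf{Initial condition.} We need $u^*_k(t,x)\to f^*_k(x)$ as $t\to0$ at every continuity point $x$ of $f^*_k$. By Assumption~\ref{Paper02_assumption_initial_condition_labelled_particles}(i)--(ii), $f^*_k\in L_\infty(\mathbb R)$ and is continuous almost everywhere. In~\eqref{Paper02_mild_formulation_labelled_particles}, the term $(P_tf^*_k)(x)$ converges to $f^*_k(x)$ at such points. The Duhamel integral is bounded by $t\,\sup_{[0,1]\times\mathbb R}|F^*_k|$, and this supremum is finite because $u$ and $u^*$ are bounded on $[0,1]\times\mathbb R$ (Lemma~\ref{Paper02_uniform_bound_total_mass} and Proposition~\ref{Paper02_basic_properties_linear_parabolic_pdes_tracer}(ii)). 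Hence the integral vanishes as $t\to0$.

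With these hypotheses verified, Proposition~\ref{Paper02_general_version_feynman_kac} applied with $t=T$ gives the claimed formula. The first term arises from $u^*_k(0,W(T))=f^*_k(W(T))$. The second term is the source contribution, and it vanishes when $k=0$ because $H^{*,(0)}\equiv0$. For $T=0$ the identity is trivial.

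The only step that needs care is the boundary behaviour at $t=0$, where $u^*$ is only known to satisfy the mild formulation. The pointwise Duhamel identity together with the uniform bounds handles it, exactly as for $u$ in Lemma~\ref{Paper02_feynman_kac_representation_lemma_each_u_k}.
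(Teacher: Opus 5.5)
Your proposal is correct and matches the paper's proof, which also applies Proposition~\ref{Paper02_general_version_feynman_kac} to each coordinate with potential $\big(s_k(1-\mu)q_+ - q_-\big)(\|u\|_{\ell_1})$ and source $\mathds{1}_{\{k\ge1\}}s_{k-1}\mu u^*_{k-1}q_+(\|u\|_{\ell_1})$, with the hypotheses supplied by Proposition~\ref{Paper02_basic_properties_linear_parabolic_pdes_tracer} and Lemma~\ref{Paper02_uniform_bound_total_mass}. Your explicit check that $u^*_k(t,x)\to f^*_k(x)$ at continuity points, via the pointwise Duhamel identity and the uniform bounds, fills in a detail the paper leaves implicit.
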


\begin{proof}
    Similarly to the proof of Lemma~\ref{Paper02_feynman_kac_representation_lemma_each_u_k}, the result follows directly from Proposition~\ref{Paper02_general_version_feynman_kac}, Proposition~\ref{Paper02_basic_properties_linear_parabolic_pdes_tracer}, Lemma~\ref{Paper02_uniform_bound_total_mass} and the definition of the reaction term~$F^{*} = (F^{*}_{k})_{k \in \mathbb{N}_{0}}$ in~\eqref{Paper02_reaction_term_PDE_labelled}.
\end{proof}

Our next result establishes an upper bound on the propagation of the labelled population that will be pivotal in the proof of Theorem~\ref{Paper02_thm_tracer_dynamics_no_gene_surfing}.
Before stating this result, we recall the sequence~$(\hat{\pi}_{k})_{k \in \mathbb{N}_{0}} \in \ell_1^+$ defined in Assumption~\ref{Paper02_assumption_initial_condition_modified}(iii), and the sequence of functions~$\phi = (\phi_{k})_{k \in \mathbb{N}}: [0, \infty) \rightarrow \ell_{1}^{+}$ defined in Lemma~\ref{Paper02_evolution_proportions_upper_bound_characterisation}. We also recall the definition of~$\mathfrak{Q}_{\min}$ and~$\mathfrak{Q}_{\max}$ in~\eqref{Paper02_definition_max_min_birth_polynomial} and that the sequence of fitness parameters~$(s_{k})_{k \in \mathbb{N}_{0}}$ satisfies Assumption~\ref{Paper02_assumption_fitness_sequence}.

\begin{lemma} \label{Paper02_prop_pop_with_mutations_die_out}
    Suppose that the assumptions of Theorem~\ref{Paper02_thm_tracer_dynamics_no_gene_surfing} hold and that $f^{*}_{0} \equiv 0$. Then $u^{*}_{0}(T, \cdot) \equiv 0$ for any~$T \geq 0$. Moreover, for every~$k \in \mathbb{N}$, any $T \geq 0$ and any~$x \in \mathbb{R}$,
    \begin{equation} \label{Paper02_prop_pop_with_mutations_die_out_ii}
       u^{*}_{k}(T,x) \leq \Big(\hat{\pi}_{k}\exp\Big(-T\mathfrak{Q}_{\min}(1 - s_{k})(1 - \mu)\Big) + \phi_{k}(T)\Big)u_{0}(T,x).
    \end{equation}
\end{lemma}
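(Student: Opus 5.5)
The plan is to repeat the upper-bound induction from the proof of Lemma~\ref{Paper02_uniform_evolution_proportion_over_space_fkpp_case}, with the representation of Lemma~\ref{Paper02_feynman_kac_lemma_labelled_population} used in place of~\eqref{Paper02_feynman_kac_with_mutations}. In that induction the term producing $\alpha_k(\mathfrak{Q}_{\max}/\mathfrak{Q}_{\min})^k$ was fed by $u_0$. Here it is absent, because the labelled population has no mutation-free source.

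\emph{Step 1: $u^*_0 \equiv 0$.} Take $k=0$ in Lemma~\ref{Paper02_feynman_kac_lemma_labelled_population}. The indicator removes the source term, so
\[
u^*_0(T,x)=\mathbb{E}_x\big[f^*_0(W(T))\exp(\cdots)\big].
\]
Since $f^*_0\equiv 0$ and the exponential is bounded (by Lemma~\ref{Paper02_uniform_bound_total_mass}, $\|u\|_{\ell_1}\le 1$), this vanishes for every $T\ge 0$ and $x\in\mathbb R$.

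\emph{Step 2: the case $k=1$.} Now the source term contains $u^*_0\equiv 0$, so only the initial-data term remains. By Assumption~\ref{Paper02_assumption_initial_condition_labelled_particles}(iii) and Assumption~\ref{Paper02_assumption_initial_condition_modified}(iii),
\[
f^*_1 \le f_1 \le \hat\pi_1 f_0 \quad \text{a.e.}
\]
The null set does not matter, because $W(T)$ has a density. Next compare the exponents. Using~\eqref{Paper02_trivial_conclusion_Q_max_Q_min} and $s_1<1$, we have
\[
\big(s_1(1-\mu)q_+-q_-\big)(\|u\|_{\ell_1}) \le \big((1-\mu)q_+-q_-\big)(\|u\|_{\ell_1}) - \mathfrak{Q}_{\min}(1-s_1)(1-\mu).
\]
Substituting this and then applying~\eqref{Paper02_feynman_kac_no_mutations_from_begining} gives
\[
u^*_1(T,x)\le \hat\pi_1 e^{-T\mathfrak{Q}_{\min}(1-s_1)(1-\mu)}\,u_0(T,x).
\]
This is the claim for $k=1$, since $\phi_1\equiv0$.

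\emph{Step 3: the inductive step.} Assume~\eqref{Paper02_prop_pop_with_mutations_die_out_ii} holds for some $k$. For $u^*_{k+1}$, bound the initial term exactly as in~\eqref{Paper02_intermediate_step_upper_bound_prevalence_mutations_iii}, using $f^*_{k+1}\le \hat\pi_{k+1}f_0$. In the source term, bound $q_+(\|u\|_{\ell_1})\le \mathfrak{Q}_{\max}$ and insert the induction hypothesis for $u^*_{k}(T-t,W(t))$. Then lower the exponent by $\mathfrak{Q}_{\min}(1-s_{k+1})(1-\mu)$, as in Step 2. Use Fubini and~\eqref{Paper02_feynman_kac_no_mutations} on the window $[0,t]$ to recombine the expectation into $u_0(T,x)$.

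This reproduces exactly the second and third terms of~\eqref{Paper02_intermediate_step_upper_bound_prevalence_mutations_ii}, and the fourth ($\alpha$) term is missing. Estimate~\eqref{Paper02_intermediate_step_upper_bound_prevalence_mutations_vii} then bounds the second and third terms together by $\phi_{k+1}(T)u_0(T,x)$, which closes the induction.

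The main point needing care is Step 3, and specifically the bookkeeping. I need to check that the upper-bound proof of Lemma~\ref{Paper02_uniform_evolution_proportion_over_space_fkpp_case} used the induction hypothesis only termwise and linearly. It does, so dropping the $\alpha_k$ part of the hypothesis simply removes the corresponding term. No new estimates are required.
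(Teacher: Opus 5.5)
Your proposal is correct and follows the paper's proof essentially step for step: $u^*_0\equiv 0$ from Lemma~\ref{Paper02_feynman_kac_lemma_labelled_population}, then induction on $k$ reusing~\eqref{Paper02_first_step_control_proportion_density_one_mutation_PTW}, \eqref{Paper02_intermediate_step_upper_bound_prevalence_mutations_iii} and~\eqref{Paper02_intermediate_step_upper_bound_prevalence_mutations_vii}, with the $\alpha_k$ term absent because the labelled source $u^*_0$ vanishes. One small caveat, which also applies to the paper's statement: your remark that $W(T)$ has a density only covers $T>0$, since at $T=0$ the claimed bound reduces to $f^*_k\le\hat\pi_k f_0$, and this is only assumed for almost every $x$.
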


\begin{proof}
    The fact that~$u^{*}_{0}(T,\cdot) \equiv 0$ for any~$T \geq 0$ follows directly from Lemma~\ref{Paper02_feynman_kac_lemma_labelled_population} and the assumption that~$f^{*}_{0} \equiv 0$. For the proof of~\eqref{Paper02_prop_pop_with_mutations_die_out_ii}, we will use an induction argument on~$k \in \mathbb{N}$. Since the proof of~\eqref{Paper02_prop_pop_with_mutations_die_out_ii} is similar to the proof of the upper bound in Lemma~\ref{Paper02_uniform_evolution_proportion_over_space_fkpp_case}, we will omit some of the details. For~$k = 1$, since~$u^{*}_{0} \equiv 0$, by the Feynman--Kac representation given in Lemma~\ref{Paper02_feynman_kac_lemma_labelled_population}, and by Assumptions~\ref{Paper02_assumption_initial_condition_modified}(iii) and~\ref{Paper02_assumption_initial_condition_labelled_particles}(iii), we have that for any~$T \geq 0$ and any~$x \in \mathbb{R}$,
    \begin{equation*}
    \begin{aligned}
          u^{*}_{1}(T,x) & \leq \hat{\pi}_{1} \mathbb{E}_{x}\left[f_{0}(W(T))\exp\left(\int_{0}^{T} \Big(s_{1} (1-\mu)q_{+} - q_{-}\Big) \Big(\| u(T-t,W(t))\|_{\ell_{1}}\Big) dt\right)\right] \\ & \leq \hat{\pi}_{1}\exp\Big(-T\mathfrak{Q}_{\min}(1 - s_{1})(1 - \mu)\Big)u_{0}(T,x),
    \end{aligned}
    \end{equation*}
    where for the second inequality we used~\eqref{Paper02_first_step_control_proportion_density_one_mutation_PTW}. Hence,~\eqref{Paper02_prop_pop_with_mutations_die_out_ii} holds for $k = 1$. Now take~$k \in \mathbb{N}$ and suppose that~\eqref{Paper02_prop_pop_with_mutations_die_out_ii} holds for all~$T \geq 0$ and~$x \in \mathbb{R}$. Then by Lemma~\ref{Paper02_feynman_kac_lemma_labelled_population}, Assumptions~\ref{Paper02_assumption_initial_condition_modified}(iii) and~\ref{Paper02_assumption_initial_condition_labelled_particles}(iii), and our assumption that~\eqref{Paper02_prop_pop_with_mutations_die_out_ii} holds, and using~\eqref{Paper02_trivial_conclusion_Q_max_Q_min} and Lemma~\ref{Paper02_feynman_kac_representation_lemma_each_u_k}, we have for any~$T \geq 0$ and any~$x \in \mathbb{R}$,
    \begin{equation} \label{Paper02_prop_pop_with_mutations_die_out_iii}
    \begin{aligned}
        & u^{*}_{k+1}(T,x) \\ & \; \leq \hat{\pi}_{k+1} \mathbb{E}_{x}\left[f_{0}(W(T))\exp\left(\int_{0}^{T} \Big(s_{k+1} (1-\mu)q_{+} - q_{-}\Big) \Big(\| u(T-t,W(t))\|_{\ell_{1}}\Big) dt\right)\right] \\ & \quad \; + \mu \mathfrak{Q}_{\max} s_{k} \hat{\pi}_{k}u_{0}(T,x)\int_{0}^{T} \exp\Big(-t\mathfrak{Q}_{\min}(1 - s_{k+1})(1 - \mu) - (T-t)\mathfrak{Q}_{\min}(1 - s_{k})(1 - \mu)\Big) \, dt \\ & \quad \; + \mu \mathfrak{Q}_{\max} s_{k} u_{0}(T,x)\int_{0}^{T} \exp\Big(-t\mathfrak{Q}_{\min}(1 - s_{k+1})(1 - \mu)\Big) \phi_{k}(T-t) \, dt.
    \end{aligned}
    \end{equation}
    Using~\eqref{Paper02_intermediate_step_upper_bound_prevalence_mutations_iii} to bound the first term on the right-hand side of~\eqref{Paper02_prop_pop_with_mutations_die_out_iii}, and~\eqref{Paper02_intermediate_step_upper_bound_prevalence_mutations_vii} to bound the sum of the second and third terms on the right-hand side of~\eqref{Paper02_prop_pop_with_mutations_die_out_iii}, we conclude that~\eqref{Paper02_prop_pop_with_mutations_die_out_ii} holds for~$u^{*}_{k+1}$. Therefore, by induction, our claim holds.
\end{proof}

We are now ready to prove Theorem~\ref{Paper02_thm_tracer_dynamics_no_gene_surfing}.

\begin{proof}[Proof of Theorem~\ref{Paper02_thm_tracer_dynamics_no_gene_surfing}]
    The claimed existence, uniqueness and regularity properties of~$u^{*}$ follow from Proposition~\ref{Paper02_basic_properties_linear_parabolic_pdes_tracer}. To complete the proof, it remains to show that~$\| u^{*}(T, x) \|_{\ell_{1}}$ converges uniformly (in space) to~$0$ as $T \rightarrow \infty$. Since~$(\hat{\pi}_{k})_{k \in \mathbb{N}} \in \ell_{1}^{+}$ by Assumption~\ref{Paper02_assumption_initial_condition_modified}(iii), and since~$(s_{k})_{k \in \mathbb{N}_{0}}$ is monotonically decreasing with~$s_{1} < 1$ by Assumption~\ref{Paper02_assumption_fitness_sequence} and Definition~\ref{Paper02_assumption_monostable_condition}, by summing both sides of~\eqref{Paper02_prop_pop_with_mutations_die_out_ii} in Lemma~\ref{Paper02_prop_pop_with_mutations_die_out} and then applying Lemmas~\ref{Paper02_uniform_bound_total_mass} and~\ref{Paper02_evolution_proportions_upper_bound_characterisation}, we conclude that for any~$T \geq 0$,
    \begin{equation} \label{Paper02_intermediate_step_proof_no_gene_surfing}
        \| u^{*}(T, \cdot) \|_{L_{\infty}(\mathbb{R}; \ell_{1})} \leq \| \hat{\pi} \|_{\ell_{1}} \exp\Big(-T\mathfrak{Q}_{\min}(1 - s_{1})(1 - \mu)\Big) + \Phi(T),
    \end{equation}
    where~$\Phi(T) = \sum_{k \in \mathbb N} \phi_{k}(T)$. By taking~$T \rightarrow \infty$ on both sides of~\eqref{Paper02_intermediate_step_proof_no_gene_surfing} and then using~\eqref{Paper02_vanishing_sum_error_term} from Lemma~\ref{Paper02_evolution_proportions_upper_bound_characterisation}, we complete the proof. 
\end{proof}

\appendix

\section{Appendix} \label{Paper02_appendix_sec}

\subsection{Brownian motion and Gaussian estimates} \label{Paper02_appendix_BM_gaussian}

 In this subsection, we collect some Brownian motion estimates. For $(t,x) \in (0, \infty) \times \mathbb R$, recall the definition of the Gaussian kernel $p(t,x)$ in~\eqref{Paper02_Gaussian_kernel}. We start by stating bounds on the spatial and temporal increments of~$p$.

\begin{lemma} \label{Paper02_gaussian_estimates_standard}
    Suppose $m > 0$, and let $p:(0, \infty) \times \mathbb R \rightarrow (0, \infty)$ be defined as in~\eqref{Paper02_Gaussian_kernel}. The following estimates hold:
    \begin{enumerate}[(i)]
    \item There exists $C^{(1)} = C^{(1)}(m) > 0$ such that for all $t > 0$ and $x,y \in \mathbb R$,
    \begin{equation} \label{Paper02_gaussian_spatial_increment}
    \left\vert p(t, x) - p(t, y) \right\vert \leq \frac{C^{(1)} \vert x - y \vert}{\sqrt{t}} (p(2t,x) + p(2t,y)).
    \end{equation}
    \item There exists $C^{(2)} = C^{(2)}(m)>0$ such that for all $t > 0$ and $t' \geq 0$,
    \begin{equation} \label{Paper02_gaussian_temporal_increment}
    \int_\mathbb R \vert p(t + t',x) - p(t,x) \vert \, dx \leq C^{(2)} \sqrt{\frac{t'}{t}}.
    \end{equation}
    \end{enumerate}
\end{lemma}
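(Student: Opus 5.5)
Both estimates are elementary Gaussian computations. For $t>0$ write $p(t,x) = (2\pi m t)^{-1/2} e^{-x^2/(2mt)}$. The plan is to prove (i) by the mean value theorem together with a pointwise domination of $|\partial_x p|$ by $p(2t,\cdot)$, and (ii) by integrating $\partial_t p$ in time and bounding its $L^1$ norm.

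For (i), we first compute $\partial_x p(t,z) = -\frac{z}{mt}p(t,z)$. Since $\sup_{w \ge 0} w e^{-w^2/4} < \infty$, taking $w = |z|/\sqrt{mt}$ gives
\[
\frac{|z|}{mt}\, e^{-z^2/(2mt)} = \frac{1}{\sqrt{mt}}\cdot \frac{|z|}{\sqrt{mt}} e^{-z^2/(4mt)} \cdot e^{-z^2/(4mt)} \lesssim_m \frac{1}{\sqrt t}\, e^{-z^2/(4mt)}.
\]
Because $p(2t,z) = (4\pi m t)^{-1/2}e^{-z^2/(4mt)}$, this yields $|\partial_x p(t,z)| \le \frac{C}{\sqrt t}\, p(2t,z)$ for all $z$. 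By the mean value theorem, $|p(t,x)-p(t,y)| \le |x-y| \sup_{z\in[x,y]} |\partial_x p(t,z)|$.

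The main obstacle is that the supremum over $z$ must be controlled by $p(2t,x)+p(2t,y)$. For $z$ between $x$ and $y$, the function $z \mapsto p(2t,z)$ is unimodal and even, so its minimum on the interval $[x,y]$ is attained at an endpoint, but its maximum may be attained at an interior point such as $z=0$. A pure mean value argument therefore does not directly give the endpoint bound. I would split into two cases.

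\textbf{Case 1: $|x-y| \le \sqrt t$.} For $z$ between $x$ and $y$, we have $|z| \ge |x| - \sqrt t$ (and similarly with $y$). Then $e^{-z^2/(4mt)} \le C e^{-x^2/(8mt)}$, since $(|x|-\sqrt t)^2 \ge x^2/2 - t$. This produces $p$ at time $4t$ rather than $2t$. To get back to $p(2t,\cdot)$, I would apply the bound $|\partial_x p(t,z)| \le \frac{C}{\sqrt t}\, p(ct,z)$ with $c$ close to $1$ (for instance $c=4/3$), using $w e^{-\eta w^2} \lesssim 1$, and then lose a factor $2$ in the exponent via $(|x|-\sqrt t)^2 \ge \theta x^2 - C_\theta t$ with $\theta$ near $1$. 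With $c = 4/3$ and $\theta = 2/3$, the result is $e^{-\theta x^2/(2mct)} = e^{-x^2/(4mt)}$, which matches $p(2t,x)$ up to a constant.

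\textbf{Case 2: $|x-y| > \sqrt t$.} Here the crude bound suffices:
\[
|p(t,x)-p(t,y)| \le p(t,x)+p(t,y) \le \sqrt2\,\bigl(p(2t,x)+p(2t,y)\bigr) \le \frac{\sqrt 2\,|x-y|}{\sqrt t}\bigl(p(2t,x)+p(2t,y)\bigr).
\]

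For (ii), write $p(t+t',x)-p(t,x) = \int_t^{t+t'} \partial_s p(s,x)\,ds$. We have $\partial_s p = \frac{m}{2}\partial_x^2 p$, and $\partial_s p(s,x) = p(s,x)\bigl(\frac{x^2}{2ms^2} - \frac{1}{2s}\bigr)$, so $\int_{\mathbb R} |\partial_s p(s,x)|\,dx \le C/s$ by Gaussian scaling. Fubini then gives
\[
\int_{\mathbb R} |p(t+t',x)-p(t,x)|\,dx \le C \log(1+t'/t).
\]
If $t' \le t$, this is at most $C t'/t \le C\sqrt{t'/t}$. If $t' > t$, the left-hand side is at most $2 \le 2\sqrt{t'/t}$, which completes the proof.
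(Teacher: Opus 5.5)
Your proof is correct, but it takes a different route from the paper in both parts.

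\textbf{Part (i).} The paper gives no argument for (i); it simply cites it as a standard estimate from the literature. Your mean value argument supplies a complete, self-contained proof. The split at $|x-y|=\sqrt t$ is combined with the refined bound $|\partial_x p(t,z)|\lesssim_m t^{-1/2}e^{-3z^2/(8mt)}$ and with $z^2\ge \frac23 x^2-2t$ for $z$ between $x$ and $y$. This correctly deals with the fact that the intermediate point may lie near the origin. In fact, Case~1 gives the bound with $p(2t,x)$ alone.

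\textbf{Part (ii).} The paper does not differentiate in time. It writes $p(t+t',x)=\int_{\mathbb R}p(t,x-y)p(t',y)\,dy$ and applies (i) to $|p(t,x-y)-p(t,x)|$. It then integrates out $x$ and uses $\int_{\mathbb R}|y|\,p(t',y)\,dy=\sqrt{2mt'/\pi}$. This yields $2C^{(1)}\sqrt{2mt'/(\pi t)}$ in one line, with no case distinction.

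Your route uses $\partial_s p=p\,\bigl(\frac{x^2}{2ms^2}-\frac{1}{2s}\bigr)$ and $\int_{\mathbb R}|\partial_s p(s,x)|\,dx\le 1/s$. Compared with the paper's argument:
\begin{itemize}
\item It is independent of (i).
\item It gives the explicit, $m$-independent constant $C^{(2)}=2$.
\item The intermediate bound $\log(1+t'/t)$ is sharper for small $t'/t$: linear rather than square-root.
\item The price is a case split to reach the stated $\sqrt{t'/t}$ form.
\end{itemize}
The paper's approach instead gains brevity by reusing (i).
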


\begin{proof}
    Estimate~\eqref{Paper02_gaussian_spatial_increment} is standard (see~e.g.~\cite[Lemma~7.7]{etheridge2023looking}). For estimate~\eqref{Paper02_gaussian_temporal_increment}, the case $t' = 0$ is trivial. For $t' > 0$, observe that by the semigroup property of the Gaussian kernel, we have for all $t,t' > 0$,
    \[
    p(t + t',x) = \int_{\mathbb R} p(t,x-y)p(t',y) \, dy \quad \forall x \in \mathbb R.
    \]
    Therefore for $t,t' >0$,
    \begin{equation*}
    \begin{aligned}
        \int_\mathbb R \vert p(t + t',x) - p(t,x) \vert \, dx & = \int_\mathbb R \left\vert \int_{\mathbb R} p(t,x-y)p(t',y) \, dy - p(t,x) \right\vert  \, dx \\
        & \leq \int_{\mathbb R} \int_\mathbb R \vert p(t,x-y) - p(t,x) \vert p(t',y) \, dy \, dx \\
        & \leq \frac{C^{(1)}}{\sqrt{t}} \int_\mathbb R \int_\mathbb R \vert y \vert p(t',y) (p(2t,x-y) + p(2t,x)) \, dy \, dx \\
        & = \frac{C^{(1)}}{\sqrt{t}} \int_\mathbb R \int_\mathbb R \vert y \vert p(t',y) (p(2t,x-y) + p(2t,x)) \, dx \, dy \\
        & = \frac{2C^{(1)}}{\sqrt{t}} \int_\mathbb R \vert y \vert p(t',y) \, dy,
    \end{aligned}
    \end{equation*}
    where for the third line we used estimate~\eqref{Paper02_gaussian_spatial_increment}, and for the fourth line we used Fubini's theorem. Using the elementary identity
    \[
    \int_\mathbb R \vert y \vert p(t',y) \, dy = \sqrt{\frac{2mt'}{\pi}},
    \]
    we then conclude that
    \begin{equation*}
    \begin{aligned}
        \int_\mathbb R \vert p(t + t',x) - p(t,x) \vert \, dx \leq 2C^{(1)} \sqrt{\frac{2mt'}{\pi t}}.
    \end{aligned}
    \end{equation*}
    Therefore~\eqref{Paper02_gaussian_temporal_increment} holds. This completes the proof.
\end{proof}
 
Next, we collect some results that are straightforward modifications of the Brownian motion estimates in~\cite{morters2010brownian,roberts2015fine,penington2018spreading}. We refer the reader interested in the proof of each estimate to the indicated reference.

\begin{lemma} \label{Paper02_aux_results_BM}
Let~$(W(t))_{t \geq 0}$ be a Brownian motion run at speed~$m > 0$, with $W(0) = 0$ under $\mathbb P_0$. Then the following estimates are satisfied:
     \begin{enumerate}[(i)]
        \item \cite[Lemma~12.9]{morters2010brownian} For any~$T > 0$ and~$x > 0$,
        \begin{equation} \label{Paper02_elementary_gaussian_estimate_ii}
            \mathbb{P}_{0}(W(T) > x) \leq \frac{(mT)^{1/2}}{x\sqrt{2\pi}} \exp\left(- \frac{x^{2}}{2mT}\right).
        \end{equation}
        
         \item \cite[Equation~(12)]{penington2018spreading} For any $0 \leq x \leq y$ and $t > 0$,
         \begin{equation} \label{Paper02_elementary_gaussian_estimate}
             \frac{y-x}{\sqrt{2 \pi m t}} \exp\left(- \frac{y^{2}}{2mt}\right) \leq \mathbb{P}_{0}(W(t) \in [x,y]) \leq \frac{y-x}{\sqrt{2 \pi m t}} \exp\left(- \frac{x^{2}}{2mt}\right).
         \end{equation}
         
         \item \cite[Lemma~5]{roberts2015fine} For $T > 0$, there exists $C^{(1)}_T > 0$ such that for any $t \geq T$ and $0 \leq R \leq 1$,
    \begin{equation} \label{Paper02_roberts_lemma}
    \mathbb{P}_{0}\Big(\vert W(\tau) \vert \leq 1\; \forall \tau \leq t, \; \vert W(t) \vert \leq R\Big) \geq C^{(1)}_{T} \exp\left(-\frac{\pi^{2}mt}{8}\right) \min\left(R, \tfrac{2}{3}\right).
    \end{equation}
    \item 
    \cite[Equation~(19)]{penington2018spreading} For $R > 0$ and $c^* > 0$, there exists~$C^{(2)}_{R} = C^{(2)}_{R}(c^*) > 0$ such that for $b \in \mathbb{R}$ with $\vert b \vert \leq c^{*}$, $r \in [0,R]$ and $t \geq 1$,
    \begin{equation} \label{Paper02_tilted_tube_BM}
         \mathbb{P}_{0}\Big(\vert W(\tau) - b\tau\vert \leq R\; \forall \tau \leq t, \; \vert W(t) - bt\vert \leq r\Big) \geq C^{(2)}_{R}r\exp\left(-\frac{b^{2}t}{2m} - \frac{\pi^{2}mt}{8R^{2}}\right).
    \end{equation}
     \end{enumerate}
\end{lemma}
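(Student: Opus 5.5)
Every part of the lemma is a classical one-dimensional Gaussian or Brownian estimate, so I would prove the four parts separately and directly. They are adapted from \cite{morters2010brownian,roberts2015fine,penington2018spreading}. Throughout I use that $W(t)$ under $\mathbb P_0$ has density $p(t,\cdot)$ from~\eqref{Paper02_Gaussian_kernel}, and that Brownian scaling maps speed $m$ to speed $1$.

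For (i), I would use the Mills-ratio trick. On $\{y>x\}$ we have $y/x\geq 1$, so
\[
\mathbb P_0(W(T)>x)\leq \int_x^\infty \tfrac{y}{x}\,p(T,y)\,dy=\tfrac{mT}{x}\,p(T,x),
\]
which is exactly~\eqref{Paper02_elementary_gaussian_estimate_ii}. For (ii), I would note that $p(t,\cdot)$ is decreasing on $[0,\infty)$. Hence on $[x,y]\subset[0,\infty)$ the density lies between $p(t,y)$ and $p(t,x)$, and integrating over an interval of length $y-x$ gives~\eqref{Paper02_elementary_gaussian_estimate}.

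For (iii), I would pass to Brownian motion killed on exiting $[-1,1]$, with sub-Markov transition density $q_s(y,z)$. The operator $\frac m2\Laplace$ with Dirichlet conditions on $[-1,1]$ has principal eigenvalue $\lambda_1=\pi^2m/8$ and eigenfunction $\varphi(z)=\cos(\pi z/2)$. Since $q_s$ is symmetric, $\int\varphi(y)q_s(y,z)\,dy=e^{-\lambda_1 s}\varphi(z)$. Next I would show that $q_{T}(0,y)\geq c_T\varphi(y)$ for $y\in[-1,1]$. This follows from the eigenfunction expansion at the fixed time $T$, or from the fact that $q_T(0,\cdot)$ is positive and vanishes linearly at $\pm1$, like $\varphi$. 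Chapman--Kolmogorov then gives, for $t\geq T$,
\[
q_t(0,z)\geq c_T\int \varphi(y)\,q_{t-T}(y,z)\,dy=c_Te^{-\lambda_1(t-T)}\varphi(z).
\]
Integrating over $|z|\leq R$ and using $\cos(\pi z/2)\geq 1/2$ on $[-2/3,2/3]$ yields~\eqref{Paper02_roberts_lemma}. The lower bound $q_T(0,\cdot)\geq c_T\varphi$ near the boundary is the one delicate point. The difficulty is uniformity in $t\geq T$ for $t$ close to $T$, where a truncated eigen-expansion is awkward; fixing the time $T$ in the comparison and propagating exactly with $\varphi$ is what avoids it.

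For (iv), I would use Girsanov to remove the drift $b$. Let $A$ be the event that $|W(\tau)|\leq R$ for all $\tau\leq t$ and $|W(t)|\leq r$. Then
\[
\mathbb P_0\big(|W(\tau)-b\tau|\leq R\ \forall\tau\leq t,\ |W(t)-bt|\leq r\big)=\mathbb E_0\Big[e^{-bW(t)/m-b^2t/(2m)}\mathds 1_A\Big].
\]
On $A$ we have $|bW(t)|/m\leq c^*R/m$, so the left-hand side is at least $e^{-c^*R/m}e^{-b^2t/(2m)}\mathbb P_0(A)$. Scaling space by $R$ and time by $R^2$ turns $\mathbb P_0(A)$ into the probability in (iii) with time $t/R^2\geq 1/R^2$ and endpoint radius $r/R\leq1$. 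Applying (iii) with $T=1/R^2$ gives $C^{(1)}_{1/R^2}\,e^{-\pi^2mt/(8R^2)}\min(r/R,2/3)\geq C\,r\,e^{-\pi^2mt/(8R^2)}$. Combining these bounds gives~\eqref{Paper02_tilted_tube_BM}, with a constant $C^{(2)}_R$ depending only on $R$, $m$ and $c^*$.
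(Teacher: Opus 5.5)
Your proposal is correct. Note that the paper does not actually prove this lemma. It only says the four estimates are straightforward modifications of \cite[Lemma~12.9]{morters2010brownian}, \cite[Lemma~5]{roberts2015fine} and \cite[Equations~(12) and~(19)]{penington2018spreading}, and refers the reader there.

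Your write-up is a self-contained reconstruction along the standard lines behind those references:
\begin{enumerate}[(i)]
\item the Mills-ratio bound;
\item monotonicity of $p(t,\cdot)$ on $[0,\infty)$;
\item the principal Dirichlet eigenfunction $\cos(\pi z/2)$ with eigenvalue $\pi^2 m/8$, combined with Chapman--Kolmogorov;
\item Girsanov plus Brownian scaling, which reduces the estimate to (iii).
\end{enumerate}
Compared with the citation route, your version checks the adaptation to speed $m$ and the normalisation $\min(R,2/3)$. It also exhibits (iv) as a corollary of (iii) with the explicit constant $C^{(2)}_R=\tfrac{2}{3R}e^{-c^*R/m}C^{(1)}_{1/R^2}$, using $\min(r/R,2/3)\ge \tfrac{2r}{3R}$ for $r\le R$. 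This makes visible that the constant depends on $m$ as well as on $R$ and $c^*$. The Girsanov density $e^{-bW(t)/m-b^2t/(2m)}$ you use has the correct sign.

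The one step you should justify more carefully is $q_T(0,\cdot)\ge c_T\cos(\pi\,\cdot/2)$ on $[-1,1]$. You need it for arbitrarily small $T$, because (iv) invokes (iii) with $T=1/R^2$.

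The eigenfunction expansion alone does not give this for small $T$. Using $|\sin(n\theta)|\le n|\sin\theta|$, it yields only $q_T(0,y)\ge\cos(\pi y/2)\bigl(e^{-\lambda_1T}-\sum_{n\ge2}n e^{-\lambda_nT}\bigr)$, which is useless as $T\downarrow 0$. So for small $T$ you genuinely need the boundary argument. There, ``vanishes linearly'' has to mean a nonzero normal derivative at $\pm1$, which is the parabolic Hopf lemma, not merely vanishing at the endpoints.

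Alternatively, you can avoid the boundary altogether:
\begin{itemize}
\item For $t\ge T_0$ with $T_0$ large, the expansion gives $q_t(0,z)\ge\tfrac12e^{-\lambda_1t}\cos(\pi z/2)$ directly.
\item For $t\in[T,T_0]$, only $|z|\le 2/3$ enters the final integral. On the compact set $[T,T_0]\times[-2/3,2/3]$, $q_t(0,z)$ is continuous and strictly positive, so it is bounded below by a positive constant.
\end{itemize}
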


Next, we prove that, analogously to its action on real-valued functions, the Gaussian kernel has a smoothing effect on $\ell_1$-valued maps. Recall the definition of the Brownian semigroup $\{P_t\}_{t \geq 0}$ in~\eqref{Paper02_semigroup_BM_action_ell_1_functions}. 

\begin{lemma}\label{Paper02lem:heat_continuity_ell1}
Let $T>0$. Let
\(f:\mathbb R\to\ell_1^+\) and \(g:[0,T]\times\mathbb R\to\ell_1\) be such that
\[
f\in L_\infty(\mathbb R;\ell_1) \quad \textrm{and} \quad g\in L_\infty([0,T]\times\mathbb R;\ell_1).
\]
For \((t,x)\in[0,T]\times\mathbb R\), define
\begin{equation} \label{Paper02_definition_strong_mild}
v(t,x):=(P_t f)(x)+\int_0^t (P_{t-\tau}g(\tau,\cdot))(x)\,d\tau.
\end{equation}
Then, for all $\delta \in (0,T)$, there exists $C_{\delta,T,f,g} > 0$ such that for all $t_1,t_2 \in [\delta,T]$ and $x_1,x_2 \in \mathbb R$,
\begin{equation} \label{Paper02_pre_step_holder}
    \| v(t_1,x_1) - v(t_2,x_2)\|_{\ell_1} \leq C_{\delta,T,f,g} \left(\vert x_1 - x_2 \vert + \vert t_1 - t_2\vert^{1/2} \right).
\end{equation}
In particular, $v\in\mathscr C((0,T]\times\mathbb R;\ell_1)$.
\end{lemma}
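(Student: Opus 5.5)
I would treat the two terms in \eqref{Paper02_definition_strong_mild} separately. Write $v = v^{(1)} + v^{(2)}$ with $v^{(1)}(t,x) = (P_tf)(x)$ and $v^{(2)}(t,x) = \int_0^t (P_{t-\tau}g(\tau,\cdot))(x)\,d\tau$. Since $P_t$ acts coordinatewise, every $\ell_1$-norm estimate reduces to an integral of the scalar kernel against $\|f(y)\|_{\ell_1}$ or $\|g(\tau,y)\|_{\ell_1}$. Both of these are essentially bounded by $\|f\|_{L_\infty}$ and $\|g\|_{L_\infty}$. Concretely, by the triangle inequality for Bochner integrals,
\[
\|v^{(1)}(t,x_1)-v^{(1)}(t,x_2)\|_{\ell_1} \leq \|f\|_{L_\infty(\mathbb R;\ell_1)} \int_\mathbb R |p(t,x_1-y)-p(t,x_2-y)|\,dy ,
\]
and similarly in time with $|p(t_2,\cdot)-p(t_1,\cdot)|$. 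Everything therefore comes down to $L_1$ bounds on increments of the Gaussian kernel, which Lemma~\ref{Paper02_gaussian_estimates_standard} provides.

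For $v^{(1)}$: the spatial increment is bounded using \eqref{Paper02_gaussian_spatial_increment}. Integrating in $y$ gives $\int|p(t,x_1-y)-p(t,x_2-y)|dy \le 2C^{(1)}|x_1-x_2|/\sqrt t \le 2C^{(1)}\delta^{-1/2}|x_1-x_2|$ for $t\ge\delta$. For the time increment with $\delta\le t_1\le t_2\le T$, \eqref{Paper02_gaussian_temporal_increment} gives the bound $C^{(2)}\sqrt{(t_2-t_1)/\delta}$. Combining the two via the triangle inequality through the point $(t_2,x_1)$ handles $v^{(1)}$.

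For $v^{(2)}$ the spatial increment is handled the same way. We get $\|g\|_{L_\infty}\int_0^t 2C^{(1)}(t-\tau)^{-1/2}d\tau\,|x_1-x_2| \le 4C^{(1)}\sqrt T\|g\|_{L_\infty}|x_1-x_2|$, because the singularity $(t-\tau)^{-1/2}$ is integrable. This holds even for $t$ near $0$.

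The time increment of $v^{(2)}$ is the main obstacle, since the kernel singularity at $\tau = t$ interacts with the changing upper limit. For $t_1<t_2$, set $h=t_2-t_1$ and split $v^{(2)}(t_2,x)-v^{(2)}(t_1,x)$ into two parts.

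The first part is $\int_{t_1}^{t_2} P_{t_2-\tau}g(\tau)\,d\tau$. Its norm is at most $h\|g\|_{L_\infty}$, which is at most $\sqrt{T}\,h^{1/2}\|g\|_{L_\infty}$.

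The second part is $\int_0^{t_1}\big(P_{t_2-\tau}-P_{t_1-\tau}\big)g(\tau)\,d\tau$. By \eqref{Paper02_gaussian_temporal_increment}, its norm is at most $\|g\|_{L_\infty}\int_0^{t_1}\min\big(2,\,C^{(2)}\sqrt{h/(t_1-\tau)}\big)\,d\tau$. Evaluating this integral, using the crude bound $2$ when $t_1-\tau<h$ and the square-root bound otherwise, gives $O(h+\sqrt h\sqrt T)$, which is $\lesssim_T h^{1/2}$.

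Here the factor $2$ is the trivial bound $\int|p(s',\cdot)-p(s,\cdot)|\le 2$. Adding the two parts and the spatial part yields \eqref{Paper02_pre_step_holder} with a constant depending on $\delta$, $T$, $\|f\|_{L_\infty}$ and $\|g\|_{L_\infty}$.

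Continuity of $v$ on $(0,T]\times\mathbb R$ then follows immediately by letting $\delta\downarrow0$. Measurability and well-definedness of the Bochner integrals are routine, since both integrands are bounded.
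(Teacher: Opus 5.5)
Your proposal is correct and follows essentially the same route as the paper. Both reduce everything to $L_1$ bounds on Gaussian kernel increments from Lemma~\ref{Paper02_gaussian_estimates_standard}, and both split the time increment of the Duhamel term into the piece over $[t_1,t_2]$ and the piece over $[0,t_1]$. The only cosmetic difference is your truncation by $\min(2,\cdot)$. It is harmless but unnecessary, since $\int_0^{t_1}\sqrt{h/(t_1-\tau)}\,d\tau = 2\sqrt{h t_1}$ is already finite, which is what the paper uses directly.
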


\begin{proof}
    We prove~\eqref{Paper02_pre_step_holder} by controlling the spatial and temporal increments separately. Take $\delta \in (0,T)$, and then take $t_1,t_2 \in [\delta,T]$ and $x_1,x_2 \in \mathbb R$. The triangle inequality yields
    \begin{equation} \label{Paper02_smooth_heat_kernel_i}
    \| v(t_1,x_1) - v(t_2,x_2) \|_{\ell_1} \leq \| v(t_1,x_1) - v(t_1,x_2) \|_{\ell_1} + \| v(t_1,x_2) - v(t_2,x_2) \|_{\ell_1}.
    \end{equation}
    We now bound the terms on the right-hand side of~\eqref{Paper02_smooth_heat_kernel_i}. Recall the definition of the Gaussian kernel $p: (0, \infty) \times \mathbb R \rightarrow (0,\infty)$ in~\eqref{Paper02_Gaussian_kernel}. For the first term on the right-hand side of~\eqref{Paper02_smooth_heat_kernel_i}, by using~\eqref{Paper02_definition_strong_mild} and~\eqref{Paper02_semigroup_BM_action_ell_1_functions}, and then by Jensen's inequality,
    \begin{equation*}
    \begin{aligned}
        & \| v(t_1,x_1) - v(t_1,x_2) \|_{\ell_1} \\ & \quad \leq \int_{\mathbb R} \vert p(t_1,x_1 - y) - p(t_1,x_2-y) \vert   \cdot \| f(y)  \|_{\ell_1} \, dy \\
        & \qquad \; + \int_0^{t_1} \int_\mathbb R  \vert p(t_1 - \tau,x_1 - y) - p(t_1 - \tau,x_2-y) \vert \cdot \| g(\tau, y)  \|_{\ell_1} \, dy \, d\tau \\
        & \quad \leq \| f  \|_{L_\infty(\mathbb R;\ell_1)} \int_{\mathbb R} \vert p(t_1,x_1 - y) - p(t_1,x_2-y) \vert \, dy \\
        & \qquad \;+ \| g  \|_{L_{\infty}([0,T] \times \mathbb R;\ell_1)} \int_0^{t_1} \int_\mathbb R  \vert p(t_1 - \tau,x_1 - y) - p(t_1 - \tau,x_2-y) \vert \, dy \, d\tau,
    \end{aligned}
    \end{equation*}
    where for the second estimate we used the fact that $f \in L_{\infty}(\mathbb R; \ell_1)$ and $g \in L_{\infty}([0,T] \times \mathbb R; \ell_1)$. By applying estimate~\eqref{Paper02_gaussian_spatial_increment} from Lemma~\ref{Paper02_gaussian_estimates_standard} and using the fact that~$t_1 \in [\delta, T]$, we conclude that there exists $C^{(1)} = C^{(1)}(m) > 0$ such that
    \begin{equation} \label{Paper02_smooth_heat_kernel_iii}
    \begin{aligned}
         & \| v(t_1,x_1) - v(t_1,x_2) \|_{\ell_1} \\
         & \quad \leq C^{(1)} \| f  \|_{L_\infty(\mathbb R;\ell_1)} \frac{\vert x_1 - x_2 \vert}{\sqrt{t_1}} \int_{\mathbb R} (p(2t_1,x_1 - y) + p(2t_1,x_2 - y)) \, dy \\
         & \qquad \; + C^{(1)}\| g  \|_{L_{\infty}([0,T] \times \mathbb R;\ell_1)} \int_0^{t_1} \frac{\vert x_1 - x_2 \vert }{\sqrt{t_1 - \tau}} \int_{\mathbb R} (p(2(t_1-\tau),x_1 - y) + p(2(t_1-\tau),x_2 - y)) \, dy \, d\tau \\
         & \quad \leq C^{(1)} \left(\frac{2\| f  \|_{L_\infty(\mathbb R;\ell_1)}}{\sqrt{\delta}} + 4 \| g  \|_{L_{\infty}([0,T] \times \mathbb R;\ell_1)} \sqrt{T}\right) \vert x_1 - x_2 \vert.
    \end{aligned}
    \end{equation}

    For the second term on the right-hand side of~\eqref{Paper02_smooth_heat_kernel_i}, by using~\eqref{Paper02_definition_strong_mild},~\eqref{Paper02_semigroup_BM_action_ell_1_functions} and~\eqref{Paper02_Gaussian_kernel}, and then by Jensen's inequality, we have
    \begin{equation*}
    \begin{aligned}
        & \| v(t_1,x_2) - v(t_2,x_2) \|_{\ell_1} \\ & \quad \leq \int_{\mathbb R} \vert p(t_1,x_2 - y) - p(t_2,x_2-y) \vert   \cdot \| f(y)  \|_{\ell_1} \, dy \\
        & \qquad \; + \int_{0}^{t_1 \wedge t_2} \int_\mathbb R \vert p(t_1 - \tau,x_2 - y) - p(t_2 - \tau,x_2-y) \vert \| g(\tau, y)  \|_{\ell_1} \, dy \, d\tau \\
         & \qquad \; + \int_{t_1 \wedge t_2}^{t_1 \vee t_2} \int_\mathbb R p((t_1 \vee t_2) - \tau, x_2 - y) \| g(\tau, y)  \|_{\ell_1} \, dy \, d\tau
        \\
        & \quad \leq \vert t_1 - t_2 \vert \cdot \| g  \|_{L_{\infty}([0,T] \times \mathbb R;\ell_1)} +  \| f  \|_{L_\infty(\mathbb R;\ell_1)}  \int_{\mathbb R} \vert p(t_1,x_2 - y) - p(t_2,x_2-y) \vert \, dy \\
        & \qquad \;+ \| g  \|_{L_{\infty}([0,T] \times \mathbb R;\ell_1)} \int_{0}^{t_1 \wedge t_2} \int_\mathbb R \vert p(t_1 - \tau,x_2 - y) - p(t_2 - \tau,x_2-y) \vert \, dy \, d\tau.
    \end{aligned}
    \end{equation*}
    By applying estimate~\eqref{Paper02_gaussian_temporal_increment} from Lemma~\ref{Paper02_gaussian_estimates_standard} and using the fact that we chose $t_1,t_2$ such that $t_1,t_2 \in [\delta, T]$, it follows that there exists $C^{(2)} = C^{(2)}(m) > 0$ such that
    \begin{equation} \label{Paper02_smooth_heat_kernel_v}
    \begin{aligned}
         & \| v(t_1,x_2) - v(t_2,x_2) \|_{\ell_1} \\ & \quad \leq \sqrt{T} \| g  \|_{L_{\infty}([0,T] \times \mathbb R;\ell_1)} \vert t_1 - t_2 \vert^{1/2} + C^{(2)} \| f  \|_{L_\infty(\mathbb R;\ell_1)} \sqrt{\frac{\vert t_1 - t_2 \vert}{t_1 \wedge t_2}} \\
         & \qquad \; + C^{(2)}  \| g  \|_{L_{\infty}([0,T] \times \mathbb R;\ell_1)} \int_0^{t_1 \wedge t_2} \sqrt{\frac{\vert t_1 - t_2 \vert}{(t_1 \wedge t_2) - \tau}} \, d\tau \\
         & \quad \leq \left(\sqrt{T}(1 + 2C^{(2)}) \| g  \|_{L_{\infty}([0,T] \times \mathbb R;\ell_1)} + \frac{C^{(2)} \| f  \|_{L_\infty(\mathbb R;\ell_1)}}{\sqrt{\delta}} \right) \vert t_1 - t_2 \vert^{1/2}.
    \end{aligned}
    \end{equation}
    Estimate~\eqref{Paper02_pre_step_holder} then follows from applying~\eqref{Paper02_smooth_heat_kernel_iii} and~\eqref{Paper02_smooth_heat_kernel_v} to~\eqref{Paper02_smooth_heat_kernel_i}. Since~\eqref{Paper02_pre_step_holder} implies that $v \in \mathscr C((0,T] \times \mathbb R; \ell_1)$, the proof is complete.
\end{proof}

\subsection{Proof of Proposition~\ref{Paper02_basic_properties_linear_parabolic_pdes_tracer}} \label{Paper02_appendix_auxiliary_PDE_results}

In this subsection, we prove Proposition~\ref{Paper02_basic_properties_linear_parabolic_pdes_tracer}
using a standard Picard--Lindel\"of iteration argument. Recall the definition of $L_{\infty}(\mathbb  R; \ell_1)$ from the beginning of Section~\ref{Paper02_PDES_sequence_spaces}. Since~$\ell_{1}$ is a Banach space, $L_{\infty}(\mathbb{R}; \ell_{1})$ is also a Banach space when equipped with the norm $\| \cdot \|_{L_\infty(\mathbb R; \ell_1)}$ (see e.g.~\cite[Proposition~1.2.29]{hytonen2016analysis}). Recall that $f^* = (f^*_k)_{k \in \mathbb N_0}: \mathbb R \rightarrow \ell_1^+$ is the initial condition of the system of PDEs~\eqref{Paper02_PDE_system_labelled_particles}. By Assumption~\ref{Paper02_assumption_initial_condition_labelled_particles}(i), $f^* \in L_\infty(\mathbb R; \ell_1)$. Take $T_2 > T_1 \geq 0$, and let 
\[
    \mathscr C_b((T_1,T_2]; L_\infty(\mathbb R; \ell_1)) \defeq \left\{\phi \in \mathscr C((T_1, T_2]; L_\infty(\mathbb R; \ell_1)): \; \sup_{t \in (T_1,T_2]} \| \phi(t, \cdot) \|_{L_\infty(\mathbb R; \ell_1)} < \infty \right\}.
\]
Define the norm $\| \cdot \|_{L_{\infty}((T_1,T_2]; L_{\infty}(\mathbb R; \ell_1))}: \mathscr C_b((T_1,T_2]; L_\infty(\mathbb R; \ell_1)) \rightarrow [0, \infty)$ given for all $\phi \in \mathscr C_b((T_1,T_2]; L_\infty(\mathbb R; \ell_1))$ by
\begin{equation} \label{Paper02_norm_C_[T_1,T_2]}
\| \phi \|_{L_{\infty}((T_1,T_2]; L_{\infty}(\mathbb R; \ell_1))} \defeq \sup_{t \in (T_1,T_2]} \| \phi(t,\cdot) \|_{L_\infty(\mathbb R; \ell_1)},
\end{equation}
where the equality follows from the fact that $\phi \in \mathscr C((T_1,T_2]; L_{\infty}(\mathbb R; \ell_1))$. Since $L_\infty(\mathbb R; \ell_1)$ is a Banach space, the vector space $\mathscr C_b((T_1,T_2]; L_\infty(\mathbb R; \ell_1))$ is also a Banach space when equipped with the norm~$\| \cdot \|_{L_{\infty}((T_1,T_2]; L_{\infty}(\mathbb R; \ell_1))}$ (see e.g.~\cite[Theorem~43.6]{MunkresTopology}). Recall the definition of  the semigroup $\{P_t\}_{t \geq 0}$ of Brownian motion run at speed $m$ in~\eqref{Paper02_semigroup_BM_action_ell_1_functions}, and recall the definition of the reaction term $F^*: \ell_1^+ \times \ell_1 \rightarrow \ell_1$ in~\eqref{Paper02_reaction_term_PDE_labelled}. Let $u = (u_k)_{k \in \mathbb N_0}: [0, \infty) \times \mathbb R\rightarrow \ell_1^+$ be the continuous mild solution to~\eqref{Paper02_PDE_scaling_limit} given in Proposition~\ref{Paper02_smoothness_mild_solution}. For all $T_2 > T_1 \geq 0$, $g \in L_{\infty}(\mathbb R; \ell_1)$, $\phi \in \mathscr C_b((T_1,T_2]; L_\infty(\mathbb R; \ell_1))$, $x \in \mathbb R$ and $t \in (T_1,T_2]$, we define
\begin{equation} \label{Paper02_definition_linear_map_pre_picard}
    (H(T_1,T_2,g,t)\phi)(x) \defeq (P_{t - T_1}g)(x) + \int_{T_1}^t \Big(P_{t - \tau} F^*(u(\tau,\cdot),\phi(\tau,\cdot))\Big)(x) \, d\tau.
\end{equation}

\begin{lemma} \label{Paper02_definition_linear_semigroup_non_autonomous}
     Suppose that $(s_k)_{k \in \mathbb N_0}$, $q_+$ and $q_-$ satisfy Assumptions~\ref{Paper02_assumption_fitness_sequence} and~\ref{Paper02_assumption_polynomials}, that $\mu \in (0,1)$ and $m > 0$, that $f$ satisfies Assumption~\ref{Paper02_assumption_initial_condition} and~\ref{Paper02_assumption_initial_condition_modified}, and that the reaction term $F = (F_{k})_{k \in \mathbb{N}_{0}}$ defined in~\eqref{Paper02_reaction_term_PDE} is monostable in the sense of Definition~\ref{Paper02_assumption_monostable_condition}. Let $u = (u_k)_{k \in \mathbb N_0}: [0, \infty) \times \mathbb R\rightarrow \ell_1^+$ be the continuous mild solution to~\eqref{Paper02_PDE_scaling_limit} given in Proposition~\ref{Paper02_smoothness_mild_solution}. For all $T_2 > T_1 \geq 0$ and $g \in L_{\infty}(\mathbb R; \ell_1)$, the following properties hold:
     \begin{enumerate}[(i)]
         \item $H(T_1,T_2,g,t)\phi \in L_\infty(\mathbb R; \ell_1)$ for all $t \in (T_1,T_2]$ and $\phi \in \mathscr C_b((T_1,T_2]; L_{\infty}(\mathbb R; \ell_1))$.
         
         \item $(H(T_1,T_2,g,t)\phi)_{t \in (T_1,T_2]} \in \mathscr C_b((T_1,T_2]; L_{\infty}(\mathbb R; \ell_1))$ for all $\phi \in \mathscr C_b((T_1,T_2]; L_{\infty}(\mathbb R; \ell_1))$.
     \end{enumerate}
     Moreover, there exists $C = C(q_+,q_-) > 0$ such that for all $T_2 > T_1 \geq 0$, $g \in L_\infty(\mathbb R; \ell_1)$ and $\phi, \psi \in \mathscr C_b ((T_1,T_2]; L_\infty(\mathbb R; \ell_1))$,
     \begin{equation} \label{Paper02_contraction_principle}
     \begin{aligned}
        & \sup_{t \in (T_1,T_2]} \left\|(H(T_1,T_2,g,t)\phi) - (H(T_1,T_2,g,t)\psi)\right\|_{L_\infty(\mathbb R; \ell_1)} \\
        & \quad \leq C(T_2 - T_1) \sup_{t \in (T_1,T_2]} \left\|\phi(t,\cdot) -\psi(t, \cdot) \right\|_{L_\infty(\mathbb R; \ell_1)}.
    \end{aligned}
    \end{equation}
\end{lemma}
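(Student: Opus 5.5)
The plan is to treat the three claims in order: (i) $L_\infty$-boundedness of $H(T_1,T_2,g,t)\phi$, (ii) continuity and boundedness in $t$, and finally the contraction estimate~\eqref{Paper02_contraction_principle}. All three rest on one pointwise bound on the linear reaction term. Since $u$ is the continuous mild solution from Proposition~\ref{Paper02_smoothness_mild_solution}, Lemma~\ref{Paper02_uniform_bound_total_mass} gives $\| u(t,x)\|_{\ell_1} \leq 1$ for all $(t,x)$. Hence $q_\pm(\|u(t,x)\|_{\ell_1}) \leq C_0 \defeq \max_{U\in[0,1]} (q_+(U)+q_-(U))$, which depends only on $q_+,q_-$. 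From~\eqref{Paper02_reaction_term_PDE_labelled}, using $s_k\leq 1$ and $\mu\in(0,1)$, and the fact that $F^*$ is linear in its second argument, for every $z,w\in\ell_1$ we get
\[
\| F^*(u(t,x),z) - F^*(u(t,x),w)\|_{\ell_1} = \| F^*(u(t,x),z-w)\|_{\ell_1} \leq 2C_0 \| z-w\|_{\ell_1}.
\]
This holds because $\sum_k (s_k(1-\mu)|z_k| + s_{k-1}\mu |z_{k-1}|) \leq \|z\|_{\ell_1}$.

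For (i), I would bound the two terms of~\eqref{Paper02_definition_linear_map_pre_picard} separately. By Jensen/Minkowski for Bochner integrals, $\|(P_{t-T_1}g)(x)\|_{\ell_1} \leq \|g\|_{L_\infty(\mathbb R;\ell_1)}$. The integral term is bounded by $2C_0 (t-T_1)\sup_{\tau}\|\phi(\tau,\cdot)\|_{L_\infty(\mathbb R;\ell_1)}$. Measurability of $x\mapsto$ the integral follows coordinatewise, since each coordinate is a heat-kernel integral of a measurable bounded function. This gives (i), together with the uniform bound in $t$ needed for (ii).

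For (ii), the remaining point is continuity of $t\mapsto H(T_1,T_2,g,t)\phi$ into $L_\infty(\mathbb R;\ell_1)$ on $(T_1,T_2]$. For the $g$-term, I would use~\eqref{Paper02_gaussian_temporal_increment}: $\sup_x\|(P_{t_2-T_1}g - P_{t_1-T_1}g)(x)\|_{\ell_1} \leq \|g\|_{L_\infty(\mathbb R;\ell_1)} C^{(2)}\sqrt{|t_2-t_1|/(t_1-T_1)}$. This is continuous away from $T_1$, which is why the interval is open at $T_1$. For the integral term, I would follow exactly the temporal-increment part of the proof of Lemma~\ref{Paper02lem:heat_continuity_ell1}. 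That argument splits into the piece over $[t_1\wedge t_2, t_1\vee t_2]$, of order $|t_1-t_2|$, and the kernel-difference piece, controlled by~\eqref{Paper02_gaussian_temporal_increment} and integrable in $\tau$. It yields a bound $\lesssim |t_1-t_2|^{1/2}$ with constants depending on $T_2$ and $\sup\|\phi\|$. This is the step where some care is needed, because the forcing $F^*(u,\phi)$ is only bounded measurable in $x$, not continuous. However, only $L^1$-norms of kernel differences are used, so boundedness suffices.

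Finally, for~\eqref{Paper02_contraction_principle}, the $g$-terms cancel. Using the Lipschitz bound above and the fact that $P_{t-\tau}$ is an $L_\infty(\mathbb R;\ell_1)$-contraction,
\[
\|H(T_1,T_2,g,t)\phi - H(T_1,T_2,g,t)\psi\|_{L_\infty(\mathbb R;\ell_1)} \leq \int_{T_1}^t 2C_0 \|\phi(\tau,\cdot)-\psi(\tau,\cdot)\|_{L_\infty(\mathbb R;\ell_1)}\,d\tau.
\]
Taking the supremum over $t$ then gives the claim with $C = 2C_0$. I expect no real obstacle. The one point to flag is that the $L_\infty$ bound uses the uniform bound $\|u\|_{\ell_1}\leq 1$ from Lemma~\ref{Paper02_uniform_bound_total_mass}, which is what requires monostability and Assumption~\ref{Paper02_assumption_initial_condition_modified}, and which makes $C$ independent of $T_1,T_2$.
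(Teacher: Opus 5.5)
Your proposal is correct and follows essentially the same route as the paper. The bound $\|F^*(z,z^*)-F^*(z,z')\|_{\ell_1}\le \|z^*-z'\|_{\ell_1}\bigl(q_+(\|z\|_{\ell_1})+q_-(\|z\|_{\ell_1})\bigr)$, combined with $\|u\|_{\ell_1}\le 1$ from Lemma~\ref{Paper02_uniform_bound_total_mass} and the $L_\infty$-contractivity of the heat semigroup, gives (i) and the contraction estimate, while (ii) follows from the temporal-increment estimate of Lemma~\ref{Paper02lem:heat_continuity_ell1} on $[T_1+\delta,T_2]$; your constant $2C_0$ could be sharpened to $C_0$, as in the paper, since $q_++q_-\le C_0$ already, but this is immaterial.
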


\begin{proof}

To establish assertion (i), observe that by the definition of the reaction term $F^{*} = (F^{*}_k)_{k \in \mathbb N_0}: \ell_1^+ \times \ell_1 \rightarrow \ell_1$ in~\eqref{Paper02_reaction_term_PDE_labelled}, the fact that $s_k \leq 1 \, \forall k \in \mathbb N_0$ by Assumption~\ref{Paper02_assumption_fitness_sequence}(i) and~(iii), and using that~$q_+$ and~$q_-$ are non-negative on $[0, \infty)$ by Assumption~\ref{Paper02_assumption_polynomials}, we have that for all $z \in \ell_1^+$ and $z^* \in \ell_1$,
    \begin{equation} \label{Paper02_aux_est_label_react_i}
    \begin{aligned}
        \| F^*(z,z^*) \|_{\ell_1} & = \sum_{k = 0}^\infty \left\vert (s_k(1-\mu)z^*_k + \mathds 1_{\{k \geq 1\}}s_{k-1}\mu z^{*}_{k - 1})q_+(\| z \|_{\ell_1}) - z^*_kq_-(\| z \|_{\ell_1}) \right\vert \\
        & \leq \| z^* \|_{\ell_1} (q_+(\| z \|_{\ell_1}) + q_-(\| z \|_{\ell_1})).
    \end{aligned}
    \end{equation}
     Also, since~$q_+,q_-:[0,\infty) \rightarrow [0,\infty)$ are polynomials by Assumption~\ref{Paper02_assumption_polynomials}, Lemma~\ref{Paper02_uniform_bound_total_mass} implies that
    \begin{equation} \label{Paper02_aux_est_label_react_modified}
        \sup_{t \geq 0} \| q_+(\| u(t,\cdot) \|_{\ell_1}) + q_-(\| u(t,\cdot) \|_{\ell_1}) \|_{L_{\infty}(\mathbb R; \mathbb R)} \leq \| q_+ + q_- \|_{L_{\infty}([0,1];\mathbb R)}.
    \end{equation}
    For $(t,x) \in (0,\infty) \times \mathbb R$, recall the definition of the Gaussian kernel~$p(t,x)$ in~\eqref{Paper02_Gaussian_kernel}. For all $T_2 > T_1 \geq 0$, $g \in L_\infty(\mathbb R; \ell_1)$, $\phi \in \mathscr C_b((T_1,T_2]; L_\infty(\mathbb R; \ell_1))$, $t \in (T_1,T_2]$ and $x \in \mathbb R$, we have
    \begin{equation} \label{Paper02_aux_est_label_react_ii}
    \begin{aligned}
        & \| (H(T_1,T_2,g,t) \phi)(x) \|_{\ell_1} \\ 
        & \quad \leq \int_\mathbb R p(t-T_1, x-y) \| g(y) \|_{\ell_1} \, dy + \int_{T_1}^t \int_\mathbb R p(t-\tau,x-y) \| F^*(u(\tau,y),\phi(\tau,y)) \|_{\ell_1} \, dy \, d\tau \\
        & \quad \leq \| g \|_{L_\infty(\mathbb R;\ell_1)} + \| q_+ + q_- \|_{L_{\infty}([0,1];\mathbb R)} \int_{T_1}^t \int_\mathbb R p(t-\tau, x-y) \| \phi(\tau,y)  \|_{\ell_1} \, dy \, d\tau \\
        & \quad \leq \| g \|_{L_\infty(\mathbb R;\ell_1)} + \| q_+ + q_- \|_{L_{\infty}([0,1];\mathbb R)} (T_2 - T_1) \| \phi \|_{L_{\infty}((T_1,T_2]; L_{\infty}(\mathbb R; \ell_1))},
    \end{aligned}
    \end{equation}
    where for the second inequality we used~\eqref{Paper02_aux_est_label_react_i} and~\eqref{Paper02_aux_est_label_react_modified}. Since $\phi \in \mathscr C_b((T_1,T_2]; L_\infty(\mathbb R; \ell_1))$, the right-hand side of~\eqref{Paper02_aux_est_label_react_ii} is finite. Observing that~\eqref{Paper02_aux_est_label_react_ii} holds for all $x \in \mathbb R$, we conclude that assertion (i) holds. 

    In order to establish assertion (ii), we notice that by combining~\eqref{Paper02_aux_est_label_react_i},~\eqref{Paper02_aux_est_label_react_modified} and the definition of $H(T_1,T_2,g,t)\phi$ in~\eqref{Paper02_definition_linear_map_pre_picard}, 
    we can apply estimate~\eqref{Paper02_pre_step_holder} from Lemma~\ref{Paper02lem:heat_continuity_ell1} to conclude that for all $\delta \in (0, T_2 - T_1)$, there exists $C_{\delta,g,T_1,T_2,\phi} > 0$ such that for all $t,t' \in [T_1 + \delta, T_2]$ and $x \in \mathbb R$,
    \[
    \| (H(T_1,T_2,g,t)\phi)(x) - (H(T_1,T_2,g,t')\phi)(x) \|_{\ell_1} \leq C_{\delta,g,T_1,T_2,\phi} \vert t' - t \vert^{1/2}.
    \]
    It then follows that
    \[
    \| H(T_1,T_2,g,t)\phi - H(T_1,T_2,g,t')\phi\|_{L_\infty(\mathbb R; \ell_1)} \leq C_{\delta,g,T_1,T_2,\phi} \vert t' - t \vert^{1/2}, 
    \]
    and therefore $(H(T_1,T_2,g,t)\phi)_{t \in (T_1,T_2]} \in \mathscr C((T_1,T_2]; L_{\infty}(\mathbb R; \ell_1))$. Hence, by observing that~\eqref{Paper02_aux_est_label_react_ii} holds for all~$t \in (T_1,T_2]$ and~$x \in \mathbb R$, the proof of assertion~(ii) is complete.

    It remains to establish~\eqref{Paper02_contraction_principle}. Observe that by the definition of the reaction term $F^{*} = (F^{*}_k)_{k \in \mathbb N_0}: \ell_1^+ \times \ell_1 \rightarrow \ell_1$ in~\eqref{Paper02_reaction_term_PDE_labelled}, the fact that $s_k \leq 1 \, \forall k \in \mathbb N_0$ by Assumption~\ref{Paper02_assumption_fitness_sequence}(i) and~(iii), and using that~$q_+$ and~$q_-$ are non-negative on $[0, \infty)$ by Assumption~\ref{Paper02_assumption_polynomials}, we have that for all $z \in \ell_1^+$ and $z^*,z' \in \ell_1$,
    \begin{equation} \label{Paper02_aux_est_label_react}
    \begin{aligned}
        & \| F^*(z,z^*) - F^*(z,z') \|_{\ell_1} \\
        & \quad = \sum_{k = 0}^\infty \left\vert (s_k(1-\mu)(z^*_k - z'_k) + \mathds 1_{\{k \geq 1\}}s_{k-1}\mu (z^*_{k-1} - z'_{k-1}))q_+(\| z \|_{\ell_1}) - (z^*_k - z'_k)q_-(\| z \|_{\ell_1}) \right\vert \\
        & \quad \leq \| z^* - z' \|_{\ell_1} (q_+(\| z \|_{\ell_1}) + q_-(\| z \|_{\ell_1})).
    \end{aligned}
    \end{equation}
    Then, for all $\phi,\psi \in \mathscr C_b((T_1,T_2]; L_\infty(\mathbb R; \ell_1))$, $g \in L_\infty(\mathbb R; \ell_1)$, $t \in (T_1,T_2]$ and $x \in \mathbb R$,
    \begin{equation} \label{Paper02_aux_est_label_react_xii}
    \begin{aligned}
        & \| (H(T_1,T_2,g, t)\phi)(x) - (H(T_1,T_2,g, t)\psi)(x) \|_{\ell_1} \\
        & \quad \leq \int_{T_1}^t \int_\mathbb R p(t-\tau,x-y) \| F^*(u(\tau,y),\phi(\tau,y)) - F^*(u(\tau,y),\psi(\tau,y)) \|_{\ell_1} \, dy \, d\tau \\
        & \quad \leq \| q_+ + q_- \|_{L_{\infty}([0,1];\mathbb R)} \int_{T_1}^t \int_\mathbb R p(t-\tau,x-y) \| \phi(\tau,y) - \psi(\tau,y) \|_{\ell_1} \, dy\, d\tau \\
        & \quad \leq \| q_+ + q_- \|_{L_{\infty}([0,1];\mathbb R)} (t-T_1) \| \phi - \psi \|_{L_\infty((T_1,T_2]; L_\infty(\mathbb R; \ell_1))},
    \end{aligned}
    \end{equation}
    where for the second inequality we used~\eqref{Paper02_aux_est_label_react} and~\eqref{Paper02_aux_est_label_react_modified}. By taking the supremum over $t \in (T_1,T_2]$ on both sides of~\eqref{Paper02_aux_est_label_react_xii}, we conclude that~\eqref{Paper02_contraction_principle} holds, which completes the proof.
\end{proof}

We are finally ready to prove Proposition~\ref{Paper02_basic_properties_linear_parabolic_pdes_tracer}.

\begin{proof}[Proof of Proposition~\ref{Paper02_basic_properties_linear_parabolic_pdes_tracer}]
    Let $C = C(q_+,q_-) > 0$ be such that estimate~\eqref{Paper02_contraction_principle} from Lemma~\ref{Paper02_definition_linear_semigroup_non_autonomous} holds for any $T_2 > T_1 \geq 0$ and $g \in L_\infty(\mathbb R; \ell_1)$, and take $T^* > 0$ sufficiently small so that $CT^* < 1$.
    Since, by Assumption~\ref{Paper02_assumption_initial_condition_labelled_particles}(i), $f^* = (f^*_k)_{k \in \mathbb N_0} \in L_\infty(\mathbb R; \ell_1)$, Lemma~\ref{Paper02_definition_linear_semigroup_non_autonomous} implies that $\phi \mapsto H(0,T^*,f^*, \cdot) \phi$ is a contraction mapping on $\mathscr C_b((0,T^*]; L_\infty(\mathbb R; \ell_1))$.
    Hence, by the completeness of $\mathscr C_b((0,T^*]; L_\infty(\mathbb R; \ell_1))$, we can apply Banach's fixed-point theorem (see e.g.~\cite[Theorem~5.7]{brezis2011functional}) to conclude that there exists a unique $u^* \in \mathscr C_b((0,T^*]; L_\infty(\mathbb R; \ell_1))$ such that $(H(0,T^*,f^*, t)u^*)(\cdot) = u^*(t, \cdot)$ for all $t \in (0,T^*]$.
    
    Since $u^*(T^*,\cdot) \in L_\infty(\mathbb R; \ell_1)$, we can use the same argument to define $u^*$ on the time interval $(0, 2T^*]$. Repeating this argument iteratively (which is allowed since the constant $C > 0$ in estimate~\eqref{Paper02_contraction_principle} from Lemma~\ref{Paper02_definition_linear_semigroup_non_autonomous} does not depend on the particular time interval), we can define $u^*$ globally in time, i.e.~there is a unique $u^*: (0, \infty) \times \mathbb R \rightarrow \ell_1$ such that for all $n \in \mathbb N_0$, $u^* \vert_{(nT^*,(n+1)T^*]} \in \mathscr C_b((nT^*,(n+1)T^*]; L_\infty (\mathbb R; \ell_1))$ and
    \begin{equation} \label{Eq:proof_uniqueness_solutions_prop_812}
    u^*\vert_{(nT^*,(n+1)T^*]} = H\Big(nT^*,\, (n+1)T^*, \, u^*(nT^*,\cdot), \, \cdot\Big) \, \Big(u^*\vert_{(nT^*,(n+1)T^*]}\Big).
    \end{equation}
    Since this argument is standard in the construction of solutions of both finite and infinite-dimensional dynamical systems, we omit the details and refer the interested reader to e.g.~\cite[Theorem~2.2.2]{kolokoltsov2019differential}.
    
    For $T = 0$, we then define $u^*(0, \cdot ) \equiv f^*(\cdot)$. Then, by the semigroup property of the heat kernel~$\{P_{t}\}_{t \geq 0}$ defined in~\eqref{Paper02_semigroup_BM_action_ell_1_functions}, the function $u^*: [0, \infty) \times \mathbb R \rightarrow \ell_1$ constructed in this way satisfies~\eqref{Paper02_mild_formulation_labelled_particles} for all $(T,x) \in [0, \infty) \times \mathbb R$, i.e.~$u^* = (u^*_k)_{k \in \mathbb N_0}$ satisfies assertion~(i). Since for all $T \in (0,\infty)$, $u^*\vert_{(0,T]} \in L_\infty(0,T] \times \mathbb R; \ell_1)$, by applying Proposition~\ref{Paper02_smoothness_mild_solution}(iii) and estimate~\eqref{Paper02_aux_est_label_react_i}, we have that for all $T \in (0,\infty)$, the map
    \[
    (F^*(u(t, x),u^*(t,x)))_{t \in [0,T], \, x \in \mathbb R} \in L_{\infty}([0,T] \times \mathbb R; \ell_1).
    \]
    
    By using~\eqref{Paper02_mild_formulation_labelled_particles} and estimate~\eqref{Paper02_pre_step_holder} from Lemma~\ref{Paper02lem:heat_continuity_ell1}, it follows that $u^* \in \mathscr C((0, \infty) \times \mathbb R; \ell_1) \cap \mathscr C((0, \infty); L_\infty(\mathbb R;\ell_1))$. Hence, recalling that by Assumption~\ref{Paper02_assumption_initial_condition_labelled_particles}(i), $f^* \in L_\infty(\mathbb R; \ell_1)$ and that $u^*(0, \cdot) = f^*(\cdot)$, it follows that $u^*$ also satisfies assertions~(ii) and~(iii). 
    
    We will now establish that $u^* = (u^*_k)_{k \in \mathbb N_0}$ satisfies assertion~(iv). By the semigroup property of the heat kernel~$\{P_{t}\}_{t \geq 0}$ defined in~\eqref{Paper02_semigroup_BM_action_ell_1_functions} and by assertion~(i) of this proposition, observe that
    for all~$\delta > 0$,~$t > \delta$ and~$x \in \mathbb{R}$, we have
    \begin{equation} \label{Paper02_aux_global_mild_holder_i_labelled}
        u^*(t,x) = (P_{t - \delta}u^*(\delta, \cdot))(x) + \int_{\delta}^{t} \Big(P_{t - \tau}F^*(u(\tau, \cdot), u^*(\tau, \cdot))\Big)(x) \, d\tau.
    \end{equation}
    Since $u^*(\delta, \cdot) \in \mathscr C(\mathbb R; \ell_1)$, by the standard theory for linear parabolic PDEs (see e.g.~\cite[Theorem~8.10.1]{krylov1996lectures}), assertion~(iv) will be proved after establishing that for all $T > \delta > 0$, there exists $C_{\delta, T} > 0$ such that for all~$t_1,t_2 \in [\delta, T]$ and $x_1,x_2 \in \mathbb R$,
    \begin{equation} \label{Paper02_reaction_term_holder_labelled_coordinatewise}
        \| F^*(u(t_1,x_1),u^*(t_1,x_1)) - F^*(u(t_2,x_2),u^*(t_2,x_2)) \|_{\ell_1} \leq C_{\delta,T} ( \vert x_1 - x_2 \vert + \vert t_1 - t_2 \vert^{1/2}).
    \end{equation}
    Observe that by combining assertions~(i) and~(ii) of this proposition with~\eqref{Paper02_aux_est_label_react_i} and estimate~\eqref{Paper02_pre_step_holder} from Lemma~\ref{Paper02lem:heat_continuity_ell1}, we have that for all $T > \delta > 0$, there exists $C^{(1)}_{\delta,T} > 0$ such that for all~$t_1,t_2 \in [\delta,T]$ and~$x_1,x_2 \in \mathbb R$,
    \begin{equation} \label{Paper02_holder_cooordinate_labelled_i}
        \| u^*(t_1,x_1) - u^*(t_2,x_2) \|_{\ell_1} \leq C^{(1)}_{\delta,T} ( \vert x_1 - x_2 \vert + \vert t_1 - t_2 \vert^{1/2}).
    \end{equation}
    Take $T > \delta > 0$. By the definition of the reaction term $F^* = (F^*_{k})_{k \in \mathbb{N}_{0}}$ in~\eqref{Paper02_reaction_term_PDE_labelled}, the triangle inequality and the fact that $s_k \leq 1 \; \forall k \in \mathbb N_0$ by Assumption~\ref{Paper02_assumption_fitness_sequence}(i) and~(iii), we have that for all~$t_1,t_2 \in [\delta,T]$ and~$x_1,x_2 \in \mathbb R$,
    \begin{equation} \label{Paper02_holder_cooordinate_labelled_ii}
    \begin{aligned}
        & \| F^*(u(t_1,x_1),u^*(t_1,x_1)) - F^*(u(t_2,x_2),u^*(t_2,x_2)) \|_{\ell_1} \\
        & \quad \leq (q_+(\|u(t_1,x_1)\|_{\ell_1}) + q_-(\|u(t_1,x_1)\|_{\ell_1})) \| u^*(t_1,x_1) - u^*(t_2,x_2) \|_{\ell_1} \\
        & \qquad + \| u^*(t_2,x_2) \|_{\ell_1} \Big(\vert q_+(\|u(t_1,x_1)\|_{\ell_1}) -  q_+(\|u(t_2,x_2)\|_{\ell_1}) \vert \\
        & \qquad \qquad \qquad \qquad \qquad + \vert q_-(\|u(t_1,x_1)\|_{\ell_1}) - q_-(\|u(t_2,x_2)\|_{\ell_1}) \vert \Big).
    \end{aligned}
    \end{equation}
    For the first term on the right-hand side of~\eqref{Paper02_holder_cooordinate_labelled_ii}, we use~\eqref{Paper02_holder_cooordinate_labelled_i} and estimate~\eqref{Paper02_aux_est_label_react_modified}, and for the second term on the right-hand side of~\eqref{Paper02_holder_cooordinate_labelled_ii} we use the fact that $q_{+},q_{-}: [0,\infty) \rightarrow [0,\infty)$ are polynomials, combined with Lemma~\ref{Paper02_uniform_Holder_continuity}, to deduce that estimate~\eqref{Paper02_reaction_term_holder_labelled_coordinatewise} holds, which completes the proof of assertion~(iv).

    We will now show that $u^* = (u^*_k)_{k \in \mathbb N_0}: [0, \infty) \times \mathbb R \rightarrow \ell_1$ is the unique function satisfying assertions~(i)--(iv). Let $\tilde u^* = (\tilde u^*_k)_{k \in \mathbb N_0}: [0, \infty) \times \mathbb R \rightarrow \ell_1$ be such that $\tilde u^*$ satisfies assertions~(i)--(iv). Then, by~\eqref{Paper02_mild_formulation_labelled_particles}, Proposition~\ref{Paper02_smoothness_mild_solution}(iii),~\eqref{Paper02_aux_est_label_react_i} and estimate~\eqref{Paper02_pre_step_holder} from Lemma~\ref{Paper02lem:heat_continuity_ell1}, we conclude that for all $T \in (0,\infty)$, $\tilde u^* \in \mathscr C_b((0,T]; L_\infty(\mathbb R; \ell_1))$. Moreover, by~\eqref{Paper02_mild_formulation_labelled_particles} and~\eqref{Paper02_definition_linear_map_pre_picard}, $\tilde u^*$ is such that for all $T \in (0,\infty)$,
    \[
    \tilde u^*\vert_{(0,T]} = H(0,T,f^*,\cdot) \Big(\tilde u^*\vert_{(0,T]}\Big).
    \]
    Let $T^* \in (0, \infty)$ be the parameter defined at the beginning of the proof of this proposition. As explained at the beginning of the proof of this proposition, from Lemma~\ref{Paper02_definition_linear_semigroup_non_autonomous}, $\phi \mapsto H(0,T^*,f^*,\cdot)\phi$ is a contraction mapping on $\mathscr C_b((0,T^*]; L_\infty(\mathbb R; \ell_1))$, and therefore by Banach's fixed-point theorem, it follows that
    \[
      \tilde u^*\vert_{(0,T^*]} =  u^*\vert_{(0,T^*]}.
    \]
    Repeating this argument iteratively, we conclude that for all $n \in \mathbb N_0$,
    \[
     \tilde u^*\vert_{(nT^*,(n+1)T^*]} = H\Big(nT^*,\, (n+1)T^*, \, u^*(nT^*,\cdot), \, \cdot\Big) \, \Big(\tilde u^*\vert_{(nT^*,(n+1)T^*]}\Big).
    \]
    Therefore by using the fact that $\phi \mapsto H(nT^*,(n+1)T^*,u^*(nT^*, \cdot),\cdot)\phi$ is a contraction mapping on $\mathscr C_b((nT^*,(n+1)T^*]; L_\infty(\mathbb R; \ell_1))$ by Lemma~\ref{Paper02_definition_linear_semigroup_non_autonomous} and the definition of $T^*$ at the beginning of the proof of this proposition, we can apply Banach's fixed-point theorem and~\eqref{Eq:proof_uniqueness_solutions_prop_812} to conclude that
    \[
     \tilde u^*\vert_{(nT^*,(n+1)T^*]} =  u^*\vert_{(nT^*,(n+1)T^*]} \quad \forall n \in \mathbb N_0,
    \]
    which implies that $\tilde u^* = u^*$, and therefore uniqueness holds.
    
    To complete the proof of Proposition~\ref{Paper02_basic_properties_linear_parabolic_pdes_tracer}, it remains to establish~\eqref{Paper02_simple_inequality_comparing_labelled_original_system_PDEs}. For this purpose, we will use an induction argument on $k \in \mathbb N_0$. Define the function $h_0: [0, \infty) \times \mathbb R \rightarrow \mathbb R$ by 
    \[
    h_0(t,x) \defeq (1-\mu)q_+(\|u(t,x)\|_{\ell_1}) -  q_-(\|u(t,x)\|_{\ell_1}) \quad \forall \, (t,x) \in [0, \infty) \times \mathbb R.
    \]
    By Assumption~\ref{Paper02_assumption_initial_condition_labelled_particles}(iii), we have that $0 \leq u^*_0(0, x) \leq u_0(0,x) \; \forall \, x \in \mathbb R$. Moreover, by recalling the definition of the reaction terms $F = (F_k)_{k \in \mathbb N_0}$ and $F^* = (F^*_k)_{k \in \mathbb N_0}$ in~\eqref{Paper02_reaction_term_PDE} and~\eqref{Paper02_reaction_term_PDE_labelled}, respectively, and using Proposition~\ref{Paper02_smoothness_mild_solution}(i) and~(iv), and assertions~(i) and~(iv) of this proposition, we conclude that for all $(t,x) \in (0, \infty) \times \mathbb R$,
    \begin{equation*}
        \partial_t u^{*}_0(t,x) - \frac{m}{2} \Laplace u^*_0(t,x) - u^*_0(t,x) h_0(t,x) = \partial_t u_0(t,x) - \frac{m}{2} \Laplace u_0(t,x) - u_0(t,x) h_0(t,x).
    \end{equation*}
    Hence, by classical comparison theorems for parabolic PDEs (see e.g.~\cite[Proposition~2.1]{aronson1975nonlinear}), it follows that $0 \leq u^*_0(t,x) \leq u_0(t,x) \; \forall \, (t,x) \in [0, \infty) \times \mathbb R$.

    Suppose now that~\eqref{Paper02_simple_inequality_comparing_labelled_original_system_PDEs} holds for some $k \in \mathbb N_0$. Define the function $h_{k+1}: [0, \infty) \times \mathbb R \rightarrow \mathbb R$ by
    \[
    h_{k+1}(t,x) \defeq s_{k+1}(1-\mu) q_+(\|u(t,x)\|_{\ell_1}) -  q_-(\|u(t,x)\|_{\ell_1}) \quad \forall \, (t,x) \in [0, \infty) \times \mathbb R.
    \]
    By Assumption~\ref{Paper02_assumption_initial_condition_labelled_particles}(iii), we have $0 \leq u^*_{k+1}(0, x) \leq u_{k+1}(0,x) \; \forall \, x \in \mathbb R$. Moreover, by using~\eqref{Paper02_reaction_term_PDE} and~\eqref{Paper02_reaction_term_PDE_labelled} again, and then applying Proposition~\ref{Paper02_smoothness_mild_solution}(i) and~(iv), assertions~(i) and~(iv) of this proposition, and by our induction hypothesis, we conclude that for all $(t,x) \in (0, \infty) \times \mathbb R$,
    \begin{equation*}
    \begin{aligned}
         & \partial_t u^{*}_{k+1}(t,x) - \frac{m}{2} \Laplace u^*_{k+1}(t,x) - u^*_{k+1}(t,x) h_{k+1}(t,x) - s_k\mu u_k(t,x)q_+(\|u(t,x)\|_{\ell_1}) \\
         & \quad \leq \partial_t u_{k+1}(t,x) - \frac{m}{2} \Laplace u_{k+1}(t,x) - u_{k+1}(t,x) h_{k+1}(t,x) - s_k\mu u_k(t,x)q_+(\|u(t,x)\|_{\ell_1}).
    \end{aligned}
    \end{equation*}
    Hence, again using a standard comparison principle,~\eqref{Paper02_simple_inequality_comparing_labelled_original_system_PDEs} holds when~$k$ is replaced by~$k+1$, which completes the proof.
\end{proof}

\subsection{Proof of Lemma~\ref{Paper02_lem_equiv_mild_weak_sol}} \label{Paper02_appendix_section_proof_auxiliary_no_technical_lemmas}

In this subsection, we prove Lemma~\ref{Paper02_lem_equiv_mild_weak_sol}. 

\begin{proof}[Proof of Lemma~\ref{Paper02_lem_equiv_mild_weak_sol}]
    We must establish that $u: [0, \infty) \times \mathbb R \rightarrow \ell_1^+$ satisfies Definition~\ref{Paper02_weak_solution_distributional_sense}(i)-(iv). Definition~\ref{Paper02_weak_solution_distributional_sense}(i) follows directly from condition~(i). By condition~(ii) and by recalling estimate~\eqref{Paper02_trivial_ell_1_bound_reaction_term}, we conclude that for any $T > 0$,
    \begin{equation} \label{Paper02_trivial_estimate}
    F(u) \vert_{[0,T] \times \mathbb R} \in L_{\infty}([0,T] \times \mathbb R; \ell_1).
    \end{equation}
    In particular, $F(u) \in L_{1,\textrm{loc}}([0, \infty) \times \mathbb R; \ell_1)$, and $u$ satisfies Definition~\ref{Paper02_weak_solution_distributional_sense}(iii). Observe also that by condition~(iii), and then by Assumption~\ref{Paper02_assumption_initial_condition}(ii),~\eqref{Paper02_trivial_estimate} and Jensen's inequality, we have that for all $T > 0$,
    \[
    \| u(T, \cdot) \|_{L_{\infty}(\mathbb R; \ell_1)} \leq \| f \|_{L_{\infty}(\mathbb R; \ell_1)} + T  \| F(u)  \|_{L_{\infty}([0,T] \times \mathbb R; \ell_1)} < \infty.
    \]
    In particular, for all $T \geq 0$, $u(T, \cdot) \in L_{1,\textrm{loc}}(\mathbb R; \ell_1)$, and therefore $u$ satisfies Definition~\ref{Paper02_weak_solution_distributional_sense}(ii).

    It remains to establish that $u$ satisfies Definition~\ref{Paper02_weak_solution_distributional_sense}(iv). Let $k \in \mathbb N_0$, $\varphi \in \mathscr C_c^{1,2}([0,\infty) \times \mathbb R; \mathbb R)$ and $T > 0$ be arbitrary. As explained after Definition~\ref{Paper02_weak_solution_distributional_sense}, it will suffice to show that~\eqref{Paper02_coordinate_wise_weak_sol} holds. Note that by condition~(iii),
    \begin{equation} \label{Paper02_mild_is_weak_i}
    \begin{aligned}
        & \int_0^T \int_\mathbb R u_k(t,x) \left(\partial_t + \frac{m}{2} \Laplace\right) \varphi(t,x) \, dx \, dt \\ & \; = \int_0^T \int_\mathbb R (P_tf_k)(x) \left(\partial_t + \frac{m}{2} \Laplace\right) \varphi(t,x) \, dx \, dt \\ & \quad \; + \int_0^T \int_\mathbb R \left(\partial_t + \frac{m}{2} \Laplace\right) \varphi(t,x) \left(\int_0^t (P_{t- \tau} F_k(u(\tau,\cdot)))(x) \, d\tau\right) dx \,dt. 
    \end{aligned}
    \end{equation}
    We will tackle the terms on the right-hand side of~\eqref{Paper02_mild_is_weak_i} separately. For the first term on the right-hand side of~\eqref{Paper02_mild_is_weak_i}, fix $\varepsilon \in (0, T)$ and note that
    \begin{equation} \label{Paper02_mild_is_weak_iii}
    \begin{aligned}
        & \int_0^T \int_\mathbb R (P_tf_k)(x) \left(\partial_t + \frac{m}{2} \Laplace\right) \varphi(t,x) \, dx \, dt \\ & \quad =  \int_\varepsilon^T \int_\mathbb R (P_tf_k)(x) \left(\partial_t + \frac{m}{2} \Laplace\right) \varphi(t,x) \, dx \, dt + \int_0^\varepsilon \int_\mathbb R (P_tf_k)(x) \left(\partial_t + \frac{m}{2} \Laplace\right) \varphi(t,x) \, dx \, dt.
    \end{aligned}
    \end{equation}
    Since $\varphi \in \mathscr C_c^{1,2}([0, \infty) \times \mathbb R; \mathbb R)$, there exists $R_{\varphi} \in (0,\infty)$ such that $\supp \varphi \subset [0, \infty) \times [-R_{\varphi}, R_{\varphi}]$. Moreover, there exists $C_{\varphi} \in (0, \infty)$ such that
    \begin{equation} \label{Paper02_mild_is_weak_ii}
       \sup_{t \in [0,\infty)} \left(\| \partial_t \varphi (t,\cdot) \|_{L_{\infty}(\mathbb R; \mathbb R)} + \frac{m}{2} \| \Laplace \varphi (t,\cdot) \|_{L_{\infty}(\mathbb R; \mathbb R)}\right) \leq C_{\varphi}.
    \end{equation}
    Therefore, by applying Jensen's inequality, we can bound the second term on the right-hand side of~\eqref{Paper02_mild_is_weak_iii} by
    \begin{equation} \label{Paper02_mild_is_weak_iv}
    \begin{aligned}
        \left\vert \int_0^\varepsilon \int_\mathbb R (P_tf_k)(x) \left(\partial_t + \frac{m}{2} \Laplace\right) \varphi(t,x) \, dx \, dt \right\vert \leq 2\varepsilon R_{\varphi}C_{\varphi} \| f_k \|_{L_{\infty}(\mathbb R; \mathbb R)} \leq 2\varepsilon R_{\varphi}C_{\varphi} \| f \|_{L_{\infty}(\mathbb R; \ell_1)}. 
    \end{aligned}
    \end{equation}
    For $(t,x) \in (0, \infty) \times \mathbb R$, recall the definition of the Gaussian kernel $p(t,x)$ in~\eqref{Paper02_Gaussian_kernel}. For the first term on the right-hand side of~\eqref{Paper02_mild_is_weak_iii}, by applying~\eqref{Paper02_semigroup_BM_action_ell_1_functions} and Fubini's theorem, we have that
    \begin{equation} \label{Paper02_mild_is_weak_v}
    \begin{aligned}
        & \int_\varepsilon^T \int_\mathbb R (P_tf_k)(x) \left(\partial_t + \frac{m}{2} \Laplace\right) \varphi(t,x) \, dx \, dt \\ & \quad =   \int_\mathbb R f_k(y) \left(\int_\varepsilon^T \int_\mathbb R p(t,x-y) \left(\partial_t + \frac{m}{2} \Laplace\right) \varphi(t,x) \, dx \, dt\right) dy \\ & \quad = \int_{\mathbb R} f_k(y) \left(\int_ \mathbb R \Big(p(T,x-y) \varphi(T,x) - p(\varepsilon, x-y) \varphi(\varepsilon,x)\Big) \, dx\right) dy \\ & \quad \quad + \int_\mathbb R f_k(y) \left(\int_\varepsilon^T \int_\mathbb R \left(- \partial_t + \frac{m}{2} \Laplace\right) p(t,x-y)  \varphi(t,x) \, dx \, dt\right) dy \\ & \quad = \int_{\mathbb R} (P_Tf_k)(x)\varphi(T,x) \, dx - \int_{\mathbb R} f_k(y) \left(\int_\mathbb R p(\varepsilon, x-y) \varphi(\varepsilon,x) \, dx \right) dy,
    \end{aligned}
    \end{equation}
    where for the second identity we used integration by parts, while for the third equality we used~\eqref{Paper02_semigroup_BM_action_ell_1_functions}, Fubini's theorem, and the fact that by~\eqref{Paper02_Gaussian_kernel}, $\left(- \partial_t + \frac{m}{2} \Laplace\right) p(t, \cdot) \equiv 0$ for $t > 0$. We now claim that
    \begin{equation} \label{Paper02_mild_is_weak_vi}
    \lim_{\varepsilon \downarrow 0} \int_{\mathbb R} f_k(y) \left(\int_\mathbb R p(\varepsilon, x-y) \varphi(\varepsilon,x) \, dx \right) dy = \int_\mathbb R f_k(y) \varphi(0,y) \, dy.
    \end{equation}
    To establish~\eqref{Paper02_mild_is_weak_vi}, we first notice that since $\varphi \in \mathscr{C}_c([0, \infty) \times \mathbb R; \mathbb R)$, by using standard regularity properties of the heat kernel (see e.g.~\cite[Theorem~2.3.1(iii)]{evans2022partial}), we have
    \[
    \lim_{\varepsilon \downarrow 0} \int_\mathbb R p(\varepsilon, x-y) \varphi(\varepsilon,x) \, dx = \varphi(0,y) \quad \forall \, y \in \mathbb R,
    \]
    and therefore, since $f \in L_\infty(\mathbb R; \ell_1)$ by Assumption~\ref{Paper02_assumption_initial_condition}(ii), by the dominated convergence theorem, the limit in~\eqref{Paper02_mild_is_weak_vi} will be proved after establishing that for all $\varepsilon \in (0,T)$ and $y \in \mathbb R$,
    \begin{equation} \label{Paper02_mild_is_weak_vii}
    \begin{aligned}
        & \left\vert \int_\mathbb R p(\varepsilon, x-y) \varphi(\varepsilon,x) \, dx \right\vert \\ & \quad \leq \mathds{1}_{\{y \in [-R_{\varphi},R_{\varphi}]\}} \|\varphi\|_{L_{\infty}([0, \infty) \times \mathbb R; \mathbb R)} + \mathds{1}_{\{y \not\in [-R_{\varphi},R_{\varphi}]\}} \|\varphi\|_{L_{\infty}([0, \infty) \times \mathbb R; \mathbb R)} e^{- {(\vert y \vert - R_\varphi)^2}/{2mT}},
    \end{aligned}
    \end{equation}
    recalling that we chose $R_{\varphi} \in (0,\infty)$ such that $\supp \varphi \subset [0, \infty) \times [-R_{\varphi}, R_\varphi]$. Now, observe that~\eqref{Paper02_mild_is_weak_vii} follows from Jensen's inequality and the elementary identity $\int_\mathbb R p(t,z) \, dz = 1 \; \forall t > 0$ for $y \in [-R_{\varphi}, R_\varphi]$, and for $y \not\in [-R_{\varphi}, R_\varphi]$,
    \begin{equation*}
    \begin{aligned}
         \left\vert \int_\mathbb R p(\varepsilon, x-y) \varphi(\varepsilon,x) \, dx \right\vert & \leq \|\varphi\|_{L_{\infty}([0, \infty) \times \mathbb R; \mathbb R)} \int_\mathbb R p(\varepsilon, x-y) \mathds{1}_{\{x \in [-R_\varphi,R_\varphi]\}}\, dx \\ & = \|\varphi\|_{L_{\infty}([0, \infty) \times \mathbb R; \mathbb R)} \int_{\vert y \vert - R_{\varphi}}^{\vert y \vert + R_{\varphi}} p(\varepsilon, z) \, dz \\ & \leq \|\varphi\|_{L_{\infty}([0, \infty) \times \mathbb R; \mathbb R)} e^{- {(\vert y \vert - R_\varphi)^2}/{2mT}}.
    \end{aligned}
    \end{equation*}
    Hence,~\eqref{Paper02_mild_is_weak_vii} holds, and therefore~\eqref{Paper02_mild_is_weak_vi} also holds. By applying~\eqref{Paper02_mild_is_weak_iv} and~\eqref{Paper02_mild_is_weak_v} to~\eqref{Paper02_mild_is_weak_iii}, and then by observing that we can take $\varepsilon \in (0,T)$ to be arbitrarily small and using~\eqref{Paper02_mild_is_weak_vi}, we conclude that
    \begin{equation} \label{Paper02_mild_is_weak_viii}
    \begin{aligned}
        \int_0^T \int_\mathbb R (P_tf_k)(x) \left(\partial_t + \frac{m}{2} \Laplace\right) \varphi(t,x) \, dx \, dt = \int_{\mathbb R} (P_Tf_k)(x)\varphi(T,x) \, dx - \int_{\mathbb R} f_k(x) \varphi(0,x) \, dx. 
    \end{aligned}
    \end{equation}
    We now tackle the second term on the right-hand side of~\eqref{Paper02_mild_is_weak_i}. Fix $\varepsilon \in (0,T)$, and observe that
    \begin{equation} \label{Paper02_mild_is_weak_ix}
    \begin{aligned}
        & \int_0^T \int_\mathbb R \left(\partial_t + \frac{m}{2} \Laplace\right) \varphi(t,x) \left(\int_0^t (P_{t- \tau} F_k(u(\tau,\cdot)))(x) \, d\tau\right) dx \,dt \\ & \quad = \int_\varepsilon^T \int_\mathbb R \left(\partial_t + \frac{m}{2} \Laplace\right) \varphi(t,x) \left(\int_0^{t-\varepsilon} (P_{t- \tau} F_k(u(\tau,\cdot)))(x) \, d\tau\right) dx \,dt \\ & \quad \quad + \int_\varepsilon^T \int_\mathbb R \left(\partial_t + \frac{m}{2} \Laplace\right) \varphi(t,x) \left(\int_{t - \varepsilon}^{t} (P_{t- \tau} F_k(u(\tau,\cdot)))(x) \, d\tau\right) dx \,dt \\ & \quad \quad + \int_0^\varepsilon \int_\mathbb R \left(\partial_t + \frac{m}{2} \Laplace\right) \varphi(t,x) \left(\int_{0}^{t} (P_{t- \tau} F_k(u(\tau,\cdot)))(x) \, d\tau\right) dx \,dt.
    \end{aligned}
    \end{equation}
    For the first term on the right-hand side of~\eqref{Paper02_mild_is_weak_ix}, by using integration by parts,
    \begin{equation} \label{Paper02_mild_is_weak_x}
    \begin{aligned}
        & \int_\varepsilon^T \int_\mathbb R \left(\partial_t + \frac{m}{2} \Laplace\right) \varphi(t,x) \left(\int_0^{t-\varepsilon} (P_{t- \tau} F_k(u(\tau,\cdot)))(x) \, d\tau\right) dx \, dt \\ 
        & \quad = \int_\mathbb R \varphi(T,x) \left(\int_0^{T-\varepsilon} (P_{T- \tau} F_k(u(\tau,\cdot)))(x) \, d\tau\right) dx \\ 
        & \quad \quad + \int_\varepsilon^T \int_\mathbb R  \varphi(t,x) \left(-\partial_t + \frac{m}{2} \Laplace\right)\left(\int_0^{t-\varepsilon} (P_{t- \tau} F_k(u(\tau,\cdot)))(x) \, d\tau\right) dx \, dt \\ 
        & \quad = \int_\mathbb R \varphi(T,x) \left(\int_0^{T-\varepsilon} (P_{T- \tau} F_k(u(\tau,\cdot)))(x) \, d\tau\right) dx  - \int_\varepsilon^T \int_\mathbb R \varphi(t,x) (P_{\varepsilon} F_k(u(t-\varepsilon,\cdot)))(x) \, dx \, dt \\ & \quad \quad +  \int_\varepsilon^T \int_\mathbb R \varphi(t,x) \left(\int_0^{t-\varepsilon} \int_\mathbb R \left(-\partial_t + \frac{m}{2} \Laplace\right) p(t - \tau, x - y) F_k(u(\tau,y)) \, dy \, d\tau\right) dx \, dt \\ & \quad =  \int_\mathbb R \varphi(T,x) \left(\int_0^{T-\varepsilon} (P_{T- \tau} F_k(u(\tau,\cdot)))(x) \, d\tau\right) dx  -  \int_\varepsilon^T \int_\mathbb R \varphi(t,x) (P_{\varepsilon} F_k(u(t-\varepsilon,\cdot)))(x) \, dx \, dt,
    \end{aligned}
    \end{equation}
    where for the second equality we used~\eqref{Paper02_semigroup_BM_action_ell_1_functions},~\eqref{Paper02_Gaussian_kernel}, the fact that $p(t,x)$ has uniformly bounded derivatives of all orders on $[\varepsilon, \infty) \times \mathbb  R$,~\eqref{Paper02_trivial_estimate} and the dominated convergence theorem, and for the third equality we used the fact that $\left(- \partial_t + \frac{m}{2} \Laplace\right) p(t, \cdot) \equiv 0$ for $t > 0$. We will now compute the limit for both terms on the right-hand side of~\eqref{Paper02_mild_is_weak_x} as $\varepsilon \downarrow 0$. By~\eqref{Paper02_trivial_estimate}, we have that for all $x \in \mathbb R$, the map
    \[
    [0, T] \ni t \mapsto \int_0^{t} (P_{T- \tau} F_k(u(\tau,\cdot)))(x) \, d\tau
    \]
    is continuous, and therefore by~\eqref{Paper02_trivial_estimate} and the dominated convergence theorem,
    \begin{equation} \label{Paper02_mild_is_weak_xi}
        \lim_{\varepsilon \downarrow 0} \int_\mathbb R \varphi(T,x) \left(\int_0^{T-\varepsilon} (P_{T- \tau} F_k(u(\tau,\cdot)))(x) \, d\tau\right) dx = \int_\mathbb R \varphi(T,x) \left(\int_0^{T} (P_{T- t} F_k(u(t,\cdot)))(x) \, dt\right) dx.
    \end{equation}
    For the second term on the right-hand side of~\eqref{Paper02_mild_is_weak_x}, using Fubini's theorem, and then by using~\eqref{Paper02_trivial_estimate} and the same argument we used to derive~\eqref{Paper02_mild_is_weak_vi}, we conclude that
    \begin{equation} \label{Paper02_mild_is_weak_xii}
    \begin{aligned}
        \lim_{\varepsilon \downarrow 0} \int_\varepsilon^T \int_\mathbb R \varphi(t,x) (P_{\varepsilon} F_k(u(t,\cdot)))(x) \, dx \, dt & = \lim_{\varepsilon \downarrow 0} \int_\varepsilon^T \int_\mathbb R (P_{\varepsilon}\varphi(t,\cdot))(x) F_k(u(t,x)) \, dx \, dt \\ & = \int_0^T \int_\mathbb R \varphi(t,x) F_k(u(t,x)) \, dx \, dt.
    \end{aligned}
    \end{equation}
    Recall from after~\eqref{Paper02_mild_is_weak_iii} that $R_\varphi \in (0,\infty)$ is such that $\supp \varphi \subset [0, \infty) \times [-R_\varphi, R_\varphi]$. For the second and third terms on the right-hand side of~\eqref{Paper02_mild_is_weak_ix}, observe that by~\eqref{Paper02_trivial_estimate} and since $\varphi \in \mathscr{C}^{1,2}_{c}([0, \infty) \times \mathbb R; \mathbb  R)$,
    \begin{equation} \label{Paper02_mild_is_weak_xiii}
    \begin{aligned}
        \Bigg\vert & \int_\varepsilon^T \int_\mathbb R \left(\partial_t + \frac{m}{2} \Laplace\right) \varphi(t,x) \left(\int_{t - \varepsilon}^{t} (P_{t- \tau} F_k(u(\tau,\cdot)))(x) \, d\tau\right) dx \,dt \\ & \; + \int_0^\varepsilon \int_\mathbb R \left(\partial_t + \frac{m}{2} \Laplace\right) \varphi(t,x) \left(\int_{0}^{t} (P_{t- \tau} F_k(u(\tau,\cdot)))(x) \, d\tau\right) dx \,dt \Bigg\vert \\ & \quad \quad \leq 4R_\varphi T \varepsilon \left\| \left(\partial_t + \frac{m}{2} \Laplace\right) \varphi \right\|_{L_\infty([0, \infty) \times \mathbb R; \mathbb R)} \| F(u) \|_{L_{\infty}([0, T] \times \mathbb R; \ell_1)}.
    \end{aligned}
    \end{equation}
    By applying~\eqref{Paper02_mild_is_weak_x} and~\eqref{Paper02_mild_is_weak_ix}, and then by using the fact that we can take $\varepsilon \in (0, T)$ to be arbitrarily small, and using~\eqref{Paper02_mild_is_weak_xi},~\eqref{Paper02_mild_is_weak_xii} and~\eqref{Paper02_mild_is_weak_xiii}, it follows that
    \begin{equation} \label{Paper02_mild_is_weak_xiv}
    \begin{aligned}
        & \int_0^T \int_\mathbb R \left(\partial_t + \frac{m}{2} \Laplace\right) \varphi(t,x) \left(\int_0^t (P_{t- \tau} F_k(u(\tau,\cdot)))(x) \, d\tau\right) dx \,dt \\ & \quad = \int_\mathbb R \varphi(T,x) \left(\int_0^{T} (P_{T- t} F_k(u(t,\cdot)))(x) \, dt\right) dx - \int_0^T \int_\mathbb R \varphi(t,x) F_k(u(t,x)) \, dx \, dt.
    \end{aligned}
    \end{equation}
    Finally, applying~\eqref{Paper02_mild_is_weak_viii} and~\eqref{Paper02_mild_is_weak_xiv} to~\eqref{Paper02_mild_is_weak_i}, and using condition~(iii), we conclude that~\eqref{Paper02_coordinate_wise_weak_sol} holds for any $k \in \mathbb N_0$, $\varphi \in \mathscr C_c^{1,2}([0, \infty) \times \mathbb R; \mathbb R)$ and $T > 0$, which completes the proof.
\end{proof}

\subsection{Weak selection--low mutation regime} \label{Paper02_app_weak_selec_low_mut}

In this subsection, we will establish the limit in~\eqref{Paper02_lim_Poisson_equil_ell_1}.

\begin{lemma} \label{Paper02_app_lem_weak_selec_low_mut}
    Let $s, \mu \in (0, \infty)$ and $(s^{(n)})_{n \in \mathbb{N}}, (\mu^{(n)})_{n \in \mathbb{N}} \subset (0,1)$ be such that~\eqref{Paper02_weak_selection_low_mutation_regime} holds. For $n \in \mathbb{N}$ and $k \in \mathbb N_0$, let $\alpha^{(n)}_k$ be defined as in~\eqref{Paper02_sequence_proportions_weak_selection_low_mutation}, $\hat \alpha_k$ be defined as in~\eqref{Paper02_lim_Poisson_equil}, and let $\alpha^{(n)} \defeq (\alpha^{(n)}_k)_{k \in \mathbb N_0}$ and $\hat \alpha \defeq (\hat \alpha_k)_{k \in \mathbb N_0}$. Then $(\alpha^{(n)})_{n \in \mathbb N}$ converges in $\ell_1$ as $n \rightarrow \infty$ to $\hat \alpha$.
\end{lemma}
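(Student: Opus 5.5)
We show pointwise convergence $\alpha^{(n)}_k \to \hat\alpha_k$ for each fixed $k$, and then upgrade to $\ell_1$ convergence with a summable dominating sequence. Since $\hat\alpha\in\ell_1^+$ and all terms are non-negative, Scheffé's lemma (or dominated convergence on $\mathbb N_0$ with counting measure) then gives $\|\alpha^{(n)}-\hat\alpha\|_{\ell_1}\to 0$.

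\emph{Pointwise limit.} Fix $k$ and write, from~\eqref{Paper02_sequence_proportions_weak_selection_low_mutation},
\[
\alpha^{(n)}_k=\prod_{i=1}^k \frac{\mu^{(n)}(1-s^{(n)})^{i-1}}{(1-\mu^{(n)})\bigl(1-(1-s^{(n)})^i\bigr)}.
\]
By~\eqref{Paper02_weak_selection_low_mutation_regime} we have $s^{(n)},\mu^{(n)}\to0$, and $1-(1-s^{(n)})^i = i s^{(n)}(1+o(1))$. Hence each factor equals $\frac{n\mu^{(n)}}{i\,n s^{(n)}}(1+o(1))$, which tends to $\frac{\mu}{s i}$. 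Taking the finite product gives $\alpha^{(n)}_k\to (\mu/s)^k/k!=\hat\alpha_k$.

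\emph{Domination.} This is the main step, since we need a bound uniform in $n$ and $k$. We use the elementary inequality
\[
1-(1-x)^i \ge 1-e^{-ix}\ge \frac{ix}{1+ix}, \qquad x\in[0,1],
\]
together with $(1-s^{(n)})^{i-1}\le e^{-(i-1)s^{(n)}}$. For $n\ge n_0$ we have $\mu^{(n)}\le 2\mu/n$, $s^{(n)}\ge s/(2n)$ and $1-\mu^{(n)}\ge 1/2$. Then the $i$-th factor is bounded by
\[
\frac{4\mu}{s}\cdot\frac{1+i s^{(n)}}{i}\,e^{-(i-1)s^{(n)}}.
\]
Since $(1+y)e^{-y}\le 1$ for $y\ge0$, we get $(1+is^{(n)})e^{-(i-1)s^{(n)}}\le e^{s^{(n)}}\le e$. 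So each factor is at most $\frac{4e\mu}{s i}$, and therefore
\[
\alpha^{(n)}_k\le \frac{(4e\mu/s)^k}{k!}\quad\text{for all } n\ge n_0,\ k\in\mathbb N_0.
\]
This bound is summable in $k$.

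Dominated convergence on $\mathbb N_0$ then yields $\sum_k|\alpha^{(n)}_k-\hat\alpha_k|\to0$. The normalised statement~\eqref{Paper02_lim_Poisson_equil_ell_1} follows because $\|\alpha^{(n)}\|_{\ell_1}\to\|\hat\alpha\|_{\ell_1}=e^{\mu/s}$. The only delicate point is the uniform bound on the factors when $i s^{(n)}$ is not small, and it is handled by the inequality $(1+y)e^{-y}\le1$.
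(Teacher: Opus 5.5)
Your proof is correct. Its overall structure matches the paper's: pointwise convergence plus control of the tails of $\alpha^{(n)}$ that is uniform in $n$. Your domination step, however, is genuinely different and cleaner.

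The paper uses the cruder bound $1-(1-s^{(n)})^i \ge (1-e^{-1})(is^{(n)}\wedge 1)$ and discards the numerator $(1-s^{(n)})^{i-1}$ entirely. It therefore has to split the product at $I^{(n)}=\max\{i: is^{(n)}\le 1\}$. Below $I^{(n)}$ one gets Poisson-type terms. Above it one gets a geometric series with ratio $\mu^{(n)}e/(1-e^{-1})$, which is summable only because $\mu^{(n)}\to 0$. The uniform tail estimate then additionally needs $I^{(n)}\to\infty$, and the paper concludes from uniform $\ell_1$-boundedness, uniform tail smallness and pointwise convergence.

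You instead keep $(1-s^{(n)})^{i-1}\le e^{-(i-1)s^{(n)}}$ and use $1-e^{-y}\ge y/(1+y)$. The inequality $(1+y)e^{-y}\le 1$ then absorbs exactly the loss in the denominator. This gives a single majorant $(C\mu/s)^k/k!$ that does not depend on $n$, so dominated convergence on $\mathbb N_0$ finishes the proof at once, with no regime split.

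The trade-off is that your bound relies on the decay of the numerator $(1-s^{(n)})^{i-1}$, while the paper's relies on $\mu^{(n)}$ being small. Both are available here.

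One small arithmetic slip: with $\mu^{(n)}\le 2\mu/n$, $s^{(n)}\ge s/(2n)$ and $1-\mu^{(n)}\ge 1/2$, you get $\mu^{(n)}/\bigl((1-\mu^{(n)})s^{(n)}\bigr)\le 8\mu/s$, not $4\mu/s$. This does not affect the argument.
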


\begin{proof}
    We first establish bounds, uniform in $n \in \mathbb N$, on $\| \alpha^{(n)} \|_{\ell_1}$ and on the tails of $\alpha^{(n)}$. We start by recalling the elementary inequalities
    \begin{equation} \label{Paper02_elementary_inequality_exp}
        1 + y \leq e^y \quad \forall \, y \in \mathbb R,
    \end{equation}
    and
    \begin{equation} \label{Paper02_elementary_inequality_exp2}
        1 - e^{-y} \geq (1 - e^{-1}) (y \wedge 1) \quad \forall \, y \in [0, \infty).
    \end{equation}
    By~\eqref{Paper02_elementary_inequality_exp} and then by~\eqref{Paper02_elementary_inequality_exp2}, we have that for every $n \in \mathbb{N}$ and $i \in \mathbb N$,
    \begin{equation} \label{Paper02_first_trivial_ineq_exp_weak_selec_low_mut}
    \begin{aligned}
        1 - (1-s^{(n)})^i \geq 1 - e^{-is^{(n)}} \geq (1 - e^{-1})(is^{(n)} \wedge 1).
    \end{aligned}
    \end{equation}
    Observe that by~\eqref{Paper02_weak_selection_low_mutation_regime}, there exists $n^* \in \mathbb{N}$ such that
    \begin{equation} \label{Paper02_first_trivial_ineq_exp_weak_selec_low_mut_iv}
    \mu^{(n)} < \frac{1}{2}e^{-1}(1 -e^{-1}) \quad \forall \, n \geq n^*.
    \end{equation}
    In particular,
    \begin{equation} \label{Paper02_first_trivial_ineq_exp_weak_selec_low_mut_v}
    1 - \mu^{(n)} > 1 -\frac{1}{2}e^{-1}(1 -e^{-1}) > e^{-1} \quad \forall \, n \geq n^*.
    \end{equation}
    For each $n \in \mathbb N$, let
    \begin{equation} \label{Paper02_first_trivial_ineq_exp_weak_selec_low_mut_extra_index}
        I^{(n)} \defeq \max \{i \in \mathbb N_0: \, is^{(n)} \leq 1\}.
    \end{equation}
    By the definition of $\alpha^{(n)}$ in~\eqref{Paper02_sequence_proportions_weak_selection_low_mutation}, and then the fact that $s^{(n)}, \mu^{(n)} \in (0,1)$,~\eqref{Paper02_first_trivial_ineq_exp_weak_selec_low_mut} and~\eqref{Paper02_first_trivial_ineq_exp_weak_selec_low_mut_v}, and finally using~\eqref{Paper02_first_trivial_ineq_exp_weak_selec_low_mut_extra_index}, we conclude that for $n \geq n^*$,
    \begin{equation} \label{Paper02_aux_weak_sel_low_mut_i}
    \begin{aligned}
        \| \alpha^{(n)} \|_{\ell_1} & = 1 + \sum_{k = 1}^{\infty} \left(\frac{\mu^{(n)}}{1 - \mu^{(n)}}\right)^{k} \prod_{i = 1}^{k} \frac{(1 - s^{(n)})^{i-1}}{1 - (1 - s^{(n)})^{i}}
        \\ 
        & \leq 1 + \sum_{k = 1}^\infty (\mu^{(n)}e)^k \prod^k_{i = 1} \frac{1}{(1 - e^{-1})(is^{(n)} \wedge 1)}
        \\
        & \leq 1 + \sum_{k = 1}^{I^{(n)}} \frac{1}{k!} \Bigg(\frac{\mu^{(n)}e}{s^{(n)} ( 1 - e^{-1})}\Bigg)^k + \frac{1}{I^{(n)}!} \Bigg(\frac{\mu^{(n)}e}{s^{(n)} ( 1 - e^{-1})}\Bigg)^{I^{(n)}} \sum_{k = I^{(n)}+1}^{\infty} \left(\frac{\mu^{(n)}e}{1 - e^{-1}}\right)^{k - I^{(n)}}.
    \end{aligned}
    \end{equation}
    Using~\eqref{Paper02_first_trivial_ineq_exp_weak_selec_low_mut_iv}, and applying~\eqref{Paper02_weak_selection_low_mutation_regime} to~\eqref{Paper02_aux_weak_sel_low_mut_i}, it follows that
    \begin{equation} \label{Paper02_aux_weak_sel_low_mut_ii}
        \sup_{n \in \mathbb N, \, n\ge n^*} \| \alpha^{(n)} \|_{\ell_1} < \infty. 
    \end{equation}
    By an argument similar to the one we used to establish~\eqref{Paper02_aux_weak_sel_low_mut_i}, and using~\eqref{Paper02_weak_selection_low_mutation_regime} and~\eqref{Paper02_first_trivial_ineq_exp_weak_selec_low_mut_extra_index}, there exists $C > 0$ such that for all $j \in \mathbb N$ and $n \geq n^*$,
    \begin{equation*}
    \begin{aligned}
    \sum_{k = j}^\infty \alpha^{(n)}_{k} 
    & \leq \sum_{k = j}^\infty \frac{1}{k!} \Bigg(\frac{\mu^{(n)}e}{s^{(n)}( 1 - e^{-1})}\Bigg)^k + \frac{1}{I^{(n)}!}\Bigg(\frac{\mu^{(n)}e}{s^{(n)}( 1 - e^{-1})}\Bigg)^{I^{(n)}} \sum_{k = j \vee (I^{(n)}+1)}^\infty \left(\frac{\mu^{(n)}e}{1 - e^{-1}}\right)^{k-I^{(n)}}
    \\ & \leq \sum_{k = j}^{\infty} \frac{1}{k!} C^k\frac{e^k}{( 1 - e^{-1})^k} + \frac{1}{I^{(n)}!}\Bigg(C\frac{e}{( 1 - e^{-1})}\Bigg)^{I^{(n)}} \sum_{k = j \vee (I^{(n)}+1)}^\infty \left(\frac{\mu^{(n)}e}{1 - e^{-1}}\right)^{k-I^{(n)}}.
    \end{aligned}
    \end{equation*}
    Since~$I^{(n)} \rightarrow \infty$ as $n \rightarrow \infty$ by~\eqref{Paper02_weak_selection_low_mutation_regime}, we conclude from~\eqref{Paper02_first_trivial_ineq_exp_weak_selec_low_mut_iv} that for any $\varepsilon > 0$, there exists $j(\varepsilon) \in \mathbb N$ such that for all $j \in \mathbb N$ such that $j \geq j(\varepsilon)$,
    \begin{equation} \label{Paper02_aux_weak_sel_low_mut_iii}
        \sup_{n \in \mathbb N, \, n\ge n^*} \; \sum_{k = j}^\infty \alpha^{(n)}_{k} \leq \varepsilon.
    \end{equation}
    By estimates~\eqref{Paper02_aux_weak_sel_low_mut_ii} and~\eqref{Paper02_aux_weak_sel_low_mut_iii}, it remains to establish that $\alpha^{(n)}_{k} \rightarrow \hat \alpha_k$ as $n \rightarrow \infty$ for every $k \in \mathbb N_0$. The limit is trivial for $k = 0$. For $k \in \mathbb N$, observe that~\eqref{Paper02_weak_selection_low_mutation_regime} implies that for every $i \in \mathbb N$, there exists $C_i \in (0,\infty)$ such that for all $n \in \mathbb N$,
    \begin{equation*}
        is^{(n)} - C_i n^{-2} \leq 1 - (1 - s^{(n)})^i \leq is^{(n)} + C_i n^{-2}.
    \end{equation*}
    Therefore, for all $k \in \mathbb N$,~\eqref{Paper02_weak_selection_low_mutation_regime} and~\eqref{Paper02_sequence_proportions_weak_selection_low_mutation} imply that 
    \begin{equation*}
        \lim_{n \rightarrow \infty} \alpha^{(n)}_{k} = \lim_{n \rightarrow \infty} \frac{1}{k!}\left(\frac{\mu^{(n)}}{s^{(n)}}\right)^k = \frac{1}{k!}\left(\frac{\mu}{s}\right)^k = \hat \alpha_k,
    \end{equation*}
    which completes the proof.
\end{proof}

\subsubsection*{Acknowledgements}

The authors are grateful to Matthias Birkner, Alison Etheridge, Félix Foutel-Rodier, Karsten Matthies and Matt Roberts for helpful comments and suggestions. 
While this work was being carried out, JLdOM was supported by a scholarship from the EPSRC Centre for Doctoral Training in Statistical Applied Mathematics at Bath (SAMBa), under the project EP/S022945/1.
MO is partially supported by EPSRC grant EP/X040089/1. 
SP is supported by a Royal Society University Research Fellowship. While part of this work was being carried out, SP was visiting SLMath as a Research Member of the Probability and Statistics of Discrete Structures program.

\bibliographystyle{abbrv}
\bibliography{bibli_law_large_numbers_mullers_ratchet}

\end{document}